\def\pd#1#2{\dfrac{\partial#1}{\partial#2}}
\theoremstyle{plain}
\newtheorem*{lemma*}{Lemma}
\newtheorem{lemma}[subsection]{Lemma}
\newtheorem*{theorem*}{Theorem}
\newtheorem{theorem}{Theorem}
\newtheorem*{proposition*}{Proposition}
\newtheorem{proposition}[subsection]{Proposition}
\newtheorem*{corollary*}{Corollary}
\newtheorem{corol}{Corollary}
\newtheorem{corollary}[subsection]{Corollary}
\theoremstyle{definition}
\newtheorem*{definition*}{Definition}
\newtheorem{constraint}{Constraint}\newtheorem{ass}{Assumption}
\newtheorem{definition}[subsection]{Definition}
\newtheorem*{example*}{Example}
\newtheorem{example}[subsection]{Example}
\theoremstyle{remark}
\newtheorem*{remark*}{Remark}\newtheorem*{proof*}{Proof}
\newtheorem{remark}[subsection]{Remark}
\def\A{{\mathcal A}}
\def\Z{{\mathbb Z}}
\def\R{{\mathbb R}}
\def\C{{\mathbb C}}
\def\ome{{\omega}}
\def\Ome{{\Omega}}
\def\N{{\mathbb N}}
\def\MM{{\mathcal M}}
\newcommand{\T}{\mathbb{T}}
\newcommand{\ii}{{\rm i}}
 \title[A normal form of the non--linear Schr\"odinger  equation ]{A normal form of the non--linear Schr\"odinger  equation}
\author{ M. Procesi*, \and 
 C. Procesi{**}. }
 \thanks{ *Universit\`a Federico II  Napoli,supported by ERC under FP7, {**}  Universit\`a di Roma, La Sapienza }
\begin{document}
\begin{abstract}
In this note we discuss normal forms of the completely resonant non--linear Schr\"odinger  equation on a torus, with particular  applications  to   quasi periodic solutions.
 \end{abstract}\maketitle

\section{Introduction}
The aim of this paper is to study an approximate normal form for the completely resonant cubic non--linear Schr\"odinger  equation
\begin{equation}\label{mnls}\ii v_t-\Delta v  =\kappa |v|^2v, \end{equation}
 NLS for short,  on an $n$--dimensional  torus ($n>1$) whose coordinates we denote by $\varphi$ (that is with periodic boundary conditions), with the purpose of applying  KAM Theory as for instance in \cite{EK}, \cite{GY},\cite{GYX}.  We  shall  perform a very detailed study in all dimensions  with the most precise results in dimensions 2 and 3.

The NLS in dimension 1 has a long history, it is completely integrable and   several explicit solutions are known.  Moreover it has a convergent normal form, see \cite{KL}. In higher dimension  we loose  the complete integrability and all the techniques associated to it, but we  can start from the well known fact  that the NLS  \eqref{mnls} is an infinite dimensional Hamiltonian system (see Formula \eqref{Ham}) whose linear part  consists of infinitely many independent oscillators with rational frequencies and hence completely resonant (all the bounded solutions are periodic). The presence of the non-linear term couples the oscillators and modulates the frequencies so that one expects small quasi--periodic (and almost-periodic) solutions to exist for appropriate choices of the initial data.  
To prove the existence of  such quasi--periodic solutions for Hamiltonian PDE's there are two main approaches in the literature: one by KAM theory and the other by using Lyapunov-Schmidt decomposition and then Nash--Moser implicit function theorems (the so--called Craig--Wayne--Bourgain method). 
It is important to notice however that for both approaches it is necessary to  start from a suitably non degenerate normal form and the existence of such normal form is not apparent for equation \eqref{mnls}.

 \noindent The object of this paper is to construct an appropriate normal form and show that it satisfies the hypothesis of a KAM algorithm, we will also briefly discuss how our normal form relates to the other main method of producing quasi--periodic solutions.

\vskip5pt Before describing our results let us give an extremely sketchy overview of the existing literature on quasi--periodic solutions for the NLS.
   
\subsection{Background}   In   the literature on the NLS in higher dimensions, most of the results are restricted to simplified models such as
\begin{equation}\label{parnls}i v_t-\Delta v +M_\sigma v = f(\varphi,|v|^2) v \end{equation}
where $M_\sigma$ is a ``Fourier multiplier'' i.e. a linear operator, depending on a finite number-- say $m$-- of free parameters $\sigma$, which commutes with the Laplacian.  The role of the ``Fourier multiplier'' is to ensure that the equation \eqref{parnls}, linearized at $v=0$ has quasi--periodic solutions with $m$ frequencies 
which excite $m$ Fourier modes (say ${v_1},\dots,{v_m}$ with $v_i\in \Z^n$) and leave the others at rest.  The modes $v_i$ on which the linear motion takes place are called the {\em tangential sites}, and one constructs quasi--periodic solutions of equation \ref{parnls}, which are approximately confined to these Fourier modes. 
\vskip10pt
The first proofs of existence of periodic and quasi--periodic solutions for equation \eqref{parnls} were given by Bourgain, see \cite{Bo2} and \cite{Bo3}.
Then Eliasson and Kuksin, in \cite{EK}, proved both existence and linear stability of  quasi--periodic solutions for equations like \eqref{parnls}, by using KAM theory.
 In this setting the main difficulty   is to prove measure estimates on the set of initial data for which quasi--periodic solutions occur. More precisely  one needs to impose the {\em second Melnikov condition}, this was done  by a subtle analysis of the ``Toeplitz Lipschitz'' properties of the NLS Hamiltonian. 
 \vskip5pt  
In the case of dimension $n=2$ Geng, You and Xu proved the existence of quasi-periodic solutions for equation \eqref{mnls} by a combination of non-integrable normal form, momentum conservation (in the spirit of \cite{GY}) and the ideas of  Kuksin and Eliasson.

The existence of wave packets of  periodic solutions  for equation \eqref{mnls} (as well as for the beam equation)  in any dimension is proved in \cite{GP2} and \cite{GP3}.     In those papers the authors were able to deal also with Dirichlet boundary conditions  for which the normal form is much more complicated.
\subsection{Description of the methods and results}

While trying to understand the connections between  the result of \cite{GYX}  and \cite{GP2}--\cite{GP3}, the first author  started to understand that one could even attack   the much more complicated case of quasi--periodic solutions in case $n>2$. In particular it became clear  that,  the challenging problem of completely understanding   the NLS Hamiltonian after one step of Birkhoff  normal form,  had  subtle combinatorial and geometric aspects which needed a very careful and 
non--trivial analysis which is fully performed in this paper.

Resonant Birkhoff  normal form is a well-known approach to  resonant or degenerate dynamical systems and works very well in the case of the one-dimensional NLS, see \cite{Bou1}. Roughly speaking, consider a Hamiltonian  
$$ H= H^{(2)}(p,q)  + H^{(4)}(p,q)\,,\quad H^{(2)}(p,q)= \sum_k \lambda_k (p_k^2+q_k^2)$$ where $H^{(4)}$ is a polynomial of degree $4$ and the $\lambda_k$ are all rational.

A step of ``resonant Birkhoff  normal form'' is a sympletic change of variables which reduces the Hamiltonian $H$ to 
$$H_N= H^{(2)}(p,q)  + H^{(4)}_{res}(p,q)+ H^{(6)} $$ where $H^{(6)}$ is an analytic function of degree at least $6$ while $H^{(4)}_{res}$ is of degree $4$ and Poisson commutes with $H^{(2)}$. 
Then one wishes to treat $H^{(2)}(p,q)  + H^{(4)}_{res}(p,q)$ as the new unperturbed Hamiltonian and $H^{(6)}$ as a small perturbation. This can work provided that $H^{(2)}(p,q)  + H^{(4)}_{res}(p,q)$ is simple enough (possibly completely integrable) and has quasi-periodic motions for large classes of initial data (which now play the role of the parameters $\sigma$). An ideal situation is when the $\lambda_k$ are non--resonant up to order $4$ so that the normal form is integrable, for example, in
$$ H^{(2)}+ H^{(4)}_{res}= \sum_{k=1}^N \lambda_k (p_k^2+q_k^2)+ \sum_{k=1}^m (p_k^2+q_k^2)^2,$$   the quartic term produces an integrable  twist on the first $m$ frequencies. Then one chooses these as tangential sites and passes to action--angle variables $  p_k^2+q_k^2= \xi_k +y_k$ for $k=1,\dots m$. It is easily seen that the anisochronous twist implies that the linear frequencies now depend on the initial datum $\xi$ and hence for almost all $\xi$ the motions are quasi-periodic.   

Our setting is quite far from being ideal and one has to start by 
  dividing, in a very careful way,  the oscillators into two suitable subsets,  the {\em tangential} and the {\em normal}  sites. This we shall do by imposing the condition that the tangential sites are {\em generic} (cf. Definition \ref{gener} and \S \ref{therin} for a precise statement), with the strategy of  analyzing the equation  near the solutions in which the normal sites are at rest and the tangential sites move quasi--periodically on an $m$-torus.   

This is done by {\em doubling} the action variables in the tangential sites, so each action variable is written as $\xi_i+y_i$ where the $\xi_i$ are treated as parameters  (which are used to parametrize the quasi--periodic or almost--periodic solutions obtained by the perturbative method), the $y_i$ instead remain part of the actual symplectic dynamical variables.  The introduction of these parameters  allows to treat  the orbits  which produce diffusive phenomena as singularities of the perturbative algorithm (the problem of small denominators). This technique was introduced by P\"oschel to prove the existence of lower dimensional tori in finite-dimensional systems and was extended to infinitely-many degrees of freedom in various papers, see \cite{P1}, \cite{BB}.

In our case the normal form Hamiltonian $H^{(2)}+ H^{(4)}_{res}$ is non--integrable and rather complicated. The structure of this normal form was first discussed by Bourgain in \cite{Bo2} and then revisited in \cite{GP2} and\cite{GP3} for the more complicated case of Dirichlet boundary conditions.

The key point in \cite{GP2}  is that, for most of the choices of tangential sites, the leading order of the normal form Hamiltonian is quadratic and block diagonal. 

This is not explicit in  \cite{GP2}, so let us reformulate the result  using, as we shall do in this paper,      complex symplectic variables $w_k=(z_k,\bar z_k)$ for the normal sites and   action angle variables $(\xi +y, x)$ for the tangential sites.
\begin{equation}\label{norham}H^{(2)}+ H^{(4)}_{res}= (\ome(\xi),y) + \frac12 (y, A y)-\frac{1}{2}w M(\xi,x) J w^t +O(w^3) \end{equation} where  $A$ is an invertible twist matrix, $J$ is the standard symplectic matrix and $M(\xi,x)$ is a complex matrix such that the quadratic form $Q= -\frac{1}{2}w M(\xi,x) J w^t$ is real, semi-definite and {\em block diagonal with blocks of uniformly bounded dimension}. The   entries of $M$ depend analytically on the parameters $\xi$ and on the angle variables $x$ and the Hamilton equations for the normal form decouple to (we represent  $w$ as a row vector):
$$\dot y=0\,,\quad \dot x= \ome(\xi)\,,\quad \dot w = i w M(\xi,x_0+\ome(\xi)t) \,. $$

In the present paper we largely improve this result by showing that, for generic choices of the tangential sites, non only the quadratic form  is block diagonal but that the dimension of the blocks is bounded by $n+1$; in particular the fact that the dimension of the block does not depend on the number of tangential sites enables us to consider infinitely many tangential sites, this is crucial if one wants to construct almost--periodic solutions.   

{\bf 1.}\quad By relating the normal form to appropriate combinatorial graphs we describe completely and efficiently the structure of the quadratic form; of particular relevance is the fact that the infinite-dimensional quadratic form is described by a finite number of combinatorially defined graphs.

{\bf 2.}\quad The next substantial result, Theorem \ref{teo1}, consists in proving that the normal form Hamiltonian \eqref{norham}  is reducible to constant coefficients:

$$(\ome(\xi),y)-\frac12 w M'(\xi)J w^t, $$ where the quadratic form is still real but we loose information on the signature of the single blocks.
  In general wether one may reduce a quadratic Hamiltonian to constant coefficients is a difficult question even for finite dimensional systems. We are moreover in an infinite-dimensional context so that there is also a convergence issue to be treated. Due to our wise choice of the tangential sites however the change of variables which reduces the normal form to constant coefficients is completely  explicit and one can check easily all the convergence issues. See \cite{YX} for similar results in the context of finite dimensional systems. 

{\bf 3.}\quad The final step is to diagonalize the Hamiltonian (by a linear change of variables $U(\xi)$) and obtain:
\begin{equation}\label{norma}
(\ome(\xi),y)+\sum_{|k|> C}\tilde\Ome_k(\xi) |z_k|^2 +\tilde Q(\xi,w) +\quad {\rm Perturbation}, 
\end{equation}
where $\ome(\xi),\tilde\Ome(\xi)$ are real and $\tilde Q(\xi,w)$ is a finite dimensional quadratic form (i.e. it involves only a finite number of normal sites $w_h=u_h,\bar u_h$.  The matrix $U(\xi)$ and the functions  $\ome(\xi),\tilde\Ome(\xi)$ are linearly homogeneous analytic functions of $\xi$ for all $\xi$ outside a real algebraic hypersurface.
 See Theorem \ref{teo1b} for notations and details.

{\bf 4.}\quad  We show that in dimension $n=2$ the non-elliptic terms in \eqref{norma} vanish for appropriate choices of the parameters $\xi_i$. 
As a consequence the non-integrable normal form used in the paper \cite{GYX} can be reduced to a standard one, Corollary \ref{coro}.
This allows us to identify which of the solutions  found in  \cite{GYX} are  linearly stable (resp. unstable).

{\bf 5.}\quad  In dimension $>2$  we then study the question of how to apply an abstract  KAM algorithm to our Hamiltonian.   We follow \cite{P1} and \cite{BB}.
A main point is to prove a  ``non-degeneracy  condition'', as stated in \cite{P1}.  We obtain a complete result in dimension 3 and produce a finite (but rather heavy) algorithm to check the property in any dimension.
 Let us denote by $\tilde\Ome_k$ {\em all} the eigenvalues of the matrix $M'$ (as we have said it is possible that a finite number is complex valued).  We study  the set of values   $\xi$, where at least one of  Melnikov resonances 
\begin{equation}\label{mel}
 (\ome(\xi),\nu)=0\,, \quad (\ome(\xi),\nu)+\tilde\Ome_k(\xi)=0\,,\quad  (\ome(\xi),\nu)+\tilde\Ome_k(\xi)+\sigma \tilde\Omega_h(\xi)=0
\end{equation}    occur. In the first equality we assume $\nu\neq 0$ while in the third equality we assume that  $(\sigma,\nu,h,k)\neq (-,0,h,h)$  because these give  trivial resonances. Since equation \eqref{mnls} has the total momentum as a preserved quantity we only need to consider those choices of  $\sigma=+,-$, $\nu\in \Z^m$ and of normal sites $h,k$ which are compatible with momentum conservation (see Proposition \ref{reteo} item {\it v)}).

In Theorem \ref{teo2} we prove in dimensions 2,3 that the set of $\xi$ which satisfies a non-trivial Melnikov resonance  is   of  measure zero  for all choices of  $(\sigma,\nu,h,k)$  compatible with momentum conservation.

These Conditions are necessary in order to follow the KAM algorithm as in \cite{P1}, and sufficient  at least at a formal level,  since the expression in \eqref{mel} appear as denominators in the homological equation.   Even if at the moment we are unable to prove the second Melnikov condition in dimension $>3$  we still show that a slightly weaker statement on separation of eigenvalues, proved in Proposition \ref{wemel} is sufficient to perform the KAM algorithm in our case.  Thus  although in dimensions $\leq 3$ our results are more precise, we still have an answer to the original question in all dimensions.
Finally we briefly discuss how our Theorems lead to the convergence of an abstract KAM scheme for $\xi$ in some {\em Cantor--like set} defined iteratively. To conclude a KAM theorem one should prove that this set is non--empty and of positive measure; we do not discuss here this last question (which can be handled by following \cite{EK}). The explicit construction of a KAM iteration scheme and the measure estimates will appear in a forthcoming paper.

  If one is willing  to give up linear stability results one can probably  use the CWB  method. Then  it is only necessary to check the first two Melnikov conditions and this we do in complete  generality  (Theorem \ref{teo2} item 1.).

In our setting, the singularities (i.e. the values for which one of the Melnikov denominators is zero) appear at the loci where the eigenvalues of some matrices depending parametrically (and polynomially) from the $\xi_i$  coalesce or  become 0.    These loci are  algebraic hypersurfaces, and then the full KAM algorithm producing the space of parameters for quasi--periodic solutions converges outside countably many small   neighborhoods of these hypersurfaces.

The problem arises   in the study of the second Melnikov equation  where we have to understand when it is that two eigenvalues become equal or opposite.
This is essentially equivalent to using the   classical Theory of Sylvester.  The condition  for a polynomial to have distinct roots is the non--vanishing of the discriminant while   the condition  for two polynomials to have a root in common  is the  vanishing of the resultant.  In our case    these resultants and discriminants are polynomials in the parameters $\xi_i$ so, in order to make sure that the singularities are only in measure 0 sets (in our case even an algebraic hypersurface), it is necessary to show that these polynomials are formally non--zero.   This is a purely algebraic problem involving, in each dimension $n$, only finitely many explicit polynomials (cf. \S \ref{matrici}) and so it can be checked by a finite algorithm.

The problem is that, even in dimension 3, the total number of these polynomials is quite high  (in the order of the hundreds or thousands) so that the algorithm becomes quickly non practical. 

In order to avoid this  we have experimented with a conjecture which is stronger than the mere non--vanishing of the desired polynomials.  We expect our polynomials to be irreducible and separated in a sense explained  in \S \ref{SEP}. 

This we prove in dimension 3 (in dimension 2 it is almost immediate), by a mixture of theoretical  arguments and a few direct algorithmic verifications. Then this strong result immediately leads to the analyticity of the $\tilde\Ome_k(\xi)$ in the parameters $\xi$  and the verification of the second Melnikov condition.

In order to prove irreducibility and separation for all dimensions it would be necessary to eliminate the direct verification of some special cases and thus strengthen the theoretical  approach. For the moment this remains conjectural.

Another interesting point is that our reduction algorithm of item {\bf 2.} cannot exclude  that a finite number of blocks in $M$ may have complex eigenvalues $\tilde\Ome_k(\xi)$ for positive  $\xi$.  In general, again following Sylvester Theory, one can state the condition that the $\tilde\Ome_k\in \R$ as  a system of a finite number of explicit polynomial inequalities in the parameters, this determines an open  possibly non--empty region and one has to show that it intersects the positive sector. In dimension $n=2$ this can be done trivially, but already the case of dimension $3$ seems very challenging from a computational point of view.

\section{Notations }
\subsection{Symplectic formalism} Consider a Nonlinear Schr\"odinger  equation on the torus $\T^n$ (NLS for brevity):
\begin{equation}\label{main}iu_t-\Delta u=\kappa |u|^2u +\partial_{\bar u}G(u,\bar u) \end{equation}
where $u:= u(t,\varphi)$, $\varphi\in \T^n$ and  $G(a,b)$ is a real analytic function whose Taylor series starts from degree 6; notice that we  restrict to the case where there  is no explicit dependence on the spatial variable $\varphi$.  It is well known that equation  \ref{main},  the NLS,  is a Hamiltonian equation and has  the momentum  $\int_{\T^n} \bar u(\varphi) \nabla u(\varphi)$ as integral of motion (notice that if $f(u,\bar u)= G(|u|^2)u$, then also the  $L^2$ norm  $ \int_{\T^n} |u(\varphi)|^2$ is preserved).

We shall see that the essential part of the equation is the cubic term.  As for the constant $\kappa $  it can be rescaled to $\pm 1$. In our paper se set it equal to 1 since the other case is quite similar.\smallskip

Passing to the Fourier representation
\begin{equation}
u(t,\varphi):= \sum_{k\in \Z^n} u_k(t) e^{\ii (k, \varphi)}
\end{equation} 
Eq. \ref{main} can be written as an infinite dimensional Hamiltonian dynamical system $\dot u=\{H,u\}$ with Hamiltonian
\begin{equation}\label{Ham}H:=\sum_{k\in \Z^n}|k|^2 u_k \bar u_k + \sum_{k_i\in \Z^n: k_1+k_3= k_2+k_4}u_{k_1}\bar u_{k_2}u_{k_3}\bar u_{k_4}+G(u,\bar u) \end{equation}
on the scale of complex Hilbert spaces 
\begin{equation}\label{scale}
{\bf{\bar \ell}}^{(a,p)}:=\{ u=\{u_k \}_{k\in \Z^n}\;\vert\;\sum_{k\in \Z^n} |u_k|^2e^{2 a |k|} |k|^{2p}:=||u||_{a,p}^2 \le \infty \},
\end{equation}$$\ a>0,\ p>n/2.$$
with respect to the complex symplectic form $ i \sum_{k}d u_k\wedge d \bar u_k$.
\smallskip

These choices are rather standard in the literature:
\begin{remark}\label{cotor}
The condition imposed on $u$ by \eqref{scale} means that:
\begin{itemize}
\item We restrict our study to functions which extend to analytic functions in the domain of the complex torus   $ \C ^n/2\pi\Z^n$ where  $(z_1,\ldots,z_n)\in \C^n,\ Im(z_i)\leq a$.
\item The   functions on the boundary are  in the  Sobolev space $H^p$.
\item The condition $p>n/2$  implies that the function space under consideration embeds in $L^\infty$.
In particular the following  uniform bound holds for each $u\in \bf{\bar \ell}^{(a,p)}$:
\begin{equation}
\label{insn}|u_k|\leq C(s,a)\frac{\Vert u\Vert_{a,p} e^{-a|k|}}{\langle k\rangle^{p-n/2}} ,\quad\langle k\rangle:=\max(1,|k|).
\end{equation}
  In fact  this implies that $\bf{\bar \ell}^{(a,p)}$ has a Hilbert algebra structure.
\end{itemize} 
\end{remark}

\subsection{Analytic Hamiltonians}
We consider real
Hamiltonians on the space ${\bf{\bar \ell}}^{(a,p)}$ which can be formally expanded in Taylor series
$$F= \sum_i F^{(i)}(u,\bar u)= \sum_i \sum_{(\alpha,\beta)\in \N^\infty\atop |\alpha+\beta|_1= i}F^{(\alpha,\beta)}\prod_{k\in\Z^n}u_k^{\alpha_k}\bar u_k^{\beta_k}  $$ 
where $F^{(i)}$ is a multilinear form of $i$ variables  $(u,\bar u)\in {\bf{\bar \ell}}^{(a,p)}\times {\bf{\bar \ell}}^{(a,p)}$.

We require that the series  is totally convergent in    some ball of positive radius, so that by definition the function is analytic.  Usually this will be verified, using Formula \eqref{insn},  by determining a positive $r$ so that if $\Vert u\Vert_{a,p}<r$ 
$$ \sum_i  \sum_{(\alpha,\beta)\in \N^\infty\atop |\alpha+\beta|_1= i}|F^{(\alpha,\beta)}|\prod_k\frac{ e^{-a (\alpha_k+\beta_k)|k|}}{ \langle k\rangle ^{(\alpha_k+\beta_k)(p-n/2)}}\Vert u\Vert_{a,p}^i<\infty.$$
Clearly the NLS Hamiltonian belongs to this class and is convergent on any ball $B_r$.

We will also be interested in Hamiltonian vector fields $X_F = \{ \partial_{\bar u_k} F\}_{k\in \Z^n}$, where $F$ is an analytic Hamiltonian in the ball $B_r$. With this notation the Hamilton equations are $  \dot u_k = iX_F$.
We will require that the vector field maps $B_r\to {\bf\bar\ell}_{a,p}$ and  we will use the norm
$$ |X_F|_{r}:= \sup_{u\in B_r}\Vert X_F\Vert_{a,p}/r.$$
notice that if $ |X_F|_{r}<1/2$ then the Hamiltonian flow is well defined up to $t=1$  and depends analytically on the initial data so that the symplectic change of variables is well defined and analytic say from $B_{r/2}\to B_r$.
Notice that this condition is NOT verified by $H^{(2)}$ but just by $H^{(4)}$.
 
\vskip10pt
\subsection{Elliptic--action angle variables\label{Eaav}}
As explained in the introduction we will be interested in non symmetric domains $D \subset B_\epsilon$ 
where some of the variables $u_k$  (the tangential sites) are bounded away from zero and hence can be passed in action angle variables while all the other variables are much smaller.

\smallskip

We first  partition $$\Z^n= S\cup S^c,\quad S:=(v_1,\ldots,v_m) $$ where:
the set  $S$ are called {\em tangential sites} and $S^c$ the {\em normal sites}. We then introduce the 
elliptic action angle coordinates $(\xi +y, x)\times (z,\bar z)\in \R^m\times \T^m\times  {\bf{\ell}}^{(a,p)}$  
where $\T^m:=\R^m/2\pi\Z^m$ and we denote by   $ {\bf{\ell}}^{(a,p)}$   the subspace of $\bf{\bar \ell}^{(a,p)}\times\bf{\bar \ell}^{(a,p)} $  generated by the indices in $S^c$ and  $w=(z,\bar z)$ (considered as row vectors) are the corresponding coordinates. As explained in the introduction the $\xi$ are positive parameters while the $(y,x)$ are the conjugate dynamical variables. The symplectic form is $$ dy\wedge dx +d iz\wedge d\bar z.$$

We consider the domain $$ A_\alpha\times D(s,r):= $$\begin{equation}\label{domain}
 \{  \xi\,:\  \frac12 r^\alpha\le \xi_i\le r^\alpha\,\}\times \{   x,y,w\,:\   x\in \T^m_s\,,\  |y|\le r^2\,,\  |w|_{a,p}\le r\}
\end{equation} $$\subset  \R^m\times \T^m_s\times\C^m\times {\bf{\ell}}^{(a,p)}.$$  Here $0<\alpha < 2,0<r<1,0<s$ are parameters. $\T^m_s$ denotes the open subset of the complex torus $\T_{\C}^m:=\C^m/2\pi\Z^m$ where   $ x\in\C^m,\ |$Im$(x)|<s$ (cf. Remark \ref{cotor}). 
\begin{remark}
It is possible and useful to treat also the case $m=\infty$. In this case we use in Formula \eqref{domain} an exponentially decaying  norm $|y|_{a,p}$ and for $ A_\alpha$ a condition of the form $\frac12 r^\alpha\le \xi_i e^{a |v_i|}\le r^\alpha$.
\end{remark}

Some comments on the choice of this domain are in order.  The main point is the inequality in the $\xi$ which is required to  determine a domain  where the KAM algorithm can be performed  outside the singularities to be determined.  The inequality on the $y$ and on the $w$  are just auxiliary and only used to keep the computations inside the domains of convergence.  The use of the complex domain $\T^m_s$ is motivated by the need to insure that the solutions that we shall find have some analytic behavior.

Given a Banach space $E$, a function $F(\xi,y,x,w):A_\alpha\times D(s,r)\to E$ is said to be analytic in $x,y,w$  if its Taylor-Fourier series in these variables is totally convergent in the domain $ A_\alpha\times D(s,r)$.  We use as norm the sup-norm
$$ \Vert F\Vert_{s,r}= \sup_{A_\alpha\times D(s,r)}| F|_E. $$
As for the variables $\xi$ we will require less regularity, namely we use  weighted   Lipschitz norms
	$$ \Vert F\Vert^\lambda_{s,r}= \Vert F\Vert_{s,r}+\lambda \sup_{\xi\neq \eta \in A_\alpha\,,\;(y,w)\in D(s,r)}\frac{|F(\eta)-F(\xi)|}{|\eta-\xi|}.  $$

In this new set of variables we are also interested in Hamiltonian vector fields $$X_F= \{\partial_y F, -\partial_x F, J d_w F\} $$ which map $D(s,r)\to \T^m_s\times\C^m\times {\bf{\ell}}^{(a,p)}$. For such vector fields we will use the norm
$$ | X_F|_{s,r}:=\sup_{A_\alpha\times D(s,r)}( |\partial_y F| +r^{-2}|\partial_x F|+ r^{-1}\Vert d_w F\Vert_{a,p}) $$ and the corresponding weighted $C^1$ norm.
The Hamilton equations are then
$$\dot x= \partial_y F\,,\quad \dot y=-\partial_x F\,,\quad  \ii \dot w=   J d_w F.  $$
Of particular interest are the ``real quadratic Hamiltonians'' in the normal frequencies; we will represent them by matrices with the notation
$$ Q(w):= -\frac12 w M J w^t \,,\quad J=\begin{pmatrix} 0 & -I\\ I & 0 \end{pmatrix} $$ so that the Hamiltonian  vector field is $ X_Q = M$. The matrices $M$ which may appear are in the Lie algebra  of the group of real symplectic transformations, see Remark \ref{sym} and Proposition \ref{pois}.

\section{Main results}
\subsection{The Theorems}
We are now ready to state our Theorems.

We shall find a finite list  (say  $P_1(y),\dots, P_{N}(y)$, where $N$ depends only on $n$) of non--zero polynomials with integer coefficients   depending on $  2n$ vector variables $y=(y_1,\dots,y_{2n})$ with $y_i\in \C^n$. We call the $P_i$ {\em the resonances,  see \S \ref{therin} }. 

 \begin{definition}\label{gener}
We say that a list of    tangential sites $S=\{v_1,\dots,v_m\}\in\Z^{nm}$ is  {\em generic}  if for any subset  $A$ of $S$ such that $|A|=2n$, the evaluation of the resonance polynomials at $A$ is non--zero. 
\end{definition}If $m$ is finite this condition is equivalent to requiring that $S$ (considered as a point in  $\Z^{nm}$) does not belong to the algebraic hyper--surface where at least one of the resonance polynomials is zero.

We find also a finite list $\mathcal M$ of matrices of dimensions $2\leq k\leq n+1$ with entries polynomials in the elements $\sqrt {\zeta_i}$  for a list $\zeta_i,\ i=1,\ldots,2n$ of auxiliary variables.  We shall denote by $\mathcal M(\xi)$ the list of matrices obtained  by substituting to the variables $\zeta_i$ any $2n$ elements of the list $\xi_1,\ldots,\xi_m$ in all possible ways.

 Given any $m\in \N,\ 0<\alpha<2$ and  appropriately small $s,r$, the following holds (cf. \ref{reteo}):
\begin{theorem}\label{teo1}For all generic choices $S=\{v_1,\dots,v_m\}\in\nobreak  \Z^{nm}$ of the tangential sites,    there exists   an analytic symplectic change of variables
$$\Phi: ( y, x)\times (z,\bar z) \to (u, \bar u) $$   from $ A_\alpha\times D(s,r) \to B_{ r^{\alpha/2}}$  with the following properties.

i)\  the Hamiltonian \eqref{Ham} in the new variables  is analytic and has the form
$$ H\circ\Phi= (\ome(\xi),y)+ \frac12 (y,A(\xi)y)  -\frac12 w M'(\xi) J w^t + P(\xi,y,x,w)\,,$$ where

ii) \ $\ome_i(\xi)= |v_i|^2 -2\xi_i +4\sum_j \xi_j$ and 
the matrix $A(\xi)$ has 1 on the diagonal and 2 off diagonal and hence it is invertible. 

iii)\ The matrix      $M'(\xi)$  is   a block--diagonal matrix with the following properties: all except a finite number of the blocks are self adjoint; all the blocks are sum of a scalar matrix plus a term chosen from the finite list $\mathcal M(\xi)$.

iv) \ The perturbation $P$ is of the form $P(\xi,y,x,w)=P^{(3)}(\xi,x,y,w)+ P^{(6)}(\xi,x,y,w)$, where-- setting $\lambda= r^{\alpha}/\max_i(|v_i|^2)$--  we have   the bounds:
$$\Vert P^{(3)}(\xi,x,y,w)\Vert^\lambda_{s,r}\leq C r^{3+\alpha/2}\, \quad 
\Vert P^{(6)} (\xi,x,y,w)\Vert^\lambda_{s,r}\leq C r^{\max( 5\alpha,1+ \frac52 \alpha) },$$ moreover  $P^{(3)}(\xi,x,y,w)$ is at least cubic in $w$.
\end{theorem}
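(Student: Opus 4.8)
The statement is really the combination of two classical procedures: one step of resonant Birkhoff normal form applied to the NLS Hamiltonian \eqref{Ham}, followed by the passage to elliptic--action angle variables on the tangential sites $S$. I would organize the proof in the following stages.

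\emph{Step 1: one step of Birkhoff normal form.} Start from $H=H^{(2)}+H^{(4)}+G$ as in \eqref{Ham}, with $H^{(2)}=\sum|k|^2 u_k\bar u_k$ and $H^{(4)}=\sum_{k_1+k_3=k_2+k_4}u_{k_1}\bar u_{k_2}u_{k_3}\bar u_{k_4}$. Split $H^{(4)}=H^{(4)}_{res}+H^{(4)}_{nonres}$, where $H^{(4)}_{res}$ is the part Poisson--commuting with $H^{(2)}$ (i.e. the terms with $|k_1|^2+|k_3|^2=|k_2|^2+|k_4|^2$ in addition to $k_1+k_3=k_2+k_4$). Solve the homological equation $\{H^{(2)},F\}=H^{(4)}_{nonres}$ for an analytic degree--$4$ Hamiltonian $F$; since the frequencies $|k|^2$ are integers, the small-denominator issue is absent (the nonzero denominators $|k_1|^2-|k_2|^2+|k_3|^2-|k_4|^2$ are integers $\geq 1$ in absolute value), but one must still check that $F$ defines an analytic vector field on the scale \eqref{scale}, using the Hilbert algebra property from Remark \ref{cotor} and the momentum-conservation constraint $k_1-k_2+k_3-k_4=0$ to control the decay. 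Then $e^{X_F}$ conjugates $H$ to $H^{(2)}+H^{(4)}_{res}+H^{(\geq 6)}$, with $H^{(\geq 6)}$ analytic of degree $\geq 6$; the vector-field bound $|X_F|_r<1/2$ on a suitable ball guarantees the change of variables is well defined from $B_{r/2}\to B_r$ as discussed in \S\ref{Eaav}.

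\emph{Step 2: action--angle variables and the quadratic normal form.} Introduce $u_{v_i}\mapsto \sqrt{\xi_i+y_i}\,e^{\ii x_i}$ for the tangential sites and keep $w=(z,\bar z)$ for $S^c$, on the domain $A_\alpha\times D(s,r)$. Substituting into $H^{(2)}+H^{(4)}_{res}$ and expanding in powers of $w$, the constant term is irrelevant, the term linear in $y$ produces the frequency vector; here I expect precisely $\ome_i(\xi)=|v_i|^2-2\xi_i+4\sum_j\xi_j$ (from $|v_i|^2$ in $H^{(2)}$ plus the contributions of the diagonal and near-diagonal resonant quartic terms), and the quadratic-in-$y$ term produces the twist matrix $A(\xi)$ with $1$ on the diagonal and $2$ off-diagonal — a direct computation from the combinatorics of which quadruples $(v_i,v_i,v_j,v_j)$ etc. are resonant, and the matrix $\tfrac12(\text{all-}2\text{s}) + \tfrac12 I$ is invertible (its eigenvalues are $2m$ and $-\tfrac{3}{2}$, say, up to the exact normalization). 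The part quadratic in $w$ gives $-\tfrac12 w M(\xi,x)Jw^t$ as in \eqref{norham}, and everything of order $\geq 3$ in $(y,w)$ together with $H^{(\geq 6)}$ gets absorbed into $P$.

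\emph{Step 3: block-diagonal structure and the finite list $\mathcal M$.} This is the part where the genericity hypothesis does the work and is the main obstacle. The matrix $M(\xi,x)$ couples normal sites $h,k\in S^c$ only when $h-k$ equals a difference $v_i-v_j$ of tangential sites (momentum conservation in the resonant quartic terms), so one gets a graph on $S^c$ whose connected components are the blocks; one must show that for generic $S$ these components have size $\leq n+1$ and, moreover, that $M$ is actually $x$-independent after accounting for the resonance condition — i.e. that $M(\xi,x)=M'(\xi)$ — because the only quartic resonant terms contributing to the $w$-quadratic part with $x$-dependence would need $|h|^2-|k|^2 = $ (something involving $|v_i|^2$) together with $h-k=v_i-v_j$, and the "generic" condition (non-vanishing of the resonance polynomials $P_1,\dots,P_N$ at every $2n$-subset of $S$) is designed exactly to kill the bad configurations (resonances among $\leq n+1$ sites that would enlarge a block or reintroduce angle dependence). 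I would invoke the combinatorial-graph analysis advertised in item \textbf{1} of the introduction to (a) enumerate the finitely many shapes of admissible blocks, (b) read off that each block is a scalar matrix (coming from the diagonal $\sum(|v_i|^2-\dots)|z_k|^2$ type terms) plus one of finitely many matrices $\mathcal M(\xi)$ whose entries are polynomials in the $\sqrt{\xi_i}$ (coming from the off-diagonal couplings, each carrying a factor $\sqrt{\xi_i\xi_j}$), and (c) note that all but finitely many blocks are self-adjoint — the finitely many exceptions being the "small" blocks built from tangential sites that are close together, which are controlled uniformly. Reality/semidefiniteness of the full quadratic form follows as in \cite{GP2,GP3}.

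\emph{Step 4: the perturbation estimates.} Finally, $P=P^{(3)}+P^{(6)}$ where $P^{(3)}$ collects the terms of the substituted $H^{(4)}_{res}$ that are of order $\geq 3$ in $(y,w)$ and at least cubic in $w$ (the $y^2w$, $yw^2$ pieces being already in the quadratic normal form up to relabeling, or handled separately), and $P^{(6)}$ collects $H^{(\geq 6)}\circ(\text{a.a. substitution})$. The stated bounds come from counting powers of $r$: each $w$ costs $r$, each $y$ costs $r^2$, each $\sqrt{\xi_i+y_i}$ is $\sim r^{\alpha/2}$, so a cubic-in-$w$ quartic term scales like $r^{\alpha/2}\cdot r^3 = r^{3+\alpha/2}$ (times the trivial diameter factors in the weighted norm, with the Lipschitz-in-$\xi$ constant $\lambda=r^\alpha/\max|v_i|^2$ tracking the $\xi$-dependence), while $H^{(6)}$ evaluated on the domain gives either $r^{5\alpha}$ (six tangential legs, $(r^{\alpha/2})^6\cdot$ something) or $r^{1+\frac52\alpha}$ (the worst mixed term, e.g. one $w$ and five tangential, $r\cdot r^{5\alpha/2}$), whence the $\max$. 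I expect Steps 1, 2 and 4 to be essentially routine bookkeeping with the norms of \S\ref{Eaav}; the genuine content — and the reason the paper exists — is Step 3, i.e. proving that genericity forces blocks of size $\leq n+1$ and the explicit finite classification $\mathcal M$, which I would defer to the combinatorial sections referenced as \S\ref{therin} and \S\ref{matrici}.
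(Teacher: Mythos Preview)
Your Steps 1, 2 and 4 are essentially correct and match the paper's route. The serious gap is in Step 3: it is \emph{not} true that for generic $S$ the matrix $M(\xi,x)$ produced by action--angle variables is already $x$--independent. Look at \eqref{quafo}: after Birkhoff plus action--angle, the quadratic form $Q_M(w)$ has entries of the form $4\sqrt{\xi_i\xi_j}\,e^{\pm\ii(x_i\pm x_j)}$, one for every pair $(h,k)\in S^c$ lying on a rectangle with some $(v_i,v_j)$. There are infinitely many such pairs, and no genericity condition on $S$ can kill them --- these are the terms that make the normal form non--integrable in the first place. The resonance polynomials of Definition~\ref{gener} are used for something different: they force the block structure of $M$ to have blocks of size $\leq n+1$ (Theorem~\ref{sunto}), not to remove the angle dependence.

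What you are missing is the paper's third, \emph{non--perturbative} symplectic change of variables (Proposition~\ref{reteo}, formula~\eqref{labella}):
\[
z_k = e^{\ii L(k)\cdot x} z'_k,\qquad y = y' - \sum_{k\in S^c} L(k)\,|z'_k|^2,
\]
where $L(k)$ is a combinatorial datum (a $\Z^m$--vector of weight $0$ or $2$) attached to each $k$ via the graph component containing $k$ (Corollary~\ref{sunto1}). Conjugating $M$ by the diagonal matrix $X=\mathrm{diag}(e^{\ii L(k)\cdot x})$ cancels all the phases and simultaneously shifts the diagonal by $-(\omega,L(k))$; the resulting $M'=\Omega'+XMX^{-1}$ is constant in $x$ and each block equals $|x(A)|^2+4\sum_j\xi_j$ times the identity plus $2C_A$, with $C_A$ taken from the finite list $\mathcal M$ (Proposition~\ref{reteo1}). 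This step is why the theorem says ``reducible to constant coefficients'' rather than ``automatically constant''. Your argument as written would also misidentify which blocks fail to be self--adjoint: the non--self--adjoint ones are precisely those whose graph component contains a \emph{red} edge (the $z_hz_k$ and $\bar z_h\bar z_k$ couplings from the $\sum^{**}$ terms), and there are finitely many of these because red edges live on spheres $S_{i,j}$ with finitely many lattice points --- not because of any ``closeness'' of tangential sites.
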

Completely explicit statements will appear in  \S\ref{reinf}.

{\bf Outline of the proof.} The symplectic change of variables $\Phi$ is the composition of three steps: first we perform one step of Birkhoff  normal form passing to the Hamiltonian \eqref{Ham2}, then we pass to the elliptic action angle variables and obtain the Hamiltonian \eqref{Ham4}. We study the block form of the quadratic part of this Hamiltonian, see  \eqref{hama}, and produce a list of constraints on the tangential sites so that \eqref{hama} is as simple as possible, this is the core of the proof.  Then we perform a simple but non-perturbative symplectic change of variables, see  \eqref{Nc}, so that the quadratic Hamiltonian \eqref{hama} is reduced to constant coefficients. \qed
\smallskip
 
 When we apply the theory of normal forms for quadratic Hamiltonians to $M'$ we obtain.
\begin{theorem}\label{teo1b}
 There exists a real algebraic hypersurface  ${\mathfrak A}$ such that the following holds. There exists a linear change of variables in the $(z,\bar z)$ depending analytically on $\xi$ for all $\xi\in A_\alpha\setminus {\mathfrak A}$, such that in the new variables the Hamiltonian  is
 \begin{equation}\label{hfin}  H_{\rm fin}= (\ome(\xi),y)+ \frac12 (y,Ay) + \sum_{k\in S^c\setminus {\mathcal H}} \tilde\Omega_k |z_k|^2 -\frac12 w R J w^t \end{equation}$$ + P^{(3)}(\xi,x,y,w)+ P^{(6)}(\xi,x,y,w),$$
where:

i)\   $\mathcal H$ is a finite subset of $S^c$ and $R$ is a matrix with only a finite number of non zero entries.

ii)
Let us call $\tilde\Omega_k $ with $k\in {\mathcal H}$ the eigenvalues\footnote{naturally the matrix may not be diagonalizable}  of $R$, we have
 $$ \tilde\Omega_k =  |k|^2 -\sum_i A^{(i)}_k |v_i|^2 + \lambda_k(\xi)\,,\quad \forall k\in S^c$$  
The $ A^{(i)}_k$ are integers with  $\sum_i | A^{(i)}_k|< 2n$ and the correction $\lambda_k(\xi)$   is chosen in a finite list, say
\begin{equation}\label{autov}
\lambda_k(\xi) \in \{ \lambda^{(1)}(\xi),\dots,  \lambda^{(K)}(\xi)\}\,,\quad K:= K(n,m),
\end{equation}
of different analytic functions of $\xi$.

ii)\ The functions $\lambda^{(i)}(\xi)$ are homogeneous of degree one in $\xi$. This implies that for $\xi\in A_{\alpha}\setminus \mathfrak{A}_{\varepsilon}$-- where  $\mathfrak{A}_{\varepsilon}$ is the  spherical neighborhood of $\mathfrak{A}$ of radius $\varepsilon = r^{\alpha}/4$-- the $\lambda^{(i)}(\xi)$   satisfy  the bounds
\begin{equation}
\label{linh} |\lambda^{(i)}(\xi)|\leq Cr^{\alpha} \,,\quad c r^{\alpha}\leq |\lambda^{(i)}(\xi)\pm \lambda^{(j)}(\xi)|\leq Cr^{\alpha}\,, \quad 
|\nabla_{\xi}\lambda^{(i)}(\xi)| \leq  C. 
\end{equation} 
iii)\  For $\xi\in A_{\alpha}\setminus \mathfrak{A}_{\varepsilon}$ item { v)} of Theorem \ref{teo1} holds. 
 \end{theorem}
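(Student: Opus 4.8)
\noindent\emph{Proof proposal.} The plan is to apply the classical normal--form theory for quadratic Hamiltonians (cf.\ \cite{YX}) to $M'(\xi)$ one block at a time, the decisive structural input being that, by Theorem \ref{teo1} iii), only \emph{finitely many} matrices occur, up to a scalar shift, as blocks of $M'(\xi)$. First I would write $M'(\xi)=\bigoplus_k B_k(\xi)$ with $B_k(\xi)=c_k\,\mathrm{Id}+N_{j(k)}(\xi)$, where the scalar $c_k$, by the construction underlying Theorem \ref{teo1} (linear frequencies plus the integer corrections produced by the Birkhoff step), equals $|k|^2-\sum_i A^{(i)}_k|v_i|^2$ with $A^{(i)}_k\in\Z$, $\sum_i|A^{(i)}_k|<2n$, and $N_{j(k)}(\xi)$ is one of the finitely many matrices of $\mathcal M(\xi)$, whose entries are homogeneous of degree one in $\xi$ and hence of size $O(r^\alpha)$ on $A_\alpha$. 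Since a scalar shift merely translates eigenvalues, the problem reduces to analysing the finitely many matrix families $N_j(\zeta)$ in the auxiliary variables $\zeta$ and reassembling.

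For each $j$ whose associated quadratic form is self--adjoint the reduction is elementary: in the complex symplectic variables $w=(z,\bar z)$ a unitary map $z\mapsto U_j(\xi)z$, which is real and symplectic, diagonalises the block Hamiltonian, and its eigenvalues --- the roots of the characteristic polynomial $\chi_j(\,\cdot\,;\xi)$ of $N_j(\xi)$ --- are real and give the functions $\lambda^{(i)}(\xi)$ of \eqref{autov}. The finitely many non--self--adjoint blocks I would leave untouched and collect into the finite--rank matrix $R$, whose eigenvalues are the $\tilde\Omega_k$, $k\in\mathcal H$, possibly complex and possibly with $R$ non--diagonalisable. Then I would take for $\mathfrak A$ the union, over the finitely many $j,j'$ and over $\sigma=\pm$, of the zero loci of $\operatorname{disc}(\chi_j)$ and of $\lambda^{(i)}\pm\lambda^{(j)}$ (the latter controlled by the resultants $\operatorname{Res}(\chi_j,\chi_{j'})$), together with their singular strata. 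That these are \emph{proper} algebraic subsets --- equivalently, that the relevant discriminants and resultants, which are explicit polynomials in the $\xi_i$, do not vanish identically --- is precisely the algebraic statement of \S\ref{matrici}--\S\ref{SEP} (unconditional for $n=2$, proved for $n=3$, reduced to a finite check in general). Granting it, $\mathfrak A$ is a real algebraic hypersurface; off $\mathfrak A$ the assembled change of variables $\mathcal U(\xi)=\bigoplus_k U_{j(k)}(\xi)$ (identity on the bad blocks) is analytic in $\xi$ and the $\lambda^{(i)}$ are pairwise distinct and non--opposite, which yields the Hamiltonian \eqref{hfin}, item i), the formula for $\tilde\Omega_k$ in ii), and the analyticity of all the $\tilde\Omega_k$.

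It then remains to transfer the perturbation and the momentum structure and to prove \eqref{linh}. Since $\mathcal U(\xi)$ is unitary, $\|\mathcal U^{\pm1}\|\le1$, so $P^{(3)}$ and $P^{(6)}$ composed with $\mathcal U$ satisfy the same estimates as in Theorem \ref{teo1} iv) on $A_\alpha\setminus\mathfrak A$, with $P^{(3)}$ still at least cubic in $w$; and since $\mathcal U$ acts separately on each block and the blocks group normal sites compatibly with momentum conservation, momentum is preserved, so item iii) --- the momentum constraint of Proposition \ref{reteo} --- holds off $\mathfrak A_\varepsilon$. For \eqref{linh}: the coefficients of $\chi_j$ are homogeneous in $\xi$ of the degrees forced by the degree--one homogeneity of the entries of $\mathcal M(\xi)$, so each $\lambda^{(i)}$ is homogeneous of degree one; on the compact slice $\{|\xi|=1\}\cap\overline{A_\alpha}$ at fixed distance from $\mathfrak A$, the $\lambda^{(i)}$, the nonzero $\lambda^{(i)}\pm\lambda^{(j)}$ and the degree--zero gradients $\nabla_\xi\lambda^{(i)}$ are bounded, and $\lambda^{(i)}\pm\lambda^{(j)}$ is bounded below; rescaling to $A_\alpha$ (where $|\xi|\sim r^\alpha$) and removing the spherical neighbourhood $\mathfrak A_\varepsilon$, $\varepsilon=r^\alpha/4$, of the conical singular locus converts these into $|\lambda^{(i)}|\le Cr^\alpha$, $cr^\alpha\le|\lambda^{(i)}\pm\lambda^{(j)}|\le Cr^\alpha$ and $|\nabla_\xi\lambda^{(i)}|\le C$.

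The genuinely non--routine point is the algebraic non--degeneracy needed to make $\mathfrak A$ a proper hypersurface: proving that the discriminants and resultants attached to the finite list $\mathcal M$ are formally nonzero --- and, in the stronger form actually used, irreducible and ``separated'' in the sense of \S\ref{SEP}. This rests on the combinatorial description of $\mathcal M$ and, beyond dimension $3$, currently forces reliance on a finite but impractical algorithm rather than a uniform theorem; it is also what leaves open whether some of the finitely many non--self--adjoint blocks genuinely contribute complex $\tilde\Omega_k$ for positive $\xi$. Everything else --- the spectral reduction of the self--adjoint blocks, analyticity off the discriminant locus, the transfer of the estimates and of momentum conservation, and the scaling derivation of \eqref{linh} --- is routine once the finiteness of the block list and this non--degeneracy are in hand.
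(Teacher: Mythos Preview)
Your strategy is the same as the paper's: exploit the block decomposition of $M'(\xi)$ from Theorem~\ref{teo1}, diagonalise the (infinitely many) self--adjoint blocks by an orthogonal change of variables, leave the finitely many non--self--adjoint blocks in the residual matrix $R$, and derive the bounds \eqref{linh} from the degree--one homogeneity of the entries of $C_A$. The decomposition $B_k=c_k\,\mathrm{Id}+N_{j(k)}$ and the reading of $c_k$ via Corollary~\ref{sunto1} are also what the paper does through formula \eqref{MpA}.

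There is, however, one substantive overreach. You make the construction of $\mathfrak A$ depend on the irreducibility and separation results of \S\ref{irrid}--\S\ref{SEP}, writing that the non--vanishing of the relevant discriminants and resultants ``is precisely the algebraic statement of \S\ref{matrici}--\S\ref{SEP}'' and is ``unconditional for $n=2$, proved for $n=3$''. This would make Theorem~\ref{teo1b} conditional on results only available in low dimension, whereas the theorem is stated and proved for all $n$. The paper avoids this entirely: it invokes only Proposition~\ref{smoot}, a general fact about matrices with algebraic entries, to cut a semialgebraic hypersurface $\mathfrak A$ so that the complement is simply connected and the eigenvalues $\lambda^{(i)}(\xi)$ become single--valued analytic functions. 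No information on the characteristic polynomials beyond the fact that their coefficients are polynomials in $\xi$ (Corollary~\ref{poli}) is used. The machinery of \S\ref{irrid}--\S\ref{SEP} enters only later, in the proof of Theorem~\ref{teo2}, to control the \emph{second} Melnikov resonance; it plays no role in Theorem~\ref{teo1b}.

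Two minor points. First, your claim $\|\mathcal U^{\pm1}\|\le 1$ is an $\ell^2$ statement; in the weighted space $\ell^{(a,p)}$ one needs the extra observation (used in the paper) that within a block $A$ all sites $x_a$ satisfy $|x_a|^2\le |x(A)|^2+C$ with $C$ uniform, so the weights are comparable and the block--orthogonal map is bounded on $\ell^{(a,p)}$. Second, item iii) of the theorem refers to the perturbation estimates of Theorem~\ref{teo1} (the bounds on $P^{(3)},P^{(6)}$), not to the momentum constraint; the paper deduces it directly from Lemma~\ref{stime} together with the boundedness of the linear change of variables and the analyticity of the eigenprojections on $A_\alpha\setminus\mathfrak A_\varepsilon$.
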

 \begin{remark}\label{mini}
Notice that we may further restrict the  set $\mathcal H$ by requiring that all the block matrices in $R$ are either non--diagonalizable or have all complex eigenvalues. Indeed all the other blocks may be put in the elliptic normal form by a symplectic change of variables. In dimensions $2,3$ we show that also $R$  is diagonalizable for generic values of $\xi$ over the complex numbers. This gives an hyperbolic normal form where for each complex eigenvalue $\tilde\Ome_k$ one has a two--by--two diagonal block with Re$(\tilde\Ome_k)I +$  Im$(\tilde\Ome_k) J$, where $I$ is the identity and $J$ the symplectic matrix.
\end{remark}

 \medskip 

\begin{corol}\label{coro} For $n=2$ and for appropriate choices of the tangential sites $S$ there exists  a cone--like domain $D$, with meas$ (D\cap A_\alpha)\sim r^{\alpha}$, such that for all $\xi\in D$ 
the set $\mathcal H$ in Theorem \ref{teo1b} is empty. On $A_\alpha\setminus D$ the set   $\mathcal H$ is non--empty and and such that $R$ is  in hyperbolic normal form (see Remark \ref{mini}).
\end{corol}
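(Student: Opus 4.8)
The strategy is to reduce the statement to an explicit computation with the finitely many small matrices in the list $\mathcal M$ and then read off the cone structure from homogeneity. By Theorem \ref{teo1b} and Remark \ref{mini} we may work on $A_\alpha\setminus\mathfrak A_\varepsilon$ (enlarging $\mathfrak A$ if necessary) and assume that $R$ is the direct sum of its non--scalar blocks, $\mathcal H$ being exactly the set of indices of those blocks which are either non--diagonalizable or carry a non--real eigenvalue; every other block is elliptic and has already been eliminated. By item iii) of Theorem \ref{teo1} each such block is a real scalar matrix plus a matrix $B_0(\xi)$ chosen from $\mathcal M(\xi)$, and adding the scalar part changes neither the eigenvectors nor the reality of the eigenvalues; hence the reality and the diagonalizability of a block are equivalent to those of the corresponding $B_0(\xi)\in\mathcal M(\xi)$, whose eigenvalues are the functions $\lambda_k(\xi)$ of \eqref{autov}.

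First I would use the combinatorial description of the quadratic form (item \textbf{1.} of the Introduction, carried out in \S\ref{matrici}) to list $\mathcal M$ in dimension $n=2$: here the blocks have size $\le n+1=3$ and in fact reduce to the $2\times2$ blocks attached to resonant pairs of normal sites, so $\mathcal M$ is a short, completely explicit list of matrices with real entries polynomial in the $\sqrt{\xi_i}$. Each member $B_0$ lies in the Lie algebra of the real symplectic group (cf. Remark \ref{sym} and Proposition \ref{pois}), so its spectrum is stable under $\mu\mapsto-\mu$ and under complex conjugation; for a $2\times2$ block the eigenvalues are $\pm\mu$ with $\mu^2=p_\ell(\xi)$ for an explicit homogeneous polynomial $p_\ell$ in $\xi$ (its degree being forced by item ii) of Theorem \ref{teo1b}, which makes $\lambda_k$ homogeneous of degree one). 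Thus the block is elliptic precisely when $p_\ell(\xi)<0$, genuinely hyperbolic (real nonzero $\mu$) precisely when $p_\ell(\xi)>0$, and can fail to be diagonalizable only on the hypersurface $\{p_\ell=0\}$; for blocks of size $3$ one argues identically, with the discriminant of the characteristic polynomial in place of $p_\ell$. Let $\mathfrak B$ be the union of these finitely many homogeneous real algebraic hypersurfaces (which we absorb into $\mathfrak A$), and set $D:=\{\,\xi:\ p_\ell(\xi)<0\text{ for every }\ell\,\}$, which by homogeneity is an open cone (intersected with the excision already required by Theorem \ref{teo1b}).

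Next I would exhibit a choice of tangential sites $S$, generic in the sense of Definition \ref{gener}, for which $D$ is non--empty; this is where the explicit list of \S\ref{matrici} enters, and in dimension $2$ it amounts to the elementary verification that the finitely many $p_\ell$ can be made simultaneously negative (for instance one may take the sites used in \cite{GYX}, which is precisely what makes the corollary relevant to that paper). On $D$ every non--scalar block is elliptic, so $\mathcal H=\emptyset$ and \eqref{hfin} reduces to the standard elliptic normal form; on $A_\alpha\setminus(D\cup\mathfrak B)$, which differs from $A_\alpha\setminus D$ only by a null set, at least one $p_\ell(\xi)>0$ while no $p_\ell$ vanishes, so the corresponding block is genuinely hyperbolic, $\mathcal H\neq\emptyset$, and every block of $\mathcal H$ is diagonalizable over $\C$; assembling the resulting two--by--two blocks $\mathrm{Re}(\tilde\Ome_k)I+\mathrm{Im}(\tilde\Ome_k)J$ of Remark \ref{mini} puts $R$ in hyperbolic normal form. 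Finally, since $D$ is a nonempty open cone with vertex at the origin and, by \eqref{domain}, $A_\alpha$ is an annular box at scale $r^\alpha$, the set $D\cap A_\alpha$ occupies a fixed proportion of $A_\alpha$, which yields the stated measure estimate $\mathrm{meas}(D\cap A_\alpha)\sim r^{\alpha}$.

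\textbf{Main obstacle.} Everything except one step is formal bookkeeping with the symplectic Lie algebra together with the homogeneity already established in Theorem \ref{teo1b}; the substantive point is the explicit determination of $\mathcal M$ in dimension $2$ and the exhibition of a single generic $S$ for which all the sign polynomials $p_\ell$ are simultaneously negative on an open cone while none is identically signed on $A_\alpha$. In dimension $2$ this is a direct computation, but (as emphasized in the Introduction) the analogous verification becomes computationally heavy already from dimension $3$ on.
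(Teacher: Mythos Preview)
Your overall strategy --- list the finitely many block matrices, read off from each a homogeneous sign polynomial, and define the cone $D$ by the corresponding inequalities --- is the right shape, and is essentially what the paper does. But the central algebraic step in your argument is mistaken, and it propagates through the rest.

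You assert that each $B_0(\xi)\in\mathcal M(\xi)$ lies in the Lie algebra of the real symplectic group and therefore has spectrum invariant under $\mu\mapsto-\mu$, so that a $2\times2$ block has eigenvalues $\pm\mu$ with $\mu^2=p_\ell(\xi)$. This is not correct. The matrix $M'$ of Theorem~\ref{teo1} is symplectic-Lie-algebra valued because it acts on the full space $F^{(0,1)}$; but the block $B_0=C_A$ (cf.\ Proposition~\ref{reteo1}) acts only on the \emph{Lagrangian} half $(A,+)$, and the full Hamiltonian block is $\begin{pmatrix}C_A&0\\0&-C_A\end{pmatrix}$. There is no reason for the eigenvalues of $C_A$ itself to come in $\pm$ pairs, and indeed they do not: for a red edge one computes $\chi_{C_A}(t)=t^2+t(\xi_i+\xi_j)+4\xi_i\xi_j$ (see \eqref{22}), whose roots are $\tfrac12\big(-(\xi_i+\xi_j)\pm\sqrt{(\xi_i+\xi_j)^2-16\xi_i\xi_j}\big)$. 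Your ``$p_\ell$'' should therefore be the \emph{discriminant} of $\chi_{C_A}$, not a square of an eigenvalue, and the relevant condition for $\mathcal H=\emptyset$ is that these discriminants be \emph{positive} (real eigenvalues of $C_A$, hence real $\tilde\Omega_k$), not that your $p_\ell$ be negative. The paper's ``hyperbolic'' blocks (Remark~\ref{iperb}) are precisely those where the eigenvalues of $C_A$ are non-real --- the opposite of what your sign convention gives.

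A second, smaller gap: you say the blocks ``in fact reduce to the $2\times2$ blocks''. In dimension $n=2$ the generic bound of Theorem~\ref{sunto} is $n+1=3$, so size-$3$ blocks do occur for generic $S$. The paper gets rid of them not by the genericity of Definition~\ref{gener} but by imposing the extra \emph{arithmetic} Constraint~\ref{natu} from \cite{GYX}, forcing all non-scalar blocks to have a single edge. This needs to be stated explicitly. With that constraint in place, the only sign polynomial coming from red edges is $(\xi_i+\xi_j)^2-16\xi_i\xi_j=\xi_i^2+\xi_j^2-14\xi_i\xi_j$ (black edges have always-positive discriminant for $\xi>0$), and the paper's cone is
\[
D=\{\xi:\ \xi_i^2+\xi_j^2-14\xi_i\xi_j>0\ \text{for all }i\neq j\},
\]
which is manifestly nonempty and gives the claimed measure estimate by homogeneity.
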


\begin{theorem}\label{teo2} 
\begin{enumerate}[1.]
\item The set of parameter values $\xi$ for which the first two  Melnikov resonances  in \eqref{mel}
occur has zero measure.
  \item  For $n=2,3$ the set of parameter values $\xi$ such that the three Melnikov resonances \eqref{mel}
occur is  a zero measure set (and for each condition it is algebraic).

\end{enumerate}
\end{theorem}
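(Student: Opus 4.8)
I would first reduce both statements to a purely algebraic non--vanishing assertion. For a fixed tuple $(\sigma,\nu,h,k)$ compatible with momentum conservation (Proposition \ref{reteo}, item v)), each relation in \eqref{mel} is the vanishing of a function of $\xi$; clearing the radicals coming from the entries of the matrices in $\mathcal M(\xi)$ --- i.e. multiplying by the conjugates over the field extension generated by the eigenvalues --- exhibits this locus inside the zero set of a polynomial $R_{\sigma,\nu,h,k}(\xi)$ with coefficients built from the $|v_i|^2$. Since a non--zero polynomial vanishes only on a proper algebraic hypersurface, hence on a Lebesgue--null set, and momentum conservation leaves only countably many admissible tuples, everything comes down to showing that each $R_{\sigma,\nu,h,k}$ is \emph{formally} non--zero, except for the trivial tuples $(-,0,h,h)$ explicitly discarded in the statement. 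I would keep throughout the explicit expressions of Theorems \ref{teo1}--\ref{teo1b}: $\omega(\xi)$ affine with integer coefficients, $\omega_i(\xi)=|v_i|^2-2\xi_i+4\sum_j\xi_j$, and $\tilde\Omega_k(\xi)=c_k+\lambda_k(\xi)$ with $c_k=|k|^2-\sum_i A^{(i)}_k|v_i|^2\in\Z$ and $\lambda_k$ one of the finitely many homogeneous degree--one functions $\lambda^{(1)},\dots,\lambda^{(K)}$ of \eqref{autov}. A convenient device is the scaling $\xi\mapsto t\xi$: only the integers $c_k,|v_i|^2$ are left untouched, so identical vanishing of a Melnikov function splits into an integer identity (the $t$--constant part) and a functional identity between the $\lambda^{(i)}$'s and a linear form in $\xi$ (the $t$--linear part).

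\textbf{Item 1 (all $n$).} For $(\omega(\xi),\nu)=0$, $\nu\neq0$, the function is affine with homogeneous part $\xi\mapsto\sum_\ell(4N-2\nu_\ell)\xi_\ell$, $N:=\sum_j\nu_j$; if it vanished identically then $\nu_\ell=2N$ for every $\ell$, so the $\nu_\ell$ coincide and summation gives $(2m-1)N=0$, whence $\nu=0$ --- a contradiction. For $(\omega(\xi),\nu)+\tilde\Omega_k(\xi)=0$ the scaling device reduces identical vanishing to an integer identity $\sum_i\nu_i|v_i|^2+c_k=0$ plus $\lambda_k(\xi)\equiv\sum_\ell(2\nu_\ell-4N)\xi_\ell$. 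If $\lambda_k$ is genuinely non--linear (a block of size $\ge2$ with an irrational eigenvalue) the second identity is impossible; for the finitely many $k$ with $\lambda_k$ linear it says that one of the finitely many linear forms $\lambda^{(i)}$ equals $\sum_\ell(2\nu_\ell-4N)\xi_\ell$ for an admissible $\nu$ --- a finite check, which in the dominant case $\lambda_k(\xi)=4\sum_j\xi_j$ reads $(2m-1)N=-2m$, unsolvable in $\Z$ for $m\ge2$, and which for the remaining corrections is settled by the genericity of $S$ (Definition \ref{gener}). Momentum conservation is essential here too: for an admissible pair one has $k=\mp\sum_i\nu_i v_i$, so the surviving integer identity becomes $\sum_i\nu_i|v_i|^2+|\sum_i\nu_i v_i|^2=\sum_i A^{(i)}_k|v_i|^2$, precisely the type of relation excluded for generic tangential sites. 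This proves item 1, each bad set being a hyperplane or (after squaring to rationalize $\lambda_k$) an algebraic hypersurface.

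\textbf{Item 2 (the third condition, $n=2,3$).} Here $(\omega(\xi),\nu)+\tilde\Omega_k(\xi)+\sigma\tilde\Omega_h(\xi)=0$ involves a \emph{sum or difference} of two of the algebraic functions $\lambda^{(i)},\lambda^{(j)}$, and this is where radicals may conspire. The scaling device again reduces identical vanishing to an integer identity among $|v_i|^2,|h|^2,|k|^2$ together with $\lambda^{(i)}(\xi)+\sigma\lambda^{(j)}(\xi)\equiv(\text{a linear form in }\xi)$. Whether $h,k$ index eigenvalues of the same block of $M'$ or of different ones, rationalizing (multiplying by the conjugate radicals) turns this into a polynomial identity which expresses the failure of one of the Sylvester resultant/discriminant conditions attached to the matrices of $\mathcal M(\xi)$ --- the finitely many polynomials of \S\ref{matrici}. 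For example, when $h,k$ are the two eigenvalues of a $2\times2$ block and $\sigma=-$, the identity says the block's discriminant is the square of a linear form, i.e. its characteristic polynomial factors over the rational--function field: that is reducibility. I would then invoke the irreducibility and separation of the relevant eigenvalue polynomials --- proved for $n\le3$ in \S\ref{SEP} --- to conclude that no such identity holds except for the trivial tuples $(-,0,h,h)$; hence each $R_{\sigma,\nu,h,k}$ is a non--zero polynomial, its zero set is an algebraic hypersurface, and the countable union over admissible tuples is null. For the finitely many $k$ with $\tilde\Omega_k\notin\R$ I would run the same argument on the real and imaginary parts, which are again algebraic in $\xi$.

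\textbf{Main obstacle.} Everything but the last step is elementary: arithmetic of the $|v_i|^2$ together with the principle that a non--zero polynomial has a null zero set. The genuinely hard point is the last one --- controlling sums and differences of the algebraic eigenvalue functions $\lambda^{(i)}(\xi)$, i.e. the \emph{formal} non--vanishing of the finitely many Sylvester resultants and discriminants of the matrices in $\mathcal M(\xi)$. For $n=2$ this is immediate; for $n=3$ it is the irreducibility/separation theorem of \S\ref{SEP}, obtained by theoretical arguments supplemented by a handful of explicit computations; and it is exactly the ingredient that stays conjectural for $n>3$, which is why item 2 is stated only in dimensions $2$ and $3$.
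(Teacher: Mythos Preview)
Your treatment of item 2 is essentially correct and matches the paper: the third Melnikov relation rationalizes to a resultant/discriminant condition on the finitely many block characteristic polynomials, and the hard input is precisely the irreducibility and separation proved in \S\ref{irrid}--\S\ref{SEP} for $n\le 3$.

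For item 1, however, your argument for the relation $(\omega(\xi),\nu)+\tilde\Omega_k(\xi)=0$ has a gap. You try to analyze a single algebraic eigenvalue $\lambda_k$ and reduce to a ``finite check'' plus an appeal to genericity of $S$. Two problems: first, the $\lambda^{(i)}$ depend on $\xi$ alone, not on the $v_i$, so the genericity of Definition \ref{gener} (a finite list of polynomial inequalities in at most $2n$ of the $v_i$) cannot be invoked to exclude a relation among the $\lambda^{(i)}$; second, even after you use momentum conservation to turn the integer identity into a quadratic relation among the $v_i$, that relation typically involves \emph{all} the tangential sites (since your own linear constraint forces $\nu_\ell=2N+b_\ell/2$, hence $\nu_\ell=2N\neq 0$ for all but at most $2n$ indices), and such a relation is not among the finitely many resonance polynomials defining genericity. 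So ``settled by the genericity of $S$'' does not close the argument.

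The paper avoids this by not working eigenvalue-by-eigenvalue. It observes that on a combinatorial block $\mathcal A$, the eigenvalues of $ad(N)$ are exactly the numbers $(\omega(\xi),\nu)+\tilde\Omega_{k(a)}(\xi)$ appearing in the first Melnikov relation; hence it suffices to show that $\det\bigl(-\tfrac{\ii}{2}\,ad(N)_{\mathcal A}\bigr)$ is a non--identically--zero polynomial in $\xi$. If the scalar constant $|x(A)|^2+\sum_i\nu_i|v_i|^2$ is nonzero one is done at $\xi=0$. Otherwise one reduces modulo $2$: by \eqref{allowa1} and \eqref{CAab} the off--diagonal entries of $-\tfrac{\ii}{2}\,ad(N)_{\mathcal A}$ are $2\sqrt{\xi_i\xi_j}\equiv 0$, so the determinant modulo $2$ is the product of the diagonal linear forms $\sigma(a)(\xi,L(a))-(\xi,\nu)$. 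Each of these is nonzero over $\mathbb F_2$ because $\sum_i A^{(i)}_a$ is even (Corollary \ref{sunto1}) while $\sum_i\nu_i$ is odd (parity in $F^{0,1}$), so the coefficient vector $L(a)-\nu$ cannot vanish modulo $2$. This is uniform in $\nu$, requires no case analysis on the $\lambda^{(i)}$, and no appeal to genericity of $S$ beyond what is already built into the block structure. Your argument for $(\omega(\xi),\nu)=0$ is fine and coincides with the obvious one.
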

\begin{remark} To prove this theorem it will be essential that  the NLS equation \eqref{mnls} preserves the total momentum, this implies that the  $(\sigma,\nu,h,k)$ in \eqref{mel} must satisfy a { \em momentum conservation} relation, see Proposition \ref{reteo}.
\end{remark}
The proof is based on a careful analysis of the characteristic polynomials of the matrices in $\mathcal M $ (cf. theorem \ref{teo1}, iv)) and is carried out in sections  \ref{irrid}  and \ref{SEP}.
\begin{remark}
In dimension $>3$  we shall prove a weak form of the second Melnikov resonance condition (cf. Proposition \ref{wemel}). In this form the eigenvalues may have multiplicities but outside of a  set of measure zero these multiplicities are finite and uniformly bounded. Moreover the eigenspace  for any given eigenvalue  is isotropic, it  pairs under Poisson bracket only with the eigenspace for the opposite eigenvalue. This is enough to perform a KAM algorithm by solving an appropriate homological equation, see \S \ref{homeq}
\end{remark}

{\bf The  case $m=\infty$.\quad}
  There are infinitely many infinite sets $S$ which satisfy the  non--resonance conditions (\S \ref{therin}). For these choices of tangential sites most of the previous statements hold verbatim. Some of the quantitative estimates need a more careful analysis. 
  
  \subsection{Quasi--periodic solutions for the NLS equation \eqref{mnls}}\label{conclu}
  As we have stated in the introduction there are two main methods for finding quasi--periodic solutions for non--linear PDE's near to an elliptic fixed point. Let us briefly review how our results on normal form relate to these methods. The technical issues involved in applying these methods for the completely resonant NLS will be analyzed in a separate paper. 
  
  1. The Craig--Wayne--Bourgain method 
  
  The CWB method is based on a Liapunov--Schmidt decomposition  combined with a generalized Newton method to overcome the small divisor problems.  
 We follow the approach in  \cite{BBhe}  which provides both an extension of these results and a clear exposition  using the Nash--Moser approach.
 
 We look for a solution of \eqref{mnls} of the form $v(t,\varphi)= u(\ome t,\varphi)$ where $u: \T^m\times \T^n\to \C$ lives in a Sobolev space of  functions  with a Hilbert algebra structure.
  
   Our theorems \ref{teo1},  \ref{teo1b}  imply that the bifurcation equation for \eqref{mnls} admits a  solution of the form $ u_0(x,\varphi)+ O(\xi^{3/2})$ where
  $$ u_0= \sum_{i=1}^m \sqrt{\xi_i} e^{\ii (x_i+v_i\cdot \varphi)}\,.$$  Consider the bifurcation equation  linearized at $u_0$ and written in the Fourier basis both in the space variables $\varphi$ and in the angles $x$, this equation is represented by an infinite   matrix say $L$. From theorems  \ref{teo1},  \ref{teo1b} $L$ is block diagonal and the  blocks are very simply associated to the blocks of $M'$.  Moreover the eigenvalues of $L$ are  $\ome(\xi)\cdot \nu \pm \tilde\Omega_k(\xi)$ with $\nu\in \Z^m, k\in S^c$. By Theorem  \ref{teo2} item 1.  the eigenvalues are not identically zero. 
  
  Notice finally that since the matrix is block diagonal (with blocks of dimension $\leq n+1$) its invertibility implies invertibility in any norm with the same bounds, and one may impose that $$ || L^{-1} u||_s\leq C ||u||_{s+\tau}$$ where $|| v||_s$ is a Sobolev norm in the space and angle variables.
 From this point we expect to be able to follow the same scheme as in \cite{BBhe}, with only small technical variations.
 \smallskip
 
 2. KAM theory: let us first restrict to the case $n=2,3$ where, by Theorem \ref{teo2}  item 2,  the  second Melnikov conditions hold. The case $n=2$  is essentially already  covered by \cite{GYX}. Our analysis produces several improvements  in their results, in particular for the parameter values $\xi\in D$ we show that the quasi--periodic solutions obtained in \cite{GYX} are linearly stable while for $\xi\in A_\alpha\setminus D$ the solutions are unstable. \smallskip
 
 In the literature (see for instance \cite{BB}) {\em abstract KAM schemes} are based on three main assumptions:
 
 1. A {\em smallness condition} on the perturbation $P$, in our case this is Theorem \ref{teo1b} item {\it iii)}.
 
 2. A {\em regularity condition}, namely  $\ome(\xi)$ must be a diffeomorphism and $\tilde\Ome_k(\xi)-|k|^2$ must be a bounded Lipschitz function (we have analyticity and the bounds \eqref{linh}, by Theorem \ref{teo1b}).
 
 3. A  {\em non--degeneracy condition}, that is the three Melnikov conditions \ref{mel} which in our case are proved in Theorem \ref{teo2}, see also the discussion in \S \ref{homeq}. \smallskip

As we already mentioned a full discussion of the KAM algorithm will appear elsewhere.  
\bigskip

 The abstract KAM schemes produce 
  an analytic  change of variables, depending on $\xi$,  $\Phi= e^{ad(F)}: D(s/4,r/4)\to D(s,r)$ such that
 $$e^{ad(F)}\circ H= (\ome^\infty(\xi),y^\infty)+ \sum_{k\in S^c\setminus\mathcal H}\Ome^\infty_k(\xi)|z^\infty_k|^2 -\frac12(w^\infty)^t R_\infty w^\infty + P^{\infty}, $$ where $\ome^\infty$ and $\Ome^\infty$ are parameters to be determined and  $R_\infty$ has the same structure and is simultaneously diagonalizable with $R'$.
 Finally $$P^{\infty}=\sum_{i,j:\; 2i+j>2} P^{\infty}_{ij}(x^\infty) (y^\infty)^i (w^\infty)^j.$$ In the new variables one immediately shows the existence of the quasi--periodic solution $$ y^\infty=0\,,\quad w^\infty=0\,,\quad x^\infty= x^\infty_0 +\ome^\infty t .$$
  $\Phi$ will be defined for $\xi$ in some complicated Cantor--like set. This part of the algorithms can be performed in our case by following almost verbatim the  KAM Theorem 5.1 of \cite{BB}. 
  
  The final issue is to analyze this set, show that it is non--empty and give lower bounds on its measure. 
To give a flavor of the type of computations required consider the Cantor set  $\mathcal C$, which appears at the first step of the algorithm,  defined by
$$
|(\ome(\xi),\nu)|> \frac{\gamma (A+r^\alpha)}{|\nu|^{\tau_0}}\,, \quad |(\ome(\xi),\nu)+\tilde\Ome_k(\xi)|> \frac{\gamma (A+r^\alpha)}{|\nu|^{\tau_0}}\,,$$
\begin{equation}\label{bobo}\quad  |(\ome(\xi),\nu)+\tilde\Ome_k(\xi)+\sigma \tilde\Omega_h(\xi)|> \frac{\gamma (A+r^\alpha)}{|\nu|^{\tau_0}}
\end{equation} 
 for all non--trivial resonances which are compatible with momentum conservation, moreover $A=1$ if the corresponding  function on the left hand side is zero at $\xi=0$ and $A=0$ otherwise. \begin{corol}[Of Theorems \ref{teo1b}, \ref{teo2}]\label{canto}For $n=2,3$ and for appropriate choices of $\tau_0$ and $\gamma$, the set $\mathcal C$ has positive measure in $ A_{\alpha}\setminus \mathfrak{A}_{\varepsilon}$.
\end{corol}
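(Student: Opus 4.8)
The plan is to estimate the measure of the complement of $\mathcal C$ inside $A_\alpha\setminus\mathfrak A_\varepsilon$ by a standard union bound over the countably many resonant indices, exploiting the homogeneity and analyticity information supplied by Theorem \ref{teo1b} together with the non-degeneracy of Theorem \ref{teo2}. First I would fix one resonance, say the third (most delicate) one, and consider the function $f_{\sigma,\nu,h,k}(\xi):=(\ome(\xi),\nu)+\tilde\Ome_k(\xi)+\sigma\tilde\Ome_h(\xi)$ on the domain $A_\alpha\setminus\mathfrak A_\varepsilon$. By Theorem \ref{teo1b} item ii) the functions $\ome(\xi)$ and $\lambda_k(\xi)$ are homogeneous of degree one and analytic off $\mathfrak A$, with gradient bounds $|\nabla_\xi\lambda^{(i)}|\le C$ and $|\nabla_\xi\ome_i|\le C$; since $\tilde\Ome_k(\xi)=|k|^2-\sum_i A_k^{(i)}|v_i|^2+\lambda_k(\xi)$ with $\sum_i|A_k^{(i)}|<2n$, the ``variable part'' of $f$ is a homogeneous-degree-one analytic function whose gradient is bounded uniformly in $\nu,h,k$ on the scale $r^\alpha$. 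The key quantitative input is that this variable part is \emph{not identically zero} on $A_\alpha$: this is exactly Theorem \ref{teo2} item 2 (for $n=2,3$), which says the zero set of $f$ restricted to the relevant sector is a proper algebraic subvariety, hence of measure zero; combined with homogeneity and compactness of the relevant solid angle, one gets a quantitative transversality estimate $\sup_{A_\alpha\setminus\mathfrak A_\varepsilon}|\nabla_\xi f_{\sigma,\nu,h,k}|\ge c\, r^{\alpha-1}$ on the sublevel set, uniformly in the resonance index (here one uses that only finitely many $\lambda^{(i)}$ occur, by \eqref{autov}, so there are only finitely many ``shapes'' of $f$ to check).

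Next I would convert this gradient lower bound into a measure bound for the bad set
$\mathcal B_{\sigma,\nu,h,k}:=\{\xi\in A_\alpha\setminus\mathfrak A_\varepsilon:\ |f_{\sigma,\nu,h,k}(\xi)|\le \gamma(A+r^\alpha)|\nu|^{-\tau_0}\}$
via the standard sublevel-set inequality: if $|\nabla_\xi f|\gtrsim r^{\alpha-1}$ on a set of diameter $\lesssim r^\alpha$, then
$\mathrm{meas}(\mathcal B_{\sigma,\nu,h,k})\le C\, r^{(m-1)\alpha}\cdot \dfrac{\gamma(A+r^\alpha) r^{1-\alpha}}{|\nu|^{\tau_0}}= C\gamma\,(A+r^\alpha)\, r^{(m-2)\alpha+1}\,|\nu|^{-\tau_0}.$
Momentum conservation (Proposition \ref{reteo} item v)) restricts, for each $\nu$, the pairs $(h,k)$ that can occur to a number growing at most polynomially in $|\nu|$ — in fact the normal sites entering a given block are constrained by $|h|,|k|\lesssim |\nu|$ plus the block structure — so summing over $(\sigma,h,k)$ for fixed $\nu$ costs only a factor $C|\nu|^{p_0}$ for some fixed $p_0=p_0(n,m)$. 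Summing then over $\nu\in\Z^m\setminus\{0\}$, one obtains
$\mathrm{meas}\big(\bigcup \mathcal B\big)\le C\gamma\,(1+r^\alpha)\,r^{(m-2)\alpha+1}\sum_{\nu\neq 0}|\nu|^{p_0-\tau_0},$
which is finite and small provided $\tau_0>p_0+m$. The first two Melnikov conditions are handled the same way (and more easily, using Theorem \ref{teo2} item 1, valid in all dimensions), noting that $(\ome(\xi),\nu)$ alone has a clean nonvanishing gradient because $A(\xi)$ is invertible, so these contribute an even smaller bad set.

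Finally, comparing with $\mathrm{meas}(A_\alpha\setminus\mathfrak A_\varepsilon)\sim r^{m\alpha}$ (for $n=2$ this is essentially Corollary \ref{coro}; in general one uses that $\mathfrak A_\varepsilon$ with $\varepsilon=r^\alpha/4$ removes only a fraction of the cube $A_\alpha$ bounded away from $1$, because $\mathfrak A$ is a fixed algebraic hypersurface not containing the positive sector), one needs the bad set to be a small fraction of this, i.e. one wants $C\gamma\, r^{(m-2)\alpha+1}\ll r^{m\alpha}$, that is $\gamma \ll r^{2\alpha-1}$; choosing $\gamma$ appropriately (it is a free parameter) makes $\mathcal C$ occupy, say, at least half of $A_\alpha\setminus\mathfrak A_\varepsilon$, in particular $\mathrm{meas}(\mathcal C)\gtrsim r^{m\alpha}>0$. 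The main obstacle is the uniform transversality estimate: extracting from the mere algebraicity/non-identical-vanishing in Theorem \ref{teo2} a lower bound on $|\nabla_\xi f|$ that is \emph{uniform over the infinitely many resonance indices} requires using the finiteness of the list \eqref{autov} together with homogeneity to reduce to finitely many polynomial inequalities (the discriminant/resultant non-vanishing discussed in the introduction and \S\ref{SEP}), and then controlling the competition between the lower bound on $|f|$ away from $\mathfrak A_\varepsilon$ and the shrinking denominator $|\nu|^{-\tau_0}$ — this is where the momentum constraint is indispensable to keep the number of summands polynomial in $|\nu|$.
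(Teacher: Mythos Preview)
Your overall architecture is the same as the paper's --- union bound over resonances, use Theorem~\ref{teo2} for non-triviality, sum in $\nu$ --- but the counting step contains a genuine error. You assert that momentum conservation bounds the number of admissible pairs $(h,k)$ for fixed $\nu$ by a polynomial in $|\nu|$, justified by ``$|h|,|k|\lesssim|\nu|$''. This is false: momentum conservation gives one linear relation between $h,k,\nu$, so for fixed $\nu$ there are still \emph{infinitely many} pairs $(h,k)\in S^c\times S^c$ to consider (indeed $k$ ranges over a full sublattice and determines $h$). Your sum over $(\sigma,h,k)$ therefore diverges and the argument collapses at exactly the point where summability is needed.

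The paper's proof replaces this step by the observation you already invoked for transversality but did not exploit for counting: by \eqref{autov} the corrections $\lambda_k(\xi)$ lie in a \emph{finite} list, so the functions $\lambda^{(i)}(\xi)+\sigma\lambda^{(j)}(\xi)$ form a finite list as well. Writing $f_{\sigma,\nu,h,k}(\xi)=(\ome(\xi),\nu)+L+\lambda^{(i)}(\xi)+\sigma\lambda^{(j)}(\xi)$ with $L\in\Z$, one sees that for fixed $\nu$ only \emph{finitely many} distinct functions $f$ occur (independently of the infinitely many $(h,k)$), hence only finitely many strips of width $\gamma(A+r^\alpha)/|\nu|^{\tau_0}$ must be removed. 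This is what makes the $\nu$-sum convergent. A secondary point you miss is the role of the parameter $A$: when the integer $f(0)\neq 0$ one has $A=1$, and then $|f(\xi)|\le 1$ forces $|\nu|\gtrsim r^{-\alpha}$, which is needed to control those terms; your estimate treats $A+r^\alpha$ uniformly without noticing this dichotomy. Finally, your gradient scaling $|\nabla_\xi f|\ge c\,r^{\alpha-1}$ is dimensionally off: $\nabla_\xi(\ome,\nu)=A^t\nu$ is of order $|\nu|$ (with $A$ the invertible twist matrix), and the $\lambda$-gradients are $O(1)$ by \eqref{linh}, so the correct transversality bound is $|\nabla_\xi f|\gtrsim|\nu|$ for $|\nu|$ large, yielding strips of measure $\lesssim \gamma(A+r^\alpha)r^{(m-1)\alpha}/|\nu|^{\tau_0+1}$ --- this extra factor of $|\nu|$ in the denominator is what the paper records.
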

 For a sketch of the proof see \S \ref{dim23}.

 This shows that the first step of the KAM algorithm produces a  set of positive measure where the desired symplectic change of variables is well defined. As shown by \cite{EK}, in order to give similar estimates at all steps of the KAM iteration one needs to use the  {\em T\"oplitz--Lipschitz property} of the NLS Hamiltonian. We do not discuss this last property in the present paper. Notice however that in Theorem \ref{teo1} and \ref{teo1b} even though  the Hessian matrix  
 $ \partial_{z_k}\partial_{\bar z_h} (P^{(3)}+P^{(6)}), $  is not  a T\"oplitz matrix it still satisfies  {\em T\"oplitz--Lipschitz properties} (as is discussed in detail in \cite{GYX} for the case $n=2$).
We expect to obtain the measure estimates by following  \cite{EK} and \cite{GYX}.

The case $n>3$ is discussed more in detail in subsection \ref{n4}. 
  \part{The study of the dynamics}
 \section{Preparation}
\subsection{Conservation laws}
Recall the laws of Poisson brackets:
\begin{equation}
\label{ffpb}\{iu_k,\bar u_h\}= \delta_{h,k}\,,\quad  \{u_k, u_h\}=\{\bar u_k, \bar u_h\}=0; 
\end{equation}
hence
\begin{equation}
\label{rupb}\{ u_h\bar u_k,u_j\}= i\delta_{j,k}u_h  ,\quad \{u_h \bar  u_k, \bar u_j\}=-i\delta_{h,j}\bar u_k 
\end{equation} in particular \begin{equation}
\label{rupb1}\{|u_k|^2,u_h\}= i\delta_{h,k}u_h  ,\quad \{|u_k|^2,\bar u_h\}=-i\delta_{h,k}\bar u_h 
\end{equation} 
\begin{definition}
We set 
\begin{equation}
M:= \sum_{k\in\Z^n} k |u_k|^2 ,\quad\text{momentum}.\end{equation}
\end{definition}A Hamiltonian  
 defined in ${\bf{\bar \ell}}^{(a,p)}$ which Poisson-commutes with $M=\sum_{k\in\Z^n} k |u_k|^2 $ satisfies the  constraint of  {\em conservation of momentum:}$$\quad {\rm If}\;  F^{(\alpha,\beta)}\neq 0\,,\quad {\rm one}\; {\rm has}\; \sum (\alpha_k -\beta_k) k = 0;  $$ 
 
 A Hamiltonian defined in ${\bf{\bar \ell}}^{(a,p)}$ is {\em even} if it is  sum of monomials of even degree:
Namely if $ F^{\alpha,\beta}\neq 0 $ then $\alpha+\beta$ is even.
 \subsection{Normal form}
The quadratic part $ H^{(2)}:=\sum_k |k|^2 |u_k|^2 $ of \ref{Ham}  is an infinite string of harmonic oscillators with   all rational  frequencies so that the system is {\em completely resonant} (all the bounded solutions are periodic). 

 Using the conventions of Lie Theory we shall always denote by $ad(F)$ the operator of Poisson bracket $X\mapsto \{F,X\}$.

For small $u$ ( i.e. $||u||_{a,p} <\epsilon \ll  1$) we  perform 
a standard step of ``resonant'' Birkhoff normal form removing all the terms of order four  of $H$ which do not Poisson-commute with the quadratic part, see also Remark \ref{topo}.
 
 In fact, by \eqref{rupb},  $u_{k_1}\bar u_{k_2}u_{k_3}\bar u_{k_4}$ is an eigenvector with respect to $\{H^{(2)},-\}$ with eigenvalue $i( |k_1|^2-|k_2|^2+|k_3|^2-|k_4|^2)$. Thus   we perform the  symplectic change of variables $H\mapsto e^{ad(F)}(H)$, generated by the flow of
\begin{equation}\label{forma} F:=  -i \sum_{k_i:\; k_1-k_2+k_3-k_4=0\atop |k_1|^2-|k_2|^2+|k_3|^2-|k_4|^2\neq 0} \frac{u_{k_1}\bar u_{k_2}u_{k_3}\bar u_{k_4}}{|k_1|^2-|k_2|^2+|k_3|^2-|k_4|^2}. \end{equation}  For $\epsilon$ sufficiently small, this is a well known analytic change of variables  (cf. \cite{Bo3},\cite{Bo2},\cite{bamb})   $ {\bf{\bar \ell}}^{(a,p)}\supset  B_\epsilon \to B_{2\epsilon} \subset {\bf{\bar \ell}}^{(a,p)} $ (where $B_\epsilon$ denotes as usual the open ball of radius $\epsilon$)  which brings (\ref{Ham}) to  the form:
\begin{equation}\label{Ham2}H_N:=\sum_{k\in \Z^n}|k|^2 u_k \bar u_k + \sum_{ {k_1+k_3= k_2+k_4} \atop { |k_1|^2+|k_3|^2=| k_2|^2+|k_4|^2} }u_{k_1}\bar u_{k_2}u_{k_3}\bar u_{k_4} +P^{(6)}(u)\end{equation} where $P^{(6)}(u)$ is analytic of degree at least $6$ in $u$ and on the ball $B_\epsilon$ it   is bounded by $C\epsilon^6$ with $C$ a suitable constant.  Since  we will take $\epsilon$   small, $P^{(6)}(u)$ is   small   with respect to the terms of degree $2,4$ which are bounded by $C_1\epsilon^2,C_2\epsilon^4$ respectively (cf. \S\ref{afwd}). Notice that $F$  commutes with $M$ so that $H_N$  still satisfies momentum conservation, moreover $F$ is even and hence $H_N$ is still even.
\vskip10pt
 Denote by $\mathcal P':=\{(k_1,k_2,k_3,k_4)\,|\,  {k_1+k_3= k_2+k_4},\  { |k_1|^2+|k_3|^2=| k_2|^2+|k_4|^2}\}.$

\begin{figure}[!ht]
\begin{minipage}[c]{9cm}{Trivial computations show that the condition  $$k_1+k_3= k_2+k_4, \qquad  |k_1|^2+|k_3|^2=| k_2|^2+|k_4|^2$$ is equivalent to \begin{equation}\label{vinco}k_1+k_3= k_2+k_4,\qquad( k_1-k_2,k_3-k_2)=0\end{equation}
}\end{minipage}\hskip20pt\begin{minipage}[c]{3cm}
{
\psfrag{a}{$k_1$}
\psfrag{b}{$k_2$}
\psfrag{c}{$ k_3$}
\psfrag{d}{$ k_4$}

\includegraphics[width=3cm]{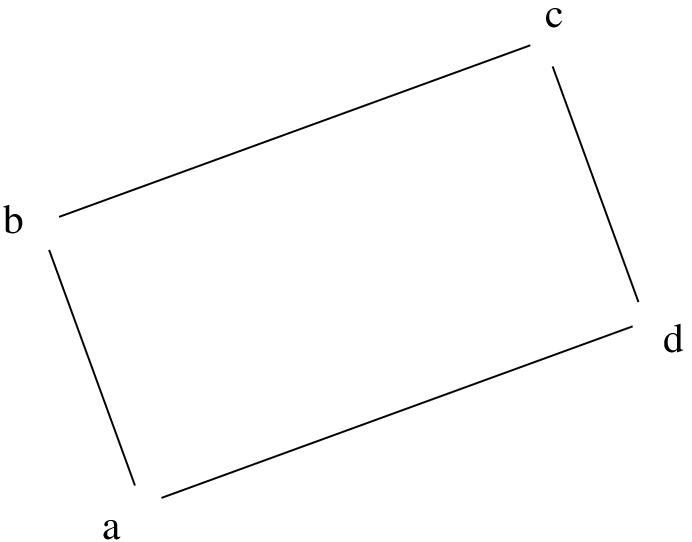}
}
\label{fig0}
\end{minipage}
\end{figure}
In this set  the integer vectors $k_1,k_2,k_3,k_4$ form the vertices of a  {\em rectangle}.
We want to put in  evidence the terms where the rectangle degenerates to a segment.   Thus define $\mathcal P$  to be the subset of $\mathcal P'$ where the rectangles are non degenerate. We obtain: \begin{equation}\label{Ham3} H_N = \sum_{k\in \Z^n}|k|^2 u_k \bar u_k + \sum_k|u_k|^4+  2 \sum_{k_1\neq k_2} |u_{k_1}|^2|u_{k_2}|^2+ \!\!\!\!\!\!\sum_{ (k_1,k_2, k_3,k_4) \in\mathcal P }u_{k_1}\bar u_{k_2}u_{k_3}\bar u_{k_4} \end{equation} \qquad\qquad\qquad $ + P^{(6)}(u)$

 \vskip10pt\subsection{Choice of the tangential sites}
 \vskip10pt
Let us now partition $$\Z^n= S\cup S^c,\quad S:=(v_1,\ldots,v_m),$$ where:
The set  $S$ are called {\em tangential sites} and $S^c$ the {\em normal sites}.
\begin{constraint}\label{co1} $S$ is a finite set (say $|S|=m$)
 such that, given any three distinct elements $v_1,v_2,v_3\in S$, one has that 
$( v_1-v_2,v_3-v_2)\neq 0$. 
\end{constraint}
\begin{remark}
We shall discuss later the extension to $S$ infinite.
\end{remark}In the following we will consider various other constraints on $S$ in order to obtain the simplest possible expression for the Hamiltonian $H$, with respect to this  choice.

Our aim is to  study  the NLS near the tori  associated to the oscillators  $v_i$ keeping the other oscillators constant in time at $u_k=0$. 
The terms of order four in the normal form introduce what is called a {\em twist}. That is an anisochronous term such that the frequency depends on the initial datum $|u_{v_i}|^2=\xi_i$ and $u_k=0$ for $k\in S^c$. 

Let us now set $$u_k:= z_k \;{\rm for}\; k\in S^c\,,\qquad u_{v_i}:= \sqrt {\xi_i+y_i} e^{\ii x_i}\;{\rm for}\;  i=1,\dots m;$$  this is a well known symplectic change of variables which puts the tangential sites in action angle variables  $(y;x)= (y_1,\dots, y_m;x_1,\dots, x_m) $ close to the action $\xi= \xi_1,\dots, \xi_m$, which we now consider as parameters for the system.  The  symplectic form is now $ dy \wedge dx + i \sum_{k\in S^c} dz_k\wedge d \bar z_k $.

   It is convenient to think of the $z_k,\bar z_k$ as a vector  $w$ and denote, for a function of $w$  by $\pd{F}{w}$  the gradient vector (which we think of as a {\em column}). Further denote by $J$ the infinite skew symmetric matrix  $\begin{vmatrix}
0&-1\\1&0
\end{vmatrix}$  where the first block is over the basis $z_k$ and the  second over $\bar z_k$.
Poisson bracket with respect to this form is 
\begin{equation}\label{PoBa}
 \{F,G\}= \pd{F}{y}.\pd{G}{x}-\pd{G}{y}.\pd{F}{x}+    i(\pd{F}{w},J\pd{G}{w})
\end{equation}

 By constraint \ref{co1}, the Hamiltonian \ref{Ham2} can be written as
 \begin{equation}\label{Ham4} H= H_0 + P^{(3)}(z,y; \xi,x)+P^{(6)}(z,y; \xi,x)\,,\quad {\rm with}\quad 
 \end{equation} $$ H_0:=\sum_{i=1}^m( |v_i|^2(\xi_i+y_i) +  (\xi_i+y_i)^2) +4\sum_{i<j} (\xi_i+y_i)(\xi_j+y_j)$$ $$+4\sum_{i;k\in S^c}(\xi_i+y_i)  | z_{k }|^2+\sum_{k\in S^c}|k|^2 |z_k|^2+ 4\sum^*_{  i\neq j ; h,k\in S^c}\sqrt{(\xi_i+y_i)(\xi_j+y_j)}e^{\ii(x_i-x_j)}z_{h}\bar z_{k}  $$ 
  $$+ 2\!\!\!\!\! \sum^{**}_{ i<j\,; h,k\in S^c }\!\!\!\sqrt{(\xi_i+y_i)(\xi_j+y_j)}e^{-\ii(x_i+x_j)}z_{h} z_{k} +
 2\!\!\!\!\!\!\sum^{**}_{ i< j\,; h,k\in S^c }\!\!\!\sqrt{(\xi_i+y_i)(\xi_j+y_j)}e^{\ii(x_i+x_j)}\bar z_{h}\bar  z_{k}  $$ 
  
\begin{definition}\label{decs}
Here $\sum^*$ denotes that $(h,k,v_i,v_j)\in\mathcal P$: 
 $$ \{(h,k,v_i,v_j)\,|\,  {h+v_i= k+v_j},\  { |h|^2+|v_i|^2=| k|^2+|v_j|^2}\}.$$
  and $\sum^{**}$, that  $(h,v_i, k,v_j)\in\mathcal P$:
   $$ \{(h,v_i,k,v_j)\,|\,  {h+k= v_i+v_j},\  { |h|^2+|k|^2=| v_i|^2+|v_j|^2}\}.$$
 \end{definition}

\smallskip

The term  $P^{(3)}$ collects all terms in which at least 3  indices  $k$ are in $S^c$ and it is of degree at least three (and at most four) in $z,\bar z$. Recall that we have assumed (constraint \ref{co1}) that no non degenerate rectangles contain 3 elements of $S$.   

\begin{remark}\label{topo} Notice that, once we have fixed the tangential sites, we have some freedom in the choice of the Birkhoff normal form transformation $F$ in \eqref{forma}. Indeed one may choose first the tangential sites and then choose $F$ as follows:
$$ F:=  -i \sum_{k_i:\; |\{k_1,k_2,k_3,k_4\}\cap S^c|\leq 2 \atop k_1-k_2+k_3-k_4=0\; |k_1|^2-|k_2|^2+|k_3|^2-|k_4|^2\neq 0 } \frac{u_{k_1}\bar u_{k_2}u_{k_3}\bar u_{k_4}}{|k_1|^2-|k_2|^2+|k_3|^2-|k_4|^2}. $$ This normal form transformation (taken from \cite{GYX}) does not change the results in any way (it only changes in a trivial manner the definitions of $P^{(3)}$ and $P^{(6)}$) and may simplify the study of  the {\em T\"oplitz--Lipschitz} properties. 
\end{remark}
{\bf Conservation laws.} 
{\em Momentum:} The conservation of $M$ in the new variables implies that the monomials appearing in $H$ are of the form
\begin{equation}\label{cons1}
z^\alpha \bar z^\beta y^c  e^{i(x,\nu)},\quad \sum_iv_i\nu_i+\sum_{k\in S^c}(\alpha_k-\beta_k)k=0
\end{equation}   where $\nu=(\nu_1,\ldots,\nu_m),\ \nu_i\in\mathbb Z$ and $\alpha,\beta$ are multi-indices in $\mathbb N$.

\noindent{\em Parity:} In the new variables a Hamiltonian is {\em even} if $\sum_i \nu_i$ is even when the total degree in $w$ is even and odd otherwise.

\begin{constraint}\label{co2}
We may further choose the $v_i$ so that $\sum_i\nu_iv_i\neq 0$ when $\sum_i|\nu_i|<10,\ \nu\neq 0.$
\end{constraint} This will imply    that $P^{(6)}$ at $z=0$ does not contain any term of degree  6 or 8  in the elements $u_{v_i}$ and non constant in $x$.
\subsection{  Analytic functions and weight decomposition\label{afwd}}

We want to assign {\em weights} to all the variables, which keep track of their original definition.

Thus we give weight 2 to the variables $y ,$ weight $2>\alpha>0$ to $\xi$  and weight 1 to the $w$. 

We work in the domain $$ A_\alpha\times D(s,r):= $$ $$
 \{  \xi\,:\  \frac12 r^\alpha\le |\xi|\le r^\alpha\,\}\times \{   x,y,w\,:\   x\in \T^m_s\,,\  |y|\le r^2\,,\  |w|_{a,p}\le r\}
$$ $$\subset  \R^m\times \T^m_s\times\C^m\times \bf{\ell}^{(a,p)}.$$  Here $0<\alpha < 2,0<r<1,0<s$ are parameters. $\T^m_s$ has been defined in \S \ref{Eaav}. 

We denote by   $ \bf{\ell}^{(a,p)}$   the subspace of $\bf{\bar \ell}^{(a,p)}\times\bf{\bar \ell}^{(a,p)} $  generated by the indices in $S^c$ and  $w=(z,\bar z)\equiv (z^+,z^-)$ are the corresponding coordinates.
\begin{remark}\label{seba}
We shall write $z_k^\sigma, \sigma=\pm$ when we do not want to specify if we are using $z_k$ or $\bar z_k$.
\end{remark}
\begin{remark}
If $r^\alpha<\epsilon$ the domain $A_\alpha\times D(s,r)$ is contained in $B_\epsilon$ so that the Hamiltonian is well defined and analytic.
\end{remark}

Under this change of variables the total degree of a monomial is preserved, provided we give to $y$ degree (or weight)  2. In the estimates we also  give  to $\xi$ weight $\alpha$. This implies (by Formula \eqref{domain}) that 
$ |\xi^a y ^i  w^j|_{s,r,\C} \leq  r^d$ where the degree $d$ equals the weight $   a\alpha +2i+j$ of the monomial (notice that $a$ can be a half--integer).

\bigskip

Given  a Banach space $E$  
  we consider  analytic functions $F: D(s,r)\to E$.
By definition   $F$ is analytic if its Taylor-Fourier series in $x; y,w$ is totally convergent.
   We choose  by definition, as  norm $| F|_{s,r},$ of an analytic function on $D(s,r):$  
\begin{equation}\label{essr}| F|_{s,r,E}:= \sup_{A_\alpha\times D(s,r)} ||F (\xi,x;y,w)||_E. 
\end{equation} 
    \subsection{ Weight decomposition of formal polynomials}
 
 We call $V^\infty$ the space of formal infinite polynomials in $y,w$ with coefficients in $L^2(\T^m,\C)$ and $F^\infty$ the subspace formed by the functions which satisfy \ref{cons1}.   Here we use the weight only in these variables and keep the $\xi$ as parameters.\smallskip
 
 We call $V^{(i,j)}$ the subspace of functions of degree $i$ in $y$ and $j$ in $w$ (hence of weight $2i+j$). We call $V^a$ the subspace of functions of weight $a$ and $V^{>a}$ (resp. $V^{<a}$) the subspace of functions of weight $>a$ (resp. $<a$):
 $$    V^\infty= \oplus_a V^a\,,\quad V^a= \oplus_{i,j:\, 2i+j=a} V^{i,j} \,,\quad V^{>a}= \oplus_{b>a} V^b.$$
 We use the spaces  of weight $\leq 2$:
\begin{itemize}\item $V^{0}(x):= L^2(\T^m,\C)$,\item  $ V^{1,0}(x)$ the space of elements $\sum_{i=1}^m f_i (x,\xi)y_i$, thought of  as  $m$ dimensional column vectors with entries in $ L^2(\T^m,\C)$ (and depending on parameters $\xi$), \item $ V^{0,1}(x)$ the space of linear forms $\sum_{i\in \mathbb Z^n}(f_i (x,\xi)z_i+e_i (x,\xi)\bar z_i)$, thought of as $\infty$-dimensional column vectors with entries in $ L^2(\T^m,\C)$ (and parameters $\xi$) and finally\item  $V^{0,2} $ the space of  quadratic forms  $$\frac{1}{2}\big(\sum_{i,j\in S^c} q_{(i,+),(j,+)} z_i z_j+ q_{ (i,+),(j,-)}z_i \bar z_j+ q_{(i,-),(j,+)}\bar z_i  z_j+ q_{ (i,-),(j,-)}\bar z_i \bar z_j\big) .$$ 
\end{itemize}
It will be convenient to represent   the elements $ Q(w)\in V^{0,2} $ as  associated to (symplectic) matrices,   where
\begin{equation}\label{mali}Q_M(w):= -  \frac 12 (w,MJ w)
 \end{equation}
and $M$ has entries $M_{(k,\sigma),(h,\tau)}= \tau q_{(k,\sigma),(h,-\tau)}$.
  
\begin{remark}\label{sym}
By definition $MJ$ is symmetric,
if moreover  $Q_M(w)$ is real we have that    $M\Sigma$ is self-adjoint, where$$ \quad
  \Sigma= \begin{vmatrix} -1 &0 \\ 0 &1 \end{vmatrix} .$$
\end{remark}
\begin{remark}
Notice that  we have defined $V^\infty$ as complex functions. However our Hamiltonian is real, and clearly $V^\infty$ contains a subspace of real functions. Indeed all our arguments and symplectic changes of variables will be real and hence preserve the real subspace.
\end{remark}
\medskip
 
The dense subspace $V^{0,2}_{an}$ of $V^{0,2}$ formed by analytic functions is closed under Poisson bracket and   we think of it as a smooth form of the Lie algebra of the infinite symplectic group.     The space $V^{0,1}_{an}$  is then the standard symplectic space, under Poisson bracket, over which $V^{0,2}_{an}$ acts. 
In particular we have
\begin{proposition} \label{pois} If $f,g\in V^{0,1}_{an}$ and $A,B$ are two elements of the Lie algebra of the symplectic group of $V^{0,1}_{an}$
\begin{equation}
\label{pam}\{f,g\}=i\,f^tJg,\quad \{Q_A(w),f\}=iAf,\quad \{Q_A(w),Q_B(w)\}=i\, Q_{ [A,B]}.
\end{equation} \end{proposition}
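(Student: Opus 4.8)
The statement to prove is Proposition \ref{pois}, asserting the three Poisson bracket identities \eqref{pam}. My plan is to reduce everything to the elementary brackets \eqref{ffpb}--\eqref{rupb} together with bilinearity and the Leibniz rule, which already hold for the dense subspace of analytic functions where all manipulations are legitimate.

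First I would establish the identity $\{f,g\}=i\,f^tJg$ for $f,g\in V^{0,1}_{an}$. Writing $f=\sum_{(k,\sigma)} f_{(k,\sigma)} z_k^\sigma$ and $g=\sum_{(h,\tau)} g_{(h,\tau)} z_h^\tau$ (with $z_k^+=z_k$, $z_k^-=\bar z_k$), bilinearity of the Poisson bracket reduces the claim to computing $\{z_k^\sigma, z_h^\tau\}$. From \eqref{ffpb} we have $\{z_k,\bar z_h\}=-\ii\delta_{h,k}$ (since $\{\ii u_k,\bar u_h\}=\delta_{h,k}$), $\{z_k,z_h\}=\{\bar z_k,\bar z_h\}=0$, and these are exactly the entries of $-\ii J$ in the chosen basis, so $\{z_k^\sigma,z_h^\tau\}=\ii (J)_{(k,\sigma),(h,\tau)}$ after matching conventions; assembling the bilinear sum gives $\{f,g\}=\ii\, f^t J g$. (I would be careful to record the precise sign convention for $J$ and for the symplectic form $\ii\sum dz_k\wedge d\bar z_k$ so that the factor of $\ii$ comes out as stated.)

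Second, for $\{Q_A(w),f\}=\ii A f$: using the Leibniz rule and the fact that $Q_A(w)=-\tfrac12(w,AJw)$ is quadratic in $w$, the bracket $\{Q_A(w), z_h^\tau\}$ is linear in $w$, and one computes $\partial_{z_k^\sigma} Q_A$ explicitly; then $\{Q_A(w), z_h^\tau\} = \sum_{(k,\sigma)} (\partial_{z_k^\sigma}Q_A)\{z_k^\sigma, z_h^\tau\}$ collapses, using the previous step and the symmetry $(AJ)^t=AJ$ from Remark \ref{sym}, to the $(h,\tau)$-component of $\ii A f$ applied coefficientwise. Summing against the coefficients of $f$ yields $\{Q_A(w),f\}=\ii A f$. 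Alternatively, and perhaps more cleanly, I would note that $ad(Q_A)$ acts on the linear forms as a derivation commuting with the symplectic structure, hence is represented by an element of the Lie algebra, and the first identity pins down that element as $\ii A$ by testing on the coordinate functions $z_k^\sigma$.

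Third, $\{Q_A(w),Q_B(w)\}=\ii\,Q_{[A,B]}$: apply the Leibniz rule to expand $\{Q_A,Q_B\}$ in terms of $\{\partial_{z}Q_A,\ \partial_z Q_B\}$-type brackets, or — more efficiently — use the Jacobi identity together with the second identity. Concretely, the operator $ad(\{Q_A,Q_B\})$ acting on any $f\in V^{0,1}_{an}$ equals $ad(Q_A)ad(Q_B)f - ad(Q_B)ad(Q_A)f = (\ii A)(\ii B)f - (\ii B)(\ii A)f = -[A,B]f = \ii\big(\ii[A,B]\big)^{-1}\cdots$; tidying the signs, $ad(\{Q_A,Q_B\})f = \ii[A,B]f = ad(\ii Q_{[A,B]})f$. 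Since a quadratic form in $V^{0,2}_{an}$ is determined up to an irrelevant constant by its adjoint action on the linear forms (the center of this Lie algebra being the constants, which do not occur here), this forces $\{Q_A,Q_B\}=\ii Q_{[A,B]}$. The one genuine point requiring care — the "main obstacle," though it is more bookkeeping than obstacle — is keeping the conventions consistent: the sign of $J$, the factor $\ii$ in the symplectic form, the identification $M_{(k,\sigma),(h,\tau)}=\tau q_{(k,\sigma),(h,-\tau)}$ from \eqref{mali}, and the symmetry statements in Remark \ref{sym}; once these are fixed the three identities are forced. Convergence/analyticity is not an issue here precisely because we restrict to $V^{0,1}_{an}$ and $V^{0,2}_{an}$, where termwise differentiation and the Leibniz rule are justified on the ball of convergence.
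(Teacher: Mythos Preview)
Your approach is correct and is in fact more than the paper offers: Proposition~\ref{pois} is stated without proof, as a standard fact about the symplectic Lie algebra.

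A few remarks. The first identity is immediate from the Poisson bracket formula~\eqref{PoBa} itself: for $f,g\in V^{0,1}_{an}$ there is no $y$-dependence, so only the term $i(\partial_w f, J\partial_w g)$ survives, and since $f,g$ are linear in $w$ their gradients are exactly the coefficient vectors with which $f,g$ are identified. You do not need to go back to~\eqref{ffpb}.

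For the third identity your Jacobi-identity argument is the right one, but the displayed computation is garbled; here is the clean version. By Jacobi and the second identity,
\[
\{\{Q_A,Q_B\},f\}=\{Q_A,\{Q_B,f\}\}-\{Q_B,\{Q_A,f\}\}=iA(iBf)-iB(iAf)=-[A,B]\,f,
\]
while if $\{Q_A,Q_B\}=i\,Q_{[A,B]}$ then $\{\{Q_A,Q_B\},f\}=i\{Q_{[A,B]},f\}=i\cdot i[A,B]\,f=-[A,B]\,f$, so the two agree. Your injectivity remark is correct and needs no ``up to constants'' caveat: a quadratic form $Q_C$ with $\{Q_C,f\}=iCf=0$ for all linear $f$ forces $C=0$, hence $Q_C=0$.
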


In particular the evolution of $w$, defined by $Q_A$  is
\begin{equation}
\label{Havf1}\dot w=i  wA
\end{equation}
We now require conservation of  momentum and parity.

In order to stress this
\begin{definition}\label{glieffe}  We denote the subspaces of  $V^0$, $V^{(1,0)}$, $V^{(0,1)},V^{(0,2)} $ which satisfy conservation of  momentum  and parity by $F^0, F^{(1,0)}$, $F^{(0,1)},F^{(0,2)}.$ The direct sum of these spaces we denote by $F^{\leq 2}$. In general we denote by $F^{(i,j)} $ the subspace of $V^{(i,j)}$ which satisfies \eqref{cons1} and parity.
\end{definition}
\begin{remark}\label{LABASE}
i)\ $F^{(0,1)}$ has as basis the elements\begin{equation}\label{LABASE1}
(\nu,+)\to e^{\ii \sum_j\nu_jx_j}z_k ,\quad (\nu,-)\to e^{-\ii \sum_j\nu_jx_j}\bar z_k;$$ $$ \sum_j\nu_jv_j+k=0\,,\quad \sum_i\nu_i= {\rm odd}.
\end{equation}

ii) \ In the same way $F^{(0,2)}$ has as basis the products of elements of $F^{0,1}$. That is we have a surjective linear map $b:F^{0,1}\otimes F^{0,1}\to F^{0,2}.$

Under this map    $b[(\nu,\sigma)\times(\mu,\tau)]=b[ (\nu',\sigma')\times(\mu',\tau')]$ if  and only if $\sigma\pi(\nu)=\sigma'\pi(\nu')$, $\tau\pi(\mu)=\tau'\pi(\mu')$ and $\sigma\nu+\tau\mu=\sigma'\nu'+\tau'\mu'$.  The image of these product  form a basis of $F^{0,2}$.  Explicitly  (cf. \ref{seba}) the elements  $$e^{ \ii \sum_j\nu_jx_j}  z_k^\sigma z_h^\tau,\ \sum_j\nu_jv_j+\sigma k+\tau h=0.$$

\end{remark}
\subsection{Quadratic normal forms} In the space $F^{\leq 2}$ we will be particularly interested in the subspace of ``normal forms'' i.e. the Hamiltonians of the form
$$N:=(\ome,y) -\frac12 w Q(x) J w^t, $$
 for some choice of  the frequency $\ome\in \R^m $ and of the matrix $Q(x)$,  both  possibly depending on the parameters $\xi$.
\smallskip 
 
The core of the KAM algorithm is to study the action of the operator $ad(N)$ on $F^{\leq 2}$.
\begin{definition}\label{lhomol}
Given $R\in F^{\leq 2}$,  the equation
$ad(N ) F = R $ for $F\in F^{\leq 2}$ is called {\em the homological equation}. 
\end{definition}
\begin{lemma}\label{homol} The operator  $ad(N):=x\mapsto \{N,x\}$  acting on $F^{\leq 2}$ can be represented as the block   matrix:
\begin{equation}\label{rep1}\begin{pmatrix} \ome \cdot \partial_x & 0 &0&0 \\ 0 & \ome \cdot \partial_x & 0 &0 \\ 0 &0&\ome \cdot \partial_x+i Q &0 \\ 0 &  i\nabla_x Q  & 0 & \ome \cdot \partial_x+i[Q, \cdot]    \end{pmatrix}, \end{equation} .
 \end{lemma}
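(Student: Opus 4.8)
First I would split $N=N_1+N_2$ with $N_1:=(\ome,y)$ and $N_2:=-\tfrac12 wQ(x)Jw^t=Q_{Q(x)}(w)$, so that by bilinearity of the Poisson bracket $ad(N)=ad(N_1)+ad(N_2)$. Since $F^{\le 2}=F^{0}\oplus F^{(1,0)}\oplus F^{(0,1)}\oplus F^{(0,2)}$ (Definition \ref{glieffe}) and $ad(N)$ is linear, it suffices to evaluate $\{N,f\}$ for $f$ running over these four summands (concretely, over a basis of each, cf. Remark \ref{LABASE}) and then to collect the outcome into a $4\times4$ block matrix in this ordered basis. The whole computation becomes very short once one notices that $N_1$ depends only on $y$ and on the parameters $\xi$, so that $\partial_x N_1=\partial_w N_1=0$ and $\partial_y N_1=\ome$, whereas $N_2$ depends only on $x$ and $w$, so that $\partial_y N_2=0$; feeding these into \eqref{PoBa} annihilates at once nearly all the potential cross terms. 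One should also verify that $ad(N)$ maps $F^{\le 2}$ into itself, which is immediate: the Poisson bracket preserves conservation of momentum and parity and $N\in F^{\le 2}$, while weight is preserved ($ad(N_1)=\ome\cdot\partial_x$ is visibly weight--preserving, and $ad(N_2)$ is weight--preserving because $N_2$ has weight $2$, so each of the three terms of \eqref{PoBa} returns the weight of $f$).

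The contribution of $N_1$ is the diagonal. For $f$ of arbitrary bidegree, \eqref{PoBa} together with $\partial_x N_1=\partial_w N_1=0$ gives $\{N_1,f\}=\partial_y N_1\cdot\partial_x f=\ome\cdot\partial_x f$; since $\ome\cdot\partial_x$ acts on the Fourier coefficients in $x$ without changing the bidegree in $(y,w)$, $ad(N_1)$ is the diagonal operator $\ome\cdot\partial_x$ on each of the four summands. This produces the first summand of all four diagonal blocks of \eqref{rep1} and kills every off--diagonal entry of $ad(N_1)$.

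Then I would treat $N_2$ summand by summand, always using $\partial_y N_2=0$. On $F^{0}$ (functions of $x$ alone) $\partial_y f=\partial_w f=0$, hence $\{N_2,f\}=0$. On $F^{(0,1)}$, a linear form $f$ in $w$ with $x$--dependent coefficients, $\partial_y f=0$, so the only surviving term of \eqref{PoBa} is $i(\partial_w N_2,J\partial_w f)$; this is the pure $w$--bracket of $Q_{Q(x)}$ with $f$ computed with $x$ frozen, and Proposition \ref{pois} gives $\{N_2,f\}=iQ(x)f\in F^{(0,1)}$. On $F^{(0,2)}$, a quadratic form $Q_{M(x)}(w)$, the same reasoning and Proposition \ref{pois} yield $\{N_2,Q_{M}\}=iQ_{[Q(x),M(x)]}$, that is the operator $i[Q,\cdot]$ acting on $M$, again with values in $F^{(0,2)}$. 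Finally, on $F^{(1,0)}$, for $f=\sum_i g_i(x)y_i$ one has $\partial_w f=0$ and $\partial_y N_2=0$, so the only term left is $-\partial_y f\cdot\partial_x N_2=\tfrac12 w\bigl(\sum_i g_i(x)\,\partial_{x_i}Q\bigr)Jw^t$, an element of $F^{(0,2)}$; read through the matrix correspondence \eqref{mali}, this is the off--diagonal block of \eqref{rep1} mapping $F^{(1,0)}$ into $F^{(0,2)}$. Assembling $ad(N_1)+ad(N_2)$ in the order $(F^{0},F^{(1,0)},F^{(0,1)},F^{(0,2)})$ reproduces \eqref{rep1}.

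I do not expect a genuine obstacle here; the computation is elementary. The one point that needs a little care is the bookkeeping in the last case: one has to pass from the quadratic form $\tfrac12 w(\sum_i g_i\,\partial_{x_i}Q)Jw^t$ to the matrix representing it through \eqref{mali} (keeping track of the index flip $M_{(k,\sigma),(h,\tau)}=\tau q_{(k,\sigma),(h,-\tau)}$ and of the symmetry $QJ=(QJ)^t$ of Remark \ref{sym}), so that the $(F^{(0,2)},F^{(1,0)})$--entry comes out with the normalization written in \eqref{rep1}; and one should check that each application of Proposition \ref{pois} is legitimate, i.e. that the intermediate linear and quadratic forms still satisfy the momentum and parity constraints defining $F^{\le 2}$.
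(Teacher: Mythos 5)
Your proof is correct and follows the same route as the paper, which simply says ``apply the rules of Poisson brackets from Proposition \ref{pois}'' and leaves the four-case computation to the reader. One small point worth noting as you finish the bookkeeping in the last case: the off-diagonal block coming from $-\partial_y f\cdot\partial_x N_2$ involves the $y$--$x$ part of \eqref{PoBa}, which carries no factor of $i$ (and produces $-g\cdot\nabla_xQ$ on the matrix side of \eqref{mali}), so the $i\nabla_x Q$ in \eqref{rep1} appears to be a minor slip in the paper — the block-triangular structure, which is the actual content of the lemma and is what gets used in Lemma \ref{hoho}, is exactly as you derived it.
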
Here, by abuse of notations,  $ \ome \cdot \partial_x =\sum_{i=1}^m\omega_i\pd{}{x_i}$, is the operator on the entries of the vector or matrix, i.e. the {\em scalar}  operator times the corresponding identity matrix.  
We have used the basis   \ref{LABASE1} for $F^{0,1}$ and the matrix representation for $F^{0,2}$. \begin{lemma}\label{homol2}
If for $F^{0,2}$ we use as basis the products of the elements of the basis  in $F^{0,1}$, then by the Leibniz rule then  $ad(N)$ on $F^{0,2}$ is induced, under the map $b: F^{0,1}\otimes F^{0,1}\to F^{0,2}$  (cf. Remark \ref{LABASE}) by the matrix 
\begin{equation}
\label{homol1}ad(N)\vert_{F^{0,1}} \otimes I + I \otimes  ad(N)\vert_{F^{0,1}}.
\end{equation}
\end{lemma}
\begin{proof}[Proof of \ref{homol}, \ref{homol2}] We just apply the rules of Poisson brackets discussed in Proposition \ref{pois}.
\end{proof}
\subsection{Final form for the Hamiltonian}

By definition an analytic function on $D(s,r)$ can be Taylor expanded in $y,w$  to obtain an element of $V^\infty$; given $F\in V^\infty$ we will denote by $F^{(i,j)}$ the projection of $F$ on $V^{(i,j)}$, same for all the other subspaces.

\bigskip

   For $r$ small enough, $H:  D(s,r)\to \R$  is analytic, so it is an element of $V^\infty$.     We obtain
   a formal polynomial  whose monomials are of the form $ m(x)\xi^a y^i w^j $ where  $a$ can have half--integer coordinates (this corresponds to a term in $u$ of degree $2(i+a)+j$).  

 We drop in formula \eqref{Ham4}  the constant part (depending only on the parameters $\xi$)  and  separate $H= N+P$  where  $N = H_0^{\leq 2}\in V^2$   is a ``normal form"   and $P= H_0^{> 2}+ P^{(3)}+ P^{(6)}$ is {\em small} with respect to $N$. 
 
 We obtain, with the notation of Formula \eqref{mali}  \begin{equation}\label{hama}N:= (\ome(\xi),y)+ \sum_k \Ome_k  |z_k|^2 +  Q_M(w):=D+   Q_M(w) ,\end{equation}
where $ D:=(\ome(\xi),y)+ \sum_k \Ome_k(\xi)  |z_k|^2  $  and  \begin{equation}\label{gliome}\ome_i(\xi):  = |v_i|^2 -2\xi_i + 4 \sum_{j } \xi_j , \quad \Ome_k(\xi)= |k|^2+4\sum_i \xi_i .
\end{equation} By $( \cdot,\cdot )$ we denote the real scalar product. Finally the quadratic form is \begin{equation}
\label{quafo}Q_M(w)= 4\sum^*_{  1\leq i\neq j\leq m    \atop h,k \in S^c}\sqrt{\xi_{i}\xi_{j}}e^{\ii(x_{i}-x_{j})}z_{h}\bar z_{k } + 
\end{equation} $$2 \sum^{**}_{ 1\leq i< j\leq m   \atop h,k \in S^c }\sqrt{\xi_{i}\xi_{j}}e^{-\ii(x_{i}+x_{j})}z_{h} z_{k } + 
 2\sum^{**}_{ 1 \leq i<j\leq m    \atop h,k \in S^c }\sqrt{\xi_{i}\xi_{j}}e^{\ii(x_{i}+x_{j})}\bar z_{h}\bar  z_{k }. $$ 
 \begin{remark}
The separation of the Hamiltonian in two parts is  justified by the following two facts.
\begin{itemize}
\item $Q_M(w)$ commutes with the variables $x$.
\item  $D$  has a natural {\em diagonal form}.
It can be useful to identify the term $ \sum_k \Ome_k(\xi)  |z_k|^2 $ with its symplectic representation  $\Ome=$ diag$(\{\Ome_k\}_{k\in S^c},-\{\Ome_k\}_{k\in S^c}) $ 
\end{itemize}
\end{remark}
\begin{remark}\label{twist} We have grouped in the perturbation $P$ all the terms of $H_0^{> 2}$, this is just a convention since such terms are explicit. In particular $H_0$ contains a quadratic part in $y$ of the form $(y, A y)/2$  where $A$ is defined in
Theorem \ref{teo1}. 
\end{remark}
\begin{lemma} \label{stime}Fixing $\lambda= r^\alpha/\max(|v_i|^2)$ the perturbation $P$ satisfies the following bounds:
$$\Vert \Pi_{F^{0}} P\Vert_{s,r}^\lambda \leq Cr^{5\alpha}\,,\quad  \Vert \Pi_{F^{1,0}} P\Vert_{s,r}^\lambda \leq Cr^{2+4\alpha}\,,\quad \Vert\Pi_{F^{0,1}} P\Vert_{s,r}^\lambda \leq Cr^{1+5\alpha/2}\,,$$$$\Vert\Pi_{F^{0,2}} P\Vert_{s,r}^\lambda \leq Cr^{2+2\alpha}\,, \quad \Vert\Pi_{V^{>2}}P\Vert_{s,r}^\lambda\leq Cr^{3+\alpha/2}.$$
\end{lemma}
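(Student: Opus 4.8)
The statement is a collection of elementary but somewhat tedious norm estimates for the five pieces $\Pi_{F^{0}}P$, $\Pi_{F^{1,0}}P$, $\Pi_{F^{0,1}}P$, $\Pi_{F^{0,2}}P$ and $\Pi_{V^{>2}}P$. The key observation to organize the whole computation is the \emph{weight bookkeeping} set up in \S\ref{afwd}: on the domain $A_\alpha\times D(s,r)$ a monomial $m(x)\,\xi^a y^i w^j$ obeys $|\xi^a y^i w^j|_{s,r,\mathbb C}\le r^{a\alpha+2i+j}$, i.e.\ its sup-norm is bounded by $r$ raised to its weight, where $y$ has weight $2$, $w$ has weight $1$ and $\xi$ has weight $\alpha$. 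Since $\alpha<2$, the $\xi$-weight is the ``cheapest'', so for each projection I only need to (i) locate the monomials of \emph{lowest total weight} that survive that projection, and (ii) check that the passage from the $|\cdot|_{s,r}$ norm to the weighted Lipschitz norm $\|\cdot\|^\lambda_{s,r}$ with $\lambda=r^\alpha/\max_i|v_i|^2$ costs only a bounded factor. The analyticity on $D(s,r)$, for $r$ small enough that $r^\alpha<\epsilon$, is already guaranteed by the Remark in \S\ref{afwd} together with the Birkhoff step, so there is no convergence issue to worry about; everything reduces to counting weights term by term.

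\textbf{Term-by-term identification of the lowest-weight contributions.} The perturbation is $P=H_0^{>2}+P^{(3)}+P^{(6)}$. First I would treat $\Pi_{V^{>2}}P$: here $H_0^{>2}$ contains the quartic-in-$\sqrt{\xi+y}$ terms such as $(\xi_i+y_i)^2$, $(\xi_i+y_i)(\xi_j+y_j)$, $(\xi_i+y_i)|z_k|^2$ and the $\sqrt{(\xi_i+y_i)(\xi_j+y_j)}\,e^{\pm}\,zz$-type terms, whose parts of weight $>2$ (after subtracting off the weight-$\le2$ pieces already absorbed into $N$) have weight at least $3+\alpha/2$ — e.g.\ $\sqrt{\xi_i\xi_j}\,y_i\, z_h z_k$ has weight $\alpha+2+1$, while the cheapest new term is $\sqrt{\xi}\,y\,w^2$-type giving exactly $\alpha/2+2+1$; and $P^{(3)}$ is at least cubic in $w$ with a $\sqrt\xi$ or $\xi$ prefactor giving weight $\ge \alpha/2 + 3$, while $P^{(6)}$ is of degree $\ge 6$ in $u$, i.e.\ weight $\ge 3$ after the action-angle substitution, but Constraint~\ref{co2} kills the purely tangential degree-$6$ and degree-$8$ monomials non-constant in $x$, so one recovers the stated $r^{\max(5\alpha,1+5\alpha/2)}$ for $\Pi_{V^{>2}}$ once one tracks that a term with $j$ normal legs carries $\ge j$ powers of weight-$1$ variables and the rest is $\xi$-power. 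For the individual projections $\Pi_{F^{0,j}}$ and $\Pi_{F^{1,0}}$ I would restrict the same analysis to the relevant $(i,j)$-bidegree: $\Pi_{F^0}P$ keeps only monomials with $i=j=0$, i.e.\ pure powers of $\xi$ coming from $P^{(6)}$ evaluated at $y=w=0$, which by Constraint~\ref{co2} start at degree $10$ in $u$, hence weight $5\alpha$; $\Pi_{F^{1,0}}P$ keeps the $y^1w^0$ part, again forced by Constraint~\ref{co2} to come from degree-$\ge10$ monomials, giving $2+4\alpha$; $\Pi_{F^{0,1}}P$ keeps $w^1$ (odd in $w$, so from $P^{(3)}$, hence $\ge3$ $w$-legs — impossible — or from $H_0$ with one $w$-leg, but $H_0$ has no linear-in-$w$ term, so the $F^{0,1}$-part of $P$ comes from $P^{(6)}$, degree $\ge 6$ with one $w$-leg and $\sqrt\xi$'s, giving weight $1+5\alpha/2$); $\Pi_{F^{0,2}}P$ keeps $w^2$ with the lowest contribution $\sqrt{\xi_i\xi_j}\,y\,z_hz_k$-type absent (that is weight $\alpha+2+\dots$), actually the cheapest is the $H_0^{>2}$ piece $\sqrt{(\xi_i+y_i)(\xi_j+y_j)}-\sqrt{\xi_i\xi_j}$ times $zz$, whose leading correction is $\sqrt{\xi}\,y\,w^2/\sqrt{\xi}\sim y\,w^2$ of weight $4$, versus $P^{(6)}$ degree-$\ge6$ with two $w$-legs giving $2+2\alpha$; comparing $4$ with $2+2\alpha$ and using $\alpha<2$ we see $2+2\alpha<4$ iff $\alpha<1$ — so more care is needed here and one should note that in fact the relevant bound $r^{2+2\alpha}$ is the \emph{claimed} one, which means the dominant (i.e.\ smallest-exponent) contribution must indeed be the $\xi$-rich $P^{(6)}$ term, and I would double-check that the $H_0^{>2}$ corrections to the quadratic form, being of weight $\ge 2+\alpha$ (from $\sqrt{\xi_i\xi_j}(e^{\pm}-1)$-type or $\sqrt{\xi}\cdot\sqrt{y}\cdot$ expansions giving half-integer $y$-powers), are all $\ge r^{2+\alpha}\ge r^{2+2\alpha}$... here the exponent \emph{increases} — careful, a larger exponent means \emph{smaller} quantity — so these are harmless and the binding bound is the $P^{(6)}$ one, $r^{2+2\alpha}$, provided $P^{(6)}$ actually contains weight-$(2+2\alpha)$ two-normal-leg monomials, which it does.

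\textbf{The Lipschitz-in-$\xi$ part.} For each projection I must also bound $\lambda\,\sup_{\xi\ne\eta}|F(\eta)-F(\xi)|/|\eta-\eta|$. Since every monomial is a finite sum of terms of the form $c\,\xi^a\,\mu(x,y,w)$ with $c$ integer and $a$ a (possibly half-integer) multi-exponent, differentiating in $\xi$ on the annulus $\tfrac12 r^\alpha\le|\xi|\le r^\alpha$ produces a factor $O(|a|\,r^{-\alpha})$ relative to the value, and the analytic estimate then shows $\|\partial_\xi F\|_{s,r}\le C r^{-\alpha}\|F\|_{s,r}$ (the half-integer powers are still differentiable since $\xi$ stays bounded away from $0$); multiplying by $\lambda=r^\alpha/\max_i|v_i|^2\le r^\alpha$ exactly cancels the $r^{-\alpha}$, so $\|\cdot\|^\lambda_{s,r}\le C\|\cdot\|_{s,r}$ up to a harmless constant absorbing $\max_i|v_i|^2$ and the combinatorial $|a|$, and the five $\|\cdot\|_{s,r}$-bounds from the previous paragraph carry over verbatim to $\|\cdot\|^\lambda_{s,r}$.

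\textbf{Main obstacle.} The only genuinely delicate point — the rest is mechanical weight-counting — is making sure the \emph{lowest-weight} monomial surviving each projection is the one advertised, in particular that no unexpectedly cheap term leaks through. This is where Constraint~\ref{co2} ($\sum_i\nu_i v_i\neq0$ for $0<\sum|\nu_i|<10$) is essential: it is exactly what forbids the potentially dangerous purely-tangential low-degree monomials in $P^{(6)}$ (degrees $6$ and $8$, non-constant in $x$), which would otherwise produce $F^0$- or $F^{1,0}$-contributions of weight $3\alpha$ or $2+2\alpha$ instead of $5\alpha$ resp.\ $2+4\alpha$; and momentum conservation \eqref{cons1} further restricts which $(\alpha,\beta,\nu)$ can appear. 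So the heart of the proof is a careful case check that, under Constraints~\ref{co1}–\ref{co2} and momentum conservation, each projected piece of $P$ indeed has its monomials of weight no less than the stated exponent, after which the passage to $\|\cdot\|^\lambda_{s,r}$ is routine.
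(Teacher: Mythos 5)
Your proposal is correct and follows essentially the same route as the paper's own (one-line) proof, which dismisses the estimates as ``purely dimensional'' weight counting and singles out exactly the role of Constraint~\ref{co2} in forcing the first nontrivial contribution to $F^{0}$ (and likewise $F^{1,0}$) to come from degree~$10$ rather than degree~$6$ in $u$. One small clean-up: the $H_0^{>2}$ corrections you worry about in the $\Pi_{F^{0,2}}$ paragraph are obtained by expanding $\sqrt{\xi_i+y_i}$ and therefore carry at least one factor of $y$; they live in $V^{(i,2)}$ with $i\geq 1$ (weight $\geq 4$), never in $F^{(0,2)}$, so the comparison with $r^{2+2\alpha}$ is moot and the sole source of $\Pi_{F^{0,2}}P$ is $P^{(6)}$, exactly as your conclusion states.
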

\begin{proof} All the bounds are purely dimensional, notice only that we have used constrain \ref{co2} to impose that the first non--zero contribution to $F^0$ is from a polynomial of degree $10$ and not $6$.
\end{proof}
\vskip10pt  
To further simplify the Hamiltonian we assume:
\begin{constraint}\label{co1b} Given any four different elements $h_1,h_2,h_3,h_4$ in $S$ one has 
\begin{itemize}
\item $ h_1\pm h_2 \neq h_3\pm h_4$.
\item $(h_1+h_2-h_3-h_4,h_3-h_4)\neq 0$.
\end{itemize}
\end{constraint}
As for the   the matrix $M(x)$ we have

\begin{lemma}\label{matri} Given $h, k\in S^c$ there exist at most one  couple $v_i\neq v_j\in S$ such that one and only one of the next two properties holds: \begin{enumerate}[(1)]
\item  
$ h -k = v_j- v_i$ \,, $|h|^2-|k|^2 =|v_j|^2- |v_i|^2$. 

\noindent In this case  one has
$M_{(h,\sigma),(k,\sigma)}=\bar M_{(k,\sigma),(h,\sigma)}=  4\sigma\sqrt{\xi_i \xi_j} e^{\ii\sigma (x_i-x_j)}$ and $M_{(h,\sigma),(h,-\sigma)}=0$.
\smallskip

\item  $i<j$, $ h +k = v_i+ v_j$ \,, $|h|^2+|k|^2 =|v_i|^2+ |v_j|^2$.

\noindent  In this case  one has
$M_{(h,-\sigma),(k, \sigma)}= - \bar M_{(k,\sigma),(h,-\sigma)}  =  4\sigma\sqrt{\xi_i \xi_j} e^{ \ii \sigma (x_i+x_j)}$ and $M_{(h,\sigma),(k,\sigma)}=0$.\medskip

\item If there exists no couple $v_i\neq v_j\in S$ satisfying either (1) or (2) then $M_{(h,\pm),(k,\pm)}=0$ (this includes naturally the case $h=k$).
\end{enumerate} 
\end{lemma}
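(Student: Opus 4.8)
The plan is to unwind the explicit form of the Hamiltonian $H_0$ in \eqref{Ham4}, read off which monomials $z_h^{\sigma}z_k^{\tau}$ appear in its quadratic-in-$w$ part $Q_M(w)$ as in \eqref{quafo}, and then translate the coefficient of each such monomial into the matrix entry $M_{(h,\sigma),(k,\tau)}$ using the dictionary $M_{(k,\sigma),(h,\tau)}=\tau q_{(k,\sigma),(h,-\tau)}$ from \eqref{mali}. Inspecting \eqref{quafo}, there are exactly three families of quadratic terms: the ``number-operator type'' terms $4\sqrt{\xi_i\xi_j}\,e^{\ii(x_i-x_j)}z_h\bar z_k$ coming from $\sum^{*}$, which require $(h,k,v_i,v_j)\in\mathcal P$, i.e.\ $h+v_i=k+v_j$ and $|h|^2+|v_i|^2=|k|^2+|v_j|^2$ — equivalently $h-k=v_j-v_i$, $|h|^2-|k|^2=|v_j|^2-|v_i|^2$; and the two ``pairing type'' families $2\sqrt{\xi_i\xi_j}\,e^{\mp\ii(x_i+x_j)}z_hz_k$ (resp.\ $\bar z_h\bar z_k$) coming from $\sum^{**}$, which require $(h,v_i,k,v_j)\in\mathcal P$ with $i<j$, i.e.\ $h+k=v_i+v_j$, $|h|^2+|k|^2=|v_i|^2+|v_j|^2$. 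These are precisely the two alternatives (1) and (2) in the statement, and the displayed values of the $M$-entries are just the coefficients above run through \eqref{mali} (the factor $\sigma$ and the sign pattern $M_{(h,-\sigma),(k,\sigma)}=-\bar M_{(k,\sigma),(h,-\sigma)}$ come from the $\tau$ and the reality/symmetry constraint of Remark \ref{sym}).

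The substantive content is the uniqueness assertion: for fixed $h,k\in S^c$ there is \emph{at most one} unordered pair $\{v_i,v_j\}$ (and only one of the two alternatives can occur). First I would handle each alternative separately. In case (1) the constraint $h-k=v_j-v_i$ together with $|h|^2-|k|^2=|v_j|^2-|v_i|^2$ is, by the ``rectangle'' reformulation \eqref{vinco}, equivalent to $(h-k,\,\text{stuff})=0$; concretely, writing $d:=h-k=v_j-v_i$ fixed, the second equation becomes $(h+k,d)=(v_i+v_j,d)$, so $v_i+v_j$ is determined modulo the hyperplane $d^{\perp}$, and since $v_j-v_i=d$ is also fixed, $v_i$ and $v_j$ are determined as soon as the pair exists. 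To rule out a genuinely different pair $\{v_a,v_b\}$ also satisfying (1) for the same $h,k$, one gets $v_b-v_a=v_j-v_i$ and $|v_b|^2-|v_a|^2=|v_j|^2-|v_i|^2$; subtracting, $v_a,v_b,v_i,v_j$ would form (possibly degenerate) rectangle relations among four elements of $S$, which is exactly what Constraint \ref{co1b} (first bullet $h_1\pm h_2\ne h_3\pm h_4$, second bullet the scalar-product condition) forbids unless the pairs coincide. Case (2) is symmetric: $h+k=v_i+v_j$ fixed and $|h|^2+|k|^2=|v_i|^2+|v_j|^2$ fixed pin down $\{v_i,v_j\}$ up to the same rectangle ambiguity, again excluded by Constraint \ref{co1b}. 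Finally, that (1) and (2) cannot hold simultaneously for the same $h,k$ (with, a priori, different pairs): combining $h-k=v_j-v_i$ from (1) with $h+k=v_a+v_b$ from (2) and the two norm identities produces a relation of the form $v_j-v_i\pm(v_a+v_b)=\pm 2h$ or a vanishing scalar product among the $v$'s and $h$; one then checks this forces two of the $v$'s to coincide (contradicting $|S|=m$ distinct) or violates Constraint \ref{co1b} / \ref{co1}. The case $h=k$ falls under alternative (3): if $h=k$ then (1) would give $v_i=v_j$ and (2) would give $2h=v_i+v_j$, $2|h|^2=|v_i|^2+|v_j|^2$, forcing $h=v_i=v_j$, impossible since $h\in S^c$.

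I would organize the write-up as: (a) extract the three monomial families from \eqref{quafo} and record the resulting $M$-entries via \eqref{mali}, noting that all \emph{other} entries vanish because no other quadratic monomials occur — this already gives item (3) for pairs $(h,k)$ admitting no such $\{v_i,v_j\}$; (b) prove the within-each-case uniqueness using Constraint \ref{co1b} as above; (c) prove the mutual exclusivity of (1) and (2). The main obstacle is purely bookkeeping: carefully enumerating the finitely many ways two rectangle-type relations among $\{h,k,v_i,v_j,v_a,v_b\}$ can overlap and checking each is killed by one of Constraints \ref{co1}, \ref{co1b}; there is no analytic difficulty, only the risk of missing a degenerate configuration (e.g.\ $v_i=v_a$ but $v_j\ne v_b$), so I would be systematic about listing the coincidence patterns among the index set. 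One should also double-check the sign/conjugation conventions so that the stated formulas $M_{(h,\sigma),(k,\sigma)}=\bar M_{(k,\sigma),(h,\sigma)}$ in (1) and $M_{(h,-\sigma),(k,\sigma)}=-\bar M_{(k,\sigma),(h,-\sigma)}$ in (2) come out exactly with the factors $4\sigma\sqrt{\xi_i\xi_j}$, which is an immediate consequence of $q_{(h,\sigma),(k,-\sigma)}$ being the literal coefficient of $z_h^{\sigma}z_k^{-(-\sigma)}$ read off \eqref{quafo} together with Remark \ref{sym}.
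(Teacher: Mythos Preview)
Your proposal is correct and follows essentially the same approach as the paper: read off the quadratic monomials from $Q_M(w)$ in \eqref{quafo}, translate them into matrix entries, and deduce uniqueness of the pair $\{v_i,v_j\}$ from Constraint~\ref{co1b}. The only cosmetic difference is that the paper extracts the entries by computing Poisson brackets $\{Q_M,z_k\}$, $\{Q_M,\bar z_k\}$ directly (which immediately gives the action of $M$ on the basis), whereas you go through the $q\leftrightarrow M$ dictionary \eqref{mali}; both are equivalent. Your uniqueness argument is actually spelled out in more detail than the paper's: for two pairs in case~(1) you get $v_b-v_a=v_j-v_i$, killed by the first bullet of Constraint~\ref{co1b}; same for case~(2) with $v_a+v_b=v_i+v_j$; for the mixed case the paper simply says ``by substituting we contradict the second item of~\ref{co1b}'', which is the computation $(v_j-v_i,\,v_l+v_m-v_i-v_j)=0$ you were heading toward. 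One small remark: your sentence ``$v_i+v_j$ is determined modulo $d^\perp$\ldots so $v_i,v_j$ are determined'' does not by itself give uniqueness (it leaves a $d^\perp$-ambiguity); but you immediately follow it with the correct argument via Constraint~\ref{co1b}, so the conclusion stands.
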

\begin{proof}  With the notations of Formula \eqref{quafo} constraint \ref{decs} now means that   $\sum^*$ denotes the constraint given in Item $(1)$   and $\sum^{**}$, the constraint  given in Item $(2)$.

By Formula \ref{ffpb} we have $$\{4 \sqrt{\xi_{i}\xi_{j}}e^{\ii(x_{i}-x_{j})}z_{h}\bar z_{k },z_k\}=4i \sqrt{\xi_{i}\xi_{j}}e^{\ii(x_{i}-x_{j})}z_{h}  .$$ The terms $2\sqrt{\xi_{i}\xi_{j}}e^{-\ii(x_{i}+x_{j})}z_{h} z_{k } $  repeat twice and $$\{\sqrt{\xi_{i}\xi_{j}}e^{-\ii(x_{i}+x_{j})}z_{h} z_{k } ,\bar z_k\}=-i\sqrt{\xi_{i}\xi_{j}}e^{-\ii(x_{i}+x_{j})}z_{h} $$  while   
$$\{\sqrt{\xi_{i}\xi_{j}}e^{\ii(x_{i}+x_{j})}\bar z_{h}\bar  z_{k },z_k\}=i \sqrt{\xi_{i}\xi_{j}}e^{\ii(x_{i}+x_{j})}\bar z_{h}.$$

   Item (3) is trivial by the definitions. The fact that the two different conditions cannot hold contemporarily with the same $v_i,v_j$ is again trivial. We only need to prove that (1) and (2) cannot hold  for two different pairs.
 Suppose now that item (1) holds for some $h,k$ and  for two different pairs $\{v_i, v_j \},\{v_l,v_m\}$ then
$v_i-\nobreak v_j=  v_l-v_m$ contrary to hypothesis \ref{co1b}. The same if item (2) holds for some $h,k$ and  for two different pairs $\{v_i, v_j \},\{v_l,v_m\}$. Suppose now that  for some $h,k$  item (1) holds with $\{v_i, v_j \}$ and  item (2) holds with $\{v_l,v_m\}$, then by substituting we contradict the second item of \ref{co1b}.
\end{proof}

\section{  Graph representation\label{Gra}} It will be convenient to associate to $ad(N)$ or equivalently to the matrix $M$  (associated to \eqref{quafo} through Formula \eqref{pam})  two graphs    $
\Lambda_S,\Gamma_S$   encoding the information of its non--zero off diagonal entries. In fact in  $ad(N)$  the part associated to $D$ is diagonal.

The two graphs arise from the following complementary points of view.

\bigskip

\subsection{Geometric graph $\Gamma_S$} 
The geometric point of view is to consider  the action of $ad(N)$ on the space $V^{(0,1)}$. In fact by a simple inspection from Lemma \ref{matri} we have \begin{proposition} The operator $ad(N)$ preserves 
  the subspace  $V^{(0,1)}_f$ of $V^{(0,1)}$ of finite linear combinations of the elements  $z_k,\bar z_k,\ k\in S^c$ with coefficients in the algebra $A$ of finite Fourier series in the variables $x$. 
\end{proposition}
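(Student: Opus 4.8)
The claim to prove is that $ad(N)$ preserves the subspace $V^{(0,1)}_f\subset V^{(0,1)}$ consisting of finite linear combinations of $z_k,\bar z_k$ ($k\in S^c$) with coefficients in the algebra $\mathcal A$ of finite Fourier series in $x$. The plan is to read off the action of $ad(N)$ on a single basis vector directly from the structure of $N$, and check that it lands back in $V^{(0,1)}_f$.

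Recall that $N=D+Q_M(w)$ where $D=(\ome(\xi),y)+\sum_k\Ome_k|z_k|^2$ is diagonal and $Q_M(w)$ is the quadratic form in \eqref{quafo}. I would first split the computation accordingly: by the Leibniz rule for the Poisson bracket it suffices to treat $ad(D)$ and $ad(Q_M)$ separately. For $ad(D)$: by \eqref{rupb1} one has $\{\Ome_k|z_k|^2,z_j\}=i\Ome_k\delta_{k,j}z_j$ and similarly for $\bar z_j$, while $(\ome,y)$ Poisson-commutes with every $z_k^\sigma$ (these depend only on $w$, not on $x,y$); hence $ad(D)$ acts diagonally, scaling $z_k^\sigma$ by $\mp i\Ome_k$, so it manifestly preserves $V^{(0,1)}_f$ — indeed it even preserves each summand $\mathcal A\cdot z_k^\sigma$.

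For $ad(Q_M)$: the entries of $Q_M$ (equivalently of $M$) are, by Lemma \ref{matri}, monomials of the form $4\sigma\sqrt{\xi_i\xi_j}\,e^{\ii\sigma(x_i\mp x_j)}$ multiplying a quadratic monomial $z_h^\sigma z_k^{\pm\sigma}$, and for each pair $(h,k)\in S^c\times S^c$ there is at most one such contribution. Computing $\{Q_M(w),z_k^\sigma\}$ using \eqref{ffpb}, the Poisson bracket kills the coefficient (which depends only on $\xi,x$), picks out the term of $Q_M$ containing $\bar z_k^{\,-\sigma}$ (i.e.\ $z_k^{-\sigma}$ in the $\sigma=\pm$ notation of Remark \ref{seba}), and returns $i$ times a coefficient in $\mathcal A$ times a single remaining normal variable $z_h^\tau$ with $h\in S^c$. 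Concretely this is the content of Proposition \ref{pois}, $\{Q_A(w),f\}=iAf$: applying the matrix $M$ to a basis vector $f=z_k^\sigma\in V^{(0,1)}_f$ produces $\sum_h M_{(h,\tau),(k,\sigma)}z_h^\tau$, a finite sum (finitely many nonzero entries in each column of $M$ by Lemma \ref{matri}) of normal variables with coefficients $M_{(h,\tau),(k,\sigma)}\in\mathcal A$. Since $\mathcal A$ is an algebra and $V^{(0,1)}_f$ is by definition closed under $\mathcal A$-linear combinations, the result lies in $V^{(0,1)}_f$. Extending by linearity over $\mathcal A$ — using that $ad(N)$ on $x$-dependent coefficients only adds the diagonal term $\ome\cdot\partial_x$, which preserves $\mathcal A$ — gives the statement for all of $V^{(0,1)}_f$.

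There is really no serious obstacle here: the proposition is essentially a bookkeeping consequence of Lemma \ref{matri} (finiteness of the off-diagonal structure of $M$) together with the elementary Poisson-bracket rules \eqref{ffpb}–\eqref{rupb1} and the algebra structure of $\mathcal A$. The only point requiring a word of care is confirming that each column of $M$ has finitely many nonzero entries, which is exactly Lemma \ref{matri}(1)–(3): for fixed $k$, an entry $M_{(h,\tau),(k,\sigma)}$ is nonzero only if $h$ is determined, together with a pair $v_i\neq v_j$, by the resonance relations $h-k=v_j-v_i$ (and $|h|^2-|k|^2=|v_j|^2-|v_i|^2$) or $h+k=v_i+v_j$ (and the corresponding norm identity) — and $S$ being finite there are only finitely many such $h$. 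Hence $ad(N)(\mathcal A\cdot z_k^\sigma)\subseteq V^{(0,1)}_f$, and the proof is complete.
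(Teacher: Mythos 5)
Your proof is correct, and it elaborates exactly the argument the paper compresses into the phrase ``by a simple inspection from Lemma \ref{matri}'': split $N=D+Q_M$, note that $ad(D)$ acts diagonally and $\omega\cdot\partial_x$ preserves finite Fourier series, and use Lemma \ref{matri} plus finiteness of $S$ to see that each column of $M$ has only finitely many nonzero entries (hence $\{Q_M,z_k^\sigma\}$ is a finite $\mathcal A$-combination of the $z_h^\tau$). The only thing the paper does differently is that it does not spell any of this out.
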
   We shall refer to $z_k,\bar z_k,\ k\in S^c$  as {\em the geometric basis}  (of $V^{(0,1)}_f$  as a free module over the algebra $A$).  \footnote{When the coefficients in a basis are in an algebra and not a field it is customary to use the word {\em module} and not vector space. }
 
   For the geometric graph we shall initially forget the difference between  $z_k,\bar z_k$ and remember only the vector $k$.
\begin{definition}
The graph $\Gamma_S$  has vertices  $\mathbb Z^n$  and edges corresponding to non zero entries of $M$.
\end{definition} 

 In order to keep track of complex conjugation it is convenient to {\em decorate}  the edges of the graph with two colors, black and red according to the rules that we presently explain, and a {\em marking or label}.

1.  If item (1) in \ref{matri} holds  we connect 
$h,k$ with a black edge  oriented from $k$ to $h$ and labeled by $v_j-v_i$;  

2.  If item (2) in \ref{matri} holds  we connect 
$h,k$ with  a red  non-oriented edge  labeled by $v_j+v_i$;

3. If item (3) holds  we do  not connect $h$ and $k$.

These rules are purely geometric and can in fact be applied to all vectors in $\R^n$ or even $\C^n$ and not just integral vectors.  They define thus a {\em colored geometric graph} with vertices  points in space and edges given by the previous rules.  To be specific:

 \begin{definition}\label{lasfe}  Given two vectors $v_h,v_k$ we set  $S_{h,k}=S_{k,h }$ to be  the sphere  having as one of its diameters $v_h,v_k$.  It has the equation
\begin{equation}\label{sfera}(x-v_h,x-v_k)= 0,\quad \text{or}\ (x,v_h+v_k)= ( v_h, v_k)+ (x)^2. 
\end{equation}
We also define the hyperplane 
$$H_{h,k}:=\{x\,|\,(x,v_k-v_k)=(v_k,v_k-v_h)\}.$$ \end{definition}
\begin{remark}
The hyperplanes $H_{h,k}$ and $H_{k,h}$ are   parallel   and
$$H_{h,k}=v_k-v_h+H_{k,h}.$$
\end{remark} 

Observe that    by definition $v_h\in H_{h,k}$  but  if $v_h\in H_{i,j}, i\neq h$ we must have $(v_h,v_j-v_i)=(v_i,v_j-v_i)$.  By Constraint 1. this is not    satisfied, if $i\neq h$.

In other words, if $i\neq h$ then $v_i\notin H_{h,k}$. \smallskip

\begin{figure}[!ht]
\centering
\begin{minipage}[b]{11cm}
\centering
{
\psfrag{a}{$v_k$}
\psfrag{b}{$v_h$}
\psfrag{c}{$ a_2$}
\psfrag{d}{$ b_2$}
\psfrag{e}{$ a_1$}
\psfrag{f}{$ b_1$}
\psfrag{H}{$H_{h,k}$}
\psfrag{S}{$S_{h,k}$}
\psfrag{m}{$ v_k-v_h$}
\psfrag{l}{$ v_k+v_h$}
\includegraphics[width=11cm]{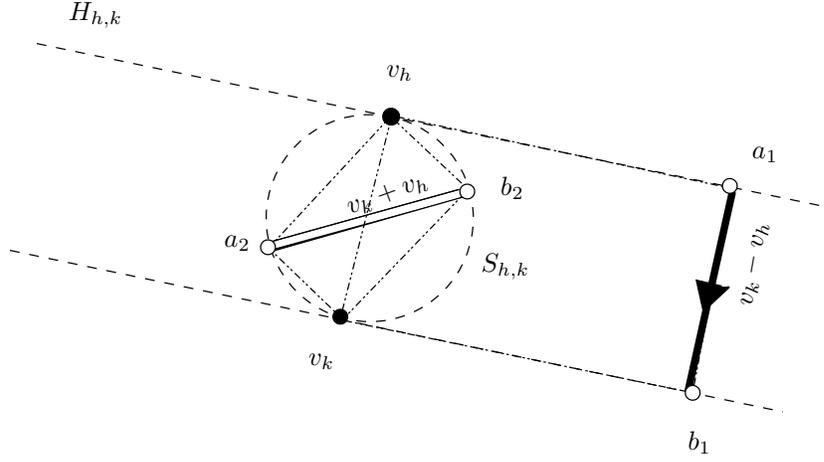}
}
\caption{\footnotesize{The plane $H_{h,k}$ and the sphere $S_{h,k}$. The points $a_1,b_1,v_k,v_h$ form  the vertices of a rectangle. Same for  the points $a_2 ,v_h,b_2,v_k$}}\label{fig1}
\end{minipage}
\end{figure}

We construct now the {\em colored graph} $\Gamma_S$ with vertices in $\mathbb R^n$.  Consider two points $a,b\in\mathbb R^n$ (sometimes even in $\C^n$).
 
If there exists a pair $h,k,\ h\neq k$ so that  $a\in H_{h,k}, \ b=a+v_k-v_h\in H_{k,h}$  we join the two points by a {\em black  } edge oriented form $a$ to $b$ and marked $v_k-v_h$. See the points $\{a_1,b_1\}$ in Figure \ref{fig1}.
In other words:
 \begin{equation}
\label{colo}\{a,b\}\ \Bigg|\begin{cases}
|b_1|^2-|a|^2=|v_k|^2-|v_h|^2\\ b-a=v_k-v_h
\end{cases}.
\end{equation} 
or equivalently $(a,v_h-v_k)=(v_h,v_h-v_k)$. \medskip

If there exists a pair $h,k,\ h\neq k$ so that  $a\in S_{h,k}, \ b=-a+v_k+v_h\in S_{k,h}=S_{h,k}$  we join the two points by a {\em red edge} marked by $v_h+v_k$. See the points $\{a_2,b_2\}$ in Figure \ref{fig1} (we represent red edges by a double line).  \begin{equation}
\label{colo1}\{a,b\}\ \Bigg|\begin{cases}
|a|^2+|b|^2=|v_h|^2+|v_k|^2\\ a+b=v_h+v_k
\end{cases}.
\end{equation} 
In other words $(a-v_h,a-v_k)=(b-v_h,b-v_k)=0$.

The points  $a ,v_h,b,v_k$ form the vertices of a rectangle.   In other words, \begin{lemma}
\label{oppo}$a,b$ are opposite points in the sphere $S_{h,k}$   having as one of its diameters $v_h,v_k$.
\end{lemma}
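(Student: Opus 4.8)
The statement of Lemma \ref{oppo} is essentially a reformulation of the conditions \eqref{colo1} that define a red edge, so the plan is to unwind the definitions. Recall that a red edge connects $a$ and $b$ precisely when $a+b = v_h+v_k$ and $|a|^2+|b|^2 = |v_h|^2+|v_k|^2$; the last parenthetical remark before the lemma already records that these two conditions are equivalent to $(a-v_h,a-v_k)=0$ and $(b-v_h,b-v_k)=0$. So the first step is simply to observe that $(a-v_h,a-v_k)=0$ says exactly that $a$ lies on the sphere $S_{h,k}$ with diameter endpoints $v_h,v_k$, by the defining equation \eqref{sfera} in Definition \ref{lasfe}; likewise $b\in S_{h,k}$.

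\textbf{The antipodality.} It then remains to check that $a$ and $b$ are antipodal on that sphere, i.e. that the midpoint of the segment $[a,b]$ is the center of $S_{h,k}$. The center of $S_{h,k}$ is $\tfrac12(v_h+v_k)$ (the midpoint of the diameter), and from $a+b=v_h+v_k$ we get $\tfrac12(a+b)=\tfrac12(v_h+v_k)$, which is exactly this center. Hence the chord $[a,b]$ passes through the center, so $a$ and $b$ are diametrically opposite points of $S_{h,k}$. Equivalently, writing $c=\tfrac12(v_h+v_k)$ for the center, one has $b-c=-(a-c)$, which is the precise meaning of ``opposite points''.

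\textbf{Consistency / the rectangle.} For completeness I would also note the geometric picture implicit in Figure \ref{fig1}: since both $a,b,v_h,v_k$ lie on $S_{h,k}$ and the pairs $\{a,b\}$ and $\{v_h,v_k\}$ are each a diameter, the quadrilateral $a\,v_h\,b\,v_k$ is inscribed in the sphere with both diagonals being diameters, hence it is a rectangle (each angle subtends a diameter and is therefore a right angle); this is the rephrasing of ``the points $a,v_h,b,v_k$ form the vertices of a rectangle'' used just above the lemma. One should keep track of the degenerate case $a=v_h$ (equivalently $b=v_k$) or $a=v_k$: here the ``rectangle'' collapses to the diameter segment, but the antipodality statement $b-c=-(a-c)$ still holds trivially, so the lemma is unaffected.

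\textbf{Main obstacle.} There is essentially no obstacle: the only thing to be careful about is bookkeeping, namely verifying that the two scalar conditions defining a red edge are genuinely equivalent to the two ``$(\,\cdot\,,\,\cdot\,)=0$'' conditions. This is the standard identity $|a|^2+|b|^2-(|v_h|^2+|v_k|^2) = -2\big[(a-v_h,a-v_k)+(b-v_h,b-v_k)\big]/1$ combined with $a+b=v_h+v_k$ — a one-line expansion — so the proof is short and purely computational.
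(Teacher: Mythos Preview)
Your argument is correct and follows exactly the route the paper intends: the paper does not supply a separate proof block for this lemma, since it is immediate from the observations recorded just above it (namely $(a-v_h,a-v_k)=(b-v_h,b-v_k)=0$ together with $a+b=v_h+v_k$), and your write-up simply makes those two lines explicit. One small slip: in your final paragraph the displayed identity has the wrong constant; under $a+b=v_h+v_k$ one actually gets $|a|^2+|b|^2-|v_h|^2-|v_k|^2=(a-v_h,a-v_k)+(b-v_h,b-v_k)$, with coefficient $+1$ rather than $-2$, but this does not affect the main argument.
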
 

\begin{remark}
For each pair $h,k$ there are finitely many   points  in  $  \mathbb Z^n$, in the sphere  $S_{h,k}$ . Therefore there are only finitely many red edges with integral vertices.
\end{remark} 
\begin{definition}
We construct the graph  $\Gamma_S$  with vertices all the points of $\mathbb R^n$ and  edges the black and red edges described.
\end{definition}
 We want to understand the connected components of the graph  $\Gamma_S$.\smallskip

By Constraint \ref{co1} if  $i\neq h, i\neq k$ then $v_i\notin S_{h,k}$.
We immediately have.
\begin{lemma}\label{spco}
The vectors $v_1,\ldots,v_m$ are a component of the graph  $\Gamma_S$. In this component every two vertices are joined by a red and by a black edge.
\end{lemma}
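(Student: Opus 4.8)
The claim is that the tangential set $S=\{v_1,\dots,v_m\}$ forms a single connected component of $\Gamma_S$, and moreover that within this component every pair $v_h,v_k$ is joined by both a red and a black edge. The plan is to verify the two edge conditions directly for an arbitrary pair $v_h\neq v_k$ in $S$, and then to check that no vertex $v_i$ of $S$ can be joined to any vertex outside $S$ by an edge whose other endpoint is again in $S$ — i.e. that the component containing $S$ does not drag in extra points via the given edge rules. Concretely, I would first recall the defining inclusions: a black edge from $a$ to $b$ exists when $a\in H_{h,k}$ and $b=a+v_k-v_h\in H_{k,h}$, and a red edge between $a$ and $b$ when $a\in S_{h,k}$ and $b=-a+v_h+v_k\in S_{h,k}$.

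\textbf{Step 1: the black edge.} Fix $h\neq k$ with $v_h,v_k\in S$. By Definition \ref{lasfe} the hyperplane $H_{h,k}=\{x\mid (x,v_k-v_h)=(v_k,v_k-v_h)\}$ obviously contains $v_k$ itself (plug in $x=v_k$), and the parallel hyperplane $H_{k,h}=v_k-v_h+H_{h,k}$ contains $v_h$. So setting $a=v_k$ gives $a\in H_{h,k}$ and $b=a+v_h-v_k=v_h\in H_{k,h}$ — wait, one must be careful about the orientation convention; the correct choice, matching \eqref{colo}, is to take $a$ with $(a,v_h-v_k)=(v_h,v_h-v_k)$, which is satisfied by $a=v_h$, and then $b=a+v_k-v_h=v_k$. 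Either way, $v_h$ and $v_k$ are the two endpoints of a black edge marked $v_k-v_h$. The only thing to check is that this really is an admissible edge of $\Gamma_S$, i.e. that the defining sphere/hyperplane pair $(h,k)$ is among the pairs used to build the graph — which it is, since $h,k$ range over all of $S$.

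\textbf{Step 2: the red edge.} Again fix $h\neq k$ in $S$. The sphere $S_{h,k}$ has $v_h,v_k$ as one of its diameters (equation \eqref{sfera}), so both $v_h$ and $v_k$ lie on $S_{h,k}$, and $v_h+v_k-v_h=v_k$ shows that $a=v_h$ and $b=v_k$ satisfy $a+b=v_h+v_k$ and $|a|^2+|b|^2=|v_h|^2+|v_k|^2$, i.e. condition \eqref{colo1}. Hence $v_h,v_k$ are also the endpoints of a red edge marked $v_h+v_k$. Thus every two distinct vertices of $S$ are joined by a red and by a black edge; in particular $S$ lies entirely inside one connected component.

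\textbf{Step 3: $S$ is a full component (the main point).} It remains to show the component containing $S$ contains \emph{nothing else}. Here I would invoke the remark just before the lemma: by Constraint \ref{co1}, if $i\neq h$ and $i\neq k$ then $v_i\notin S_{h,k}$, and likewise (from the computation after Definition \ref{lasfe}) $v_i\notin H_{h,k}$ unless $i=h$. The point is that any edge of $\Gamma_S$ incident to a vertex $v_i\in S$ is associated to some pair $(h,k)$ and forces $v_i$ to lie on $H_{h,k}$ (black case) or $S_{h,k}$ (red case); by the above this can only happen if $i\in\{h,k\}$. So the edge's defining pair already involves $v_i$, and then its other endpoint is forced: in the black case the other endpoint is $v_i+v_k-v_h$ with, say, $i=h$, and one must check this point — which a priori is an arbitrary lattice point — is actually $v_k\in S$ and not some stray vertex. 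This is where I expect the real work: one needs that the two hyperplanes $H_{h,k},H_{k,h}$ each meet $S$ in exactly the one expected point, so that a black edge from $v_h$ lands exactly on $v_k$; similarly a red edge from $v_h$ on the sphere $S_{h,k}$ whose other endpoint is also required to be in $S$ must land on $v_k$. For the red case this is immediate since $S\cap S_{h,k}=\{v_h,v_k\}$ by Constraint \ref{co1}. For the black case, $H_{h,k}\cap S$: if $v_j\in H_{h,k}$ then $(v_j,v_k-v_h)=(v_k,v_k-v_h)$; subtracting the same relation for $v_j=v_k$ we get $(v_j-v_k,v_k-v_h)=0$, and if $v_j\notin\{v_h,v_k\}$ this contradicts Constraint \ref{co1} (taking the three distinct elements $v_j,v_k,v_h$, note $(v_k-v_j,v_h-v_k)\neq 0$). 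Hence $H_{h,k}\cap S=\{v_k\}$, forcing the other endpoint of the black edge to be $v_k$. Assembling Steps 1–3 proves both assertions of the lemma; the main obstacle is exactly the bookkeeping in Step 3, which is entirely driven by Constraint \ref{co1}.
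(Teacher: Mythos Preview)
Your proof is correct and follows the same approach as the paper's (which is just two sentences: an edge from $v_i$ forces, via Constraint~\ref{co1}, that the pair $(h,k)$ involved satisfies $i\in\{h,k\}$, whence the other endpoint is again some $v_j\in S$). One comment: the ``real work'' you anticipate in Step~3 is illusory --- once you know $i=h$, the other endpoint of the black edge is $v_i+v_k-v_h=v_h+v_k-v_h=v_k$ by direct substitution, so there is nothing to check and the detour through $H_{h,k}\cap S$ is unnecessary.
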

\begin{proof}
If $v_i$ is joined to another vector $u$ by a black edge of type $H_{h,k}$ we must have  by constraint  \ref{co1} that $h=i$ and $u=v_k$. Similarly for a red edge of type $S_{h,k}$ using constraint \ref{co1}  we have $i=h$ or $i=k$.
\end{proof}
\begin{definition}
The component $v_1,\ldots,v_m$ is called the {\em special component} of the graph  $\Gamma_S$. 
\end{definition}

\subsection{The combinatorial graph $\Lambda_S$}\label{ilgrco}
Consider the space $F^{(0,1)}$  with its basis over $\C$ given by Remark \ref{LABASE}.

This basis is really indexed by $\mathbb Z^m\times\Z/(2)$, that is an integral vector plus a sign, we shall refer to it as {\em the frequency basis}.

 From Lemma \ref{matri} the  linear operator $ad(N)$ has the property that it  transforms every element of this basis  into a finite linear combination  of the same basis. 
\begin{definition}\label{lambdas}
The graph $\Lambda_S$  has vertices  $\mathbb Z^m\times\Z/(2)$  and edges corresponding to non zero entries of $ad(N)$.
\end{definition} 
\subsection{Abstract colored marked graphs.}
It will be useful to also use completely abstract graphs defined as follows
\begin{definition}\label{acmg}
A {\em abstract colored marked graph}  or {\em $\mathcal M$--graph}  for short, is
\begin{itemize}
\item A connected graph $\Gamma$ (without  repeated edges).
\item  A color {\em red or black} on each edge, displayed 
$$ \xymatrix{&&a\ar@{-}[r]^{black} &b&&&& c \ar@{=}[r]^{red} &d }$$
\item A marking $(i,j),\  1\leq i\leq m,  1\leq j\leq m, i\neq j$ on each oriented edge with the convention that the opposite orientation corresponds to the exchanged marking $(j,i)$.
\end{itemize}
\end{definition}
A {\em geometric realization} of the graph  $\Gamma$ is a graph isomorphism with a connected component of $\Gamma_S$ such that each black edge of $\Gamma$ marked $(i,j)$ corresponds to a  black edge of $A$ marked $v_j-v_i$. In the same way to each red edge of $\Gamma$ marked $(i,j)$ corresponds to a red edge of $A$ marked $v_i+v_j$.
\subsection{Summary of results}
In term of the frequency basis, denote by $e_i$ the basis of $\Z^m$ and consider the map 
\begin{equation}\label{ILPI}
\pi:\Z^m \to\Z^n,\qquad \pi(\nu_1,\ldots,\nu_m ):= \sum_{i=1}^m\nu_iv_i.
\end{equation}
If   $k=-\pi(\mu )  $, the vector $ e^{\ii \mu.x}z_k$ lies in $ F^{(0,1)}$. The vectors  $z_k \in V^{(0,1)},\quad e^{\ii \mu.x}z_k $  map  under $ad(N)$  to a linear combination in which respectively: \begin{enumerate}
\item If $h,k$ are connected with a black edge  oriented from $k$ to $h$ and labeled by $v_j-v_i$:
\begin{itemize}\item $ h=-\pi(\mu+e_i-e_j )=k+v_j-v_i$
\item In the geometric basis, $z_h$ has coefficient $4  \sqrt{\xi_i \xi_j} e^{\ii  (x_i-x_j)}$ in $M(z_k).$
\item In the frequency basis,  $e^{\ii (\mu+e_i-e_j).x}z_h$  has coefficient $4 i\sqrt{\xi_i \xi_j}  $ in $ad(N)(e^{\ii (\mu .x)}z_k).$
\end{itemize}       
\item If $h,k$ are connected with   a red  non-oriented edge  labeled by $v_j+v_i$
\begin{itemize} \item $    h= \pi(\mu+e_i+e_j) =-k+v_i+v_j$
\item In the geometric basis,  $\bar z_h$ has coefficient $4  \sqrt{\xi_i \xi_j} e^{\ii  (x_i+x_j)}$  in $M(z_k).$
\item In the frequency basis, $e^{\ii (\mu+e_i+e_j).x}\bar z_h$  has coefficient $4i \sqrt{\xi_i \xi_j}  $  in $ad(N)(e^{\ii (\mu .x)} z_k).$ 
\end{itemize}   

\end{enumerate}   
 \begin{remark}\label{iprima}
Under our convention on the indexing of the basis, $(\mu,+)$ corresponds to $e^{\ii (\mu  .x)}z_h$ while  $(-\mu-e_i-e_j,-)$ corresponds to $e^{\ii (\mu+e_i+e_j).x}\bar z_h$.  In the frequency graph $\Lambda_S$ therefore, the elements $\mu,\mu+e_i-e_j$ are joined by an edge in case i). In case ii) if $\nu=-\mu-e_i-e_j$, that is  if $\mu+\nu+e_i+e_j=0$, we have that $(\mu,+), (\nu,-)$  are connected by an edge marked $e_i+e_j$.
\end{remark}

We will show that, provided we choose the vectors $v_i$  {\em generically},  we  shall have several essential properties for these graphs  which we will  need in order to  and prove the reducibility Theorem \ref{teo1}, and study the Homological equation. 

 The  {\em generic assumption}  will be expressed by the fact that the coordinates of the vectors  $v_i$ do not satisfy some polynomial equation (a product of several equations which will be constructed in the course of the proof).  That is, we think of $(v_1,\ldots,v_m)\in \mathbb R^{nm}$  and will impose that this point does not lie in a  certain algebraic hypersurface whose equation will be at least implicitly given (in term of certain graphs).    The precise statements are contained in \S \ref{therin}.\smallskip 

We now discuss the properties  deduced from the generic assumption.

The first relates the two graphs  $\Lambda_S,\Gamma_S$.  In fact, take a frequency $\mu$, and let $\A$ be the associated component in $\Lambda_S$. Set $k=-\pi(\mu)$ and $A$ be the associated component   in $\Gamma_S$. We shall see  \begin{theorem}\label{oneone}The map $-\pi$ establishes a graph isomorphism between $\A$ and $A$, compatible with the markings.

Hence the space  spanned by all transforms of  $e^{\ii \mu.x}z_k$   applying the operator $  ad(M)$ has a basis extracted from the frequency basis in  correspondence, under $\pi$,   with  the vertices of  $A$.

Same statement for its conjugate generated by  $e^{-\ii \mu.x}\bar z_k$. 
 \end{theorem}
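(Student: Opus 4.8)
The plan is to show that $-\pi$ is a well-defined graph homomorphism from $\A$ (the component of $\mu$ in $\Lambda_S$) onto $A$ (the component of $k=-\pi(\mu)$ in $\Gamma_S$), that it respects colors and markings, and then that it is injective on $\A$, hence an isomorphism. The ``Summary of results'' computations already do most of the local work: if $(\mu,+)$ is joined in $\Lambda_S$ to $(\mu+e_i-e_j,+)$ by a black edge marked $(i,j)$, then $k=-\pi(\mu)$ and $h=-\pi(\mu+e_i-e_j)=k+v_j-v_i$ satisfy condition (1) of Lemma \ref{matri}, so $h,k$ are joined in $\Gamma_S$ by a black edge marked $v_j-v_i$; similarly for the red case $\nu=-\mu-e_i-e_j$, where $-\pi(\mu)$ and $\pi(\mu+e_i+e_j)$ are joined by a red edge marked $v_i+v_j$. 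So sending $(\nu,\sigma)\mapsto -\sigma\pi(\nu)$ (using Remark \ref{iprima} to track the sign/conjugation bookkeeping) carries each edge of $\A$ to an edge of $A$ with the correct decoration; this also shows the image is contained in $A$, and since both graphs are connected and every neighbor of a vertex in $A$ is hit (the local analysis is reversible: given an edge of $\Gamma_S$ at $k$ with a given marking, one reads off which basis vector of $F^{(0,1)}$ maps to it), the map is surjective onto $A$.

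The core of the argument is injectivity. Suppose $(\nu,\sigma)$ and $(\nu',\sigma')$ are two vertices of $\A$ with $\sigma\pi(\nu)=\sigma'\pi(\nu')$. Walking back to $\mu$ along paths in $\A$, the momentum-conservation relation \eqref{cons1} forces $\sigma=\sigma'$ throughout a component (the parity constraint in Definition \ref{glieffe} ties the sign of $\sum_i\nu_i$ to the $w$-degree), so it suffices to treat $\sigma=\sigma'=+$ and show $\pi(\nu)=\pi(\nu')\Rightarrow \nu=\nu'$ for $\nu,\nu'$ in the same component of $\Lambda_S$. The point is that moving along one edge of $\Lambda_S$ changes $\nu$ by $\pm(e_i-e_j)$ (black) or $\pm(e_i+e_j)$ (red), so $\nu'-\nu$ is a bounded integer combination of the $e_i$: because the component has bounded diameter — this is exactly the statement, proved elsewhere in the paper under the generic assumption, that blocks have size $\le n+1$ — the vector $\nu'-\nu$ has $\sum_i|\nu'_i-\nu_i|$ uniformly bounded. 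Then $\pi(\nu'-\nu)=\sum_i(\nu'_i-\nu_i)v_i=0$ with small coefficients; Constraint \ref{co2} (no relation $\sum_i\nu_i v_i=0$ with $\sum_i|\nu_i|<10$) — or, for larger components, the corresponding genericity hypothesis on $S$ from Definition \ref{gener} and \S\ref{therin} — forces $\nu'-\nu=0$. Hence $-\pi|_{\A}$ is injective, so a bijection onto $A$, and since it preserves edges and their decorations in both directions it is a graph isomorphism compatible with markings.

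The last two sentences of the theorem are then immediate: by Lemma \ref{matri} and the Summary, applying $ad(M)$ (equivalently $ad(N)$) to $e^{\ii\mu\cdot x}z_k$ produces precisely linear combinations of the frequency-basis vectors indexed by the neighbors of $(\mu,+)$ in $\Lambda_S$, and iterating stays inside the $\C$-span of the basis vectors indexed by $\A$; under the isomorphism just established these correspond bijectively, via $-\pi$, to the vertices of $A$. The conjugate statement for $e^{-\ii\mu\cdot x}\bar z_k$ follows by applying complex conjugation, which is a symmetry of the whole setup (Remark after Proposition \ref{pois}) exchanging $z\leftrightarrow\bar z$ and $\sigma\leftrightarrow-\sigma$.

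The main obstacle I expect is the injectivity step, i.e.\ ruling out that two distinct frequency vertices in the same component project to the same normal site. This is where the genericity of $S$ is genuinely used: without it, distinct $\nu,\nu'$ with $\pi(\nu)=\pi(\nu')$ could lie in one component, and the graph isomorphism would fail. Concretely one must combine (i) a bound on the diameter of components of $\Lambda_S$ — so that $\nu'-\nu$ has small $\ell^1$-norm — with (ii) the non-vanishing of the relevant ``resonance'' polynomials evaluated at the tangential sites, which is precisely what Definition \ref{gener} and the list of $P_i$'s in \S\ref{therin} are designed to guarantee. Establishing the diameter bound in the needed generality is itself part of the combinatorial analysis of the graphs promised after Theorem \ref{oneone}, so the honest statement is that this theorem is proved in tandem with those structural results rather than strictly before them.
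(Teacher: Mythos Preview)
Your overall strategy matches the paper's: the theorem is deduced from Proposition~\ref{bah}, whose proof runs exactly along the lines you sketch --- show $-\pi$ is a surjective graph homomorphism by the local computations, then prove injectivity using the bound on the size of components together with the genericity constraints on $S$. Your final paragraph correctly identifies that the size bound (rank $\le n$, hence $\le n+1$ vertices by Theorem~\ref{ridma}) is logically prior and comes from Theorem~\ref{aMT}.

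There are, however, two concrete issues in your injectivity argument.

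First, a bookkeeping slip: the map is $(\nu,\sigma)\mapsto -\pi(\nu)$, not $-\sigma\pi(\nu)$ (see Proposition~\ref{ilmco}). With your $\sigma$--twisted version red edges do not go to red edges: if $(\mu,+)$ is joined to $(\nu,-)=(-\mu-e_i-e_j,-)$, your map sends the second vertex to $+\pi(\nu)=k-v_i-v_j$, whereas the geometric red neighbor of $k=-\pi(\mu)$ is $-k+v_i+v_j$.

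Second, and more substantively, your reduction to $\sigma=\sigma'$ is not valid: a single component of $\Lambda_S$ will contain vertices of both signs whenever it has a red edge, and parity of $\sum_i\nu_i$ does not distinguish $(\nu,+)$ from $(\nu,-)$. With the correct map, if two vertices $a,b$ of $\A$ collide in $A$ one gets from \eqref{bacos} that $\sigma_a k+V(a)=\sigma_b k+V(b)$. When $\sigma_a=\sigma_b$ this is your linear relation $\pi(L(a)-L(b))=0$ with bounded coefficients, and genericity finishes. When $\sigma_a\neq\sigma_b$ one obtains instead $2k=\pi(L(b)-L(a))$, which is \emph{not} a relation among the $v_i$ alone; one must combine it with the quadratic norm relation $2|k|^2=N(b)-N(a)$ from \eqref{bacos} to obtain a genuine polynomial constraint on the $v_i$ (and check it is nontrivial, the only degenerate case $L(b)-L(a)=2e_i$ forcing $k=v_i\in S$, a contradiction). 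This is exactly what the paper does in the proof of Proposition~\ref{bah} (``either $2k=\sum a_iv_i$ if the path is odd \ldots''), and it is the missing half of your argument.
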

\begin{remark}
We shall often refer to such a subspace as {\em a block}, for $ad(N)$.  For generic $S$  the  symplectic form restricted to this block is identically 0 and if we add a block with its conjugate we have a non degenerate symplectic space decomposed as sum of two $ad(N)$--stable Lagrangian subspaces.
\end{remark}  This is thus  a block for $ \, ad(M)$ for the graph $\Lambda_S$,   and all other blocks  in the set of elements $\nu\,|\,-\pi(\nu)\in A$  are obtained from this block by multiplying with all the elements $\nu$ such that $\pi(\nu)=0$.

The entire space $F^{(0,1)}$ therefore decomposes into free submodules under the algebra $C$ of finite Fourier series in $e^{\ii \nu.x} \ |  \pi(\nu)=0$ corresponding to all the geometric blocks in $\Gamma_S$. For each such block $A$ a basis of the corresponding space over $C$ is obtained as follows.   We  choose a specific element $r\in A$, {\em a root} and then a specific $\mu$ with $\pi(\mu)=-r$. From $\mu$, and applying $ad(N)$,  we construct   the basis for the submodule in correspondence with the vertices of $A$.   \smallskip

 Let us summarize the most important properties that we shall prove in \S \ref{ilG} and \S \ref{Magt}.   \begin{theorem}\label{sunto}
\begin{enumerate} \item All connected components of the graph $\Gamma_S$   have as vertices points which are affinely independent hence at most $n+1$ vertices.
\item There are finitely many components containing red edges.  

\item The connected components of  $\Gamma_S$  consisting only of black edges  are divided into a finite number of families.

Each family is indexed  by an abstract marked graph  with $k\leq n$  edges, and it  depends on the elements of  a  $n-k$  dimensional sub-lattice.
\end{enumerate}
\end{theorem}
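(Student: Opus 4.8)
\smallskip
\noindent\emph{Proof strategy.}
The device driving all three statements is the \emph{paraboloid lift} $\R^n\hookrightarrow\R^{n+1}$, $x\mapsto\hat x:=(x,|x|^2)$, with $\hat v_i:=(v_i,|v_i|^2)$. Reading \eqref{colo} and \eqref{colo1} in these terms, a black edge $a\to b$ with marking $v_j-v_i$ becomes the translation $\hat b=\hat a+\hat v_j-\hat v_i$, while a red edge $\{a,b\}$ with marking $v_i+v_j$ becomes the point reflection $\hat b=-\hat a+\hat v_i+\hat v_j$. Hence, fixing a base vertex $a_0$ in a connected component $A$ other than the special one (which is Lemma~\ref{spco}), every vertex $a\in A$ satisfies $\hat a=\varepsilon_a\hat a_0+\sum_i n^{(a)}_i\hat v_i$ with $\varepsilon_a\in\{\pm1\}$, $n^{(a)}_i\in\Z$, where $\varepsilon_a$ and the coefficient sum $\sum_i n^{(a)}_i=1-\varepsilon_a$ are fixed by the parity of the number of red edges on a path from $a_0$ to $a$, and only the $\hat v_i$ indexing markings of $A$ (boundedly many, at most the $2n$ allowed by Definition~\ref{gener}) occur. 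I would also record the elementary reduction that if $\hat a_0,\dots,\hat a_N$ are affinely independent in $\R^{n+1}$ then $a_0,\dots,a_N$ are affinely independent in $\R^n$: an affine relation downstairs would confine the $a_p$ to a subspace of affine dimension $\le N-1$ on which $x\mapsto|x|^2$ is quadratic, so the lifts would span affine dimension $\le N$.

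For item (1) the plan is to take a spanning tree of $A$, order its vertices $a_0,a_1,\dots,a_N$ so that each $a_p$ is joined to an earlier one by a tree edge, and prove by induction that $\hat a_0,\dots,\hat a_p$ are affinely independent in $\R^{n+1}$. Substituting the expressions $\hat a_q=\varepsilon_q\hat a_0+\sum_i n^{(q)}_i\hat v_i$ into a putative affine dependence, the $\hat a_0$-coefficient equals minus the coefficient sum of the resulting combination of the $\hat v_i$; so the dependence either reads $C\hat a_0=-L$ with $L$ a rational combination of the $\hat v_i$ (of coefficient sum $-C$), forcing $a_0=\sum t_iv_i$ with $\sum t_i=1$ \emph{and} $|a_0|^2=\sum t_i|v_i|^2$, together with the hyperplane/sphere conditions carried by the edges at $a_0$; or it reads as a genuine linear relation among those $\hat v_i$. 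Each alternative is a nontrivial polynomial identity in the coordinates of the boundedly many $v_i$ entering $A$, and the \emph{generic} hypothesis (Definition~\ref{gener}) is exactly designed so that the resonance polynomials rule it out. Since one cannot have more than $n+1$ affinely independent points in $\R^n$, the induction terminates at $N\le n$, giving both the affine independence and the bound $|A|\le n+1$.

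For item (2): a red edge is a pair of integer points $\{a,b\}$ with $a+b=v_i+v_j$ lying on the sphere $S_{i,j}$ of \eqref{sfera}, and for each of the finitely many pairs $i<j$ the set $S_{i,j}\cap\Z^n$ is finite; hence there are only finitely many red edges, and a fortiori finitely many components containing one. For item (3): a component with only black edges lifts to a pure translate of $\hat a_0$, so it is completely described by its root $a_0$ together with the abstract marked graph $\Gamma$ (Definition~\ref{acmg}) recording the markings, and by (1) it has $\le n+1$ vertices. Each edge $a\to a+v_{j_l}-v_{i_l}$ of a realization imposes one \emph{linear} condition on $a_0$, namely that its source lie on the associated hyperplane; I would show that for generic $S$ these $k$ conditions are linearly independent (the degeneracies a cycle or a repeated marking would create being polynomial relations among finitely many of the $v_i$, excluded by Constraints~\ref{co1}, \ref{co1b} and Definition~\ref{gener}). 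Independence forces $k\le n$ (for $k>n$ the system is generically inconsistent, so no such component exists) and exhibits the admissible roots as a coset of the rank-$(n-k)$ sublattice $\{a\in\Z^n:(a,v_{j_l}-v_{i_l})=0,\ l=1,\dots,k\}$; since there are only finitely many abstract marked graphs with at most $n+1$ vertices and $\le n$ edges, this gives the finite list of families.

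The step I expect to be the real obstacle, as opposed to the bookkeeping above, is turning "each such dependence is a nontrivial polynomial relation" into an \emph{explicit finite} list of resonance polynomials in $2n$ vector variables whose simultaneous non-vanishing secures all the independence statements uniformly over the (finitely many combinatorial types of) components; this is precisely the content of Definition~\ref{gener} and of \S\ref{ilG}, \S\ref{Magt}. A secondary subtlety is the case $m=\infty$: finiteness of the set of red \emph{edges} persists once the spheres are controlled, and the number of black families stays finite modulo the lattice map $\pi$, but the quantitative statements there require a separate argument.
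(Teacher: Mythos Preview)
Your paraboloid lift $x\mapsto\hat x=(x,|x|^2)$ is exactly the right device, and your observation that black and red edges become translations and point reflections on $\hat x$ is precisely the content of the paper's formulas \eqref{bacos}. Your treatments of items~(2) and~(3) are essentially correct and match the paper's.

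There is, however, a genuine gap in item~(1). Your ``elementary reduction''---that affine independence of $\hat a_0,\dots,\hat a_N$ in $\R^{n+1}$ implies affine independence of $a_0,\dots,a_N$ in $\R^n$---is simply false. Take $n=1$ and three distinct points on the line: their lifts lie on a parabola and are affinely independent in $\R^2$, while the originals are of course dependent. Your justification (``the lifts would span affine dimension $\le N$'') is no contradiction, since $N+1$ points always span affine dimension $\le N$; affine independence means they span dimension \emph{exactly} $N$. Without this reduction your induction on the lifts yields at best $|A|\le n+2$, not $n+1$.

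The missing step, getting from $n+2$ down to $n+1$, is exactly the case $C\neq 0$ in your dichotomy: you obtain $\hat a_0=\sum t_i\hat v_i$ with $\sum t_i=1$ and assert this ``together with the hyperplane/sphere conditions'' is a nontrivial polynomial identity in the $v_i$. But the $t_i$ are rational numbers fixed by the combinatorics, so the relation $a_0=\sum t_iv_i$, $|a_0|^2=\sum t_i|v_i|^2$ is a constraint on $a_0$, not on the $v_i$; eliminating $a_0$ via the $n+1$ edge equations \eqref{bacos2} and showing the result is a \emph{nontrivial} constraint on the $v_i$ is the whole content of the paper's Theorem~\ref{aMT}. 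That theorem is not a bookkeeping step: it solves the system by Cramer's rule, argues (Proposition~\ref{codim}, Lemma~\ref{aMT1}) that the rational solution is actually polynomial in the $v_i$, then uses $O(n)$-equivariance to deduce $a_0=\sum c_av_a$ with scalar $c_a$, from which a single edge constraint forces $a_0\in S$, i.e.\ the special component. You correctly flag this as ``the real obstacle,'' but your proposal does not engage with it; the paper's machinery in \S\ref{Magt} is precisely what fills this hole.
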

Given a connected component $A$ with $k+1$ vertices ($k\leq n$) of $\Gamma_S$ we   fix a   vertex $x(A)\in A$ which we call  {\em the root}. Then:
\begin{corollary}\label{sunto1} For every other vertex $x_a$ (with $a=1,\dots,k)$ one has two functions on $A$:
\begin{equation}
\label{alcc}\sigma(a)=\pm 1\,,\   L(a)=\sum A^{(i)}_a e_i\,,\ A^{(i)}_a\in\Z;
\end{equation}$$ |L(a)|<n,\; \sum_iA^{(i)}_a=1-\sigma(a)\,, $$ 
such that $\sigma(a)$ is $1$ if the path from $x(A)$ to $x_a$ has an even number of red edges, $-1$ otherwise. We have from Formulas \eqref{bacos0} and \eqref{bacos}
\begin{equation}\label{iubl}x_a= \sigma(a)x(A) + \pi(L(a))= \sigma(a)x(A)+\sum A^{(i)}_a v_i \,,\end{equation}$$  |x_a|^2 = \sigma(a)|x(A)|^2 + \sum A^{(i)}_a |v_i|^2  \,.$$

The matrix $M$ is block diagonal with two blocks (denoted by $A,\pm$) in correspondence with each connected components $A$ of $\Gamma_S$. 

The  matrix 
$M$ restricted to $A,+$ is denoted by $M_{A,+}$ and given by $$M_{a,b}:= M_{(x(a),\sigma(a)),(x(b),\sigma(b))}= 0 $$
 if $(a,b)$ is not an edge (in particular it is zero on the diagonal).
$$M_{a,b}:= M_{(x(a),\sigma(a)),(x(b),\sigma(b))}=4\sigma(b)\sqrt{\xi_i\xi_j}e^{\ii (\sigma(b)L(b)-\sigma(a)L(a)).x}$$  if $(a,b)=e$ is  an edge marked $(i,j)$.

$M_{A,-}=-\overline{ M}_{A,+}$ is minus the conjugate of $M_{A,+}$.
\end{corollary}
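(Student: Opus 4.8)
The statement to be proved, Corollary \ref{sunto1}, is essentially a bookkeeping consequence of Theorem \ref{sunto} together with the explicit description of the action of $ad(N)$ on the geometric/frequency bases given in the ``Summary of results''. I would organize the argument around a single induction on the distance (in the tree $A$) from the chosen root $x(A)$. The key point to establish first is that each connected component $A$ of $\Gamma_S$ is a \emph{tree}: since by Theorem \ref{sunto}(1) the vertices of $A$ are affinely independent, there can be no cycle (a cycle would force an affine dependence among its vertices), so between the root $x(A)$ and any vertex $x_a$ there is a \emph{unique} reduced path. This makes the functions $\sigma(a)$ and $L(a)$ well defined.

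\textbf{Step 1: definition of $\sigma$ and $L$ along the path.} Fix $A$ and the root $x(A)$. For a vertex $x_a$, let $e^{(1)},\dots,e^{(\ell)}$ be the edges of the unique path from $x(A)$ to $x_a$, with colors $c_1,\dots,c_\ell\in\{\text{black},\text{red}\}$ and markings $(i_1,j_1),\dots,(i_\ell,j_\ell)$ (oriented along the path). Define $\sigma(a):=(-1)^{\#\{t:\,c_t=\text{red}\}}$ and build $L(a)$ recursively: traversing a black edge marked $(i,j)$ from $u$ to $u'$ gives $u'=u+v_j-v_i$, i.e.\ $\pi$-coordinate shifts by $e_j-e_i$; traversing a red edge marked $(i,j)$ from $u$ to $u'$ gives $u'=-u+v_i+v_j$, which flips the sign of the accumulated vector and adds $e_i+e_j$. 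Carrying this recursion along the path yields exactly \eqref{iubl}: $x_a=\sigma(a)x(A)+\pi(L(a))$, and applying $|\cdot|^2$ and using that for each edge the four points involved form a rectangle (so $|u'|^2 = \pm|u|^2 + $ the corresponding combination of $|v_i|^2$'s, as recorded in \eqref{colo}--\eqref{colo1}) gives the second identity $|x_a|^2=\sigma(a)|x(A)|^2+\sum A^{(i)}_a|v_i|^2$ by the same telescoping.

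\textbf{Step 2: the bounds on $L(a)$.} From the recursion, each black step changes $L$ by a vector of $\ell^1$-norm $2$ but the coefficient sum $\sum_i(\cdot)_i$ by $0$, while each red step flips the current $L$ and adds a vector with coefficient sum $2$. Hence after $r$ red edges one gets $\sum_i A^{(i)}_a = 1-\sigma(a)$ (this is $0$ if $r$ even, $2$ if $r$ odd — check the two parities against $\sigma(a)=(-1)^r$). For $|L(a)|<n$: the path has at most $k\le n$ edges by Theorem \ref{sunto}(1) (a tree on $\le n+1$ vertices has $\le n$ edges), and one tracks that the relevant $\ell^1$-type bound on the partial sums stays below $n$; here I would invoke the more precise combinatorial control of Theorem \ref{sunto}(3) and Corollary \ref{sunto1}'s companion formulas \eqref{bacos0}, \eqref{bacos} rather than redo the estimate from scratch.

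\textbf{Step 3: the matrix entries.} By Theorem \ref{oneone} the map $-\pi$ identifies the component $A\subset\Gamma_S$ with a component $\A\subset\Lambda_S$, and a basis of the corresponding block of $F^{(0,1)}$ is indexed by the vertices of $A$: to the vertex $x_a$ corresponds the frequency-basis element $e^{\ii(\sigma(a)L(a)\cdot x)}z_{x_a}^{\sigma(a)}$ obtained by starting from $z_{x(A)}$ and transporting along the path (the sign $\sigma(a)$ records whether we have landed on a $z$ or a $\bar z$). Reading off the coefficients from items (1)--(2) of the ``Summary of results'': an edge $(a,b)$ marked $(i,j)$ contributes, up to the $\sigma$-bookkeeping, the factor $4\sigma(b)\sqrt{\xi_i\xi_j}\,e^{\ii(\sigma(b)L(b)-\sigma(a)L(a))\cdot x}$ to $M_{a,b}$, and $0$ when $(a,b)$ is not an edge (in particular $h=k$ is impossible within a block since vertices are distinct, giving the zero diagonal). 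That $M_{A,-}=-\overline{M_{A,+}}$ is immediate from the reality of $Q_M$ and Remark \ref{sym}: complex conjugation exchanges the block $A,+$ with $A,-$ and introduces the sign from $\Sigma$.

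\textbf{Main obstacle.} The only genuinely nontrivial input is that each component is a tree with affinely independent vertices, but that is exactly Theorem \ref{sunto}(1), which we are allowed to assume. So the remaining difficulty is purely organizational: keeping the signs $\sigma(a)$, the orientations of edges, and the conjugation $z\leftrightarrow\bar z$ consistent simultaneously in the geometric basis, the frequency basis, and the matrix entries — in particular verifying that the ``transport along the path'' is path-independent (which it is, because $A$ is a tree, so there is nothing to check beyond well-definedness). I expect the bulk of the write-up to be the careful tracking in Steps 1 and 3, with Step 2 reduced to citing the already-stated formulas \eqref{bacos0}, \eqref{bacos}.
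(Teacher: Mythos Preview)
Your proposal contains a genuine error at its foundation. You assert that each connected component $A$ of $\Gamma_S$ is a \emph{tree}, arguing that affine independence of the vertices (Theorem \ref{sunto}(1)) excludes cycles. This inference is false: $k+1$ affinely independent points can perfectly well support a graph with cycles on them---affine independence constrains the \emph{positions} of the vertices, not the edge set. The paper itself exhibits such components explicitly: the triangles in \eqref{icircui} (three vertices joined by three black edges, or one black and two red) are valid components of $\Gamma_S$, and the later case analysis in \S\ref{irrid} treats circuits on four vertices as well. The remark ``choose a maximal tree in the graph'' scattered through the text would be vacuous if every component were already a tree.

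Because of this, your Step~1 breaks: the path from $x(A)$ to $x_a$ is in general not unique, so $\sigma(a)$ and $L(a)$ are not \emph{a priori} well defined by your recipe, and your closing remark that ``there is nothing to check beyond well-definedness'' because $A$ is a tree is precisely the missing step. The paper handles this by a different mechanism: it invokes Theorem \ref{oneone}, which says that $-\pi$ is a graph isomorphism from a component $\A\subset\Lambda_S$ onto $A$. In $\Lambda_S$ the vertices are literally elements of $\Z^m\times\Z/(2)$, so each vertex already \emph{is} a pair $(\mu,\sigma)$; the functions $L(a)$ and $\sigma(a)$ are then read off from this isomorphism rather than built by transport along a path, and path-independence is automatic (equivalently, the graph is ``compatible'' in the sense of \S\ref{cmg}, by Corollary \ref{lacco}). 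You already cite Theorem \ref{oneone} in Step~3 for the matrix entries; the fix is to invoke it at the very start, in place of the incorrect tree claim, and then your Steps~1--3 go through essentially as written.
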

\begin{proof}
  We fix a root $x$ on $A$ and a sign $\sigma(x)=+$. We associate to each vertex in $A$ a corresponding vertex in $\A$, by Theorem \ref{oneone}. This associates to each $x_a$ a sign $\sigma(a)$.  We use the relations \eqref{colo} and \eqref{colo1} to compute the $A_i^{(a)}$ by choosing a path from $x$ to each vertex $x_a$. Since $\pi$ is a graph isomorphism,  compatible with the two markings, it  defines $L(a)$ uniquely. 
   Notice that if say $\sigma(b)=+$  then  $\sigma(b)L(b)-\sigma(a)L(a)= e_j-e_i $ if the edge is black and $\sigma(b)L(b)-\sigma(a)L(a)= e_j+e_i $ otherwise.
To prove the second statement we implement the matrix rules of Lemma \ref{matri}.
\end{proof}
 
 \section{Proof of Theorems \ref{teo1}, \ref{teo1b} and \ref{teo2} \label{reinf}}

\subsection{Theorem   \ref{teo1}}We will prove Theorem \ref{teo1} by exploiting the block structure discussed in Corollary \ref{sunto1}. 
 For all  $k\in S^c$ set $x(k):=x(A)$ to be the root of the component $A$  of $\Gamma_S$ to which it belongs. Set $ L(k)=0$ if $k=x(A)$ is the root (this includes the connected components made of one point). Otherwise   $k= x_b$ for some index $b=b(k)$.  We then  set $L(k):=L(b)$ and $\sigma(k)=\sigma_b$(cf. \ref{sunto1}).
 Theorem \ref{teo1} is contained in the following, more precise, propositions:
  \begin{proposition}\label{reteo} i)\  The equations 
\begin{equation}\label{labella}
z_k= e^{\ii L(k).x}z_k',\ y=y'-\sum_{k\in S^c} L(k)|z_k'|^2,\ x=x'.
\end{equation}
define a symplectic change  of variables $D(s,r/2)\to D(s,r)$, which preserves the spaces $V^{i,j}$.\smallskip

{\em We denote by  $X=$ diag$(\{e^{\ii L(k).x}\}_{k\in S^c},\{ e^{-\ii L(k).x}\}_{k\in S^c})$,  the change of variables on  $w$ and  $\Omega'=$ diag$(\{\Ome_k -(\ome,L(k))\}_{k\in S^c},-\{\Ome_k -(\ome,L(k))\}_{k\in S^c}) $.} \smallskip

ii)\  The Hamiltonian $H$ in the new variables is
$$N+(y', A y')+P^{(3)}+P^{(6)}  $$
where  $N$ in the new variables is 
\begin{equation}\label{Nc}
N:= (\omega(\xi),y')+Q_{M'}(w'),\ M'= \Omega'+ X M X^{-1},
\end{equation}

and the terms $P^{(3)}$, $P^{(6)}$ satisfy the bounds of Theorem \ref{teo1}, {  v)}.
\end{proposition}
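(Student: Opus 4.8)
The plan is to verify the two claims by direct computation, using the structural results of Corollary \ref{sunto1} and Theorem \ref{oneone}. For part (i), I would first observe that the transformation \eqref{labella} is of the standard form used to ``untwist'' angle-dependent phases: it is a time-one flow-type map whose generating structure I would exhibit explicitly. Concretely, I would check that the $1$-form $dy\wedge dx + i\sum_k dz_k\wedge d\bar z_k$ is preserved. Writing $z_k = e^{\ii L(k)\cdot x} z_k'$ one gets $dz_k = e^{\ii L(k)\cdot x}(dz_k' + \ii z_k' \, L(k)\cdot dx)$, and similarly for $\bar z_k$; plugging into $i\sum_k dz_k\wedge d\bar z_k$ produces the term $i\sum_k dz_k'\wedge d\bar z_k'$ plus a cross term proportional to $\sum_k L(k)\,d|z_k'|^2 \wedge dx$, which is exactly cancelled by the correction $y = y' - \sum_k L(k)|z_k'|^2$ in the first factor $dy\wedge dx$. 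This is a routine but important bookkeeping check. That the map sends $D(s,r/2)\to D(s,r)$ follows because $|e^{\ii L(k)\cdot x}|\le e^{|L(k)| s}$ with $|L(k)|<n$ bounded (Corollary \ref{sunto1}), so for $s$ small the phases are uniformly close to $1$ and the shift in $y$ is of size $O(r^2)$; hence a point in $D(s,r/2)$ is mapped inside $D(s,r)$. Preservation of the spaces $V^{i,j}$ is immediate since the change of variables is monomial in $w$ (degree-preserving) and affine-with-quadratic-correction in $y$, with $x$ untouched, so the weight grading $2i+j$ is respected.

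For part (ii), I would substitute \eqref{labella} into the Hamiltonian $H = N + (y',Ay')/2 + P^{(3)} + P^{(6)}$ written in the form \eqref{hama}. The key point is the transformation of the normal form $N = (\ome(\xi),y) + \sum_k \Ome_k|z_k|^2 + Q_M(w)$. The linear term: $(\ome(\xi),y) = (\ome(\xi),y') - \sum_k (\ome(\xi),L(k))|z_k'|^2$, which reassigns part of the diagonal. The diagonal $\sum_k \Ome_k |z_k|^2 = \sum_k \Ome_k |z_k'|^2$ is invariant (the phases have modulus behaving trivially on $|z_k|^2$). Combining these two gives the new diagonal $\Omega' = \mathrm{diag}(\{\Ome_k - (\ome,L(k))\})$ as stated. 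For the off-diagonal part $Q_M(w) = -\frac12 w M J w^t$, conjugating $w$ by the diagonal matrix $X$ of phases turns $M$ into $XMX^{-1}$: one checks entrywise using Lemma \ref{matri} and the identities $\sigma(b)L(b) - \sigma(a)L(a) = e_j \mp e_i$ from Corollary \ref{sunto1} that the new entry $(XMX^{-1})_{a,b} = 4\sigma(b)\sqrt{\xi_i\xi_j}\, e^{\ii(\sigma(b)L(b)-\sigma(a)L(a))\cdot x}$ has precisely the form announced in Corollary \ref{sunto1}; in particular the $x$-dependence in each block is reduced to the ``root'' frequencies, which is what makes the subsequent reduction to constant coefficients possible. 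Thus $M' = \Omega' + XMX^{-1}$ and $N$ takes the form \eqref{Nc}. The remaining terms: $(y',Ay')/2$ is already in the target variables after the substitution $y \mapsto y' - \sum_k L(k)|z_k'|^2$, which only shifts it by terms that I would absorb into $P^{(3)}$ (they are at least quadratic in $w$ and of the stated degree); and $P^{(3)},P^{(6)}$ transform into functions of the same type — the monomial nature of the change of variables preserves the ``at least cubic in $w$'' property of $P^{(3)}$ and all weight counts, so the bounds of Theorem \ref{teo1} item (iv) are inherited, with the constants changing only by the bounded factor $e^{Cns}$ coming from the phases.

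The main obstacle I anticipate is not any single hard estimate but rather the careful entrywise verification that conjugation by $X$ genuinely produces the clean block structure of Corollary \ref{sunto1} — i.e.\ that the phase exponents combine as $\sigma(b)L(b) - \sigma(a)L(a)$ and that this agrees with $e_j - e_i$ (black) or $e_j + e_i$ (red) on every edge, consistently across a whole connected component and compatibly with the sign function $\sigma$. This requires using Theorem \ref{oneone} (the graph isomorphism $-\pi$ between the frequency component $\A$ and the geometric component $A$, compatible with markings) to ensure $L(k)$ is well-defined independently of the chosen path from the root, together with the relations \eqref{colo}, \eqref{colo1}. Once this combinatorial compatibility is in hand, the symplecticity and the estimates are routine; I would present the symplecticity check and the entrywise computation in detail and merely indicate that the bounds follow dimensionally as in Lemma \ref{stime}.
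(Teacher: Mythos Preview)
Your approach is essentially the same as the paper's: a direct computation showing the symplectic form is preserved (the paper writes out exactly the same cancellation of the cross term $\sum_k L(k)\,d|z_k'|^2\wedge dx'$ against the $y$-shift), the domain check via $|w'|_{a,p}\le e^{Cs}r/2\le r$, substitution of \eqref{labella} into $H$, and the observation that $X$ is symplectic so that $Q_M$ transforms by conjugation.  Two small remarks: the cross terms arising from $(y,Ay)$ under $y=y'-\sum_k L(k)|z_k'|^2$ are only quadratic in $w$ (and linear in $y'$), so they belong in $P^{(6)}$, not in $P^{(3)}$ as you wrote; and the entrywise verification you flag as the ``main obstacle'' --- that the phases combine to give constant coefficients on each block --- is not actually needed for Proposition~\ref{reteo} itself (the statement only asserts $M'=\Omega'+XMX^{-1}$) but is precisely the content of the next result, Proposition~\ref{reteo1}.
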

\begin{proof} {\it i)}\ 
Since $$ \sup_{D(s,r/2)}| w'|_{a,p}\le e^{ C s}|w|_{a,p}\le e^{ C s}r/2 \le r$$ for $s$ small enough the transformation is well defined from $D(s,r/2)$ to $D(s,r)$. It is symplectic because: 
$$dy\wedge dx +\ii dz\wedge d\bar z=dy'\wedge dx' - \sum_kL(k)d(|z_k|^2)\wedge dx' +$$ $$\ii dz'\wedge d\bar z'  - \sum_kL(k).dx'(z'_k\wedge d\bar z'_k-\bar z'_k\wedge d z'_k)=dy'\wedge dx'+\ii dz'\wedge d\bar z'.$$
Finally it preserves the spaces $F^{i,j}$ since it is linear in the variables $w$.\smallskip

  {\it ii)}\  We   substitute the new variables in the Hamiltonian and use the relation  $JX = X^{-1}J$, i.e. the fact that $X$ is symplectic. The bounds follow from Lemma \ref{stime}, notice that we have put all the terms in $H_0^{>2}$ which are not quadratic in $y$ in the perturbation $P^{(6)}$ where they contribute to the terms of weight $\geq 3$ with a  term of order $r^{4+\alpha/2}$ which is negligible with respect to $P^{(3)}$. \end{proof}
 From an algebraic point of view,  we have performed a diagonal change of coordinates using the matrix $X$ on the free module  $V^{(0,1)}_f$. Recall that this is  the space   of finite linear combinations of the element  $z_k,\bar z_k,\ k\in S^c$ with coefficients in the algebra $A$ of finite Fourier series in the variables $x$. 

It is clear that the block structure is still preserved, the main fact is now that the matrix has constant coefficients.

 \begin{proposition}\label{reteo1}
i)\  $M'$ has constant coefficients and is block diagonal with the same block structure as $M$. On a block $(A,+)$ with root $x(A)$,   \begin{equation}
\label{MpA}M'_A= (|x(A)|^2+ 4\sum_j \xi_j)I+ 2C_A,
\end{equation} where $C_A$ has the following entries in the vertices $a,b$.  \begin{equation}
\label{CAab}C_A(a,b)=\begin{cases}
0\quad \text{if}\quad M_{a,b}=0,\quad a\neq b\\4\sigma(b)\sqrt{\xi_i\xi_j  }\quad \text{if  $(a,b)=e$ is  an edge marked $(i,j)$.
}\\ \sigma(a)(\xi,L(a)) \quad \text{if }\quad a=b.
\end{cases}  .
\end{equation}

ii) $C_A$ is self-adjoint if $A$ does not contain red edges. 
If $A$ contains red edges there is a diagonal matrix $\Sigma$ with entries $\pm1$ such that $\Sigma C_A$ is self adjoint.

iii)\  The matrix $C_A$ depends only from the abstract $\mathcal M$--graph corresponding to $A$ and hence is chosen from a finite list. 
 
\end{proposition}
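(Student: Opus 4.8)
The statement is essentially a computation built on the formula $M'=\Omega'+XMX^{-1}$ of Proposition \ref{reteo} together with the explicit block $M_{A,+}$ of Corollary \ref{sunto1}. Block diagonality is immediate: $X$ and $\Omega'$ are both diagonal in the frequency basis and $M$ is block diagonal with one pair of blocks $(A,\pm)$ for each connected component $A$ of $\Gamma_S$ (Theorem \ref{oneone}), hence so is $M'=\Omega'+XMX^{-1}$, with the same blocks. Fix a block $(A,+)$ with root $x(A)$ and index its rows and columns by the vertices $a$, which under Theorem \ref{oneone} correspond to the frequency--basis indices $(x_a,\sigma(a))$ with $x_a=\sigma(a)x(A)+\pi(L(a))$ and $L(x_a)=L(a)$ (the convention fixed just before Proposition \ref{reteo}). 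Since $X_{(k,\sigma),(k,\sigma)}=e^{\ii\sigma L(k)\cdot x}$, the $(a,b)$ entry of $XMX^{-1}$ on this block is $e^{\ii\sigma(a)L(a)\cdot x}\,M_{(x_a,\sigma(a)),(x_b,\sigma(b))}\,e^{-\ii\sigma(b)L(b)\cdot x}$; plugging in the value $4\sigma(b)\sqrt{\xi_i\xi_j}\,e^{\ii(\sigma(b)L(b)-\sigma(a)L(a))\cdot x}$ from Corollary \ref{sunto1} for an edge marked $(i,j)$, the three exponents cancel and the off--diagonal entries of $M'_{A,+}$ become the constants recorded in \eqref{CAab}. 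This is exactly why the change of variables \eqref{labella} was engineered. Since $M$ vanishes on the diagonal of each block, the diagonal of $M'_{A,+}$ comes only from $\Omega'$.

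For the diagonal entry one has $(M'_{A,+})_{aa}=\sigma(a)\bigl(\Omega_{x_a}-(\omega,L(a))\bigr)$; I would substitute $\Omega_{x_a}=|x_a|^2+4\sum_j\xi_j$ from \eqref{gliome}, $|x_a|^2=\sigma(a)|x(A)|^2+\sum_iA^{(i)}_a|v_i|^2$ from Corollary \ref{sunto1}, and $(\omega,L(a))=\sum_iA^{(i)}_a\bigl(|v_i|^2-2\xi_i+4\sum_j\xi_j\bigr)$, and then use $\sum_iA^{(i)}_a=1-\sigma(a)$: the $|v_i|^2$--terms cancel and one is left with $(M'_{A,+})_{aa}=|x(A)|^2+4\sum_j\xi_j+2\sigma(a)(\xi,L(a))$. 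Comparing with $M'_A=(|x(A)|^2+4\sum_j\xi_j)I+2C_A$ gives precisely the diagonal entries of $C_A$ in \eqref{CAab}; in particular every entry of $M'$ is independent of $x$, i.e. $M'$ has constant coefficients. This proves i).

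For ii), the change of variables of Proposition \ref{reteo} is real, so $Q_{M'}$ is a real quadratic Hamiltonian and by Remark \ref{sym} the matrix $M'\Sigma$ is self--adjoint, with $\Sigma$ the sign matrix of that Remark. Restricted to the invariant block $(A,+)$ the matrix $\Sigma$ equals $-\Sigma_A$, where $\Sigma_A:=\mathrm{diag}(\sigma(a))_a$, so $M'_{A,+}\Sigma_A$ is self--adjoint; since $M'_{A,+}$ differs from $2C_A$ by a real scalar multiple of the identity, $C_A\Sigma_A$, hence $\Sigma_A C_A$, is self--adjoint. If $A$ has no red edge all $\sigma(a)$ equal $\sigma(x(A))=+1$, so $\Sigma_A=I$ and $C_A$ itself is self--adjoint; otherwise $\Sigma_A$ is a genuine diagonal $\pm1$ matrix. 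Alternatively one reads ii) straight off \eqref{CAab}: the reverse of an edge marked $(i,j)$ is marked $(j,i)$ and $\sqrt{\xi_i\xi_j}$ is symmetric, so $C_A(a,b)=c\,\sigma(b)$ and $C_A(b,a)=c\,\sigma(a)$ with the same real $c$, and $C_A(a,a)=\sigma(a)(\xi,L(a))$ is real, whence $\Sigma_A C_A$ is real symmetric.

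Finally iii): by \eqref{CAab} every entry of $C_A$ is a polynomial in the $\sqrt{\xi_i}$ whose shape depends only on the combinatorial data of $A$ --- the underlying graph, the two edge--colors (which determine the signs $\sigma(a)$ as the parity of red edges on a path from the root) and the markings in $\{1,\dots,m\}$ on the edges (which determine the lattice vectors $L(a)$ via \eqref{colo}, \eqref{colo1}); two components with isomorphic colored graphs whose markings agree after a relabelling of the index set give the same matrix after renaming variables. By Theorem \ref{sunto} there are finitely many components containing a red edge, and the black--only ones fall into finitely many families, each with a fixed colored marked graph of at most $n$ edges; moreover a component has at most $n+1$ affinely independent vertices and, by the structural bounds of \S\ref{therin} and Theorem \ref{sunto}, its markings involve at most $2n$ distinct indices, so the $\xi$--dependence of $C_A$ factors through at most $2n$ of the variables. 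Hence, after introducing auxiliary variables $\zeta_1,\dots,\zeta_{2n}$, there is a finite list $\mathcal M$ of polynomial matrices --- one per combinatorial type --- from which every $C_A$, and therefore (up to the scalar shift $|x(A)|^2+4\sum_j\xi_j$) every block of $M'$, is obtained by the substitution described before Theorem \ref{teo1}. Steps i) and ii) are a direct if slightly delicate computation; the point needing the most care is the counting in iii) --- that the number of parameters entering a single block is genuinely bounded by $2n$ and that the list of combinatorial types is finite --- and this is precisely what the structural Theorem \ref{sunto}, together with the genericity of \S\ref{therin}, supplies.
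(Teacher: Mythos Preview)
Your proof is correct and follows essentially the same route as the paper's own argument: the off--diagonal cancellation in $XMX^{-1}$ via Corollary \ref{sunto1}, the diagonal computation of $\Omega'_a$ using $|x_a|^2=\sigma(a)|x(A)|^2+\sum_iA^{(i)}_a|v_i|^2$ together with $\sum_iA^{(i)}_a=1-\sigma(a)$, and the observation that every entry of $C_A$ is read off from the abstract colored marked graph. Your treatment of ii) is more detailed than the paper's (which simply declares it immediate); both your symplectic argument via Remark \ref{sym} and your direct inspection of \eqref{CAab} are valid, and your handling of iii) makes explicit the $2n$--variable bound that underlies the finite list $\mathcal M$ of Theorem \ref{teo1}.
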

 \begin{proof}
 i) By definition $M'= \Omega'+ X M X^{-1}$ so let us analyze separately these two operators.  Since $X$ is diagonal the matrix $X M X^{-1}$ is 0 on the diagonal. 

The off diagonal entry associated to the indices $a,b$ of two vertices in the given block is 0 unless they are joined by an edge marked $(i,j)$. Then 

  \begin{equation}\label{0fdi2} (XM  X^{-1})_{a,b}=\end{equation} $$\sigma(b)4\sqrt{\xi_i\xi_j}  e^{  \sigma(a)\ii L(a).x}  e^{  -\sigma(b)\ii L(b).x}e^{\ii(\sigma(b)L(b)-\sigma(a)L(a)) .x}=4\sigma(b)\sqrt{\xi_i\xi_j}$$

   On the diagonal  of $M'$ we have the contribution of $ \Omega'$.  Applying  Formulas \eqref{iubl} and \eqref{gliome}, since by \eqref{alcc}  we have $\sum_iA^{(i)}_a=1-\sigma(a)$,  we get:\begin{equation}\label{iblocchi}
\Omega'_a=\sigma(a)(|k(a)|^2 +4\sum_j\xi_j -(\omega,L(a)))= 
\end{equation} 

$$|x(A)|^2 + \sigma(a)\sum A^{(i)}_a |v_i|^2+ \sigma(a)( 4\sum_j\xi_j -(\omega,L(a)))=$$$$|x(A)|^2 + \sigma(a)\sum A^{(i)}_a |v_i|^2+ \sigma(a)( 4\sum_j\xi_j -\sum_i(|v_i|^2 -2\xi_i + 4 \sum_{j } \xi_j )A^{(i)}_a  )=$$

$$|x(A)|^2 +   \sigma(a)( 4\sum_j\xi_j -\sum_i(  -2\xi_i + 4 \sum_{j } \xi_j )A^{(i)}_a  )= $$
$$|x(A)|^2 +    \sigma(a)[2(\xi,L(a))+ 4\sum_j\xi_j -  4 \sum_{j } \xi_j  (1-\sigma(a)) ]= $$$$|x(A)|^2 +    \sigma(a) 2(\xi,L(a))+ 4\sum_j\xi_j . $$
We then define $2C_A$ to have off-diagonal entries given by  \eqref{0fdi2} and on the diagonal $2\sigma(a)(\xi,L(a)).$ 
\smallskip

{\it ii) } Is immediate.\smallskip

{\it iii)} \   In the matrix $C_A$ the off--diagonal entries depend only on which pairs $a,b$ are connected by a marked edge, which depends only on the abstract $\mathcal M$--graph. In the same way the diagonal entries depend  on $L(a)$ which depends only on the path from the root to a (again this depends only on the abstract marked graph). Finally there are only a finite number of abstract marked graphs with $k\leq n+1$ vertices.

\end{proof}\begin{remark} Notice that in the new variables    the term $H_0$ is independent of $x$. 
\end{remark}
\subsection{Combinatorial blocks}  We have described the matrix  $M'$ in the basis $z'_k$. In particular, for a given geometric block $A$ we have chosen the block  in $V^{(0,1)}$ generated by the element $z_{x(A)}=  z'_{x(A)}.$
We have to understand in this formalism the elements in $F^{(0,1)}$-- that is  momentum conservation  and parity in the new variables-- and then compute the matrices of the operator $ad(N)$ on each combinatorial block of the graph $\Lambda_S$ (cf. Definition \ref{lambdas}).  
\begin{lemma}\label{mom2} Momentum conservation \eqref{cons1} and parity  in the new variables give the constrain:
\begin{equation}
\label{lama}e^{i \nu.x}y^i\prod_{k}(z_k')^{\alpha_k}(\bar z_k')^{\beta_k}\in F^{i,j} \to  \sum_i \nu_iv_i +\sum_{k}\sigma(k)(\alpha_k-\beta_k)x(k)=0\,,
\end{equation} $$ \sum_i \nu_i +\sum_k(\alpha_k+\beta_k)= {\rm even}.$$ In particular $e^{i \sigma(k)\nu.x} z_k'\in F^{0,1}$ if and only if $ x(k)=-\pi(\nu)$ and $\sum_i\nu_i$ is odd.
\end{lemma}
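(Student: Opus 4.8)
The statement is a translation of momentum conservation and parity through the linear substitution $z_k = e^{\ii L(k).x}z_k'$ of Proposition \ref{reteo}. The plan is to start from the original constraints \eqref{cons1} expressed in the old variables $z_k$, substitute, and track how the exponents and the Fourier mode in $x$ change. First I would fix a monomial in the new variables $e^{\ii \nu'.x} y^i \prod_k (z_k')^{\alpha_k}(\bar z_k')^{\beta_k}$ and rewrite it in the old variables. By \eqref{labella} we have $z_k' = e^{-\ii L(k).x} z_k$ and $\bar z_k' = e^{\ii L(k).x}\bar z_k$, so the product picks up a phase factor $e^{-\ii (\sum_k (\alpha_k-\beta_k)L(k)).x}$; hence the same monomial written in the old variables carries the Fourier exponent $\nu = \nu' - \sum_k(\alpha_k-\beta_k)L(k)$, while the $z$-multi-indices $\alpha_k,\beta_k$ are unchanged (the substitution is diagonal and does not mix different $k$).

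Next I would plug this into the old momentum constraint from \eqref{cons1}, namely $\sum_i \nu_i v_i + \sum_k(\alpha_k-\beta_k)k = 0$. Using $\nu_i = \nu'_i - \sum_k(\alpha_k-\beta_k)L(k)_i$ and $\pi(L(k)) = \sum_i L(k)_i v_i$, the term $\sum_i(\sum_k(\alpha_k-\beta_k)L(k)_i)v_i = \sum_k(\alpha_k-\beta_k)\pi(L(k))$ must be subtracted. Now I invoke Corollary \ref{sunto1}, specifically \eqref{iubl}: for each normal site $k$ we have $x(k) = \sigma(k) k + \pi(L(k))$ (here using that, in the notation of \S\ref{reinf}, $x(k)$ is the root of the component of $k$ and, reading \eqref{iubl} with $x_a = k$, $\sigma(a)=\sigma(k)$, $L(a)=L(k)$), hence $\pi(L(k)) = x(k) - \sigma(k)k$, equivalently $k = \sigma(k)(x(k) - \pi(L(k)))$ since $\sigma(k)^2 = 1$. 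Substituting $k = \sigma(k)x(k) - \sigma(k)\pi(L(k))$ into $\sum_k(\alpha_k-\beta_k)k$ and combining with the $-\sum_k(\alpha_k-\beta_k)\pi(L(k))$ coming from the rewriting of $\nu$, the $\pi(L(k))$ contributions should cancel (using $\sigma(k)^2=1$ term by term), leaving exactly $\sum_i \nu'_i v_i + \sum_k \sigma(k)(\alpha_k-\beta_k)x(k) = 0$, which is \eqref{lama}; the parity statement $\sum_i\nu'_i + \sum_k(\alpha_k+\beta_k)$ even follows because $\sum_i L(k)_i = 1-\sigma(k)$ by \eqref{alcc}, so $\nu'$ and $\nu$ differ, componentwise-summed, by $\sum_k(\alpha_k-\beta_k)(1-\sigma(k))$, which has the same parity as $\sum_k(\alpha_k-\beta_k)$ and hence as $\sum_k(\alpha_k+\beta_k)$, leaving the old parity condition unchanged in form. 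The special case $e^{\ii \sigma(k)\nu.x}z_k' \in F^{0,1}$ is then just \eqref{lama} with a single $z_k'$ (so $\alpha_k=1$, all other exponents zero, $i=0$): the momentum relation reads $\sum_i(\sigma(k)\nu_i)v_i + \sigma(k)x(k) = 0$, i.e. $x(k) = -\pi(\nu)$, and parity reads $\sigma(k)\sum_i\nu_i + 1$ even, i.e. $\sum_i\nu_i$ odd.

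The computation is essentially bookkeeping; the only place where genuine input is needed is the identity $\pi(L(k)) = x(k) - \sigma(k)k$ and $\sum_i L(k)_i = 1-\sigma(k)$, both of which are exactly the content of Corollary \ref{sunto1} (Formulas \eqref{iubl} and \eqref{alcc}). The main obstacle, if any, is notational consistency: one must be careful that $L(k),\sigma(k), x(k)$ as defined at the start of \S\ref{reinf} (root of the component, sign, lattice vector attached to $k$) match the conventions of Corollary \ref{sunto1}, and that the diagonal substitution genuinely leaves $\alpha,\beta,i$ untouched so that no cross-terms appear. Once that matching is pinned down, the cancellation of the $\pi(L(k))$-terms is forced by $\sigma(k)^2=1$ and the proof is complete.
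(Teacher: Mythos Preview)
Your approach is exactly the paper's: substitute \eqref{labella} into a monomial, read off the shifted Fourier exponent $\nu-\sum_k(\alpha_k-\beta_k)L(k)$, and then use Corollary~\ref{sunto1} to replace $k-\pi(L(k))$ by $\sigma(k)x(k)$.

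There is one slip to correct. Reading \eqref{iubl} with $x_a=k$ and $x(A)=x(k)$ gives
\[
k=\sigma(k)\,x(k)+\pi(L(k)),
\]
not $x(k)=\sigma(k)k+\pi(L(k))$ as you wrote. With the correct identity the combination
\[
\sum_k(\alpha_k-\beta_k)k-\sum_k(\alpha_k-\beta_k)\pi(L(k))
=\sum_k(\alpha_k-\beta_k)\,\sigma(k)x(k)
\]
cancels on the nose (no appeal to $\sigma(k)^2=1$ is needed), which is \eqref{lama}. With your version $k=\sigma(k)x(k)-\sigma(k)\pi(L(k))$ the $\pi(L(k))$-terms do \emph{not} cancel, so the step as written would fail. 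The parity argument is fine once you note that $1-\sigma(k)\in\{0,2\}$ is always even, so $\sum_i\nu_i$ is unchanged modulo~$2$ under the substitution; the phrase ``has the same parity as $\sum_k(\alpha_k-\beta_k)$'' is not quite what you need --- the point is simply that the correction is even.
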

\begin{proof}The momentum conservation in the variables $w'$ can be derived directly from \eqref{cons1} by substitution:
$$e^{\ii \nu\cdot x}\prod_k(z_k')^\alpha(\bar z_k')^\beta= e^{\ii (\nu\cdot x -\sum_k(\alpha_k-\beta_k)L(k)\cdot x)}\prod_k(z_k)^\alpha(\bar z_k)^\beta  $$
then, if $L(k)= \sum_i A(k)^{(i)}e_i$ (cf. Corollary \ref{sunto1}),   momentum conservation reads 
\begin{equation}\label{mocon}
\sum_i\nu_iv_i -\sum_i\sum_k(\alpha_k-\beta_k)A(k)^{(i)}v_i +\sum_k(\alpha_k-\beta_k)k . 
\end{equation}    Recalling that, by Corollary \ref{sunto1} with $k=x_a$, $k- \pi(L(k))= \sigma(k) x(k)$
one obtains formula \eqref{lama}.    The parity condition is  that $\sum_i(\nu_i - \sum_k (\alpha_k-\beta_k)A(k)^{(i)})+\sum_k(\alpha_k+\beta_k)$ is even. In Corollary \ref{sunto1} we have seen that $\sum_i A(k)^{(i)}=0,2$ and the parity follows.\end{proof} 
\smallskip

 We have described the matrix $M'$ on geometric blocks. From this description we can deduce a description of the operator $ad(N)$ on the combinatorial blocks, in particular on $F^{(0,1)}$. A combinatorial block $(\A,+)$ over a given geometric block $ A$ is generated by  an arbitrary lift  $e^{\ii\nu.x}z_{x(A)}$ of the root $x( A)$  in $F^{(0,1)}$ (i.e. by choosing a $\nu$ such that $x(A)=-\pi(\nu)$). By Lemma \ref{mom2},  the operator  $(\omega(\xi),y')$  contributes to $ad(N)_\A$ the scalar matrix  $-2\ii (\xi,\nu)I$. \begin{definition}\label{Cacomb}
Given a combinatorial block $\A$ generated by  the lift  $e^{\ii\nu.x}z_{x(A)}$ we define $C_\A= C_A -(\xi,\nu)I$.
 For the blocks $\A,-$ we apply sign change.\end{definition} 
 \begin{proposition}
\label{LaCA}  let $A$ be a geometric block with root $x(A)$. On the combinatorial block $(\A,+)$ generated by  an element $e^{\ii\nu.x}z_{x(A)}$ the operator $ad(N)$ has matrix:
\begin{equation}\label{allowa1}
-\ii ad(N)_\A=    \Big(|x(A)|^2+\sum\nu_i|v_i|^2  +4(\sum_i\nu_i+1)\sum_j\xi_j\Big)I+2 C_\A .
\end{equation}
 \end{proposition}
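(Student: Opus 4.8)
The plan is to read off \eqref{allowa1} by assembling three things that are already in place: the matrix form of $ad(N)$ on $F^{(0,1)}$ (Lemma \ref{homol}), the explicit description of the constant matrix $M'$ on a geometric block (Proposition \ref{reteo1}), and the momentum/parity bookkeeping in the new variables (Lemma \ref{mom2}). By Lemma \ref{homol}, on $F^{(0,1)}$ the operator $ad(N)$ equals $\omega(\xi)\cdot\partial_x+\ii M'$ — here $Q=M'$ because $N=(\omega(\xi),y')+Q_{M'}(w')$, and, $M'$ being constant, the $\ii\nabla_xQ$ entry in \eqref{rep1} vanishes. Since $M'$ is block diagonal with the blocks $M'_A$ of Proposition \ref{reteo1} and $\omega(\xi)\cdot\partial_x$ acts diagonally in the frequency basis, it is enough to restrict each of the two summands to the combinatorial block $\A$ over a given geometric block $A$ and record the resulting matrices.

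The crux is to pin down $\omega(\xi)\cdot\partial_x$ on $\A$. I would fix the generating lift $e^{\ii\nu\cdot x}z'_{x(A)}$ of the root, so that by Lemma \ref{mom2} one has $x(A)=-\pi(\nu)$ with $\sum_i\nu_i$ odd; this is the $\nu$ of Definition \ref{Cacomb} and of the statement. The point is that, after the change of variables $z_k=e^{\ii L(k)\cdot x}z'_k$ of Proposition \ref{reteo}, momentum conservation in the $z'$-variables forces \emph{every} vertex of $\A$ to carry the \emph{same} Fourier exponent $\nu$: by Lemma \ref{mom2} a monomial built on $z'_k$ lies in $F^{(0,1)}$ only when $x(k)=-\pi(\text{exponent})$, and all vertices $x_a$ of the component $A$ share the root $x(x_a)=x(A)$, so the basis element of $\A$ attached to $x_a$ is $e^{\ii\nu\cdot x}z'_{x_a}$ when $\sigma(a)=+1$ and $e^{\ii\nu\cdot x}\bar z'_{x_a}$ when $\sigma(a)=-1$ (for the conjugate block $\A,-$ replace $\nu$ by $-\nu$). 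Equivalently one can track the exponent edge by edge from the root using the transition rules recorded before Remark \ref{iprima} together with \eqref{colo}, \eqref{colo1}, \eqref{iubl}: along a black edge marked $(i,j)$ the $z$-exponent shifts by $e_i-e_j$ while $L$ shifts by $e_j-e_i$, along a red edge by $e_i+e_j$ against the conjugation of $e^{\ii L\cdot x}$, and in both cases the two contributions cancel — which is precisely what the $L(k)$-shift was designed to do (and Theorem \ref{oneone} guarantees that $-\pi$ is a marking-compatible isomorphism $\A\to A$, so these transitions describe all of $\A$). Hence $\omega(\xi)\cdot\partial_x$ acts on all of $\A$ as the scalar operator $\ii(\omega(\xi),\nu)\,I$. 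The one thing to be careful about here is the $z'$-versus-$\bar z'$ and $\pm$-block sign bookkeeping, especially when $A$ contains red edges.

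Next, $\ii M'$ on $\A$ is, by construction of the block and Theorem \ref{oneone}, represented by the matrix $\ii M'_A$ of Proposition \ref{reteo1}, i.e. $\ii\big((|x(A)|^2+4\sum_j\xi_j)I+2C_A\big)$; on $\A,-$ one applies the sign change of Definition \ref{Cacomb}. Combining the two summands gives $-\ii\,ad(N)_\A=(\omega(\xi),\nu)I+(|x(A)|^2+4\sum_j\xi_j)I+2C_A$. It then remains to expand, using $\omega_i(\xi)=|v_i|^2-2\xi_i+4\sum_j\xi_j$, which yields $(\omega(\xi),\nu)=\sum_i\nu_i|v_i|^2+4\big(\sum_i\nu_i\big)\big(\sum_j\xi_j\big)-2(\xi,\nu)$. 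Regrouping $4\big(\sum_i\nu_i\big)\big(\sum_j\xi_j\big)+4\sum_j\xi_j=4\big(\sum_i\nu_i+1\big)\sum_j\xi_j$ and absorbing $-2(\xi,\nu)$ into $2C_A-2(\xi,\nu)I=2C_\A$ (Definition \ref{Cacomb}) leaves exactly the scalar $|x(A)|^2+\sum_i\nu_i|v_i|^2+4\big(\sum_i\nu_i+1\big)\sum_j\xi_j$ in front of $I$, which is \eqref{allowa1}. Apart from the uniform-exponent claim of the second paragraph and the attendant sign tracking, every step is a short bookkeeping computation.
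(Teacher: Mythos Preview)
Your proposal is correct and follows essentially the same approach the paper has in mind: the paper does not write out a proof for this proposition, treating it as immediate from Proposition~\ref{reteo1}, Lemma~\ref{mom2}, and Definition~\ref{Cacomb}. Your argument spells out precisely the implicit steps --- in particular the key observation (via Lemma~\ref{mom2}) that in the primed variables every vertex of $\A$ carries the \emph{same} Fourier exponent $\nu$, so that $\omega(\xi)\cdot\partial_x$ acts as the scalar $\ii(\omega(\xi),\nu)I$ on the whole block, after which the regrouping $(\omega,\nu)I+M'_A$ into the stated form is straightforward bookkeeping.
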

\begin{remark}
\label{delag}Notice that, for any given block $A$,  the two combinatorial blocks $\A,+$ and $\A,-$ form two Lagrangian subspaces of a non degenerate symplectic space and the full space $F^{(0,1)}$ is the direct sum of these subspaces. 
\end{remark}
 \subsection{Theorem \ref{teo1b}}
 The core of  Theorem \ref{teo1b} is to put the normal form $N$ in a canonical form through a linear change of variables in the $w'$ which depends smoothly on the parameters $\xi$.

Let $\Sigma$ be a  diagonal matrix with $p$ entries  equal to 1 and $q=n-p$ entries equal to $-1$.  If $p>0,q>0$  the matrix   $\Sigma$ defines an indefinite scalar product preserved by a non--compact form of the orthogonal group usually denoted $O(p,q)$.      If $C$ is an $n\times n$ matrix such that $\Sigma C$ is symmetric then it can be brought into a suitable canonical Jordan form  by conjugation with elements of  $O(p,q)$.  Is furthermore $C$ is semisimple  then this normal form consists in decomposing the space $\R^n$ into orthogonal subspaces stable under $C$ and irreducible under $C$. These have either dimension 1 and correspond to the real eigenvalues or dimension 2 and correspond to the pairs of complex conjugate eigenvalues  (one can also see that we have at most $\min(p,q)$ of such 2 dimensional subspaces.  In the case of our blocks containing red edges we have chosen a basis  where the symplectic products are of the form $\pm  i$ associated to such a sign matrix 
   $\Sigma$. For such a symplectic block sum of $A,+$ and $A,-$  a matrix  of the form  $\begin{vmatrix} U&0\\0&U\end{vmatrix}$ with $U\in O(p,q)$ is symplectic and brings  $\begin{vmatrix} C&0\\0&-C\end{vmatrix}$ in a  {\em diagonal} normal form (of course some diagonal blocks are $2\times 2$ and are matrices associated to complex numbers.  
\smallskip

 We need to study matrices which depend algebraically on parameters, we have the following  proposition:
\begin{proposition}\label{smoot}
If $C(\xi)$ depends algebraically on parameters $\xi\in \R^m$, one can  define globally  and algebraically its eigenvalues  provided we {\em cut}  some real semialgebraic hypersurface (in an arbitrary way) so that   the complement is simply--connected. 

If $C(\xi)$ is semisimple  on an open set,   one can  define globally  and algebraically   the change of coordinates which brings $C(\xi)$ in the given diagonal form provided we {\em cut}  some real semialgebraic hypersurface (in an arbitrary way) so that   the complement is simply--connected.
\end{proposition}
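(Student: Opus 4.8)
This is at bottom a statement about algebraic functions of the parameters and their monodromy, so the plan is to exhibit the eigenvalues, the spectral projections and the normalising scalars as branches of algebraic functions whose ramification is confined to real semialgebraic hypersurfaces, and then to invoke the elementary fact that a finite covering over a simply connected base is trivial.

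First I would treat the eigenvalues. Consider the characteristic polynomial $\chi(\lambda,\xi)=\det(\lambda I-C(\xi))$: its coefficients are polynomials in the entries of $C(\xi)$, hence algebraic functions of $\xi$ (polynomials in the $\sqrt{\xi_i}$ in the case at hand). Let $\mathcal A_1\subset\R^m$ be the real semialgebraic hypersurface where the discriminant of $\chi$ in $\lambda$ vanishes, together with the branch loci of the coefficients. Off $\mathcal A_1$ the polynomial $\chi(\cdot,\xi)$ has $n$ simple roots, so the set of pairs $(\xi,\lambda)$ with $\xi\in\R^m\setminus\mathcal A_1$ and $\chi(\lambda,\xi)=0$ is an $n$-sheeted covering of $\R^m\setminus\mathcal A_1$, the local branches being real-analytic --- indeed algebraic --- in $\xi$ by the implicit function theorem. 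Cutting finitely many further real semialgebraic hypersurfaces so that the complement $\Omega$ is simply connected, this covering restricted to $\Omega$ is trivial, and its $n$ sections give globally single valued branches $\lambda_1(\xi),\dots,\lambda_n(\xi)$, each algebraic over the field of rational functions in $\xi$; some of them may occur in complex conjugate pairs. This is the first assertion.

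Now assume $C(\xi)$ semisimple on an open set and work over $\R^m\setminus\mathcal A_1$, where the $\lambda_j$ are also simple. I would form the spectral projections by the interpolation formula $\pi_j(\xi)=\prod_{k\neq j}(C(\xi)-\lambda_k(\xi)I)/(\lambda_j(\xi)-\lambda_k(\xi))$, which is rational in $\xi$ and the $\lambda_k$, hence algebraic in $\xi$, has rank one, and satisfies $\sum_j\pi_j=I$. Fixing a vector $e$, generic at a base point, yields eigenvectors $\pi_j(\xi)e$, nonzero off a further algebraic hypersurface. To land in the group $O(p,q)$ preserving $\Sigma$ I would normalise: with $q_j(\xi)=(\pi_j(\xi)e)^{t}\Sigma(\pi_j(\xi)e)$, an algebraic function of $\xi$, set $u_j(\xi)=(\pi_j(\xi)e)/\sqrt{\pm q_j(\xi)}$ so that $u_j^{t}\Sigma u_j=\pm1$; the square root is a two valued algebraic function branched over the real semialgebraic hypersurface $\{q_j=0\}$, and enlarging the cut (keeping the complement simply connected) lets me select a branch. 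For a conjugate pair $\lambda_j,\bar\lambda_j$ one instead normalises a spanning pair of the real $2$-dimensional invariant subspace $(\pi_j+\bar\pi_j)(\xi)\R^n$ so as to obtain the block $\mathrm{Re}(\lambda_j)\,I+\mathrm{Im}(\lambda_j)\,J$, the relevant scalar again being algebraic. Collecting these columns produces $U(\xi)\in O(p,q)$, algebraic in $\xi$, with $U(\xi)^{-1}C(\xi)U(\xi)$ in the prescribed diagonal normal form, defined over the simply connected complement of the union of all the hypersurfaces introduced.

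I expect the genuine content to be pure bookkeeping: one must check that every object produced (eigenvalues, spectral projections, normalising square roots, and in the complex case the $2\times 2$ reduction) is an \emph{algebraic} function of $\xi$, and that each ramification occurs along a real semialgebraic hypersurface, so that the finitely many cuts stay semialgebraic; the triviality of the monodromy is then immediate from simple connectivity. The one delicate point is the normalisation inside the noncompact group $O(p,q)$: unlike the compact case one cannot simply rescale eigenvectors to unit length, and one must know that $q_j(\xi)\not\equiv0$, i.e. that the eigenspaces are nondegenerate for $\Sigma$. This is precisely where the hypotheses that $C(\xi)$ is semisimple and that $\Sigma C(\xi)$ is symmetric enter, guaranteeing that $\{q_j=0\}$ is a proper subvariety which can be absorbed into the cut.
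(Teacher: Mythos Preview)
The paper does not actually prove this proposition: it declares it ``a fairly standard fact'' and explicitly leaves out the proof. Your sketch is a correct and reasonably detailed account of the standard argument the authors have in mind --- monodromy of the roots of the characteristic polynomial over the complement of the discriminant locus, trivialisation of the resulting finite covering over a simply connected base, then Lagrange interpolation for the spectral projections and normalisation with respect to the indefinite form $\Sigma$ to land in $O(p,q)$. Your identification of the one genuinely delicate point (nondegeneracy of the eigenlines for $\Sigma$, which follows from $\Sigma C$ being symmetric and the eigenvalues being simple off the discriminant) is exactly right and is what makes the construction go through in the noncompact case.
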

This is a fairly standard fact  and we leave out the proof.

 \begin{proof}[Proof   of Theorem \ref{teo1b}]  i) By formula \eqref{MpA}, on the  symplectic block  $(A,\pm)$ the quadratic Hamiltonian $N$  is given by the matrix   \begin{equation}\label{form}\MM_A=(|x(A)|^2+4\sum_j\xi_j)\begin{vmatrix}I &0 \\0&-I\end{vmatrix}+2 \begin{vmatrix}
C_A&0\\0&-  C_A 
\end{vmatrix} ,\end{equation}
where $x(A)$ is  the root of $A$ while $C_A$ depends only on the combinatorial block ${\mathcal A}$ of which $A$ is a realization. and   let $\lambda_1(\xi),\dots ,\lambda_k(\xi)$ be its eigenvalues.  We denote by $\tilde\Ome_i=\tilde\Ome_i(\xi)$ the corresponding eigenvalues of  $\MM_A^+$ thus
\begin{equation}
\label{tildOm}\tilde\Ome_i(\xi)=|x(A)|^2+4\sum_j\xi_j+\lambda_i(\xi).
\end{equation}

Suppose that $\MM_A$ is semisimple,   by Proposition \ref{smoot}   $\MM_A$  can be conjugated  by a real invertible matrix  $U$ in $D_{s,k}$.  Moreover for $\xi$ outside a semialgebraic set the matrix $U(\xi)$ is algebraic in the $\xi$. 
 We proceed as described above for all the $\MM_A$ which are self--adjoint, i.e. for all $A$ that do not contain red edges. We apply 
$w'\to w^{\rm fin}= U_A w'$ so that  $Q_{\MM_A}(w')= Q_{D_A}(w^{\rm fin})$ where $$D_A=\begin{pmatrix}\mathcal{D}_A & 0 \\ 0& -\mathcal{D}_A\end{pmatrix}\,,\quad \mathcal D_A ={\rm diag}(\tilde\Ome_{k(a)})$$ $\mathcal D_A$ is the diagonal matrix of the (real) eigenvalues of $\MM_{ A}$. Now the total transformation $U$ is block diagonal. Since $|k(a)|^2 \leq |x(a)|^2+ C$ for some uniform $C$ (depending only on the size $\max_{v_i\in S}|v_i|^2$) we have that 
$$|w^{\rm fin}|_{a,p}\leq C |w_+|_{a,p} .$$ 
Item i) follows by setting $R'$ to be the matrix of  $\sum_{k,h\in \mathcal H}(w^{\rm fin}_h,M_{h,k}'Jw^{\rm fin}_k)
$.\smallskip

ii)\ The $\tilde\Ome_k$ are the eigenvalues of the matrices ${\mathcal M}_A$ of Formula \eqref{form} as $A$ varies of the connected components of $\Gamma_S$. Since by Proposition \ref{reteo} iv) $C_A$ depends only on the abstract M-graph, there is only a finite number of such matrices. Moreover the entries of these matrices are homogeneous of degree one in $\xi$. We apply Proposition \ref{smoot}; we cut away a semialgebraic set ${\mathfrak A}$ such that the complement is simply connected. In $A_\alpha\setminus {\mathfrak A}$ we obtain a list of eigenvalues
 $\lambda^{(i)}(\xi)$ and a list of matrices (with determinant equal to one) $U_i(\xi)$ which diagonalize the $C_A$ which are semisimple (this set contains at least all the $C_A$ which do not contain red edges). These are all algebraic functions.

 Let ${{\mathfrak A}}_\epsilon$ be a neighborhood of ${\mathfrak A}$ of radius $\epsilon$ then in $\tilde A_\alpha:=A_\alpha\setminus {{\mathfrak A}}_\epsilon$ the $\lambda^{(j)}$ are analytic functions in $x$, the bounds \eqref{linh} follow by Cauchy estimates by setting $\epsilon= r^{\alpha}/4$.
 
iv)\ Follows directly from Lemma \ref{stime} since the linear change of variables $(x,y,z)\to (x',y',z^{fin})$ does not   modify the bounds. Indeed the only point is to prove that the Lipschitz bounds are unchanged, this follows by the estimates of item {\it iii)} and the fact that  in $\tilde A_\alpha$ also the eigenspaces are analytic in the parameters $\xi$.
\end{proof}
\begin{remark}\label{iperb} Suppose that $C_A$ for some $A$ is semisimple for all $\xi\in A_\alpha$. If the eigenvalues of $C_A$ are real then one can proceed as above.
 In the case of hyperbolic blocks (with eigenvalues $ a\pm  \ii b$)  we obtain  the Hamiltonian:
 $$ -a(z_2z_1+\bar z_2\bar z_1)-b \frac{\bar z_2^2+z_2^2}{2} =-\frac12  w\begin{pmatrix} D & 0 \\ 0 &-D\end{pmatrix}J w^t\,,\quad D= \begin{pmatrix} a & -b \\ b &a\end{pmatrix}, $$ we have proved Remark \ref{mini}.

 In general however $C_A$ may not be semisimple and thus have Jordan blocks of size $>1$. A normal form classification of this quadratic Hamiltonian is feasible, see \cite{A},  but intricate and not particularly useful. \end{remark}

\smallskip
\subsection{The homological equation\label{homeq}} The homological equation consists in the analysis of the range and kernel of the operator $ad(N)$ on $F^{\leq 2}$.  Using the block diagonal form \eqref{rep1}  this analysis can be split into 4 different equations. The crucial ones are the last two which for historical (and confusing) reasons are called the first and second Melnikov equations. 
 
In order to study the Homological equation it is not really necessary to perform the
reduction of Proposition \ref{reteo}, since we have seen that the change of variables leaves it essentially unchanged. In the same way it is not necessary to diagonalize the Hamiltonian,  since this just conjugates the operator $ad(N)$. The main point in the homological equation is to study the Kernel of $ad(N)$ on the real subspace of $F^{\leq 2}$, and to show that the kernel coincides with  the subspace $F^{int}$:
$$ F^{int}:=\left\{F\in F^{\leq 2}\vert\; F(\lambda,W)= c+(\lambda(\xi),y)+ Q_{W(\xi)}(w)\right\}$$ where $c$ is a constant, $\lambda\in \R^m$ and  $W$ is a block--diagonal matrix with the same blocks as $M'$ and (block by block) simultaneously diagonalizable with $M'$. 

\begin{lemma}\label{hoho}
For any given $\xi$ the condition $\ker(ad(N))=F^{int}$ is equivalent to the  fact that the non-trivial Melnikov resonances  \eqref{mel} do not occur.
\end{lemma}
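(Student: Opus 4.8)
The statement is an "iff" between a kernel condition on $ad(N)$ and the non-occurrence of the Melnikov resonances \eqref{mel}. The plan is to use the explicit block-matrix representation of $ad(N)$ on $F^{\le 2}$ from Lemma \ref{homol} (Formula \eqref{rep1}), combined with the tensor description of its action on $F^{0,2}$ from Lemma \ref{homol2}, and reduce everything to the spectral data of the constant matrices $ad(N)_{\A}$ computed in Proposition \ref{LaCA}. Since $ad(N)$ is block-triangular with diagonal blocks
$\ome\cdot\partial_x$ on $F^0$ and $F^{1,0}$, and $\ome\cdot\partial_x + iQ$ on $F^{0,1}$, and $\ome\cdot\partial_x + i[Q,\cdot]$ on $F^{0,2}$, the kernel is the direct sum of the kernels on these four pieces (the off-diagonal term $i\nabla_x Q$ only enters the $F^{0,2}$-row and is handled below).

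First I would treat the two "scalar" blocks. On $F^0$ the operator is $\ome\cdot\partial_x$, acting diagonally on Fourier modes $e^{\ii(\nu,x)}$ with eigenvalue $\ii(\ome(\xi),\nu)$; its kernel is the constants $c$ precisely when $(\ome(\xi),\nu)\ne 0$ for all $\nu\ne 0$, which is the first Melnikov condition. The same computation on $F^{1,0}$ gives the $(\lambda(\xi),y)$-part of $F^{int}$ under the same condition. Next, on $F^{0,1}$: after the reduction of Proposition \ref{reteo} (or directly, since it leaves $ad(N)$ essentially unchanged), $ad(N)$ is block diagonal over the combinatorial blocks $\A$ of $\Lambda_S$, and on $(\A,+)$ it acts by the constant matrix of Proposition \ref{LaCA}, i.e. a scalar times $I$ plus $2C_{\A}$; after diagonalizing as in Theorem \ref{teo1b} the eigenvalues are exactly $-\ii\big((\ome(\xi),\nu)+\tilde\Ome_k(\xi)\big)$ for the relevant $(\nu,k)$ compatible with momentum (Remark \ref{iprima}). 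Hence $\ker(ad(N))|_{F^{0,1}}=0$ iff none of the second Melnikov resonances $(\ome(\xi),\nu)+\tilde\Ome_k(\xi)=0$ occur — note there is no nonzero intersection with $F^{int}$ here since $F^{int}$ has no linear-in-$w$ part.

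For $F^{0,2}$ I would use Lemma \ref{homol2}: under the surjection $b\colon F^{0,1}\otimes F^{0,1}\to F^{0,2}$, the operator $ad(N)$ is induced by $ad(N)|_{F^{0,1}}\otimes I + I\otimes ad(N)|_{F^{0,1}}$, so on a product block its eigenvalues are sums $-\ii\big((\ome(\xi),\nu)+\tilde\Ome_k(\xi)\big) + (-\ii)\big((\ome(\xi),\mu)\pm\tilde\Ome_h(\xi)\big)$, i.e. $-\ii\big((\ome(\xi),\nu+\mu)+\tilde\Ome_k(\xi)+\sigma\tilde\Ome_h(\xi)\big)$. Setting the sum equal to zero is exactly the third resonance in \eqref{mel}. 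The trivial resonances excluded there, $(\sigma,\nu,h,h)$ with $\sigma=-$, $\nu=0$, are precisely the ones producing the "integrable" kernel directions: a block paired with its conjugate $(\A,-)$ gives, on the diagonal-in-$\sigma$ part, eigenvalue zero coming from $\tilde\Ome_h(\xi)-\tilde\Ome_h(\xi)$, and these span exactly $Q_{W(\xi)}(w)$ for $W$ block-diagonal and simultaneously diagonalizable with $M'$ — that is the $Q_W$-part of $F^{int}$. The off-diagonal entry $i\nabla_x Q$ in \eqref{rep1} must be accounted for: it maps $F^{1,0}$ into the $F^{0,2}$-component, so an element of $\ker(ad(N))$ with nonzero $F^{1,0}$-part would force a compatibility relation, but since $Q(x)=Q_M$ is actually $x$-independent after the reduction \eqref{Nc} (Remark after Proposition \ref{reteo1}), $\nabla_x Q=0$ and the block is genuinely diagonal; alternatively one argues that on the $x$-independent part of $F^{1,0}$ the term vanishes and the general case reduces to the Fourier-mode analysis. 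Collecting the four pieces: $\ker(ad(N))=F^{int}$ exactly when the first, second and third resonances of \eqref{mel} (all but the trivial ones) fail to occur.

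The main obstacle I anticipate is the $F^{0,2}$ bookkeeping: one must verify carefully that the surjection $b$ does not collapse distinct product eigenvectors in a way that creates spurious kernel elements, that the identification of trivial resonances with $F^{int}$ is exact (neither larger nor smaller), and that $W$ being "simultaneously block-diagonalizable with $M'$" captures precisely the zero-eigenspace and not more — in particular in the presence of red-edge (non-self-adjoint) blocks where $M'$ may fail to be semisimple, so the Jordan structure must be tracked and "$\ker$" distinguished from "generalized kernel". The rest is a direct translation of \eqref{rep1} and Proposition \ref{LaCA} into spectral conditions.
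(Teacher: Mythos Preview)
Your proposal is correct and follows essentially the same approach as the paper: decompose $ad(N)$ via Lemmas~\ref{homol} and~\ref{homol2}, reduce to an eigenvalue computation on each of the four pieces $F^{0}$, $F^{1,0}$, $F^{0,1}$, $F^{0,2}$, and identify the trivial third-Melnikov resonances with the $Q_W$-part of $F^{int}$. You are in fact more careful than the paper's terse argument about the off-diagonal entry $i\nabla_x Q$ and the potential non-semisimplicity of red-edge blocks, both of which the paper glosses over.
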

\begin{proof} By Lemma \ref{homol} $ad(N)$ restricted to $F^{0,0}$ and $F^{1,0}$ is the operator $\ome(\xi)\cdot\partial_x$ which has eigenvalues $(\ome(\xi),\nu)$.

 By Lemma \ref{homol} $ad(N)$ restricted to $F^{0,1}$ has eigenvalues $\ii \big((\ome(\xi),\nu )\pm \tilde\Ome_{k(a)} \big)$ and hence is invertible if they are non--zero.
 
 For $F^{0,2}$ we use Lemma \ref{homol2}:
 for each combinatorial block  consider a basis of eigenvectors $a_i^{\A}$ for $ad(N)$  on $\A,+$ denote by $b_i^{\A}$ the dual basis in $\A,-$. A basis for $F^{0,2}$ formed by eigenvectors of $ad(N)$ is obtained by making all the possible products of two eigenvectors in the basis. The corresponding eigenvalue is the sum of the two eigenvalues.
 
Thus $ad(N)$ restricted to $F^{0,2}$ is invertible on an eigenvector (say for example $a_i^\A b_j^\mathcal B$)  if the corresponding Melnikov resonance does not occur. Moreover the trivial Melnikov resonances $a_i^\A b_i^\A$ span $F^{int}\otimes\C\cap F^{0,2}$.
 \end{proof}
\begin{proof}[Proof of Theorem \ref{teo2} item i)]  Given a block $A$ with root $x(A)$,  let $(\A,+)$ be a corresponding combinatorial block. By the previous Lemma we need to show that $ad(N)_\A$ is invertible.
The operator $-\ii ad(N)_\A$ computed at $\xi=0$   is the scalar matrix $(\ome(0),\nu )\pm |x(A)|^2$, if this number is non zero then  $ad(N)_\A$  is invertible for all $\xi$ small. If $ad(N)_\A (\xi=0)=0$ then we study $ -\frac\ii2 ad(N)_\A$ using  \eqref{allowa1}. By definition $C_\A$ has  off-diagonal entries  $2\sqrt{\xi_i\xi_j}$, thus  $ \frac\ii2 ad(N)_\A$ modulo $2$ is diagonal. Moreover, since $\sum_i\nu_i$ is odd while  $\sum_i A(k)^{(i)}$   is even (see the proof of the parity property),  we have that $\frac12 ad(N)_\A$ modulo $2$ is formally invertible hence also $ad(N)_\A$ is invertible for  values of $\xi$ outside some proper algebraic hypersurface.
\end{proof}
\subsection{Dimension $2$ and $3$\label{dim23}}
In order to prove Theorem \ref{teo2} item {\it ii)} we need stronger conditions, which we shall verify in dimension $n=2,3$. In all the statements of this paragraph we assume that $n=2,3$.

\begin{theorem}\label{LIRRI} The characteristic polynomials of all the block matrices $M'_A$ are irreducible.  
\end{theorem}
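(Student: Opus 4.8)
The plan is to reduce the statement to a finite, explicitly computable problem and then settle it by a uniform argument for the ``generic'' families of blocks together with a direct verification of the finitely many exceptional ones. By Proposition~\ref{reteo1} the matrix $M'_A$ equals $(|x(A)|^2+4\sum_j\xi_j)I+2C_A$, where $C_A$ depends only on the abstract $\mathcal M$--graph of which $A$ is a realization; since such graphs have at most $n+1\le 4$ vertices there are finitely many of them, so $C_A$ ranges over the finite list $\mathcal M$ of matrices in auxiliary variables $\zeta_1,\dots,\zeta_{2n}$, and each $M'_A$ is obtained from one of these by renaming the $\zeta_i$ to $2n$ of the parameters $\xi_j$ plus the scalar shift above. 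Renaming variables and composing with the invertible affine substitution $s\mapsto (t-|x(A)|^2-4\sum_j\xi_j)/2$ in the spectral variable is a polynomial automorphism of affine space and hence preserves irreducibility, so it suffices to prove that each $P_A(s):=\det(sI-C_A)$ is irreducible over $\mathbb Q$ (and, for the polynomials at hand, over $\overline{\mathbb Q}$). Note moreover that $P_A$ is monic in $s$, so for each individual $C_A$ it is already enough to exhibit one specialisation $\zeta\mapsto\zeta_0$ making the resulting one-variable polynomial irreducible.

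Next I would separate the blocks as in Theorem~\ref{sunto}: the components with no red edges form finitely many families, each indexed by an abstract marked graph on at most $n+1$ vertices (a tree in the main case, with one fewer edge than vertices), while only finitely many components contain a red edge. In all cases $C_A$ is the weighted-adjacency-plus-diagonal matrix of its graph, with off-diagonal entry $\pm 4\sqrt{\zeta_i\zeta_j}$ on an edge marked $(i,j)$ and diagonal entries the linear forms $\sigma(a)(\zeta,L(a))$; hence $P_A$ is an explicit polynomial in $s$, homogeneous once $s$ and every $\zeta_i$ are given weight one, with coefficients polynomial in $\zeta$ in the tree case and in the $\sqrt{\zeta_i}$ in general. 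For the tree families I would argue by induction on the number of vertices: after dehomogenising, letting a chosen $\zeta_i\to 0$ collapses the graph by deleting the edges carrying the label $i$, so any nontrivial factorisation of $P_A$ must degenerate compatibly with that of the simpler block; combined with a Newton-polygon estimate for $P_A$ in $\zeta_i$ near $0$ this forces the factorisation to be trivial. The base case is a $2\times 2$ block, where $P_A=s^2-(\operatorname{tr}C_A)\,s+\det C_A$ is irreducible as soon as $(\operatorname{tr}C_A)^2-4\det C_A$ is a non-square, which is clear from the explicit entries.

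For $n=2$ this essentially finishes the proof by inspection: every block has at most three vertices, and the short list of $2\times 2$ and $3\times 3$ characteristic polynomials is checked directly, a cubic in $s$ being reducible precisely when it has a root, which one excludes by a single specialisation. For $n=3$ the new features are the $4\times 4$ blocks and the red-edge components. A quartic $P_A$ is reducible iff it has a linear factor or is a product of two quadratics; the first possibility is excluded by a specialisation as above, and the second is governed by a further explicit polynomial identity (non-vanishing of the cubic resolvent), again checkable after specialising. The red-edge components, of which there are only finitely many, do not fall under the tree induction; I expect these, together with a few of the largest tree families, to be exactly the cases requiring a direct computer-algebra computation, i.e.\ confirming that $\det(sI-C_A)$ is irreducible for each remaining explicit matrix in $\mathcal M$.

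The main obstacle is twofold. On the organisational side, the number of abstract $\mathcal M$--graphs on up to four vertices is large (in the hundreds, as the introduction notes), and one must enumerate them modulo the symmetries of the problem — relabellings of the tangential sites, which permute the $\zeta_i$, and graph automorphisms — so that only a manageable number of genuinely distinct characteristic polynomials survive for analysis. On the conceptual side, the delicate point is making the degeneration step of the tree induction rigorous: one must simultaneously control the combinatorics of how a block breaks into sub-blocks as $\zeta_i\to 0$ and the precise lowest-order behaviour of $P_A$ in $\sqrt{\zeta_i}$, so that no spurious factorisation survives the limit. It is exactly where this uniform argument fails to reach that the direct verifications are needed, and removing them in all dimensions is what the authors leave open.
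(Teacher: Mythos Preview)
Your overall strategy---reduce to the finite list $\mathcal M$, use the degeneration $\xi_i\to 0$ to factor $\chi_A$ into characteristic polynomials of sub-blocks, induct on block size, and verify residual cases by hand---is exactly the paper's approach (the degeneration is Theorem~\ref{lafatt}). The plan is sound, but the proposal as written has a genuine gap in the inductive step.

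The gap is the sentence ``combined with a Newton-polygon estimate for $P_A$ in $\zeta_i$ near $0$ this forces the factorisation to be trivial.'' You never say what the Newton-polygon argument is, and in fact the naive version does not work: when you set $\xi_i=0$ the polynomial $\chi_A$ factors as $\prod_j\chi_{A_j}$ with each $\chi_{A_j}$ irreducible by induction, but a hypothetical factorisation $\chi_A=Q\cdot R$ need only specialise to \emph{some} regrouping of those factors, and several different regroupings are typically available across different choices of $i$. The paper closes this gap with two concrete devices you are missing. First, a parity test (Lemma~\ref{parita}): computing $\chi_A$ modulo $2$ at $\xi_i=1$ gives $t^m$, so any integral linear factor $t+\sum_i a_i\xi_i$ must have $\sum_i a_i$ even. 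Second, Theorem~\ref{supertest}, which packages the degeneration argument precisely: if there is a vertex $a$ and an index (say $1$) appearing in exactly the edges through $a$, and the complement of $a$ stays connected, then setting $\xi_1=0$ forces any linear factor to be $t\pm\xi_1$, which the parity test excludes. This single criterion disposes of all encoding-graph types $0,1,2,3$ and most circuit cases; it is the uniform argument you are gesturing at.

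Where the paper diverges from your sketch is in handling the quartics that escape Theorem~\ref{supertest} (encoding types $A$ and $B$). Rather than invoking the cubic resolvent, the paper specialises all $\xi_i$ to a single variable $y$ and checks whether the resulting one-variable quartic is irreducible or factors in a way incompatible with the possible factorisation pattern forced by the $\xi_i=0$ degenerations (Lemma~\ref{cas1}). A handful of cases (the two linear-type graphs of type $B$I with specific orientations, and the circuit types $C1,D1,D2$) are then finished by a standard irreducibility algorithm on an explicit polynomial in three variables. Your expectation that ``a few of the largest tree families'' need direct computation is correct, but the actual number is small---well under a dozen polynomials, not hundreds---precisely because the parity test and Theorem~\ref{supertest} are so effective.
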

\begin{proof}
See \S  \ref{irrid} where we do a case analysis.
\end{proof}

Given a combinatorial block $\A,\pm $  set $\chi_{\A,\pm}(t,\xi)$ to be the characteristic polynomial   of the operator $ad(N)$ restricted to the given block 
\begin{corollary}\label{sepca}  

i)\ For all combinatorial blocks  $\chi_{\A,\pm}(t,\xi)$  is irreducible.

\smallskip

ii)\ For all $\xi$ outside a zero-measure set, the eigenvalues $\tilde\Omega_k$ relative to the same block are all distinct. Moreover the matrix $R$ defined in Theorem \ref{teo1b} i) is formed of $2\times 2$ hyperbolic blocks as in Remark \ref{iperb}. 
\smallskip

iii)\ If two different combinatorial blocks $\A,\pm,\A',\pm$  associated to two geometric  graphs $A,A'$ have a common eigenvalue of $ad(N)$  (as function of $\xi)$ then $\chi_{\A,\pm}(t,\xi)= \chi_{\A,\pm}(t,\xi)$. In particular the roots are on the same sphere: $|x(A)|^2=|x(A')|^2$. 
\end{corollary}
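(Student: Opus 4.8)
\emph{Plan.} Everything is squeezed out of Theorem~\ref{LIRRI} via the elementary observation, already contained in Proposition~\ref{LaCA}, that on a combinatorial block the operator $ad(N)$ differs from the matrix $M'_A$ of the underlying geometric block only by a \emph{scalar shift}. Indeed, if $(\A,+)$ is generated by the momentum--forced lift $e^{\ii\nu.x}z_{x(A)}$ of the root $x(A)$, then comparing \eqref{allowa1} with \eqref{MpA} and Definition~\ref{Cacomb} gives $-\ii\,ad(N)_\A=M'_A+(\omega(\xi),\nu)\,I$, while on $(\A,-)$ one gets the sign--changed matrix. Hence $\chi_{\A,+}(t,\xi)=\ii^{\dim\A}\,\chi_{M'_A}\!\bigl(-\ii t-(\omega(\xi),\nu),\xi\bigr)$, and $\chi_{\A,-}$ is obtained in the same way together with the spectral reflection coming from $M'_{A,-}=-\overline{M'_{A,+}}$. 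The substitution $s\mapsto -\ii t-(\omega(\xi),\nu)$ is an invertible affine change of the indeterminate with constant (hence non--zero) leading coefficient, so it carries irreducible polynomials to irreducible polynomials over the field of algebraic functions of $\xi$ (enlarged by $\ii$), and reflection does likewise; thus item~(i) is immediate from Theorem~\ref{LIRRI}. Note in passing that the lift $\nu$ enters only through that shift, so up to translation the $\chi_{\A,\pm}$ reduce to the finitely many polynomials attached to the abstract $\mathcal M$--graphs.

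For item~(ii): in characteristic zero an irreducible polynomial is separable, so by~(i) the discriminant of $\chi_{\A,\pm}$ in the variable $t$ is a non--zero algebraic function of $\xi$; it is moreover unchanged when all roots are translated by the common amount $(\omega(\xi),\nu)$, hence depends only on the abstract $\mathcal M$--graph of $A$, so there are only finitely many such discriminants, all non--zero. Let $Z$ be the union of their vanishing loci, a proper real--algebraic subset of $A_\alpha$, of measure zero. For $\xi\notin Z$ every block matrix $M'_A$ --- equivalently every $ad(N)_\A$ --- has simple spectrum, hence is diagonalizable over $\C$; in particular the eigenvalues $\tilde\Omega_k$ belonging to a fixed block are pairwise distinct, which is the first assertion. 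For the structure of $R$, use Remark~\ref{mini}: we may take $R$ to assemble only the blocks that are either non--semisimple or have all their eigenvalues non--real. Off $Z$ the first case cannot occur; and in a block with only non--real eigenvalues --- recall that $\Sigma C_A$ is self--adjoint for a diagonal sign matrix $\Sigma$ --- each $\tilde\Omega_k$ occurs together with its conjugate $\overline{\tilde\Omega}_k$, and, the spectrum being simple, the two span a $2$--dimensional $C_A$--invariant subspace which, after the symplectic diagonalization of Theorem~\ref{teo1b}(i), contributes exactly a $2\times2$ hyperbolic block $\mathrm{Re}(\tilde\Omega_k)\,I+\mathrm{Im}(\tilde\Omega_k)\,J$ in the sense of Remark~\ref{iperb}; the real eigenvalues of the same block are absorbed into the elliptic sum $\sum_k\tilde\Omega_k|z_k|^2$. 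Hence off $Z$ the matrix $R$ is a direct sum of $2\times2$ hyperbolic blocks. (This part of the argument is uniform in $n$; only Theorem~\ref{LIRRI} itself is restricted to $n=2,3$.)

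For item~(iii): if $\A,\pm$ and $\A',\pm$ share an eigenvalue function of $ad(N)$, then $\chi_{\A,\pm}$ and $\chi_{\A',\pm}$ have a common root in the algebraic closure of the field of functions of $\xi$; by~(i) each of them is, up to a scalar, the minimal polynomial of that root, so normalising them monic we get the identity $\chi_{\A,\pm}(t,\xi)=\chi_{\A',\pm}(t,\xi)$. Setting $\xi=0$ in this polynomial identity and using that $C_A$ vanishes at $\xi=0$ (its diagonal entries are $\sigma(a)(\xi,L(a))$ and its off--diagonal ones $4\sigma(b)\sqrt{\xi_i\xi_j}$), formula \eqref{allowa1} shows that the spectrum of $ad(N)_\A$ collapses to the single value $\ii\bigl(|x(A)|^2+(\omega(0),\nu)\bigr)$ with multiplicity $\dim\A$ (with a sign for the $-$ block); matching this with the analogous collapse for $\A'$ gives $\dim\A=\dim\A'$ and $|x(A)|^2+(\omega(0),\nu)=|x(A')|^2+(\omega(0),\nu')$. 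Since $(\omega(0),\nu)=\sum_i\nu_i|v_i|^2$ and $x(A)=-\pi(\nu)$, $x(A')=-\pi(\nu')$ (and $\sum_i\nu_i=\sum_i\nu'_i$, read off from the coefficient of $\sum_j\xi_j$), a further use of the generic hypothesis on $S$ --- or, equivalently, inspection of the finitely many admissible pairs $(\nu,x(A))$ --- forces $|x(A)|^2=|x(A')|^2$; so the two roots lie on a common sphere. (Here Corollary~\ref{sunto1} is what guarantees that $|x(A)|^2$ is exactly the $\xi$--independent part of the eigenvalue attached to the root, the correction $\lambda_k(\xi)$ being homogeneous of degree one.)

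The only genuinely hard input is Theorem~\ref{LIRRI}, whose proof in \S\ref{irrid} is a finite case analysis over the $\mathcal M$--graphs occurring in dimensions $2,3$. Granted that, Corollary~\ref{sepca} is the standard chain \emph{irreducible $\Rightarrow$ separable $\Rightarrow$ non--vanishing discriminant $\Rightarrow$ exceptional set of measure zero}; the points that require a little care are the translation between $ad(N)$ on a combinatorial block and $M'_A$ (Proposition~\ref{LaCA}) and the symplectic bookkeeping of the hyperbolic $2\times2$ pieces (Remarks~\ref{iperb} and~\ref{mini}).
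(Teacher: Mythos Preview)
Your approach is essentially the paper's: (i) follows because $\chi_{\A,\pm}$ is obtained from the characteristic polynomial of $M'_A$ (equivalently of $C_A$) by an affine substitution in $t$, which preserves irreducibility; (ii) is the standard chain irreducible $\Rightarrow$ separable $\Rightarrow$ non-zero discriminant $\Rightarrow$ distinct roots off a null set, with semisimplicity then yielding the hyperbolic $2\times2$ structure via Remarks~\ref{mini} and~\ref{iperb}; and the first half of (iii) is the fact that two monic irreducible polynomials sharing a root must coincide. The paper's proof says exactly this, more tersely.

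The one place you go beyond the paper is the last line of (iii), where you try to \emph{derive} $|x(A)|^2=|x(A')|^2$ by specialising $\xi=0$. You correctly obtain $|x(A)|^2+(\omega(0),\nu)=|x(A')|^2+(\omega(0),\nu')$ and $\sum_i\nu_i=\sum_i\nu'_i$, but your closing appeal to ``a further use of the generic hypothesis on $S$'' or ``inspection of the finitely many admissible pairs $(\nu,x(A))$'' is not a proof: the lifts $\nu,\nu'$ range over an infinite set, and one must actually argue --- for instance by separating, for generic $S$, the contributions of the cross-products $(v_i,v_j)_{i\neq j}$ in $|x(A)|^2=|\pi(\nu)|^2$ from the diagonal terms $|v_i|^2$ in $(\omega(0),\nu)$, or by matching further coefficients in the $\xi$-linear part of $\chi_{\A}$ --- in order to split the identity. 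The paper itself asserts this step without elaboration (``This implies in particular that $|x(A)|^2=|x(A')|^2$''), so you are no less rigorous than the original; just be aware that your sentence as written does not close the gap either.
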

\begin{proof} The characteristic polynomial of all the blocks are obtained by translating the variable $t$  from the polynomials of the previous Theorem so are irreducible. Since the characteristic polynomial of all the blocks is irreducible then the eigenvalues of each block are all distinct as algebraic functions. If we remove an arbitrary open set of parameters $\xi$ which contains the zero-set of the discriminant we have for each block all distinct eigenvalues (which implies that the blocks are all diagonalizable and that the eigenvalues and eigenvectors depend smoothly on the parameters). 

This follows from the fact that an irreducible polynomial is also the minimal polynomial satisfied by any algebraic function giving one of its roots, over the field of rational functions. Thus  two irreducible polynomials have a root in common if and only if they are equal. This implies in particular that  $|x(A)|^2=|x(A')|^2$.
\end{proof}

In order to prove Theorem \ref{teo2} item {\it ii)}, it is enough to verify the conditions of Lemma \ref{hoho}. This follows from: 
\begin{theorem}[Separation]\label{separ}  A  basis for $\ker(ad(N))$ on $F^{0,2}$ is given by the elements $a_i^{\A}b_i^{\A}$  (for two eigenvectors in conjugate blocks $\A,+; \A,-$).

\end{theorem}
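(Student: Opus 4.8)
The plan is to deduce Theorem \ref{separ} from the irreducibility of the characteristic polynomials (Theorem \ref{LIRRI}, Corollary \ref{sepca}) together with the matrix description of $ad(N)$ from \S\ref{Gra}. By Lemma \ref{homol2}, under the map $b:F^{0,1}\otimes F^{0,1}\to F^{0,2}$ the operator $ad(N)$ on $F^{0,2}$ equals $ad(N)|_{F^{0,1}}\otimes I+I\otimes ad(N)|_{F^{0,1}}$, so a basis of eigenvectors of $ad(N)$ on $F^{0,2}$ is obtained by taking all products $a\cdot c$ of eigenvectors of $ad(N)$ on $F^{0,1}$, the eigenvalue being the sum of the two. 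By Proposition \ref{LaCA} the eigenvalue attached to an eigenvector $a_i^{\A}$ of a combinatorial block $(\A,+)$ over a geometric block $A$ with root $x(A)$ and lift $\mu^A$ (so $\pi(\mu^A)=-x(A)$, $\sum_l\mu^A_l$ odd) has the form $\ii\bigl(P_{\A}(\xi)+2\mu^A_i(\xi)\bigr)$, where $P_{\A}$ is the explicit affine function of $\xi$ read off from \eqref{allowa1} and $\mu^A_i$ is a root of $\chi_{C_A}$, one of the finitely many irreducible polynomials attached to the list $\mathcal M$; for a one-vertex block $\mu^A_i\equiv0$. Every eigenvalue-sum is thus an algebraic function of $\xi$ of one of finitely many ``shapes'', hence either vanishes identically or is nonzero off a proper algebraic subset; so, outside a zero-measure (countable union of algebraic) set, $\ker(ad(N))|_{F^{0,2}}$ is spanned by the products whose eigenvalue-sum vanishes identically, and the task reduces to showing these are precisely the diagonal products $a_i^{\A}b_i^{\A}$ (an eigenvector of $(\A,+)$ times the dual eigenvector of $(\A,-)$).

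So take $a=a_i^{\A}$ in $(\A,+)$ and $c=c_j^{\mathcal B}$ in $(\mathcal B,\pm)$; the resonance condition reads $P_{\A}+2\mu^A_i\pm(P_{\mathcal B}+2\mu^{\mathcal B}_j)\equiv0$. In the ``$-$'' case (the second Melnikov condition $(\ome,\nu)+\tilde\Ome_k-\tilde\Ome_h=0$) one has $P_{\A}+2\mu^A_i=P_{\mathcal B}+2\mu^{\mathcal B}_j$, so the translated polynomials $\chi_{\A,+}$ and $\chi_{\mathcal B,+}$ share a root; being irreducible they coincide, whence (Corollary \ref{sepca}(iii)) the roots lie on the same sphere, $|x(A)|^2=|x(B)|^2$, and $\chi_{C_A}(t)=\chi_{C_B}\bigl(t+c(\xi)\bigr)$ with $c$ forced, by comparing the sums of roots, to be a specific linear-homogeneous function of $\xi$ (a trace difference divided by the common size $k$). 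If $A=B$ with the same lift this forces $\mu^A_i=\mu^A_j$, hence $i=j$ by Corollary \ref{sepca}(ii) --- the diagonal term; if $A=B$ with distinct lifts then $c\not\equiv0$ (a direct inspection of the $\xi$-linear parts in \eqref{allowa1} gives $P_{\A}\neq P_{\mathcal B}$ once $m\ge2$), while the relation would force $\mu^{\mathcal B}_j-\mu^A_i$ to be rational in $\xi$, impossible for $i\neq j$ by irreducibility; and if $A\neq B$ we invoke the separation statement of \S\ref{SEP}, namely that no two distinct polynomials of $\mathcal M$ are related by a translation of this type --- equivalently, that distinct combinatorial blocks have no common eigenvalue.

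In the ``$+$'' case ($\sigma=+$ in \eqref{mel}) one gets $\mu^A_i+\mu^{\mathcal B}_j=-(P_{\A}+P_{\mathcal B})/2$, so $\mu^A_i+\mu^{\mathcal B}_j$ is rational in $\xi$. If both $A$ and $B$ are one-vertex blocks then $\mu^A_i\equiv\mu^{\mathcal B}_j\equiv0$ and $P_{\A}+P_{\mathcal B}\equiv0$; comparing the coefficient of each $\xi_l$ in \eqref{allowa1} forces all components of $\mu^A+\mu^{\mathcal B}$ to equal $2\sum_l(\mu^A+\mu^{\mathcal B})_l+4$, which has no integer solution once $m\ge2$. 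If instead $A$ has at least two vertices, $\mu^A_i$ has degree $\ge2$ over the field of rational functions of $\xi$ (Theorem \ref{LIRRI}), hence so does $\mu^{\mathcal B}_j$, and $\chi_{C_B}(r-t)=\pm\chi_{C_A}(t)$ with $r$ again pinned down by the traces; this is excluded by the finite verification of \S\ref{SEP} that no polynomial of $\mathcal M$ equals $\pm$ another one (possibly itself) after $t\mapsto r-t$, and, independently, by the fact that the $\xi$-independent part of the resonance, $|x(A)|^2+|x(B)|^2+(\ome(0),\mu^A+\mu^{\mathcal B})=0$ together with $\pi(\mu^A+\mu^{\mathcal B})=-(x(A)+x(B))$, is incompatible with $x(A),x(B)\in S^c$ in view of the block structure of $\Gamma_S$ (Theorem \ref{sunto}, Corollary \ref{sunto1}). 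This exhausts the kernel; since the $a_i^{\A}b_i^{\A}$ are linearly independent and span $F^{int}\otimes\C\cap F^{0,2}$ (as in the proof of Lemma \ref{hoho}), Theorem \ref{separ} follows, and with it Theorem \ref{teo2}(ii) via Lemma \ref{hoho} (combined with Theorem \ref{teo2}(1) for the first two conditions). The hard part is the ``$+$'' case --- genuinely nontrivial here, because in the completely resonant regime $(\ome(\xi),\nu)=\sum_l\nu_l|v_l|^2+O(r^\alpha)$ is unbounded rather than small --- together with, in both cases, the exclusion of accidental coincidences among the finitely many characteristic polynomials of $\mathcal M$ under the translations and reflections permitted by momentum conservation; it is precisely this that the case analysis of \S\ref{irrid} and \S\ref{SEP} settles in dimensions $2$ and $3$.
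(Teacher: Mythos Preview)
Your argument is essentially the paper's: irreducibility (Corollary~\ref{sepca}) plus the separation statement of \S\ref{SEP} (Lemma~\ref{SEPA}). The paper's proof is literally one line, because Lemma~\ref{SEPA} is phrased so as to recover the block \emph{and its sign} from $\chi_{\A,\pm}$; once you know this, there is no need to split into your ``$+$'' and ``$-$'' cases. If an eigenvalue $\lambda$ of $(\A,+)$ and an eigenvalue $\mu$ of $(\mathcal B,\tau)$ sum to zero, then $-\lambda$ is simultaneously a root of $\chi_{\mathcal B,\tau}$ and of $\chi_{\A,-}$; irreducibility forces $\chi_{\mathcal B,\tau}=\chi_{\A,-}$, and Lemma~\ref{SEPA} then forces $(\mathcal B,\tau)=(\A,-)$. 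Distinctness of eigenvalues within a block (Corollary~\ref{sepca}(ii)) gives $i=j$. Your case split, and in particular your separate treatment of same geometric block with different lifts, is subsumed by this single step.

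Two of your side arguments are not solid as written. In the ``$-$'' case with $A=B$ and distinct lifts you assert that $\mu^A_i-\mu^A_j$ rational in $\xi$ is ``impossible for $i\neq j$ by irreducibility''; irreducibility alone does not prevent two roots from differing by a rational function (think of polynomials with nontrivial Galois stabilizer), so this step needs Lemma~\ref{SEPA}, not a Galois-type remark. In the ``$+$'' case your alternative route via the $\xi$-independent part, claiming that $|x(A)|^2+|x(B)|^2+(\ome(0),\mu^A+\mu^{\mathcal B})=0$ together with momentum is ``incompatible with $x(A),x(B)\in S^c$ in view of the block structure of $\Gamma_S$'', is asserted but not proved; nothing in Theorem~\ref{sunto} or Corollary~\ref{sunto1} gives this directly. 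Since these are offered as redundant alternatives to the \S\ref{SEP} verification, they do not break the proof, but you should either justify them or drop them and rely solely on Lemma~\ref{SEPA}, as the paper does.
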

The Theorem follows from a technical Lemma. Take a combinatorial block  $\A,\pm$ and denote by   $\chi_{\A,\pm}(t,\xi)$ the characteristic polynomial of the corresponding matrix $C_\A$ or $-C_\A$.

\begin{lemma}\label{SEPA}
The block $\A$ and its sign can be uniquely reconstructed by the polynomial $\chi_{\A,\pm}(t,\xi)$.
\end{lemma}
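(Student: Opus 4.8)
The plan is to exploit that, by Proposition~\ref{reteo1}~iii), the matrix $C_\A$ (and $-C_\A$, for the opposite sign) depends only on the abstract $\mathcal M$--graph of $\A$ together with the integral lift $\nu$ chosen to generate the combinatorial block, and that these data range over a \emph{finite}, explicitly listed family. So the assertion is the injectivity of the map associating to such a triple (graph, lift, sign) the polynomial $\chi_{\A,\pm}(t,\xi)$, and it suffices to recover the triple from the coefficients of $\chi$. I would do this using the classical expansion of a characteristic polynomial as an alternating sum of principal minors, together with the fact from Corollary~\ref{sunto1} that $C_\A$ is, up to the scalar shift $-(\xi,\nu)I$, the weighted adjacency matrix of the graph $\A$ — a graph on at most $n+1$ vertices — with off--diagonal entry $4\sigma(b)\sqrt{\xi_i\xi_j}$ on an edge marked $(i,j)$ and diagonal entry the homogeneous linear form $\sigma(a)(\xi,L(a))$ at the vertex $a$.

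First, $\deg_t\chi$ equals the size of the block, hence the number of vertices of $\A$. The coefficient of the next power of $t$ is minus the trace, and the one below it is the sum of the $2\times2$ principal minors of $C_\A$: over an edge $(a,b)$ marked $(i,j)$ this minor contributes, besides the product of the two diagonal linear forms, a term $-16\,\sigma(a)\sigma(b)\,\xi_i\xi_j$, so that — since by Constraint~\ref{co1b} distinct edges of $\A$ carry distinct markings and there are no repeated edges — the ``pure quadratic'' monomials $\xi_i\xi_j$ surviving in this coefficient record the edge set of $\A$ with its markings. The remaining lower coefficients, being sums of larger principal minors, record how the edges are mutually incident and determine the diagonal forms $(\xi,L(a))$; from these one reads off the signs $\sigma(a)$, hence — since $\sigma$ changes exactly across a red edge — the colours, and finally the lift $\nu$, isolated from the trace once the $L(a)$ are known (as a linear form in the independent indeterminates $\xi_i$ it determines $\nu$). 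The global sign $\pm$ is recovered from the fact that generically the spectrum of $C_\A$ is not invariant under $t\mapsto -t$: for blocks of odd size this is immediate from the irreducibility of $\chi$ (Theorem~\ref{LIRRI}, Corollary~\ref{sepca}), as a $\pm$--symmetric odd--dimensional spectrum would force $0$ to be an eigenvalue; for $2\times2$ blocks it follows because $\mathrm{tr}(C_\A)$ is a formally nonzero linear form in $\xi$ by the explicit entries of Corollary~\ref{sunto1}; the remaining even sizes (only $4\times4$, occurring only in dimension $3$) are handled by the case analysis.

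The step I expect to be the main obstacle is controlling \emph{cancellations}: a priori the monomial $\xi_i\xi_j$ that should mark an edge could be annihilated in the coefficient of $t^{\,\dim\A-2}$ by a contribution from products of the diagonal forms $(\xi,L(a))$, and more generally two genuinely different $\mathcal M$--graphs could conceivably produce the same $\chi$. Ruling this out amounts to a finite list of polynomial non--vanishing conditions on the characteristic coefficients of the finitely many matrices in $\mathcal M$, which is exactly the kind of condition built into the genericity of $S$ (Definition~\ref{gener}) and, in the sharper form we actually use, into the irreducibility and separation results of \S\ref{irrid} and \S\ref{SEP}. In dimensions $2$ and $3$ the family of $\mathcal M$--graphs is short enough that these conditions can be checked directly, which makes the Lemma — and hence the Separation Theorem~\ref{separ} — unconditional there; for $n>3$ it rests on the separation conjecture. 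A last technical nuisance is the non--semisimple case: if some $C_\A$ has a Jordan block of size $>1$ then $\chi$ alone carries strictly less information, but such blocks are classified as in Remark~\ref{iperb}, and in any case the combinatorial bookkeeping above — which does not see semisimplicity — already pins down the underlying $\mathcal M$--graph.
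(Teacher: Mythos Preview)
Your plan differs substantially from the paper's proof, and the gap you yourself flag is real and not closed. You propose to read the edge set of $\A$ off the monomials $\xi_i\xi_j$ in the $t^{\dim\A-2}$ coefficient, but the diagonal entries of $C_\A$ are the linear forms $-\sigma_a a(\xi)$ with $a\in\Z^m$ \emph{arbitrary} (since $\A$ is any combinatorial block, i.e.\ any translate by $\nu$), and their pairwise products produce $\xi_i\xi_j$ terms with unbounded integer coefficients that can swamp or mimic the edge contribution $\pm4\xi_i\xi_j$. Your proposed fix --- reduce to ``finitely many matrices in $\mathcal M$'' --- is not correct as stated: the $C_A$ form a finite list, but the $C_\A=C_A-(\xi,\nu)I$ do not, and your reconstruction of $\nu$ from the trace presupposes you already know the $L(a)$'s, which in turn you extract from the edge structure, so the argument is circular. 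The claim that distinct edges carry distinct markings is also not quite right (Type~A in \S\ref{imarki} has a black and a red edge both marked $(1,2)$), so even the bookkeeping needs more care.

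The paper's argument avoids all of this by specializing variables rather than expanding coefficients. The key device is Theorem~\ref{lafatt}: setting $\xi_i=0$ factors $\chi_\A$ as $\prod_j\chi_{\A_j^i}$, where the $\A_j^i$ are the components of $\A$ after deleting all edges carrying the index $i$. By the irreducibility results of \S\ref{irrid}, each factor is irreducible, so the factorization is canonical and, by induction starting from the explicit $2\times2$ computation of Lemma~\ref{recod}, determines the $\A_j^i$. One then recovers $\A$ by matching these pieces across different choices of $i$; this is where the case analysis of \S\ref{SEP} enters, but it is a matching problem on at most four vertices, not a cancellation problem in symmetric functions. The specialization approach is what makes the infinite family of translates tractable: it reduces the question to recognizing \emph{smaller} blocks (edges, then triangles) for which the polynomial visibly encodes the diagonal entries, rather than trying to disentangle all vertices at once from a single coefficient.
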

\begin{proof}
See \S \ref{SEP}.
\end{proof}
In practice in order to prove this Lemma we shall inspect 
colored marked graphs with 
  a root so that, when we embed $\A_i$ in  $\tilde\Lambda$  using Theorem \ref{limbed}  we have a complete list of representatives  under the group of translations and sign change.  \medskip

\begin{proof}[Proof of Theorem \ref{separ}]
This follows immediately from Corollary \ref{sepca} and Lemma  \ref{SEPA}.
\end{proof}

\begin{proof}[Proof of Corollary \ref{canto}]
The only difficulty comes from the study of the second Melnikov condition i.e. the third inequality in 
\eqref{bobo}. The main point is that  the functions 
$$ (\ome(\xi),\nu) + \tilde\Ome_k(\xi) +\sigma \tilde\Ome_h(\xi) = (\ome(\xi),\nu) + L + \lambda^{(i)}(\xi) +\sigma \lambda^{(j)}(\xi), $$  are  algebraic and non trivial. Moreover $L$ is an integer while  the  $\lambda^{(i)}(\xi)$ are defined in \eqref{autov} so that $ \lambda^{(i)}(\xi) +\sigma \lambda^{(j)}(\xi)$ is a {\em finite} list of analytic functions, which respect the bounds \eqref{linh} in the domain $A_{\alpha}\setminus \mathfrak{A}_{\varepsilon}$.
 Then for a given $\nu$, we impose the second Melnikov condition  for all the {\em infinite} choices of $h,k$ compatible with momentum conservation by removing from  $A_\alpha$ a {\em finite} number of ``strips'' of measure of order $\gamma (A+r^\alpha)/ (1+|\nu|^{\tau_0+1})$ (the number of ``strips'' is given by the number of different functions $ \lambda^{(i)}(\xi) +\sigma \lambda^{(j)}(\xi)$) .  Finally we notice that if $A=1$ then in order to have $(\ome(\xi),\nu) + \tilde\Ome_k(\xi) +\sigma \tilde\Ome_k(\xi)\leq 1$  we must have $|\nu| > r^{-\alpha}$.
\end{proof}
\subsection{Dimension $n>3$.}\label{n4}
In dimension $n>3$ we have not been able to prove the validity of the second Melnikov condition, although we suspect that it is true, according to our Conjecture 1  in \S\ref{matrici} and a conjectural separation Lemma. In this subsection we want to sketch how one can still apply a KAM scheme by discussing the homological equation in this more general setting.  The reader should compare this discussion with the one presented in \S \ref{conclu} and again with the KAM scheme developed in \cite{BB}.

\smallskip

 The first step consists in performing the canonical {\it Fitting decomposition} of the operator $  ad(N)$, namely we decompose the space so that  each block corresponds to one (and only one) of the generalized eigenvalues, as usual in our definitions we say that two eigenvalues coincide if they do so identically as functions of $\xi$. We  define the change of variables of the  Fitting decomposition  on each abstract combinatorial block and then  notice that the same change of variables decomposes all the translations. Thus this change of variables  is analytic for all 
$\xi$ outside some real semialgebraic hypersurface. \begin{proposition}
In the Fitting decomposition of $F^{0,1}$ under $ad(N)$ each eigenvalue appears with a uniformly bounded multiplicity $\leq \kappa$.
\end{proposition}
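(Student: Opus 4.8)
The plan is to combine a trivial within-block bound with a finiteness-of-templates argument that controls how often a fixed eigenvalue function can be shared by distinct blocks. First I would note that, by Theorem~\ref{sunto}(1) and Theorem~\ref{oneone}, every combinatorial block $\A$ has at most $n+1$ vertices, so $ad(N)$ acts on each block as an operator on a space of dimension $\le n+1$; within a single block any generalized eigenvalue therefore has multiplicity $\le n+1$. Hence the statement reduces to showing that a fixed function $\Omega(\xi)$ is a generalized eigenvalue of $ad(N)$ on at most $N_0=N_0(n)$ of the (infinitely many) combinatorial blocks, uniformly in $m$; then $\kappa\le (n+1)N_0$ works, and it depends only on $n$.

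Next I would write, for a block $(\A,+)$ attached to a geometric block $A$ with root $x(A)$ and to a lift $\nu$ with $\pi(\nu)=-x(A)$, the eigenvalue formula of Proposition~\ref{LaCA},
$$\Omega(\xi)=|x(A)|^2+\textstyle\sum_i\nu_i|v_i|^2+4\big(\sum_i\nu_i+1\big)\sum_j\xi_j-2(\xi,\nu)+2\lambda(\xi),\qquad\lambda\in\mathrm{spec}(C_A),$$
and decompose the right-hand side into its $\xi$-independent part, its part linear in $\xi$, and the remaining homogeneous degree-one part $2\rho(\xi)$, which is exactly the non-linear part of $\lambda$. The crucial input is Theorem~\ref{teo1}(iii): $C_A$ is, up to a scalar, one of finitely many template matrices in $\mathcal M$ with $2n$ of the $\xi_i$ substituted, a list depending only on $n$. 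Comparing this formula for two blocks carrying the same $\Omega$ and subtracting, the non-linear parts must coincide and the linear and constant parts must match, which pins down the data rigidly. Concretely, when $\rho\neq 0$ the variables occurring in $\rho$ identify (up to a bounded ambiguity, via the symmetries of the finitely many templates) the bounded set of tangential indices entering $C_A$ and the substitution; subtracting $2\rho$ leaves a function affine in $\xi$ whose $\xi_j$-coefficient equals $4(\sum_i\nu_i+1)$ for all $j$ outside a bounded set, so the generic coefficient recovers $\sum_i\nu_i$, then every $\nu_j$, then $\nu$, and hence the component $\A$ in $\Lambda_S$, up to $N_0(n)$ choices. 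When $\rho=0$ (in particular for the single-vertex blocks, where $\lambda=-(\xi,\nu)$, and for blocks with $0\in\mathrm{spec}(C_A)$) the eigenvalue $\lambda=(\xi,L)$ is linear with $\mathrm{supp}(L)$ inside the bounded index set of $C_A$ and $|L|_1<2n$, and the same coefficient-reading determines $\nu$ up to a bounded-norm integer perturbation supported on an a priori unknown bounded set; I would remove the residual ambiguity using the constant-part identity $\Omega(0)=|\pi(\nu)|^2+\sum_i\nu_i|v_i|^2$ together with the genericity of $S$ (Definition~\ref{gener}).

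The hard part will be precisely this last point, together with keeping the bound uniform in $m$ so that it survives for $m=\infty$: one must rule out that $\Omega$ is a generalized eigenvalue of infinitely many distinct multi-vertex blocks whose $C_A$ carries a linear eigenvalue, a case in which reading off coefficients does not by itself locate the finitely many special tangential sites. I expect this to require using jointly the finiteness of the template list $\mathcal M$, the quadratic constraint on $\Omega(0)$ forced by momentum conservation, and the generic position of the $v_i$; the remaining cases (genuinely non-linear eigenvalue, and single-vertex blocks) are routine by the coefficient-reading above. Finally, as recorded in the Remark following Theorem~\ref{teo2}, once the multiplicity bound holds the eigenspace of any eigenvalue is automatically isotropic, since the Poisson bracket of two eigenvectors with eigenvalue $\Omega$ would be an eigenvector of $ad(N)|_{F^{0,0}}=\omega\cdot\partial_x$ with eigenvalue $2\Omega$, impossible for $\Omega$ not identically zero.
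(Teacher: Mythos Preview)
Your plan and the paper's proof share the same skeleton --- compare two eigenvalues by matching their value at $\xi=0$ and then their $\xi$-dependent part --- but the paper's argument is far shorter because it does not try to make $\kappa$ independent of $m$. The paper simply writes each eigenvalue as $\pm\big((\omega(\xi),\nu)+\tilde\Omega_k\big)$, invokes Theorem~\ref{teo1b}(ii) to say that the $\xi$-dependent part of $\tilde\Omega_k$ lies in the finite list $\{\lambda^{(1)},\dots,\lambda^{(K)}\}$ with $K=K(n,m)$, and observes that two eigenvalues coinciding forces
\[
(\omega(\xi)-\omega(0),\,\nu\pm\nu')\;=\;\lambda^{(i)}\pm\lambda^{(j)}.
\]
Since $\nu\mapsto(\omega(\xi)-\omega(0),\nu)$ is an injective linear map from $\Z^m$ into linear forms in $\xi$, and the right side ranges over at most $O(K^2)$ specific functions, the possible $\nu\pm\nu'$ form a bounded set; together with the within-block bound $n+1$ this finishes the proof.

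The simplification you miss is that the paper treats the $\lambda^{(i)}$ as a finite list of \emph{actual functions of $\xi$}, not as abstract templates awaiting a choice of $2n$ indices. Your decomposition into a genuinely nonlinear part $\rho$ and a linear part, and your attempt to recover the template and the substituted indices from the support of $\rho$, are unnecessary for the Proposition as stated and are precisely what produces your ``hard case'' (a linear eigenvalue of $C_A$). For the paper's bound $\kappa=\kappa(n,m)$ that case is no harder than any other: a linear $\lambda^{(i)}$ is still one specific member of a list of size $K$, and injectivity of the left side does the rest.

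What you are really sketching is the strengthening $\kappa=\kappa(n)$, uniform in $m$, which the paper does not claim here and explicitly flags as requiring ``a more careful analysis'' in the $m=\infty$ remarks. For that strengthening your worry is legitimate and your plan is incomplete: when $C_A$ has a linear eigenvalue, two instantiations of the same template with the $2n$ indices shifted produce $\lambda$'s differing by a linear form, and coefficient-reading alone cannot separate a shift of $\nu$ from a shift of the substituted index set. The appeal you make to the constant term $\Omega(0)$ and the genericity of $S$ would have to be turned into an actual argument; as written it is a hope, not a proof.
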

\begin{proof}The eigenvalues of $\ii$ $ad(N)$ are $\mu= \pm \big((\ome(\xi),\nu)+\tilde \Omega_k\big)$, hence  two eigenvalues  may coincide only if they coincide at $\xi=0$.  Supposing that this is true we study the remaining linearly homogeneous terms.  The linear terms  in an eigenvalue are a translation of the finite list $\lambda^{(i)}(\xi)$, and $$ (\ome(\xi)-\ome(0),\nu \pm \nu') = \lambda^{(i)}\pm \lambda^{(j)}$$ may hold only if $\nu\pm \nu'$ is uniformly bounded. 
\end{proof}
Notice that with each eigenvalue $\mu$ there is also the eigenvalue $-\mu$ with the same multiplicity.
Let us denote by $F^{0,1}_\mu$ the generalized eigenspace relative to $\mu$. It is easy to see that, from the first Melnikov condition, we have
\begin{lemma}
 $$\{F^{0,1}_\mu,F^{0,1}_{\mu'}\}=0 \,\quad \iff \mu+\mu'\neq 0 $$ moreover $F^{0,1}_\mu$ and $F^{0,1}_{-\mu}$ are in duality under Poisson bracket.
\end{lemma}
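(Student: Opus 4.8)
The plan is to deduce the lemma from the single fact that $ad(N)$ belongs to the symplectic Lie algebra acting on $F^{0,1}$, together with the explicit block picture of Proposition \ref{LaCA}. First I would fix the meaning of the pairing: by Proposition \ref{pois}, for $f,g\in F^{0,1}$ one has $\{f,g\}=\ii f^tJg\in F^{0,0}$, and I set $\ome_s(f,g):=\int_{\T^m}\{f,g\}\,dx$ to be its mean in $x$; this is the genuine $\C$--valued symplectic form carried by $F^{0,1}$, and it is nondegenerate because, by Remark \ref{delag}, $F^{0,1}$ is a direct sum over the geometric components $A$ of $\Gamma_S$ and over the lifts of nondegenerate pieces $(\A,+)\oplus(\A,-)$, each a sum of two dual Lagrangians. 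The first Melnikov resonances being absent is the standing hypothesis that makes the eigenvalue bookkeeping below meaningful.

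The next and main step is soft symplectic linear algebra. Since $N$ is a quadratic normal form, $\ome_s$ is $ad(N)$--invariant: by the Jacobi identity $\{N,\{f,g\}\}=\{ad(N)f,g\}+\{f,ad(N)g\}$, and on $F^{0,0}$ one has $ad(N)=\ome(\xi)\cdot\partial_x$ (Lemma \ref{homol}), whose image has vanishing mean, so integrating over $\T^m$ gives $\ome_s(ad(N)f,g)+\ome_s(f,ad(N)g)=0$. An induction on the lengths of the Jordan strings of $\ii\,ad(N)$ then shows $\ome_s(F^{0,1}_\mu,F^{0,1}_{\mu'})=0$ whenever $\mu+\mu'\neq0$: for $f\in F^{0,1}_\mu$, $g\in F^{0,1}_{\mu'}$ the quantity $(\mu+\mu')\,\ome_s(f,g)$ is a finite combination of values of $\ome_s$ on vectors strictly lower in the two filtrations, so it vanishes by induction. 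Combined with the nondegeneracy of $\ome_s$ and the decomposition $F^{0,1}=\bigoplus_\mu F^{0,1}_\mu$, this forces $\ome_s$ to restrict to a perfect pairing $F^{0,1}_\mu\times F^{0,1}_{-\mu}\to\C$; that is the duality statement, it re-proves $\dim F^{0,1}_\mu=\dim F^{0,1}_{-\mu}$, and it gives both directions of the ``iff''.

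It remains to say where the first Melnikov condition enters concretely, and this is the one point that needs care rather than bookkeeping. By Proposition \ref{LaCA} an $ad(N)$--eigenvector $f$ in a combinatorial block $(\A_\nu,+)$ has eigenvalue $\ii\big((\ome(\xi),\nu)+\tilde\Ome_k(\xi)\big)$ and $x$--frequency $\nu$, and on the conjugate block the eigenvalue is the opposite; hence $\ome_s(f,g)$ is nonzero only for $f,g$ over a common component $A$, in opposite blocks, with cancelling $x$--frequencies $\nu,\nu'$. Since $\pi(\nu)=\pi(\nu')$, the first Melnikov condition, in the form $(\ome(\xi),\nu-\nu')\equiv0\Rightarrow\nu=\nu'$, forces $\nu=\nu'$, and for such a matched pair $\mu$ and $\mu'$ are exactly opposite, so a nonzero value of $\ome_s$ does entail $\mu+\mu'=0$. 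The hard part will be making the identification of the Fitting blocks of $ad(N)$ on $F^{0,1}$ with (conjugate pairs built from) the combinatorial blocks $\A_\nu$ of \S\ref{ilgrco} fully precise, including the uniformly bounded multiplicities of the preceding proposition, so that the phrase ``opposite blocks with matching lift'' is unambiguous; once that is in place the argument above closes, and one reads off that the kernel-relevant content of the lemma is exactly compatible with the description of $F^{int}$ used in Lemma \ref{hoho}.
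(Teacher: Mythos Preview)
Your core argument in the first two paragraphs is correct and is, if anything, cleaner than the paper's own proof. You work with the $\C$-valued form $\ome_s$ obtained by averaging the Poisson bracket over $x$, observe that $ad(N)$ lies in its symplectic Lie algebra (via Jacobi and the fact that $\ome\cdot\partial_x$ annihilates constants), and then invoke the standard fact that generalized eigenspaces for non-opposite eigenvalues of an infinitesimally symplectic operator are orthogonal. Nondegeneracy of $\ome_s$ then forces the duality between $F^{0,1}_\mu$ and $F^{0,1}_{-\mu}$. This is a complete proof of the lemma.

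The paper takes a more concrete route: it uses the explicit decomposition of $F^{0,1}$ into pairs of conjugate combinatorial blocks $(\A_\nu,+)\oplus(\A_\nu,-)$ from Remark~\ref{delag}. Distinct pairs are $\ome_s$-orthogonal (over different geometric blocks this is trivial; over the same geometric block the bracket is a pure nonconstant exponential in $x$, hence has zero mean). Within a single pair the two blocks are Lagrangian and dual, and the eigenvalues of $ad(N)$ on $(\A_\nu,-)$ are the negatives of those on $(\A_\nu,+)$; the pairing then matches the eigenspace for $\mu$ with that for $-\mu$. Your abstract argument subsumes all of this bookkeeping in one stroke, at the cost of hiding where the block structure enters; the paper's argument has the advantage of making explicit that each Fitting block $F^{0,1}_\mu$ sits inside finitely many specific combinatorial blocks, which is the identification you flag at the end as ``the hard part''---but for the lemma as stated, your proof does not need it.

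Your third paragraph, however, is muddled and should be dropped. The condition for $\ome_s(f,g)\neq 0$ on frequency-basis elements is simply that the $x$-exponents cancel, which directly forces the frequency indices to coincide; no Melnikov condition is needed at that step. The role of the first Melnikov condition is only to guarantee that $0$ is not an eigenvalue of $ad(N)|_{F^{0,1}}$ (so that $F^{0,1}_\mu$ and $F^{0,1}_{-\mu}$ are genuinely distinct), and even this is not strictly required by your abstract argument. As written the paragraph looks like an attempt to re-derive the concrete block picture in order to justify the abstract step, but the abstract step already stands on its own.
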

\begin{proof}
We start from the canonical decomposition into pairs of conjugate combinatorial blocks.  Two different pairs are orthogonal (under Poisson bracket) each block is isotropic and in duality with the conjugate block. If we decompose a given block (identified by the frequency $\nu$) in generalized eigenspaces under $ad(N)$ we have, a list of non zero eigenvalues $\mu_\nu^{(i)}$  and in the conjugate space the  eigenvalues $-\mu_\nu^{(i)}$.  The duality pairing of Poisson bracket puts in duality the eigenspace relative to $\mu_\nu^{(i)}$  and in the conjugate space the  one relative to the eigenvalue  $-\mu_\nu^{(i)}$. It follows that the total generalized eigenspace relative to a given eigenvalue  (which is necessarily non--zero) appearing in the sum of two conjugate blocks is isotropic and in duality with the generalized eigenspace relative to the opposite eigenvalue    in the same sum of two conjugate blocks.
\end{proof}
\smallskip

We decompose $F^{0,2}$ according to the pairs $(\mu,\mu')$ and  denote by $F^{0,2}_{\mu,\mu'}= F^{0,1}_\mu \cdot F^{0,1}_{\mu'}$ (the product of functions).  By Leibniz rule $ad(N)$ on this space  has generalized eigenvalue $\mu+\mu'$. 
As il \S \ref{conclu}, for the first step we consider the Cantor set $\mathcal C$:
\begin{equation}\label{mama}
|\mu_\nu^{(i)}(\xi) +  \mu_{\nu'}^{(j)}(\xi)| \geq \frac{\gamma (A+r^\alpha)}{1+|\nu +\nu'|^{\tau_0}}
\end{equation}    for all  $\mu_\nu^{(i)}(\xi) +  \mu_{\nu'}^{(j)}(\xi)\not\equiv 0$ where $A= 1$ if the integer $\mu_\nu^{(i)} (0)+  \mu_{\nu'}^{(j)}(0)\neq 0$ and $A=0$ otherwise. The fact that this Cantor set has positive measure follows by the same reasoning as in Corollary \ref{canto}. \begin{proposition}\label{wemel}
For all $\xi\in \mathcal C$ the operator $ad(N)$ is invertible on each $F^{0,2}_{\mu_\nu^{(i)} ,\mu_{\nu'}^{(j)}}$ such that  $\mu_\nu^{(i)} +  \mu_{\nu'}^{(j)}\not\equiv 0$. For all  $x\in F^{0,2}_{\mu_\nu^{(i)} ,\mu_{\nu'}^{(j)}}$ one has:
$$ |ad(N)^{-1} x |_{D(s,r)} \leq C\frac{|\nu+\nu'|^{\kappa^2 \tau_0 +1 }}{A+ r^{\alpha}}|x|_{D(s,r)},$$ with $C$ some universal constant and $A$ defined as in \eqref{mama}.
\end{proposition}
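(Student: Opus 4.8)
The plan is to reduce the claim to an elementary estimate on the inverse of a finite-dimensional matrix of the shape $(\mu_\nu^{(i)}+\mu_{\nu'}^{(j)})I+\mathcal N$ with $\mathcal N$ nilpotent, the only point requiring care being a \emph{uniform} bound $\|\mathcal N\|\le Cr^\alpha$. First I would record, using the Fitting decomposition already constructed and the Leibniz rule of Lemma \ref{homol2}, that on the finite-dimensional space $F^{0,2}_{\mu_\nu^{(i)},\mu_{\nu'}^{(j)}}=b\big(F^{0,1}_{\mu_\nu^{(i)}}\otimes F^{0,1}_{\mu_{\nu'}^{(j)}}\big)$ the operator $ad(N)$ is the image of $ad(N)|_{F^{0,1}_{\mu_\nu^{(i)}}}\otimes I+I\otimes ad(N)|_{F^{0,1}_{\mu_{\nu'}^{(j)}}}$; since on each generalized eigenspace $ad(N)|_{F^{0,1}_\mu}=\mu I+\mathcal N_\mu$ with $\mathcal N_\mu$ nilpotent, we get $ad(N)=(\mu_\nu^{(i)}+\mu_{\nu'}^{(j)})I+\mathcal N$ with $\mathcal N$ nilpotent and, by the preceding proposition on uniformly bounded multiplicity, $\dim F^{0,2}_{\mu_\nu^{(i)},\mu_{\nu'}^{(j)}}\le\kappa^2$, hence $\mathcal N^{\kappa^2}=0$. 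For $\xi\in\mathcal C$ the scalar $\mu_\nu^{(i)}+\mu_{\nu'}^{(j)}$ is nonzero with the lower bound \eqref{mama}, so the matrix is invertible and $ad(N)^{-1}=\sum_{l=0}^{\kappa^2-1}(-1)^l\,\mathcal N^l/(\mu_\nu^{(i)}+\mu_{\nu'}^{(j)})^{l+1}$.

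The quantitative heart is the bound $\|\mathcal N\|\le Cr^\alpha$ in operator norm. Within any combinatorial block the non-scalar part of $-\ii\,ad(N)$ is $2C_{\mathcal A}$ (the term $(\xi,\nu)I$ being scalar), and by Lemma \ref{matri} and Corollary \ref{sunto1} its entries are $4\sigma(b)\sqrt{\xi_i\xi_j}$ off the diagonal and $\sigma(a)(\xi,L(a))$ on it, with $\sum_i|A^{(i)}_a|<2n$, hence of size $O(r^\alpha)$ on $D(s,r)$. Because $C_A$ is homogeneous of degree one in $\xi$, writing $\xi=r^\alpha\hat\xi$ with $\hat\xi$ ranging in a fixed compact set shows that the Fitting change of basis depends only on $\hat\xi$ and, on $A_\alpha\setminus\mathfrak A_\varepsilon$ (the region where the Fitting data is defined), stays bounded; conjugating by it does not affect the $O(r^\alpha)$ estimate, so $\|\mathcal N\|\le Cr^\alpha\le C(A+r^\alpha)$.

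I would then estimate the Neumann series term by term. If $\|\mathcal N\|\le|\mu_\nu^{(i)}+\mu_{\nu'}^{(j)}|$ the leading term dominates and \eqref{mama} gives at once a bound $\le\kappa^2(1+|\nu+\nu'|^{\tau_0})/(\gamma(A+r^\alpha))$. If $\|\mathcal N\|>|\mu_\nu^{(i)}+\mu_{\nu'}^{(j)}|$ the top term $\|\mathcal N\|^{\kappa^2-1}/|\mu_\nu^{(i)}+\mu_{\nu'}^{(j)}|^{\kappa^2}$ dominates; here the $\kappa^2-1$ factors $\|\mathcal N\|\le C(A+r^\alpha)$ cancel all but one power of $A+r^\alpha$ produced by \eqref{mama} in the denominator, leaving a bound of order $|\nu+\nu'|^{\kappa^2\tau_0+1}/(A+r^\alpha)$. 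To finish I would pass from the operator norm on the block to $|\cdot|_{D(s,r)}$: all basis elements of $F^{0,2}_{\mu_\nu^{(i)},\mu_{\nu'}^{(j)}}$ carry Fourier exponents $e^{\ii\nu''\cdot x}$ differing pairwise by $O(n)$ (since $|L(a)|<n$) and $w$-monomials of fixed bidegree, so the weights $\|\cdot\|_E$ of any two of them have ratio bounded by $e^{Cns}$; thus the norm-equivalence constants are uniform in $\nu,\nu'$ and depend only on $n,\kappa,s$, and the stated estimate follows.

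The main obstacle is exactly the uniform-in-$\nu$ bound $\|\mathcal N\|=O(r^\alpha)$ — this is what forces the conclusion to have the single power $A+r^\alpha$ in the denominator rather than $(A+r^\alpha)^{\kappa^2}$; the homogeneity argument controlling the Fitting change of basis near $\mathfrak A$ is the delicate point, while the rest is routine linear algebra and the already-established structure of $ad(N)$ on blocks.
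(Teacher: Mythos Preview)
Your argument is correct and takes a genuinely different route from the paper's. The paper does not isolate the nilpotent part at all: it simply observes that $F^{0,2}_{\mu_\nu^{(i)},\mu_{\nu'}^{(j)}}$ has dimension $\le\kappa^2$, bounds all entries of $ad(N)$ on this block by $C|\nu+\nu'|$ (after, in the case $A=0$, rescaling by $r^{-\alpha}$ using that $ad(N)$ is then homogeneous of degree one in $\xi$), notes that the determinant equals $(\mu_\nu^{(i)}+\mu_{\nu'}^{(j)})^{d}$ with $d\le\kappa^2$, and invokes Cramer's rule. Your approach instead writes $ad(N)=(\mu_\nu^{(i)}+\mu_{\nu'}^{(j)})I+\mathcal N$ with $\mathcal N^{\kappa^2}=0$ and uses the finite Neumann series together with the uniform bound $\|\mathcal N\|=O(r^\alpha)$.

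The key difference is where the smallness is located. The paper treats the two cases $A=0,1$ separately because in the raw basis the entries of $ad(N)$ contain the large integer $\mu(0)+\mu'(0)$ when $A=1$; it compensates for the $\kappa^2$-th power of the small denominator only via the homogeneity rescaling in the $A=0$ case, and via $A+r^\alpha\asymp 1$ in the $A=1$ case. You instead subtract off the full eigenvalue, so that the remaining nilpotent piece comes only from $C_A-\lambda I$, which is $O(r^\alpha)$ irrespective of $A$; this lets a single computation handle both cases and makes transparent why only one power of $A+r^\alpha$ survives in the denominator. Your approach does require controlling the Fitting change of basis (which you do via homogeneity on $A_\alpha\setminus\mathfrak A_\varepsilon$), whereas Cramer's rule is basis-free; on the other hand, the paper's bound ``entries $\le C|\nu+\nu'|$'' is asserted without detail, and your argument sidesteps that issue entirely.
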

\begin{proof}
The operator $ad(N)$ on $F^{0,2}_{\mu_\nu^{(i)} ,\mu_{\nu'}^{(j)}}$ has generalized eigenvalue $\mu_\nu^{(i)} +  \mu_{\nu'}^{(j)}$ so that if $ad(N)$ is semi--simple the result is obvious with $\kappa=1$. In general we notice that $F^{0,2}_{\mu_\nu^{(i)} ,\mu_{\nu'}^{(j)}}$ is a finite dimensional space with dimension $\leq \kappa^2$.   The entries of  $ad(N)$ on $F^{0,2}_{\mu_\nu^{(i)} ,\mu_{\nu'}^{(j)}}$  are all bounded by $C|\nu+\nu'|$. Then,  if  $A=1$   the result follows by Cramer's rule.  If $A=0$  $ad(N)$ is homogeneous of degree one in $\xi$, hence the entries of  $ad(N)r^{-\alpha}$ are bounded by 
$C|\nu+\nu'|$ (recall that $\xi \leq r^\alpha$), again we apply Cramer's rule.
\end{proof}
Let  $[R]$ be the projection of $R$ on the spaces $F^{0,2}_{\mu,-\mu}$. 
\begin{corollary} The operator  $ad([R])$ on $F^{(0,1)}$  is block diagonal relative to the Fitting decomposition of $ad(N)$.
\end{corollary}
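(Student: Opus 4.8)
The plan is to prove the slightly stronger statement that $ad([R])$ maps each summand $F^{0,1}_{\mu}$ of the Fitting decomposition of $ad(N)$ into itself; since $F^{(0,1)}=\bigoplus_{\mu}F^{0,1}_{\mu}$ this is exactly the asserted block diagonality. Note first that $[R]=\sum_{\mu}\Pi_{F^{0,2}_{\mu,-\mu}}R$ is a genuine element of $F^{0,2}$ (a finite sum, since $R$ has finitely many nonzero entries), so by Proposition \ref{pois} the assignment $f\mapsto\{[R],f\}$ is a well defined linear operator $ad([R])$ on $F^{(0,1)}$, and the question is whether it commutes with the Fitting projections.

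The first and only substantive step is to check that $[R]$ lies in the generalized null space of $ad(N)$ acting on $F^{0,2}$, i.e.\ that $ad(N)^{a}[R]=0$ for some $a\in\N$. This is precisely where the definition of $[R]$ enters: by Lemma \ref{homol2}, $ad(N)$ on $F^{0,2}$ is induced, through the product map $b\colon F^{0,1}\otimes F^{0,1}\to F^{0,2}$, by $ad(N)|_{F^{0,1}}\otimes I+I\otimes ad(N)|_{F^{0,1}}$; hence on $F^{0,2}_{\mu,-\mu}=F^{0,1}_{\mu}\cdot F^{0,1}_{-\mu}$ the unique generalized eigenvalue of $ad(N)$ is $\mu+(-\mu)=0$. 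Summing over the finitely many $\mu$ involved and taking $a$ to be the largest of the corresponding Fitting exponents yields $ad(N)^{a}[R]=0$. The off-Fitting-diagonal part of $R$ discarded in passing to $[R]$ is exactly what would otherwise spoil this.

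The second step is formal. Since $ad(N)$ is a derivation of the Poisson bracket (Jacobi identity), for any $f$ one has $(ad(N)-\mu)\{[R],f\}=\{ad(N)[R],f\}+\{[R],(ad(N)-\mu)f\}$, and iterating (the two operations act on different tensor slots, hence commute) gives
\begin{equation*}
(ad(N)-\mu)^{k}\,ad([R])f=\sum_{j=0}^{k}\binom{k}{j}\,\bigl\{\,ad(N)^{j}[R]\,,\,(ad(N)-\mu)^{k-j}f\,\bigr\}.
\end{equation*}
Now fix $f\in F^{0,1}_{\mu}$ and choose $b$ with $(ad(N)-\mu)^{b}f=0$. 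With $k=a+b-1$ every summand on the right vanishes: if $j\ge a$ then $ad(N)^{j}[R]=0$, while if $j<a$ then $k-j\ge b$ and $(ad(N)-\mu)^{k-j}f=0$. Hence $(ad(N)-\mu)^{a+b-1}\bigl(ad([R])f\bigr)=0$, i.e.\ $ad([R])f\in F^{0,1}_{\mu}$; letting $\mu$ range over all Fitting eigenvalues finishes the proof.

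I do not expect a genuine obstacle: the only place the argument uses the earlier analysis is the identification of $[R]$ with an element of the $0$-generalized eigenspace of $ad(N)|_{F^{0,2}}$, which rests on the Leibniz description of $ad(N)$ on $F^{0,2}$ in Lemma \ref{homol2} (compatibility of the Fitting decomposition with the product map $b$). Everything after that is the elementary fact that a derivation carries a product of two generalized eigenvectors to a generalized eigenvector for the sum of the eigenvalues. The same computation, applied with $f$ replaced by an eigenvector and reading off the ordinary eigenvalues, also explains the remark that $ad([R])$ acts on the pieces of the Fitting decomposition without shifting eigenvalues.
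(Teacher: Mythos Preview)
Your argument is correct. The core mechanism---showing that $[R]$ lies in the generalized $0$--eigenspace of $ad(N)$ on $F^{0,2}$ via Lemma~\ref{homol2}, and then using the binomial identity for the derivation $ad(N)$ to conclude that $ad([R])$ preserves each $F^{0,1}_\mu$---is sound and self-contained.

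One small quibble: the sentence ``a finite sum, since $R$ has finitely many nonzero entries'' seems to confuse the present $R$ (a generic perturbation in $F^{\leq 2}$ arising in the KAM step of \S\ref{n4}) with the finite--rank matrix $R$ of Theorem~\ref{teo1b}. Here $R$ need not be finite. This does not damage your proof: either run the argument separately for each summand $[R]_\mu\in F^{0,2}_{\mu,-\mu}$, or invoke the uniform bound $\dim F^{0,1}_\mu\le\kappa$ from the Proposition just above to get a uniform nilpotency exponent $a\le 2\kappa$ for $ad(N)$ on every $F^{0,2}_{\mu,-\mu}$.

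The paper's one--line proof takes a different, more concrete route. It appeals directly to the Lemma immediately preceding the Corollary, which asserts that $\{F^{0,1}_\mu,F^{0,1}_{\mu'}\}=0$ whenever $\mu+\mu'\neq 0$, and that $F^{0,1}_\mu$ and $F^{0,1}_{-\mu}$ are in duality. One then writes $r=\sum f_ig_i\in F^{0,2}_{\mu,-\mu}$ with $f_i\in F^{0,1}_\mu$, $g_i\in F^{0,1}_{-\mu}$ and, for $h\in F^{0,1}_\nu$, expands $\{r,h\}=\sum f_i\{g_i,h\}+g_i\{f_i,h\}$ by Leibniz. The isotropy forces both terms to vanish unless $\nu=\pm\mu$; in that case the surviving Poisson bracket is a scalar (the dual pairing on conjugate combinatorial blocks is constant), and one reads off $\{r,h\}\in F^{0,1}_\nu$ directly. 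So the paper uses the explicit symplectic geometry of the block decomposition, whereas you use only the abstract Fitting--derivation algebra. Your approach is more portable (it would apply verbatim in any setting where $ad(N)$ is a locally nilpotent derivation with bounded block size); the paper's approach is shorter once the duality Lemma is in hand and makes transparent \emph{why} the off--diagonal pieces of $R$ are exactly what must be discarded.
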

\begin{proof} The result follows by the duality of $F^{0,1}_\mu$ and $F^{0,1}_{-\mu}$ by Poisson bracket.
\end{proof}

The purpose of a KAM algorithm will now be to construct a change of variables $\Phi= e^{ad(F)}: D(s/4,r/4)\to D(s,r)$ such that
 $e^{ad(F)}\circ H= N^\infty + P^{\infty}, $ where  $$N^\infty= (\ome^\infty,y^\infty)-\frac12 w^t M(x^\infty) J w\,,\quad P^{\infty}=\sum_{i,j:\; 2i+j>2} P^{\infty}_{ij}(x^\infty) (y^\infty)^i (w^\infty)^j$$ and $M$ is represented on  $F^{0,1}$ as a block--diagonal matrix on the blocks  $F^{0,1}_\mu$ defined by the Fitting decomposition of $ad(N)$.  
  
From a purely formal point of view at each step of the KAM algorithm, we have the Hamiltonian
$$H_k=N_k + R_k + P_k $$ where $ad(N_k)$ is block diagonal with respect to the Fitting decomposition of $ad(N)$,  $R_k$ belongs to $F^{\leq 2}$ and $P_k\in F^{>2}$. Naturally one should prove appropriate estimates to show that the sequence $R_k$ tends to zero super-exponentially while   $P_k$ is bounded.
To define $H_{k+1}$ we   solve the homological equation 
\begin{equation}\label{hon}
ad(N_k) F_k = R_k -[R_k],
\end{equation} 
where $[\cdot]$ is defined with respect to the  Fitting decomposition of $ad(N)$.  We define   $N_{k+1}:=N_k+[R_k]$  which remains block diagonal with respect to the initial Fitting decomposition, although it is possible that a  given eigenspace may split  into different eigenspaces. The main point is that, due to the fact that  the $R_{h}$ are very  small, the Fitting decomposition of $N_k$ may only be a refinement of the  decomposition of $ad(N)$. Indeed  two different eigenspaces of $ad(N)$ are different for all the $ad(N_k)$, since the correction to the eigenvalue is small.  Then formula \eqref{hon} defines $F_k\in F^{\leq 2}$ uniquely for $\xi$ on an appropriate Cantor set ${\mathcal C}_k$ where, $$ |ad(N_k)^{-1} x |_{D(s,r)} \leq  r^{-\alpha}|\nu+\nu'|^{ \tau}|x|_{D(s,r)},$$ for all $x\in F^{0,2}_{\mu_\nu^{(i)} ,\mu_{\nu'}^{(j)}}$. 


By definition $ H_{k+1}:=\exp(ad(F_k))H_k$, we define $R_{k+1}:= \Pi_{F^{\leq 2}}\exp(ad(F_k))H_k-N_{k+1}$ and $P_{k+1}$ consequently.

\smallskip

 \subsection{The issue of linear stability}
As we have seen in the previous paragraph, in dimension 2 and 3 the three Melnikov conditions enable us to solve the Homological equation and hence put up a KAM iteration (as discussed in \S \ref{conclu}). 
 For the linear stability of the normal form we must verify weather the eigenvalues $\tilde\Ome$ may be all real.   Indeed it is not true that for all small parameters the eigenvalues are real (as one can see even in the case of a single red edge where the condition is that the discriminant $(\xi_1+\xi_2)^2-16\xi_1\xi_2>0$). Nevertheless the condition on the parameters for the roots to be real is given  by a system of inequalities (still given by Sylvester's theory).

The question is to verify that this open region intersects our domain $A_\alpha$. 
Let us recall briefly the Theory, (cf. \cite{PrC} for a modern exposition).  Given a monic polynomial $f(t)=\prod_{i=1}^n(t-x_i)$  its coefficients are up to sign the elementary symmetric functions $\sigma_h$ in the $x_i$. Consider the {\em Newton functions} $\psi_h:=\sum_{i=1}^nx_i^h$. There are simple recursive formulas expressing the $\psi_h$ as polynomials in the $\sigma_k,\ k\leq h$ with integer coefficients.  Consider next the {\em Bezoutiante matrix}, that is the symmetric $n\times n$ matrix $B$ with entries $\psi_{i+j-2}$.  Its determinant is the {\em discriminant} and equals $\pm\prod_{i\neq j}(x_i-x_j)$. If the polynomial has real coefficients then   $b$ is a real symmetric matrix and  its signature is the number of real roots of $f(t)$.

In particular $B$ is positive definite if and only if all the roots are real and distinct.  The condition on a symmetric matrix to be positive definite is given by the positivity of the determinants of all the principal minors. In
 our setting thus, for every block containing red edges   we deduce a finite number of inequalities in the parameters $\xi_i$. The region where all these inequalities are satisfied is thus the region where all the eigenvalues are real and distinct.  This region is a cone  and the issue is to show that it is non--empty.  This requires again a very complicated case analysis which we do not perform.
We refer to \S \ref{posit} for a partial discussion.

In dimension $n=2$ however we may follow the approach by \cite{GYX} to further simplify the allowable blocks $A$ and hence the matrix $M'$ of Theorem \ref{teo1}.  

\begin{constraint}[\cite{GYX}]\label{natu} We choose the tangential sites $S$ so that there are no geometric blocks in Theorem \ref{sunto} with more than one edge. 
\end{constraint}
The meaning of this constraint is as follows:  a vector $k\in \Z^n$ is the root of a geometric block with $k$ edges if $k$ satisfies a system of $k$ linear and quadratic equations (see Formula \eqref{Ilsom}, notice that the equations depend only on the abstract marked graph). The constrain above can be verified by requiring that all the $2\times 2$ systems which identify blocks with two edges do not have any  {\em integer} solution.  Naturally such a strong constrain may hold only in dimension $n=2$.

\begin{proof}[Proof of Corollary \ref{coro}] We impose Constrain \ref{natu}.
For all $S$ such that Constrain \ref{natu} holds, the matrices $C_A$ are $2\times 2$ and quite simple (the possible $ t I-C_A$ are written in Formula \eqref{22}). Then all the eigenvalues can be computed explicitly. By inspection one sees that all the $\tilde\Ome_k$ are real in the domain:
$$D:=\left\{\xi\in \R^m\vert\quad \forall i,j=1,\dots,m\,,\; i\neq j\,,\quad \xi^2_i+\xi^2_j-14\xi_i\xi_j>0\right\}. $$ Clearly $D$ is a cone and the intersection with $A_\alpha$ is non empty. 
\end{proof}
\part{The algebraic combinatorial Theorems}
 
 \section{A graph problem\label{ilG}}

\subsection{A universal  graph\label{umg}}
We need to develop some combinatorics which is useful to study  the graph $\Lambda_S$   introduced in \S \ref{ilgrco}.

Let us thus choose  $m$ variables  $e_1,\ldots,e_m$.  Denote by $$\Lambda:=\{\sum_{i=1}^ma_ie_i\},\ S^2[\Lambda]:=\{\sum_{i,j=1}^ma_{i,j}e_ie_j\},\ a_i,a_{i,j}\in\Z$$ the lattices generated by these variables $e_i$ and that generated by the products $e_ie_j $.  Set $ \eta:\Lambda\to \Z,\  \eta(e_i):=1$ the {\em augmentation}.

We define a structure of {\em colored marked graph} on $$\Lambda\times \Z/(2):=\Lambda^+\cup \Lambda^-;\quad  \Z/(2)=\pm 1$$   as follows.

We take two elements $(a,\sigma), a=\sum_im_ie_i,\quad (b,\tau), b=\sum_in_ie_i$:
\begin{enumerate}
\item We join $(a,\sigma), (b,\tau)$ with an oriented  black edge, marked $(i,j)$ if  \begin{equation}\label{latonero}
\sigma=\tau,\ b=a+e_i-e_j,\iff a=b+e_j-e_i.
\end{equation} 

\item We join $(a,\sigma), (b,\tau)$ with an unoriented  red edge, marked $(i,j)$ if  \begin{equation}
\label{latorosso}\sigma=-\tau,\ b+a+e_j+e_i=0.
\end{equation} 
\end{enumerate} 
\begin{remark}
In case $i)$ we have $\eta(a)=\eta(b)$ in case  $ii)$  we have $\eta(a)+\eta(b)=-2.$ 
\end{remark}
We will draw  a black edge with its orientation either as horizontal or as vertical edge as
\begin{equation}
\label{dised}\xymatrix{b=a+e_i-e_j&&a\ar@{-}[r]^{(i,j)} &b&a& c \ar@{=}[r]^{(i,j)} &d\\ c+d+e_j+e_i=0&&&&b\ar@{-}[u]^{(i,j)}&& }
\end{equation} 

Recall that \begin{definition}
i)\quad   A {\em path} $p$ of length $\ell(p)=k$, from a vertex $a$ to a vertex $b$  in a graph is a sequence of vertices
$p=\{a=a_0,a_1,\ldots,a_k=b\}$  such that $a_{i-1},a_i$ form an edge for all $i=1,\ldots, k$.

The vertex $a$ is called the {\em source} and $b$ the {\em target} of the path.
\smallskip

ii)\quad   A {\em circuit}  is a path from a vertex $a$ to itself.
\smallskip

iii)\quad A graph  without circuits is called a {\em tree}.
\end{definition} 

It is sometimes useful to project the graph on the factor $\Lambda$  and just follow the paths here.

Let $\sigma(p)$ be  1 if  $p$ has an even number of red edges and $-1$ if odd.  We see that 
\begin{lemma}
$\eta(a)=\eta(b)$ if  $\sigma(p)=1$,\ $\eta(a)+\eta(b)=-2$ if  $\sigma(p)=-1.$
\end{lemma}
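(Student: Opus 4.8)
The plan is to prove this by a short induction on the length $\ell(p)$ of the path, feeding off the two cases recorded in the Remark just after the definition of the edges: if $(a,\sigma),(b,\tau)$ are joined by a black edge then $\eta(a)=\eta(b)$, and if they are joined by a red edge then $\eta(a)+\eta(b)=-2$. Note that the orientation and the marking $(i,j)$ play no role whatsoever for $\eta$, since $\eta(e_i-e_j)=0$ and $\eta(-e_i-e_j)=-2$ independently of $i,j$; this is really a statement about how $\eta$ of the endpoint changes as one walks along $p$.

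\textbf{Base case and set-up.} First I would dispose of $\ell(p)=0$: then $a=b$, the path has no red edges so $\sigma(p)=1$, and $\eta(a)=\eta(b)$ trivially. For the inductive step, write $p=\{a=a_0,a_1,\dots,a_k=b\}$ and let $p'=\{a_0,\dots,a_{k-1}\}$ be the truncation (from the target end), to which the inductive hypothesis applies with source $a$ and target $a_{k-1}$. It remains to read off the effect of the last edge $e=\{a_{k-1},a_k\}$.

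\textbf{The two cases for the last edge.} If $e$ is black, then $\sigma(p)=\sigma(p')$ and $\eta(a_{k-1})=\eta(a_k)$, so the assertion for $p$ is literally the assertion for $p'$. If $e$ is red, then $\sigma(p)=-\sigma(p')$ and $\eta(a_k)=-2-\eta(a_{k-1})$; substituting the inductive hypothesis for $p'$ in each of its two subcases $\sigma(p')=\pm 1$ and using that $x\mapsto -2-x$ is an involution gives exactly the two claimed alternatives for $p$. Equivalently, and more transparently, one observes that traversing a black edge acts as the identity on the value of $\eta$ of the current endpoint, while traversing a red edge applies the affine involution $\iota\colon x\mapsto -2-x$; composing along $p$ yields $\iota^{\,r}$ applied to $\eta(a)$, where $r$ is the number of red edges of $p$, and since $\iota^2=\mathrm{id}$ this equals the identity when $r$ is even and $\iota$ when $r$ is odd — which is precisely $\eta(b)=\eta(a)$ when $\sigma(p)=1$ and $\eta(a)+\eta(b)=-2$ when $\sigma(p)=-1$.

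\textbf{Obstacle.} There is no real obstacle; the only care needed is to truncate the path from the target end so the inductive hypothesis applies cleanly, and to keep the bookkeeping of $\sigma(p)$ consistent with the count of red edges. The ``involution'' reformulation in the last paragraph is the cleanest way to present it and avoids any case chasing.
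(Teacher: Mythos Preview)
Your proof is correct and is exactly the argument the paper has in mind: the paper states the lemma without proof, relying on the immediately preceding Remark (the single-edge case) together with the obvious induction on the length of the path that you spell out. Your ``involution'' reformulation is a clean way to package the two-case induction and matches the intended reasoning.
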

Given an integer $\alpha$ set $\Lambda_\alpha:=\{a\in\Lambda\,|\,\eta(a)=\alpha\}.$
\begin{proposition}\begin{enumerate}\item Each $\Lambda_\alpha^+$ or $\Lambda_\alpha^-$ is a  connected component of the graph in which we only use black edges. \item For each pair of integers $\alpha,\beta$ with $\alpha+\beta=-2$  we have that the set $\Lambda_\alpha^+\cup  \Lambda_\beta^-$ is a connected component of the graph.

\end{enumerate}

\end{proposition}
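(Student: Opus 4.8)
The plan is to attach to each vertex $(a,\sigma)\in\Lambda^{+}\cup\Lambda^{-}$ two invariants, the augmentation $\eta(a)\in\Z$ and the sign $\sigma$, and to record precisely how an edge changes them. A black edge marked $(i,j)$ sends $(a,\sigma)$ to $(a+e_i-e_j,\sigma)$, and since $\eta(e_i-e_j)=0$ it leaves both $\sigma$ and $\eta(a)$ fixed. A red edge marked $(i,j)$ sends $(a,\sigma)$ to $(-a-e_i-e_j,-\sigma)$, hence flips $\sigma$ and sends $\eta(a)\mapsto-\eta(a)-2$. From this, two closedness facts follow at once. First, in the graph built from black edges only, each $\Lambda_\alpha^{+}$ and each $\Lambda_\alpha^{-}$ is a union of connected components, since no black edge leaves such a set. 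Second, in the full graph, whenever $\alpha+\beta=-2$ the set $\Lambda_\alpha^{+}\cup\Lambda_\beta^{-}$ is a union of connected components: a black edge keeps a vertex inside whichever of the two pieces it already lies in, a red edge starting in $\Lambda_\alpha^{+}$ ends in $\Lambda_{-\alpha-2}^{-}=\Lambda_\beta^{-}$, and a red edge starting in $\Lambda_\beta^{-}$ ends in $\Lambda_{-\beta-2}^{+}=\Lambda_\alpha^{+}$ because $-\beta-2=\alpha$. Moreover every vertex $(a,\pm)$ lies in a set of the stated form, so these are all of the components.

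The one genuine point is then to prove connectivity of each piece, which I would do by an explicit path construction inside the black subgraph. Let $a,b\in\Lambda_\alpha$ and write $a-b=\sum_i c_ie_i$; then $\sum_i c_i=\eta(a)-\eta(b)=0$, so with $N=\sum_{c_i>0}c_i=\sum_{c_j<0}(-c_j)$ one can express $a-b$ as an ordered sum of $N$ vectors each of the form $e_i-e_j$ (pair the positive coordinates against the negative coordinates, counted with multiplicity, in any fixed order). Adding these summands to $b$ one at a time yields vertices $b=a_0,a_1,\dots,a_N=a$ such that every consecutive pair $(a_{t-1},\sigma),(a_t,\sigma)$ is a black edge; this is the desired path. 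Hence $\Lambda_\alpha^{\sigma}$ is a single connected component of the black subgraph, which is part (1), and in particular both pieces $\Lambda_\alpha^{+}$ and $\Lambda_\beta^{-}$ in part (2) are connected using black edges.

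Finally, to finish part (2) I would link the two pieces by a single red edge. Fixing two distinct indices $i\neq j$ and any $a\in\Lambda_\alpha$, the vector $b:=-a-e_i-e_j$ has $\eta(b)=-\alpha-2=\beta$, so $(a,+)\in\Lambda_\alpha^{+}$ and $(b,-)\in\Lambda_\beta^{-}$ are joined by a red edge marked $(i,j)$; together with the black-connectivity of each piece this makes $\Lambda_\alpha^{+}\cup\Lambda_\beta^{-}$ connected, and being also closed under all edges it is exactly one connected component. Thus the only step beyond bookkeeping with $\eta$ and $\sigma$ is the decomposition of a zero-augmentation lattice vector into a sum of differences $e_i-e_j$, and this is the step I would treat with care — though it is elementary and presents no real obstacle.
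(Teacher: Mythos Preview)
Your proof is correct and follows essentially the same approach as the paper: closedness of the sets is read off from how $\eta$ and $\sigma$ change along edges (the paper invokes the preceding Lemma for this), and connectivity of each $\Lambda_\alpha^{\sigma}$ under black edges is established by writing $a-b$ as a sum of vectors $e_i-e_j$ (the paper compresses this into ``we easily see by induction''). Your explicit red-edge link between $\Lambda_\alpha^{+}$ and $\Lambda_\beta^{-}$ is exactly what the paper has in mind when it says ``it is clearly enough to prove i)'', so the two arguments coincide, with yours simply spelling out the details the paper omits.
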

\begin{proof}
By the previous identity   each connected component of the graph  is inside one of these sets.
It is clearly enough to prove i)  which we easily see by induction.

\end{proof}

The natural choice coming from conservation of mass is to take  $\Lambda_{-1}^+\cup  \Lambda_{-1}^-.$

There are symmetries in the graph. First the symmetric group $S_m$ of the $m!$ permutations of the elements $e_i$ preserves the graph. Next the {\em sign change map} and,  given any   element $c\in \Lambda$,  the {\em translation map}:
\begin{equation}
\label{sitr}(a,\sigma)\mapsto (a,-\sigma),\quad \tau_c:  (b,\sigma)\mapsto  (b+\sigma c,\sigma). 
\end{equation}  If we want to restrict to $\Lambda_{-1}\times\Z/(2) $ we need to have $\eta(c)=0$.

If $A$ is any subgraph we set $\bar A$ to be the transform of $A$  under   sign change.

In particular  we  may shift and possibly change sign to  the graph so that any element $(a,\sigma)$  is sent to $(0,+)$.

The element  $(0,+)$ is called {\em the root}.
\begin{definition}\label{CMG}
A {\em complete marked graph}, on a set $V\subset    \Lambda\times\Z/(2)$ is the full sub--graph generated by the vertices   in $V$.
\end{definition}
We shall see that these are the combinatorial objects appearing in our context. By the previous remarks given such a $V$ and an element $(a,\sigma)\in V$ we can apply translations and sign change so that the root  $(0,+)$ is in the transformed graph. Thus we start by studying the connected graphs containing the root.

It is very convenient  to understand this picture as follows. \begin{remark}
\label{zerdu} A connected   graph containing $(0,+)$  is completely determined by its projection in $\Lambda$. This lies in the set  \begin{equation}
\label{latil}\tilde\Lambda:=\{a\in\Lambda\,|\,\eta(a)=\{0,-2\}\}.
\end{equation}  \end{remark} 

Given   a vector $a\in\tilde\Lambda$ we have either $\eta(a)=0$ and then we assign to it  the sign $+$ otherwise $-$.  We shall then consider  $\tilde\Lambda$ as a {\em universal graph}.

In particular the  graphs that will appear are given by
\begin{definition}\label{cgli}
A {\em non degenerate graph}, is a complete graph in $\tilde\Lambda$ containing 0 and other linearly independent vectors.
\end{definition}
\begin{remark}
If we choose $k$ linearly independent elements in $\tilde\Lambda$  in general the  complete graph  generated by 0 and these elements is not connected.  The possible connected ones are easily seen to be a finite number.
\end{remark}
We shall need in fact to consider equivalent  two non degenerate graphs if we can translate one into the other. Inside $\tilde\Lambda$  the translations by elements $c$ with $\eta(c)=0$  preserve  the sign,  the translations by elements $c$ with $\eta(c)=-2$  exchange the sign. They are used to {\em change the root}.\smallskip

\subsection{Geometric  realization\label{gerea}}
Given a list $S$ of  $m$ vectors $v_i\in\C^n$, we define a linear map
$$\pi_S=\pi:\Lambda\to\C^m,\quad \pi(e_i):= v_i.$$ 
We consider $\C^m$   with the standard scalar product (NOT the Hilbert product). Set $(u)^2:=(u,u),\ u\in \C^m$.
Extend the map $\pi$  to a linear map  of the  polynomials  $S^2(\Lambda)$ of degree $2$  in the $e_i$ setting (cf. \eqref{ILPI}):
$$  \pi(e_i):= v_i,\quad 
\pi(e_ie_j):=(v_i,v_j),\
$$
We have $   \pi(AB)=(\pi(A),\pi(B)),\forall A,B\in\Lambda.$
We next define the linear map:
$$L^{(2)}:\Lambda\to S^2(\Lambda),\ e_i\mapsto e_i^2.$$ \begin{remark}
Notice that we have   $L^{(2)}(a)=a^2$ if and only if  $a$ equals 0 or one of the variables $e_i$.
\end{remark} 
\begin{definition}
Given an edge $(a,\sigma),(b,\tau)$   set $h:=-\pi(b),k:=-\pi(a)$. We say that the edge is {\em compatible} with $S$ or $\pi$ if
\begin{equation}
\begin{cases}
(h)^2-(k)^2=(v_j)^2-(v_i)^2\quad \text{if}\ \sigma=\tau,\ b-a=e_i-e_j\\
(h)^2+(k)^2=(v_j)^2+(v_i)^2\quad \text{if}\ \sigma=-\tau,\  b+a+e_j+e_i=0\end{cases}
.\end{equation}

\end{definition}
\begin{proposition}
The sub--graph  of  $\Lambda\times \Z/(2) $   in which we keep only the  edges compatible with $S$, coincides with the graph $\Lambda_S$ defined in \S \ref{Gra}.
\end{proposition}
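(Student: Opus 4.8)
The plan is to unwind the two definitions and check that they impose the same edge rule on the common vertex set; there is no analytic content here, only the bookkeeping of the $\Z/(2)$--factor and of the markings, and that bookkeeping is forced at every turn by the freeness of $\Lambda$ on the $e_i$. First I would record the dictionary of vertices: $\Lambda\times\Z/(2)\cong\Z^m\times\Z/(2)$ is simultaneously the vertex set of the universal graph of \S\ref{umg} and that of $\Lambda_S$ (Definition \ref{lambdas}), and under the frequency basis of Remark \ref{LABASE} the vertex $(\nu,\sigma)$ names the basis vector of $F^{(0,1)}$ sitting at the normal site $k=-\pi(\nu)$ with sign $\sigma$. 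By definition the edges of $\Lambda_S$ are the pairs of vertices on which $ad(N)$ has a non--zero entry; since $N=D+Q_M$ with $D$ diagonal, these off--diagonal entries are precisely the non--zero entries of $M$ read in the frequency basis, which are computed in the two displayed cases of \S\ref{Gra} just before Remark \ref{iprima}. Reading that computation off, an edge of $\Lambda_S$ at a source $(\mu,+)$ links it either to $(\mu+e_i-e_j,+)$, recording a normal-site pair $(h,k)$ with $k=-\pi(\mu)$ and $h=k+v_j-v_i$, or to $(\nu,-)$ with $\mu+\nu+e_i+e_j=0$, recording a pair $(h,k)$ with $h+k=v_i+v_j$; in the first case the non--vanishing of the entry is exactly the condition of Lemma \ref{matri}(1), in the second exactly that of Lemma \ref{matri}(2). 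The source $(\mu,-)$ is handled by the sign--change map \eqref{sitr}, under which $ad(N)$, being real, is equivariant.

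Now I would compare this with the ``compatible edge'' rule of \S\ref{gerea}. Take an edge $e$ of the universal graph joining $(a,\sigma)$ to $(b,\tau)$ and set $k:=-\pi(a)$, $h:=-\pi(b)$. If $e$ is black then $\sigma=\tau$ and $b=a+e_i-e_j$ for a marking $(i,j)$ that is unique since $\Lambda$ is free; applying $\pi$ gives $h=k+v_j-v_i$, the first relation of Lemma \ref{matri}(1), and --- because $S\subset\Z^n$, so that $(\,\cdot\,)^2=|\,\cdot\,|^2$ --- the compatibility condition $(h)^2-(k)^2=(v_j)^2-(v_i)^2$ is exactly the second relation. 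Hence ``$e$ black and compatible'' $\iff$ ``$h,k$ fall in case (1) of Lemma \ref{matri}'' $\iff$ ``$M$, equivalently $ad(N)$, carries the relevant non--zero entry'' $\iff$ ``$e$ is an edge of $\Lambda_S$''. If instead $e$ is red then $\sigma=-\tau$ and $a+b+e_i+e_j=0$ with $(i,j)$ again unique; applying $\pi$ (using $\pi(AB)=(\pi(A),\pi(B))$ on the quadratic part) gives $h+k=v_i+v_j$ and the compatibility condition $(h)^2+(k)^2=(v_j)^2+(v_i)^2$, which is Lemma \ref{matri}(2); so once more ``$e$ red and compatible'' $\iff$ ``$e$ is an edge of $\Lambda_S$''. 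By the first paragraph every edge of $\Lambda_S$ is of one of these two shapes, so the two subgraphs of $\Lambda\times\Z/(2)$ have the same edge set and therefore coincide.

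I do not expect a genuine obstacle: the content of the statement is just that the matrix description of $ad(N)$ in Lemma \ref{matri} is the lattice--theoretic compatibility rule in disguise, and the only thing requiring care is the combinatorial bookkeeping, which is entirely forced. The two endpoints of an edge already pin down its colour (black means $\sigma=\tau$, red means $\sigma=-\tau$, so the two types are mutually exclusive) and its marking (since $b-a$, respectively $-a-b$, has a unique expression $e_i-e_j$, respectively $e_i+e_j$, once $i\ne j$), so there are neither repeated nor ambiguous edges to reconcile; the restriction $i<j$ appearing in Lemma \ref{matri}(2) and in $\sum^{**}$ is merely a choice of ordered representative for the unordered pair labelling a non--oriented red edge and does not change the edge set; and the case of a source $(\mu,-)$ is, as noted, reduced to that of $(\mu,+)$ by \eqref{sitr}. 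So the whole argument is the dictionary just described, run once for $\sigma=+$.
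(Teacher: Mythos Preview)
Your proposal is correct and follows the same approach as the paper: both arguments reduce to applying $\pi$ to the edge relations \eqref{latonero}, \eqref{latorosso} to recover the linear constraints $h=k+v_j-v_i$ (black) and $h+k=v_i+v_j$ (red), after which the compatibility condition is precisely the remaining quadratic constraint of Lemma \ref{matri}. The paper's own proof is a single sentence carrying out exactly this dictionary; your version is simply a more explicit unpacking of the same bookkeeping.
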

\begin{proof} If the edge is black $b=a+e_i-e_j$ we have $h=k+v_j-v_i$  for a red edge instead we have $h+k=v_i+v_j$. 
\end{proof}
\begin{remark}
$\Lambda_S=\bar\Lambda_S$ is closed under sign change.
\end{remark}
It is convenient to express in a unique form the previous identities as
\begin{equation}\label{ricon}
\sigma (k)^2-\tau (h)^2=\pi[L^{(2)}(\tau b-\sigma a)]  \end{equation}$$\iff \sigma [(k)^2+\pi[L^{(2)}(   a)]]=\tau [(h)^2+\pi[L^{(2)}(   b)]].$$\begin{theorem}
If $(a,\sigma),(b,\tau)$ are in the same connected component of $\Lambda_S$ we have  
\begin{equation}
\label{cnno}\sigma\pi\big[  a ^2+ L^{(2)}(  a) \big]=\tau\pi\big[ b ^2+ L^{(2)}(  b) \big]. 
\end{equation}\end{theorem}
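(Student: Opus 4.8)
The plan is to recognize \eqref{cnno} as the statement that a certain scalar attached to each vertex is \emph{constant on connected components} of $\Lambda_S$, and to reduce this to the single-edge identity \eqref{ricon}, which has already been established. Concretely, for a vertex $(a,\sigma)$ of $\Lambda_S$ I would set
$$\Phi(a,\sigma):=\sigma\,\pi\big[a^2+L^{(2)}(a)\big]\in\C,$$
where $a^2$ means $a\cdot a\in S^2(\Lambda)$, so that $\pi(a^2)=(\pi(a),\pi(a))=(\pi(a))^2$ — in the notation preceding \eqref{ricon}, $\pi(a^2)=(k)^2$ with $k=-\pi(a)$ — and $\pi(L^{(2)}(a))=\sum_i a_i(v_i)^2$ for $a=\sum_i a_i e_i$. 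With this notation, the content of the theorem is exactly $\Phi(a,\sigma)=\Phi(b,\tau)$ whenever $(a,\sigma),(b,\tau)$ lie in the same connected component.

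\textbf{Key steps.} First I would verify the claim for a single edge: if $(a,\sigma),(b,\tau)$ is an edge of $\Lambda_S$ (black or red), then $\Phi(a,\sigma)=\Phi(b,\tau)$. This is just a rewriting of \eqref{ricon}: using linearity of $L^{(2)}$ and of $\pi$ one has $\pi[L^{(2)}(\tau b-\sigma a)]=\tau\pi[L^{(2)}(b)]-\sigma\pi[L^{(2)}(a)]$, and substituting $\pi(a^2)=(k)^2$, $\pi(b^2)=(h)^2$ turns the equivalent form of \eqref{ricon} into precisely $\sigma\pi[a^2+L^{(2)}(a)]=\tau\pi[b^2+L^{(2)}(b)]$. (Recall that \eqref{ricon} was set up exactly as the unified form of the two compatibility conditions defining the edges of $\Lambda_S$, so there is nothing further to check here.) Second, I would conclude by a one-line induction on path length: given two vertices in the same component, choose a path $(a,\sigma)=(c_0,\rho_0),\dots,(c_\ell,\rho_\ell)=(b,\tau)$ in $\Lambda_S$; the value of $\Phi$ is unchanged across each edge $(c_{i-1},\rho_{i-1}),(c_i,\rho_i)$ by the first step, hence $\Phi(a,\sigma)=\Phi(b,\tau)$, which is \eqref{cnno}.

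\textbf{Main obstacle.} Honestly there is no serious obstacle: the whole substance is carried by \eqref{ricon}, and the theorem is the assertion that a ``local'' conservation law (one edge) propagates along paths. The only points demanding a little care are bookkeeping ones — that $a^2$ is to be read inside $S^2(\Lambda)$ so that $\pi$ applies and equals $(\pi(a))^2$, and that a path in $\Lambda_S$ may traverse red edges, at which the sign flips; but this flip is exactly what the prefactors $\sigma,\tau$ in $\Phi$ absorb, so the induction step is uniform for black and red edges alike. It is also worth noting, as a sanity check, that this is the abstract combinatorial counterpart of the geometric relations recorded in Corollary \ref{sunto1} (formula \eqref{iubl}), transported through the isomorphism $-\pi$ of Theorem \ref{oneone}.
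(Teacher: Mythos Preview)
Your proposal is correct and matches the paper's approach: the paper states the theorem immediately after \eqref{ricon} without a separate proof, treating it as the direct propagation of the single-edge identity \eqref{ricon} along paths, which is exactly what you spell out. Your bookkeeping (reading $a^2\in S^2(\Lambda)$ so that $\pi(a^2)=(\pi(a))^2$, and absorbing the sign flips via $\sigma,\tau$) is accurate and completes the argument cleanly.
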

\begin{corollary}\label{lacco}
A component of $\Lambda_S$ is a complete subgraph (cf. \ref{CMG}) of the universal graph.\end{corollary}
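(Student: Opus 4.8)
The plan is to deduce Corollary \ref{lacco} at once from the preceding Theorem (equation \eqref{cnno}), by noticing that the compatibility with $S$ of a \emph{single} edge of the universal graph $\Lambda\times\Z/(2)$ is, via the reformulation \eqref{ricon}, \emph{exactly} the equality of the quantity $\sigma\,\pi(a^2+L^{(2)}(a))$ at its two endpoints — and this quantity is precisely what \eqref{cnno} declares to be constant on each connected component of $\Lambda_S$.

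First I would fix a connected component $C$ of $\Lambda_S$ and two of its vertices $(a,\sigma),(b,\tau)$ that happen to be joined by an edge $e$ of $\Lambda\times\Z/(2)$; the Corollary will follow once I show that $e$ is compatible with $S$. Indeed, $e$ then belongs to $\Lambda_S$, and since both of its endpoints lie in the connected component $C$, the edge $e$ lies in $C$; as $(a,\sigma),(b,\tau)$ were arbitrary, this says precisely that $C$ is the full subgraph of $\Lambda\times\Z/(2)$ on its own vertex set, i.e. a complete marked graph in the sense of Definition \ref{CMG}. I would remark here, for safety, that between an ordered pair of vertices of $\Lambda\times\Z/(2)$ there is at most one edge, with colour and marking $(i,j)$ determined by the pair — black with $b-a=e_i-e_j$ when $\sigma=\tau$, red with $b+a+e_i+e_j=0$ when $\sigma=-\tau$ — so the phrase ``$e$ is compatible'' is unambiguous.

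The core step is then immediate: by \eqref{ricon} the edge $e$ is compatible with $S$ if and only if
\[
\sigma\,\pi\bigl(a^2+L^{(2)}(a)\bigr)=\tau\,\pi\bigl(b^2+L^{(2)}(b)\bigr),
\]
and since $(a,\sigma)$ and $(b,\tau)$ lie in the same component of $\Lambda_S$, this equality is exactly the conclusion of the preceding Theorem, \eqref{cnno}. Hence $e$ is compatible, which completes the argument.

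I do not expect a genuine obstacle at the level of the Corollary: all the weight sits in the Theorem \eqref{cnno}, which I would prove by induction on the length of a path in $\Lambda_S$ joining the two vertices, each inductive step being one instance of \eqref{ricon} applied to a single edge. The one point that calls for care there is the sign bookkeeping across red edges — one must verify that it is the \emph{signed} expression $\sigma\,\pi(a^2+L^{(2)}(a))$, not $\pi(a^2+L^{(2)}(a))$ alone, that is transported unchanged along edges of either colour, which is exactly why the factors $\sigma,\tau$ appear in \eqref{ricon}. Granting that, nothing further is needed.
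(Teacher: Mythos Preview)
Your proof is correct and follows essentially the same idea as the paper's: both rest on the observation that, by \eqref{ricon}, an edge of the universal graph is compatible with $S$ precisely when the invariant $\sigma\,\pi(a^2+L^{(2)}(a))$ agrees at its endpoints, and this invariant is constant on components by the Theorem \eqref{cnno}. The paper phrases it by first taking the level set of the invariant (which is then automatically a complete subgraph of $\Lambda_S$) and identifying the component inside it, whereas you start from the component and check each universal edge directly; these are the same argument in different order.
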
\begin{proof}
Fix an element $(a,\sigma)$ of which we want to find the component. Consider the set of all elements $(b,\tau)$  such that \eqref{cnno}  holds.  They determine a complete subgraph and  the component passing through $(a,\sigma)$ of this graph is  the required one.
\end{proof}
The construction we just made is such that:
\begin{proposition}
\label{ilmco} The map  $(a,\sigma)\mapsto -\pi(a)$  maps to the graph $\Gamma_S$ and it is compatible with the structure of the two graphs. 
\end{proposition}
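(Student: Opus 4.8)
The plan is to check directly that $\phi\colon(a,\sigma)\mapsto-\pi(a)$, restricted to $\Lambda_S$, is a morphism of colored, marked, oriented graphs into $\Gamma_S$: it should carry the two endpoints of any edge of $\Lambda_S$ to the two endpoints of an edge of $\Gamma_S$ of the same color, sending a black edge marked $(i,j)$ to a black edge (with the same orientation) marked $v_j-v_i$, and a red edge marked $(i,j)$ to a red edge marked $v_i+v_j$ --- precisely the correspondence demanded by Definition \ref{acmg}. On vertices $\phi$ takes values in $\pi(\Lambda)\subseteq\Z^n\subseteq\R^n$, so nothing is to be checked there; since (by the Proposition identifying $\Lambda_S$) the edges of $\Lambda_S$ are exactly the $S$-compatible edges of the universal graph $\Lambda\times\Z/(2)$, it is enough to run through the two edge types one at a time.

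For a black edge $(a,\sigma)\to(b,\sigma)$ of $\Lambda_S$ one has $b=a+e_i-e_j$ with $i\neq j$ and, writing $k=-\pi(a)$, $h=-\pi(b)$, the compatibility relation $(h)^2-(k)^2=(v_j)^2-(v_i)^2$. Applying $\pi$ to $b=a+e_i-e_j$ already forces $h=k+v_j-v_i$. Thus $h,k$ satisfy $h-k=v_j-v_i$ together with $(h)^2-(k)^2=(v_j)^2-(v_i)^2$, which is exactly the system \eqref{colo} defining a black edge of $\Gamma_S$ oriented from $k$ to $h$ and marked $v_j-v_i$, with the distinct tangential sites $v_i,v_j$ in the roles of $v_h,v_k$; equivalently one reads off $(k-v_i,v_j-v_i)=\tfrac12\big[(h)^2-(k)^2-(v_j)^2+(v_i)^2\big]=0$, i.e. $k\in H_{i,j}$, and then $h-v_j=k-v_i$ gives $h\in H_{j,i}$ automatically. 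So $\phi$ sends this edge to a black edge of $\Gamma_S$ with the right orientation and marking.

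For a red edge $\{(a,\sigma),(b,-\sigma)\}$ of $\Lambda_S$ one has $a+b+e_i+e_j=0$ with $i\neq j$ and the compatibility relation $(h)^2+(k)^2=(v_i)^2+(v_j)^2$; applying $\pi$ to $a+b+e_i+e_j=0$ gives $k+h=v_i+v_j$. Hence $k,h$ satisfy $k+h=v_i+v_j$ and $(k)^2+(h)^2=(v_i)^2+(v_j)^2$, which is exactly \eqref{colo1} for a red edge of $\Gamma_S$ joining $k$ and $h$ and marked $v_i+v_j$ (equivalently $(k-v_i,k-v_j)=(v_i,v_j)-(k,h)=0$, since the two relations force $(k,h)=(v_i,v_j)$, so $k\in S_{i,j}$). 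Thus $\phi$ sends it to a red edge of $\Gamma_S$ with the correct marking.

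These exhaust the edge types, so $\phi|_{\Lambda_S}$ is a morphism of colored--marked graphs onto a subgraph of $\Gamma_S$; and by \eqref{cnno} it maps each connected component of $\Lambda_S$ into a single component of $\Gamma_S$, consistently with Corollary \ref{lacco}. I do not expect any real obstacle: the verification is a one-line computation in each case, the point being that in the black case the identity $h-k=v_j-v_i$ is produced by $\pi$ itself, so only the single scalar compatibility equation is genuine new information (and symmetrically for red edges). The only care needed is bookkeeping of the sign/orientation conventions and of the fact that the marking indices $i\neq j$ yield genuinely distinct sites $v_i\neq v_j$, so that $H_{i,j}$ and $S_{i,j}$ (Definition \ref{lasfe}) are the non-degenerate objects appearing in the definition of $\Gamma_S$; this distinctness is built into the choice of tangential sites (cf. Constraint \ref{co1}).
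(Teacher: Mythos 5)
The paper leaves Proposition \ref{ilmco} without an explicit proof, treating it as immediate from the preceding construction (the one-line proof of the proposition identifying $\Lambda_S$ with the $S$-compatible subgraph already records the key observation that $\pi$ maps the edge relations to \eqref{colo} and \eqref{colo1}). Your proposal simply writes out that same verification in full, with the orientation and marking bookkeeping spelled out, so it is correct and follows essentially the same route as the paper.
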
One of the first constraints we shall impose on the $v_i$  will make this map a graph isomorphism on each component, although infinitely many components map to the same geometric component. Given the geometric component $A$ associated to an element $r=-\pi(\mu)$ and a component $\A$ in $\Lambda_S$ mapping to $A$, all the other components mapping to $A$ are obtained as   translates of $\A$ under the subgroup of $\Z^m$ kernel of $\pi$ and their conjugates.  \smallskip

\subsection{The colored marked graphs\label{cmg}}
One can also start in an abstract way returning to and expanding Definition  \ref{acmg}.
\begin{definition}
A {\em colored marked graph}  or {\em $\mathcal M$--graph}  for short, is
\begin{itemize}
\item A graph $\Gamma$ (without  repeated edges).
\item  A color {\em red or black} on each edge, displayed 
$$ \xymatrix{&&a\ar@{-}[r]^{black} &b&&&& c \ar@{=}[r]^{red} &d }$$
\item A marking $(i,j),\  1\leq i\leq m,  1\leq j\leq m, i\neq j$ on each oriented edge with the convention that the opposite orientation corresponds to the exchanged marking $(j,i)$.
\end{itemize}
\end{definition}
The red edges are assumed to be unoriented, so for them we do not distinguish between the markings $(i,j),\ (j,i)$.\begin{example}\label{es1}
$$ \xymatrix{&c\ar@{-}[dr]^{2,1}& &&e&c\ar@{=}[dl]^{2,3}& &&\\(I)\quad a\ar@{-}[r]_{3,4}   &b\ar@{=}[u] ^{2,1} \ar@{-}[r]^{2,5} &d &(II)\quad a\ar@{-}[r] _{5,4}&b\ar@{=}[u]^{2,1}  \ar@{-}[r]_{2,3} &d }$$
\end{example}

Given a path $p$  from $a$ to $b$  and another $q$ from $b$ to $c$  we have the obvious:
\begin{itemize}
\item Opposite path $p^o:=\{b=a_k,a_{k-1},\ldots,a_1=a\}$ from $b$ to $a$.
\item {\em concatenation}  $q\circ p$, a path   from $a$ to $c$.
\end{itemize}

Observe  that a circuit $p=\{a=a_0,a_1,\ldots,a_k=a\}$ induces by {\em rotation}  a circuit from $a_i$ to itself for every $i$ and also an {\em opposite circuit} $p^o.$\smallskip

Given a path  $p$ we define 
\begin{itemize}\item  A {\em sign $\sigma(p)=\pm$}, where\begin{enumerate}
\item $\sigma(p)=+$ if the path has an even number of red edges, we also say that {\em $p$ is even},\item  $\sigma(p)=-$ if the path has an odd number of red edges, we also say that {\em $p$ is odd}.
\end{enumerate} 
\item A linear combination $L(p)=\sum_ia_ie_i,\ a_i\in\mathbb Z$ as follows:
\begin{enumerate}
\item If $p$ is an oriented edge $e$, i.e. $\ell(p)=1$ marked $(i,j)$  we set
\begin{equation}
L(p):=\begin{cases} e_j-e_i\quad \text{if}\quad e\ \text{is black}\\
e_j+e_i\quad \text{if}\quad e\ \text{is red}
\end{cases}
\end{equation}\item If $k>1$  let $p':=\{a_0,a_1,\ldots,a_{k-1}\}$ and $e=(a_{k-1},a_{k })$. 
We set
\begin{equation}\label{lei}
L(p)=L(e)+\sigma(e)L(p').
\end{equation}
 
\end{enumerate}
\end{itemize}

The following Lemma follows from an easy induction.
\begin{lemma}\label{IPA}
\begin{equation}  \sigma( p^o)=  \sigma(p),\quad 
L( p^o)=-\sigma(p)L(p).
\end{equation}\begin{equation}  \sigma(q\circ  p)=\sigma(q) \sigma(p),\quad 
L(q\circ  p)=L(q)+\sigma(q)L(p).
\end{equation}
\end{lemma}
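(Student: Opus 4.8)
The plan is to prove the four identities in the order: first the two assertions about $\sigma$, which are essentially definitions; then the concatenation formula for $L$ by induction on $\ell(q)$; and finally the reversal formula for $L$, which I will deduce from the concatenation formula by a second induction. For the signs: by definition $\sigma(p)=(-1)^{r(p)}$, where $r(p)$ is the number of red edges occurring in $p$. The opposite path $p^o$ uses exactly the same edges as $p$ (in reverse order and with reversed orientation, neither of which changes an edge's color), so $r(p^o)=r(p)$ and $\sigma(p^o)=\sigma(p)$; and $q\circ p$ uses the edges of $p$ followed by those of $q$, so $r(q\circ p)=r(p)+r(q)$ and $\sigma(q\circ p)=(-1)^{r(p)+r(q)}=\sigma(q)\sigma(p)$. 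Throughout I use freely that $\sigma(p)^2=1$, together with the convention that the empty (length $0$) path has $L=0$ and $\sigma=+1$, which makes the base cases uniform.

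For $L(q\circ p)=L(q)+\sigma(q)L(p)$ I would induct on $\ell(q)$. If $\ell(q)\le1$ the claim is immediate: when $\ell(q)=0$, $q\circ p=p$, $L(q)=0$, $\sigma(q)=+1$; when $q$ is a single edge $e$, the path $q\circ p$ is obtained from $p$ by appending $e$, so the recursive definition \eqref{lei} gives $L(q\circ p)=L(e)+\sigma(e)L(p)=L(q)+\sigma(q)L(p)$. For $\ell(q)\ge2$, write $q=e\circ q'$ with $e$ the last edge of $q$ and $q'$ of length $\ell(q)-1$; then the last edge of $q\circ p$ is again $e$ and deleting it leaves $q'\circ p$. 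Applying \eqref{lei}, then the inductive hypothesis to $q'\circ p$, then $\sigma(q)=\sigma(e)\sigma(q')$, and finally \eqref{lei} once more to $q=e\circ q'$, one obtains
\[ L(q\circ p)=L(e)+\sigma(e)\bigl(L(q')+\sigma(q')L(p)\bigr)=\bigl(L(e)+\sigma(e)L(q')\bigr)+\sigma(q)L(p)=L(q)+\sigma(q)L(p). \]

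For $L(p^o)=-\sigma(p)L(p)$ I first dispatch the length-one case directly from the definitions: if $e$ is a black edge marked $(i,j)$ then $e^o$ is marked $(j,i)$, so $L(e^o)=e_i-e_j=-L(e)=-\sigma(e)L(e)$ since $\sigma(e)=+1$; if $e$ is a red edge marked $(i,j)$ then $e^o$ carries the same symmetric marking, so $L(e^o)=e_i+e_j=L(e)=-\sigma(e)L(e)$ since $\sigma(e)=-1$. Now induct on $\ell(p)$: writing $p=e\circ p'$ with $e$ the last edge and $p'$ of length $\ell(p)-1$, one has $p^o=(p')^o\circ e^o$, so the concatenation formula, the inductive hypothesis $L((p')^o)=-\sigma(p')L(p')$, the identity $\sigma((p')^o)=\sigma(p')$, and the length-one case give $L(p^o)=-\sigma(p')L(p')-\sigma(p')\sigma(e)L(e)=-\sigma(p')\bigl(\sigma(e)L(e)+L(p')\bigr)$, while $L(p)=L(e)+\sigma(e)L(p')$ and $\sigma(p)=\sigma(e)\sigma(p')$ yield $-\sigma(p)L(p)=-\sigma(e)\sigma(p')\bigl(L(e)+\sigma(e)L(p')\bigr)=-\sigma(p')\bigl(\sigma(e)L(e)+L(p')\bigr)$ after using $\sigma(e)^2=1$; the two expressions coincide, which closes the induction.

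There is no genuine obstacle here: this is a routine double induction on path length, and the only point that requires care is bookkeeping with signs — applying $\sigma(e)^2=1$ at the right moments, and using the convention that reversing a black edge exchanges its marking (hence negates its contribution to $L$) while a red edge is unoriented (so its contribution to $L$ is unchanged under reversal).
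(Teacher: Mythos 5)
Your proof is correct and takes essentially the approach the paper intends (the paper simply asserts the lemma ``follows from an easy induction'' without giving details). The double induction — first the concatenation formula for $L$ by induction on $\ell(q)$ via the recursion \eqref{lei}, then $L(p^o)=-\sigma(p)L(p)$ by induction on $\ell(p)$ using the length-one case and the already-established concatenation formula — is the natural way to carry it out, and your sign bookkeeping, including the observation that reversing a black edge negates its contribution to $L$ while a red edge is unaffected, is accurate.
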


\begin{example}
$$ \xymatrix{p:=&   a\ar@{-}[r]^{1,2} & \bullet\ar@{=}[r]^{3,2} &b;&& q:=&b \ar@{-}[r]^{1,3}& \bullet\ar@{-}[r]^{4,2} &  \bullet \ar@{=}[r]^{3,4} &c }$$
$$ \xymatrix{q\circ p=&   a\ar@{-}[r]^{1,2} & \bullet\ar@{=}[r]^{3,2}& \bullet\ar@{-}[r]^{1,3}& \bullet\ar@{-}[r]^{4,2} &  \bullet \ar@{=}[r]^{3,4} &c }$$
$$ \sigma(p)= \sigma(q)=-,\quad L(p)=e_3  +e_1;\ L(q)=2e_4 +e_1  -e_2 ;\ L(q\circ p)=  2e_4 -e_3  -e_2 .$$
\end{example}
\subsection{Compatibility}
We need to restrict to smaller and smaller classes of $\mathcal M$--graphs for our analysis thus we start setting
\begin{definition} A connected $\mathcal M$--graph $A$ is {\em compatible}, if, given any two  vertices  $a,b$ and two paths $p_1,p_2$ from $a$ to $b$ we have:
\begin{equation}\label{com1}\sigma(p_1)=\sigma(p_2),\quad L(p_1)=L(p_2).
\end{equation}
\end{definition}
Observe that the two conditions in \eqref{com1} are equivalent to saying that, given a circuit $p$ we have 
$$\sigma(p)=+,\ L(p)=0,\quad \forall\ p\quad \text{a circuit}.$$
For instance the graph (I)  of example  \ref{es1}  is not compatible.
\begin{ass} {\em All the graphs  we consider are compatible}
\end{ass} 

We apply  this assumption as follows.  Choose a vertex $r$ of a compatible connected graph $A$ which we call the {\em root}.  Given any vertex $a$ and a path  $p$ from $r$ to $a$  we set
$$\sigma_r(a):=\sigma(p),\ L_r(a):=L(p).$$

Let us   take as root another vertex $s$,  and let $q$  be a path from $s$ to $r$. We clearly have from Lemma \ref{IPA}:
\begin{equation}\label{cambio}
\sigma_s(a)=\sigma_r(a)\sigma_r(s),\quad L_s(a)=L_r(a)-\sigma_r(a)\sigma_r(s)L_r(s)
\end{equation}
\begin{ass}\label{differe} We restrict to those graphs  for which the linear forms  $L_r(a)$ are all  {\em different}.
\end{ass} 
Observe that this assumption  does not depend on the choice of the root. We have
    \begin{equation}\label{Heta}
\eta(L_r(a))=\begin{cases} 0\  \text{if}\ \sigma(a)=+\\2\  \text{if}\ \sigma(a)=-
\end{cases}
\end{equation}

\begin{theorem}\label{limbed}
In a compatible graph $A $ satisfying the previous assumption, the mapping  $\lambda:a\mapsto  - L_r(a) $ embeds  $A$ as a subgraph  of  the universal graph $\tilde\Lambda=\{a\in\Lambda\,|\,\eta(a)=\{0,-2\}\} $ (cf. \ref{zerdu}).
\end{theorem}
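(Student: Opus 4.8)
\textbf{Proof plan for Theorem \ref{limbed}.}

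The plan is to verify three things: (1) the map $\lambda\colon a\mapsto -L_r(a)$ is well defined on vertices; (2) it is injective; (3) it sends every edge of $A$ to an edge of the universal graph $\tilde\Lambda$, compatibly with colors and markings. Well-definedness is exactly Assumption of compatibility: since any two paths $p_1,p_2$ from $r$ to $a$ satisfy $L(p_1)=L(p_2)$, the value $L_r(a)$ does not depend on the chosen path, so $\lambda(a)$ is unambiguous. Injectivity is exactly Assumption \ref{differe}: the linear forms $L_r(a)$ are pairwise distinct, hence so are their negatives. Note that this is a statement about vertices of $A$; because the embedding lands in $\Lambda$ (we only keep the projection to $\Lambda$, not the sign), we must also record that the sign $\sigma_r(a)$ is recovered from $\eta(L_r(a))$ via \eqref{Heta}, so that no information is lost and $\lambda$ is genuinely a graph isomorphism onto its image.

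Next I would check that $\lambda(a)\in\tilde\Lambda$ for every vertex $a$, i.e. $\eta(L_r(a))\in\{0,-2\}$. This is precisely formula \eqref{Heta}: if $\sigma_r(a)=+$ (an even path from the root) then $\eta(L_r(a))=0$, and if $\sigma_r(a)=-$ then $\eta(L_r(a))=2$, so $\eta(-L_r(a))\in\{0,-2\}$ as required by \eqref{latil}. One small point to address here is that \eqref{Heta} itself follows by induction on the length of a path using Lemma \ref{IPA} and the definition of $L$ on a single edge: a black edge marked $(i,j)$ contributes $e_j-e_i$ with $\eta=0$, a red edge contributes $e_j+e_i$ with $\eta=2$, and the recursion \eqref{lei} together with $\sigma(e)=\pm1$ propagates the parity count correctly.

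Finally, the edge-compatibility step. Let $(a,b)$ be an edge of $A$ with marking $(i,j)$, and pick the root-path to $a$ given by some $p$, extended by this edge $e$ to a path to $b$, so that $L_r(b)=L(e)+\sigma(e)L_r(a)$ and $\sigma_r(b)=\sigma(e)\sigma_r(a)$. If $e$ is black, then $\sigma(e)=+$, so $\sigma_r(b)=\sigma_r(a)$ and $L_r(b)=L_r(a)+e_j-e_i$; writing $\lambda(a)=-L_r(a)=:\alpha$ and $\lambda(b)=-L_r(b)=:\beta$ with common sign $\sigma:=\sigma_r(a)=\sigma_r(b)$, this reads $\beta=\alpha+e_i-e_j$ with $\sigma=\tau$, which is exactly the black-edge rule \eqref{latonero} of the universal graph, with the same marking $(i,j)$. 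If $e$ is red, then $\sigma(e)=-$, so $\sigma_r(b)=-\sigma_r(a)$ and $L_r(b)=L_r(a)-\sigma_r(a)(e_j+e_i)$ -- here I use $\sigma(e)L_r(a)=-\,(\text{sign along }p\text{ after the red edge})$; more carefully, from $L(q\circ p)=L(q)+\sigma(q)L(p)$ in Lemma \ref{IPA} applied with the single red edge last one gets $L_r(b)+\sigma_r(b)\cdot 0 = \dots$, and a direct computation of $-L_r(a)-L_r(b)$ yields $e_i+e_j$ up to the sign bookkeeping; translating into $\alpha=-L_r(a)$, $\beta=-L_r(b)$ one obtains $\beta+\alpha+e_j+e_i=0$ with $\sigma=-\tau$, which is exactly the red-edge rule \eqref{latorosso}, again with marking $(i,j)$. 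Since $\lambda$ is injective and preserves edges with their colors and markings, and since the sign at each vertex is determined by $\eta$ of the image, $\lambda$ is an isomorphism of $A$ onto the full subgraph of $\tilde\Lambda$ on the vertex set $\lambda(A)$; in particular the root $r$ goes to $0$ (the empty path has $L=0$), recovering the normalization discussed after Definition \ref{CMG}.

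\textbf{Expected main obstacle.} The only genuinely delicate point is the red-edge sign bookkeeping in the last step: one must be careful that the recursive definition \eqref{lei} of $L$ builds the form from the \emph{root} outward, whereas when one changes which endpoint is considered ``first'' the sign $\sigma(e)$ flips the accumulated form, and the identities of Lemma \ref{IPA} (for $L(p^o)$ and $L(q\circ p)$) must be invoked in the correct order. Once these identities are applied consistently, the verification reduces to matching \eqref{latonero}--\eqref{latorosso} term by term, which is routine. Everything else is a direct unwinding of the definitions together with Assumptions of compatibility and \ref{differe}, both of which are root-independent by \eqref{cambio}.
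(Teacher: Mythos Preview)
Your proposal is correct and follows essentially the same approach as the paper's proof: injectivity on vertices from Assumption~\ref{differe}, then edge compatibility from the recursion $L_r(b)=L(e)+\sigma(e)L_r(a)$ checked separately for black and red edges against \eqref{latonero}--\eqref{latorosso}. The paper's argument is more terse (it does not spell out the $\eta$-check or the recovery of signs, leaving these to Remark~\ref{zerdu} and \eqref{Heta}), but the logic is identical; note only that your intermediate formula in the red case, $L_r(b)=L_r(a)-\sigma_r(a)(e_i+e_j)$, is misstated---the correct recursion gives simply $L_r(b)=e_i+e_j-L_r(a)$, from which $(-L_r(a))+(-L_r(b))+e_i+e_j=0$ follows immediately, as you ultimately write.
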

\begin{proof}
By Assumption \ref{differe}  we have an embedding at the level of vertices. We need to show that this is compatible with the edges.  Take  an edge $e=(a,b)$ in $A$, we know that   we have  $L_r(b)= L_r(e)+\sigma(e)L_r(a)$  hence if $e$ is red marked $(i,j)$  we have  $0=e_i+e_j-L_r(a)-L_r(b)$. If $e$ is black marked $(i,j)$  we have  $0=e_j-e_i+L_r(a)-L_r(b)$ or $-L_r(b)=e_i-e_j-L_r(a)$.  \end{proof}
This embedding maps $r$ to $(0,+)$.  We could have defined more general embeddings but they can be obtained from this by translations and sign changes.
 From now on, unless it is necessary, we shall drop the symbol $r$  in $\sigma_r(a) ,L_r(a) .$
\begin{remark}\label{segni}
Assume that, for a given root $r$ of a graph $A$ we have $k$ vertices marked $+$ and $h$ marked $-$. If  we change the root to a root $s$,  and $s$ is marked $+$  we still have $k$ vertices marked $+$ and $h$ marked $-$, if $s$ is marked $-$  we   have $h-1$ vertices marked $+$ and $k+1$ marked $-$.

The embedding changes by the translation by $L_r(s)$  from \eqref{cambio} and \eqref{sitr}
\end{remark}The main reason of this paragraph is the following:
\begin{remark} 
 Let $A$ be a component of the geometric graph,  then to $A$ is associated in a natural way an abstract colored marked graph. If we choose a root and perform the embedding  given by Theorem \ref{limbed}  we see that we have lifted the geometric graph  to  the graph $\Lambda_S$  inverting the map $\pi$.

We shall prove that, under the generic assumption, this lift is an isomorphism to a connected component  of  $\Lambda_S$ over $A$. By abuse of notations we shall denote often by $A$ also this component. All other components over $A$ are obtained from this by translations with elements in $Ker(\pi)$.\end{remark}
\subsection{Realizing the graphs}  
 \begin{definition}
 A realization of an $\mathcal M$--graph is obtained by composition of the embedding, a translation  and a realization as in the previous section  $a\mapsto -\pi(\sigma(a)r-L_r(a))=-\sigma(a)\pi(r)+\pi( L_r(a)).$

A   realization is called {\em integral} if all the vectors $v_i$ have integer coordinates and all the $x_a$ lie in the lattice $\pi(\Z^m)=\mathcal L_v$ generated by the $v_i$.

\end{definition}
\begin{remark}
We have defined a realization by the choice of a root and a translation.  Taking a different root we may obtain the same realization by changing appropriately the translation.
\end{remark}
 
 We    suppose that our graphs are rooted and drop the subscript $r$.
\begin{definition}
Given a graph $A$ we then set for a vertex $a$
\begin{equation}\label{bacos0}  V(r):=0, N(r):=0,\ 
V(a):=\pi(L(a)),
\end{equation}$$\ L^{(2)}(a):=L^{(2)}(L(a)),\ N(a):= \pi(L^{(2)}(a)).$$
\end{definition} Then a  realization  of the graph, given the vectors $v_i$ consists in 
  assigning to each vertex $a$ a vector $x_a\in \C^s$ so that, setting $x:=-\pi(r)$,  the following constraints are verified:
\begin{equation}\label{bacos}
x_a=\sigma(a)x +V(a),\quad (x_a)^2=\sigma(a)(x)^2+N(a).
\end{equation}

 Observe that, if the $v_i$ are integral, the realization is integral if and only if $x\in \mathcal L_v$.

The meaning of a realization is  really a reformulation of the Properties  used in Lemma \ref{matri}.

In fact suppose that we have a path  $p$ from $r$ to $a$ and an edge $e$ from $a$ to $b$.  We have by definition \eqref{lei}
$$L(b)=\begin{cases}
e_j-e_i+  L(a),\quad \text{if}\ e\ \text{is black marked}\ e_j-e_i\\ e_j+e_i-  L(a),\quad \text{if}\ e\ \text{is red marked}\ e_j+e_i
\end{cases} $$ \begin{equation}\label{Maiv}
V(b)=\pi(L(b))=\begin{cases}
v_j-v_i+  V(a),\quad \text{if}\ e\ \text{is black marked}\ e_j-e_i\\ v_j+v_i-  V(a),\quad \text{if}\ e\ \text{is red marked}\ e_j+e_i
\end{cases} 
\end{equation}
\begin{equation}
\label{glix}x_a=\sigma(a)x +V(a),x_b=\sigma(b)x +V(b),
\end{equation}
thus  $$x_b=\sigma(b)x+v_j-v_i+V(a)= \sigma(a)x+v_j-v_i+V(a)=v_j-v_i+x_a $$ $\text{if}\ e\ \text{is black marked}\ e_j-e_i $   $$x_b=\sigma(b)x+v_j+v_i-V(a)= -\sigma(a)x+v_j+v_i-V(a)=v_j+v_i-x_a $$ $\text{if}\ e\ \text{is red marked}\ e_j+e_i .$ For the scalar products \begin{equation}
\label{ivi1}(x_a)^2=\sigma(a)(x)^2+N(a),( x_b )^2=\sigma(b)(x)^2+N(b), 
\end{equation} 

\begin{equation}
\label{enneb}N(b)= \begin{cases}
(v_j)^2-(v_i)^2+  N(a),\quad \text{if}\ e\ \text{is black marked}\ e_j-e_i\\ (v_j)^2+(v_i)^2-  N(a),\quad \text{if}\ e\ \text{is red marked}\ e_j+e_i
\end{cases} 
\end{equation}thus finally we have
\begin{theorem}
The constraints \eqref{bacos}  are equivalent to the recursive identities, (which correspond to {\em simple steps} in the graph $\Gamma_S$):
\begin{equation}\label{Ilsom}
\begin{cases}x_b= v_j-v_i+x_a,\quad 
(v_j)^2-(v_i)^2=( x_b )^2-  (x_a)^2  ,\\ \text{if}\ e\ \text{is black marked}\ e_j-e_i\\\\ x_b=v_j+v_i-x_a,\quad  (v_j)^2+(v_i)^2=  (x_a)^2+( x_b )^2,\\ \text{if}\ e\ \text{is red marked}\ e_j+e_i
\end{cases} 
\end{equation} 
\end{theorem}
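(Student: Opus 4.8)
The plan is to establish the equivalence of the two systems of constraints on the vectors $(x_a)_{a\in A}$ by a path-wise analysis in the (connected, compatible) graph $A$, all the needed algebra being already contained in the recursions \eqref{Maiv}, \eqref{enneb} for $V,N$ and the steps \eqref{glix}--\eqref{ivi1} displayed just above the statement. A preliminary remark orients the argument: the relations \eqref{Ilsom} are precisely the edge conditions \eqref{colo}--\eqref{colo1} defining $\Gamma_S$, rewritten with the markings $e_j-e_i$ (black) and $e_j+e_i$ (red). Thus, once the theorem is proved, a realization of the abstract graph $A$ in the sense of \eqref{bacos} is the same thing as a marking-preserving vertex map from $A$ to $\Gamma_S$.

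For the implication \eqref{bacos} $\Rightarrow$ \eqref{Ilsom} I would work one edge at a time. Given an edge $e=(a,b)$ marked $(i,j)$, the recursion \eqref{lei} gives $L(b)=L(e)+\sigma(e)L(a)$; applying $\pi$ and using the linearity of $L^{(2)}$ yields $V(b)=v_j-v_i+V(a)$, $N(b)=(v_j)^2-(v_i)^2+N(a)$, $\sigma(b)=\sigma(a)$ for a black edge, and $V(b)=v_j+v_i-V(a)$, $N(b)=(v_j)^2+(v_i)^2-N(a)$, $\sigma(b)=-\sigma(a)$ for a red edge (these are \eqref{Maiv}, \eqref{enneb}). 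Writing \eqref{bacos} for $a$ and for $b$ and eliminating $x$ and $(x)^2$ between the two pairs of equations gives exactly \eqref{Ilsom}; this elimination is the computation carried out between \eqref{glix} and \eqref{ivi1}.

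For the converse \eqref{Ilsom} $\Rightarrow$ \eqref{bacos} I would fix the root $r$, set $x:=-\pi(r)$, and induct on the length of a path from $r$; since $A$ is connected this reaches every vertex. At $r$ one has $V(r)=N(r)=0$ and $\sigma(r)=+$, so \eqref{bacos} reduces to $x_r=x$, which is the definition of the realization. If \eqref{bacos} holds at $a$ and $e=(a,b)$ is an edge, then substituting the values of $x_b$ and $(x_b)^2$ prescribed by \eqref{Ilsom} into the already-established formulas for $V(b),N(b),\sigma(b)$ recovers $x_b=\sigma(b)x+V(b)$ and $(x_b)^2=\sigma(b)(x)^2+N(b)$, i.e. \eqref{bacos} at $b$; in the red case one has only to remember that $\sigma(b)=-\sigma(a)$ flips the signs of the $x$-terms. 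The single point where the hypotheses are genuinely used — and the only possible obstacle — is the well-definedness of $V(a)$, $N(a)$, $\sigma(a)$, that is, their independence of the chosen path from $r$ to $a$; this is exactly the compatibility assumption ($\sigma(p)=+$ and $L(p)=0$ for every circuit $p$) together with Lemma \ref{IPA}. Once that is granted, the proof is pure bookkeeping of the black/red sign conventions.
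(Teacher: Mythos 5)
Your proof is correct and follows essentially the same route as the paper: the forward implication is exactly the edge‑by‑edge computation the paper carries out in Formulas \eqref{Maiv}--\eqref{enneb} just before the statement, and your converse is the same substitution read in the other direction, with the induction on path length and the appeal to compatibility making explicit what the paper leaves implicit (the paper simply says ``thus finally we have'' after presenting the forward calculation).
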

 In  particular the definition of realization does not depend on the choice of the root $r$.\medskip
 
 It is convenient to reformulate the equations \eqref{bacos} as:  \begin{equation}\label{bacos1}
  \sigma(a)(x)^2+N(a)=(\sigma(a)x +V(a),\sigma(a)x +V(a))\end{equation} $$=(x)^2+2\sigma(a)(x,V(a))+( V(a)  )^2.$$
  Let  $L(a)=\sum_ha_he_h$ so that $V(a)=\sum_ha_hv_h$. 
 \medskip

 We have:
$( V(a)  )^2-N(a)=\sum_h(a_h^2-a_h)(v_h)^2+2\sum_{h<k}a_ha_k(v_h,v_k). $ Thus
denote    by \begin{equation}
\label{LaNo}C(a):=\sum_h\binom{a_h}{2}(v_h)^2+\sum_{h<k}a_ha_k(v_h,v_k) =\frac12(( V(a)  )^2-N(a)) .
\end{equation} 
  Formula \eqref{bacos1} becomes, using   \eqref{LaNo} 
\begin{theorem}
\begin{equation}\label{bacos2}
   -C(a)= \begin{cases}\ ( x,V(a)) \quad  \quad  \quad  \quad  \ \ \text{if}\ \sigma(a)=+\\
- ( x,V(a))  + (x)^2 \quad \ \ \text{if}\ \sigma(a)=-
\end{cases}  \end{equation}
\end{theorem}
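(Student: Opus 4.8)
The plan is to obtain \eqref{bacos2} from the definition of a realization by eliminating the auxiliary vectors $x_a$ in favour of the single vector $x=-\pi(r)$ attached to the root, and then substituting the closed-form expression for $C(a)$ coming from \eqref{LaNo}. Recall that a realization assigns to each vertex $a$ a vector $x_a$ subject to the two constraints \eqref{bacos}: the linear one $x_a=\sigma(a)x+V(a)$ (which, together with \eqref{bacos0}, is just the definition of $V(a)=\pi(L(a))$), and the quadratic one $(x_a)^2=\sigma(a)(x)^2+N(a)$. First I would use the linear constraint to substitute for $x_a$ inside the quadratic one and expand $(\sigma(a)x+V(a),\sigma(a)x+V(a))$ by bilinearity of the symmetric (non-Hermitian) form $(\cdot,\cdot)$ on $\C^m$; since $\sigma(a)=\pm1$ we have $\sigma(a)^2=1$, so there are no complex conjugations and the $(x)^2$-term has coefficient $1$ while the cross term equals $2\sigma(a)(x,V(a))$. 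This is exactly the rewriting \eqref{bacos1}, and it shows that, once $x_a$ has been fixed by the linear constraint, the realization condition at $a$ collapses to the single scalar identity $\sigma(a)(x)^2+N(a)=(x)^2+2\sigma(a)(x,V(a))+(V(a))^2$.

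Next I would feed in the combinatorial identity $(V(a))^2-N(a)=2C(a)$, which is \eqref{LaNo} unwound: writing $L(a)=\sum_h a_h e_h$, so that $V(a)=\sum_h a_h v_h$ and $N(a)=\pi(L^{(2)}(L(a)))=\sum_h a_h (v_h)^2$, one gets $(V(a))^2-N(a)=\sum_h (a_h^2-a_h)(v_h)^2+2\sum_{h<k}a_h a_k (v_h,v_k)=2C(a)$ because $2\binom{a_h}{2}=a_h^2-a_h$. Substituting $N(a)=(V(a))^2-2C(a)$ into the scalar identity above and cancelling $(V(a))^2$ leaves $\sigma(a)(x)^2-2C(a)=(x)^2+2\sigma(a)(x,V(a))$, i.e. $2\sigma(a)(x,V(a))=(\sigma(a)-1)(x)^2-2C(a)$; multiplying by $\sigma(a)$ and dividing by $2$ gives $(x,V(a))=\tfrac{1-\sigma(a)}{2}(x)^2-\sigma(a)C(a)$. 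Reading off the two cases, $\sigma(a)=+1$ yields $-C(a)=(x,V(a))$ and $\sigma(a)=-1$ yields $-C(a)=-(x,V(a))+(x)^2$, which is precisely \eqref{bacos2}. Conversely, if $x$ satisfies \eqref{bacos2} at every vertex $a$ (for the sign function $\sigma$ and the linear forms $L$ of the abstract graph relative to the chosen root), one defines $x_a:=\sigma(a)x+V(a)$ and reverses the computation to recover the quadratic constraint; hence the two formulations are equivalent, as claimed.

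I do not expect any serious obstacle: the proof is a direct computation, and the only points that need care are elementary. One must use that $(\cdot,\cdot)$ is the symmetric bilinear form on $\C^m$ (so $(\sigma(a)x+V(a))^2$ expands without conjugation and $\sigma(a)^2=1$ can be used freely), keep the algebra of the binomial coefficient in $C(a)$ straight, and note that the whole discussion is independent of the choice of root — which has already been established through the equivalence of \eqref{bacos} with the simple-step recursions \eqref{Ilsom}. The mild ``trap'' to avoid is forgetting that the linear constraint in \eqref{bacos} is not an extra hypothesis but a definition of $x_a$, so that the genuine content of a realization is the one-equation-per-vertex system \eqref{bacos2} in the single unknown $x$.
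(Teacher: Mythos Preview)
Your proposal is correct and follows exactly the paper's approach: substitute the linear constraint into the quadratic one to obtain \eqref{bacos1}, then use the identity $(V(a))^2-N(a)=2C(a)$ from \eqref{LaNo} and read off the two cases $\sigma(a)=\pm1$. The only addition is that you spell out the converse direction, which the paper leaves implicit.
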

\medskip
  
We need one further definition
\begin{definition}
A graph $A$ is {\em allowed}  if for generic choices of integral vectors $v_i$  it admits an integral realization.
\end{definition}
The word {\em generic} is used  as in algebraic geometry. It means  that the property holds outside the zeros of some polynomial equation in the coordinates of the $v_i$.
\medskip

One of our main tasks is to study such graphs.
\subsection{Relations}
Let us introduce a
\begin{definition}\begin{itemize}
\item A  graph $A$ with $k+1$ vertices is  said to be of {\em dimension} $k$.

\item  The  lattice  $\Z^m_r$  generated by the elements $L(a)$  as $a$  runs over the vertices for a given choice of a root  $r$ is independent of the root. We call its dimension  
the {\em rank}, ${\rm rk} A$,  of the graph $A$.
\item If the rank of $A$ is strictly less than the dimension of $A$ we say that $A$ {\em is degenerate}.
\end{itemize}
\begin{proof}[Proof of item 2]  If we change the root from $r$ to another $a$  we have by Formulas \eqref{IPA} that.
$$L_s(a)=L_r(a)-\sigma(s)\sigma(a)L_r(s).  $$  This shows that  $\Z^m_s\subset   \Z^m_r$ and of course also the converse is true by exchanging the two roles.

\end{proof}
\end{definition}
If $A$ is degenerate  then there are non trivial relations $\sum_an_a\sigma(a)L(a)=0,\ n_a\in\Z$  among the elements $L(a)$.

It is also useful to choose a maximal tree $T$ in $A$. There is   a triangular change of coordinates from the $L(a)$ to the markings of $T$. Hence the relation can be also expressed as a relation between these markings.

 If we are given a realization $\pi:e_i\to v_i$ of the graph we must have, for every relation  $\sum_an_a\sigma(a)L(a)=0,\ n_a\in\Z$ that  $\sum_an_a\sigma(a)V(a)=0$ and, using Formula  \eqref{Heta}
\begin{equation}
0=\sum_{a,\ |\, \sigma(a)=-}n_a.
\end{equation}
Applying Formula \eqref{bacos2}  we deduce that we must have
\begin{equation}\label{riso}
\sum_an_aC(a)=-(x,\sum_an_a\sigma(a)V(a))-[\sum_{a,\ |\, \sigma(a)=-}n_a](x)^2  
\end{equation} hence
\begin{equation}\label{riso1}
\sum_an_aC(a)=-(x,\sum_an_a\sigma(a)V(a))   =0.
\end{equation}
Let us thus set 
\begin{equation}
\label{LaNo1}G(a):=\sum_h\binom{a_h}{2}e_h ^2+\sum_{h<k}a_ha_ke_he_k  =\frac12(L(a)  ^2-L^{(2)}(a)) .
\end{equation} We have $C(a)=\pi(G(a))$. 
Remark  that   also $\sum_an_a\sigma(a)L^{(2)}(a)=0$.

We have
$$\sum_an_a G(a)=\frac12  \sum_an_aL(a)  ^2 -\frac12  \sum_an_aL^{(2)}(a) $$ 
$$ \sum_an_aL^{(2)}(a) = 2\sum_{a,\, \sigma(a)=-}n_a L^{(2)}(a) .$$
and
$$0= \sum_an_aC(a) =\pi\big(\sum_an_aG(a)\big)=\frac12\pi\big(\sum_an_aL(a)  ^2 - \sum_{a\,|\,\sigma(a)=-}n_aL^{(2)}(a)\big) .$$
\begin{definition}
If $\sum_an_aL(a)  ^2 - \sum_{a\,|\,\sigma(a)=-}L^{(2)}(a) \neq 0$ then the equation   \eqref{riso}  is a non trivial constraint, and we say that the graph has an  {\em avoidable resonance}.  
 
\end{definition}
\begin{remark}
If   we have an avoidable resonance then for a generic choice of the $S:=\{v_i\}$  the graph is not realized by $\pi_S$.
\end{remark}
 
We arrive now at the main Theorem of the section:
\begin{theorem}\label{ridma}
Given  a compatible graph of rank $k$ then either it has   $k +1$  vertices or it produces an avoidable resonance.  \end{theorem}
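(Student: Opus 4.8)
The plan is to reduce the Theorem to a nonvanishing statement about the quadratic form $G(R)$ attached to a suitable relation, and then to settle that statement by a finite check. Set $d:=\dim A$, so that $A$ has $d+1$ vertices; since the lattice $\Z^m_r$ is generated by the $L(a)$ over the $d$ non--root vertices, one always has $k=\operatorname{rk}A\le d$. If $k=d$ there is nothing to prove, so assume $k<d$; it then suffices to produce a single relation $R=(n_a)$ — that is, $\sum_a n_a\sigma(a)L(a)=0$ with $n_a\in\Z$ — such that $G(R):=\sum_a n_a G(a)\neq 0$ in $S^2(\Lambda)$. Indeed, by \eqref{riso1} such a relation forces the identity $\pi_S(G(R))=\sum_a n_a C(a)=0$; and $\pi_S(G(R))$, viewed as a polynomial in the coordinates of the $v_i$, is a sum over coordinate directions of the quadratic form attached to $G(R)$, hence is not identically $0$ as soon as $G(R)\neq 0$. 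So $G(R)\neq 0$ exhibits a proper algebraic hypersurface off which $A$ is not realized, i.e. an avoidable resonance.

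To choose $R$ I would fix a spanning tree of $A$, order the non--root vertices $a_1,\dots,a_d$ so that parents precede children, and let $b=a_i$ be the first one with $L(b)$ in the $\mathbb Q$--span of $L(a_1),\dots,L(a_{i-1})$; such $i$ exists since $k<d$, and $L(b)\neq 0$ by Assumption \ref{differe}. Deleting the vertices entering with coefficient zero and relabelling, write $L(b)=\sum_{l=1}^p c_l L(a_l)$ with all $c_l\neq 0$, the $L(a_1),\dots,L(a_p)$ linearly independent, and $b\notin\{a_1,\dots,a_p\}$ — so by Assumption \ref{differe}, $L(b)\neq L(a_l)$ for every $l$. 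Using $\eta(L(a))=1-\sigma(a)$ (formula \eqref{Heta}), set $n_b:=1$, $n_{a_l}:=-\sigma(b)\sigma(a_l)c_l$ and clear denominators; then $\sum_a n_a\sigma(a)L(a)=\sigma(b)\bigl(L(b)-\sum_l c_l L(a_l)\bigr)=0$, so $R$ is a relation.

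From $2G(a)=L(a)^2-L^{(2)}(a)$ (formula \eqref{LaNo1}) and the linearity of $L^{(2)}$ one computes
\[
2G(R)=\Bigl(L(b)^2-\sigma(b)\sum_{l=1}^p c_l\sigma(a_l)L(a_l)^2\Bigr)-\sum_{l=1}^p c_l\bigl(1-\sigma(b)\sigma(a_l)\bigr)L^{(2)}(a_l).
\]
Each $L^{(2)}(a_l)=L^{(2)}(L(a_l))$ is a combination of the monomials $e_h^2$, so in the monomial basis of $S^2(\Lambda)$ the coefficients of $G(R)$ on the off--diagonal monomials $e_he_k$ ($h\neq k$) agree with those of $L(b)^2-\sigma(b)\sum_l c_l\sigma(a_l)L(a_l)^2$. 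Writing $\zeta^{(l)}\in\Z^m$ for the coordinate vector of $L(a_l)$, $\beta=\sum_l c_l\zeta^{(l)}$ for that of $L(b)$, and setting $\zeta^{(p+1)}:=\beta$, $d_{p+1}:=1$, $d_l:=-\sigma(b)\sigma(a_l)c_l\neq 0$, these off--diagonal coefficients are twice the off--diagonal entries of the symmetric matrix
\[
E\;=\;\beta\beta^{t}-\sigma(b)\sum_{l=1}^p c_l\sigma(a_l)\,\zeta^{(l)}(\zeta^{(l)})^{t}\;=\;\sum_{l=1}^{p+1}d_l\,\zeta^{(l)}(\zeta^{(l)})^{t}.
\]
Hence it is enough to show that $E$ is never a diagonal matrix: this gives $G(R)\neq 0$ and completes the proof.

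That last claim is the point I expect to be the main obstacle. Suppose $E$ were diagonal. Let $Z=[\zeta^{(1)}\mid\cdots\mid\zeta^{(p+1)}]$, an $m\times(p+1)$ integer matrix of rank $p$ whose kernel is the line spanned by $v=(c_1,\dots,c_p,-1)$, a vector with no zero entry; then $E=ZDZ^{t}$ with $D=\operatorname{diag}(d_1,\dots,d_{p+1})$, so $E$ diagonal says precisely that the $m$ rows $\rho_1,\dots,\rho_m$ of $Z$ are mutually orthogonal for the nondegenerate (possibly indefinite) form with Gram matrix $D$; these rows span the $p$--dimensional space $v^{\perp}\subset\mathbb Q^{p+1}$ (standard orthogonal complement), and $\sum_h\rho_h=(\eta(L(a_1)),\dots,\eta(L(a_p)),\eta(L(b)))\in\{0,2\}^{p+1}$. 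For $p=1$ one has $v^{\perp}=\mathbb Q(1,c_1)$, and mutual orthogonality forces either all but one $\rho_h$ to vanish — which makes each $L(a_l)$, $l\le p$, as well as $L(b)$, equal to $2e_h$ for a single $h$, contradicting $L(b)\neq L(a_1)$ — or $d_1+c_1^2 d_2=0$, i.e. $c_1=\sigma(b)\sigma(a_1)\in\{\pm1\}$; then $L(b)=\pm L(a_1)$, and comparing $\eta$--values with the rule $\sigma(a)=+\iff\eta(L(a))=0$ forces $L(b)=L(a_1)$, again impossible. For general $p$ the same dichotomy should persist, but extracting it requires a finite case analysis on the vanishing pattern of the entries of $E$ against the constraints $\eta(L(a_l))\in\{0,2\}$ and the distinctness of the $L(a_l)$; it is immediate when $A$ has no red edges, while the cases with red edges — where the signs $\sigma(a_l)$ interfere — are the delicate part. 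This linear--algebra lemma is, I believe, the technical heart of the Theorem.
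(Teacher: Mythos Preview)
Your setup and the reduction to $G(R)\neq 0$ are correct, and the $p=1$ case is fine. The gap is exactly where you flag it: for $p\ge 2$ you have not shown that $E$ is non--diagonal, and the assertion that the black--edge case is ``immediate'' is not substantiated. This is a real gap, and the route you chose --- analyzing $D$--orthogonality of the rows of $Z$ in the $e_h$--basis --- makes the problem harder than necessary, because by passing to the off--diagonal of $2G(R)$ in the $e$--basis you discard precisely the diagonal information that finishes the argument.

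The paper closes the proof with two short moves that avoid your matrix lemma entirely. First, change basis: since $L(a_1),\dots,L(a_k)$ are independent, take $x_i:=L(a_i)$ as new coordinates. When all $\sigma(a_i)$ agree, the $L^{(2)}$ terms cancel and (normalizing $n_{a_0}=1$) one has $2G(R)=(\sum_i p_i x_i)^2+\sum_i p_i x_i^2$; the mixed monomials $x_ix_j$ force all but one $p_i$ to vanish, and then the diagonal term gives $p_i=-1$, hence $L(a_0)=L(a_i)$, contradicting distinctness. Second, for mixed signs, substitute $y_h\mapsto 1+y_h'$: since $L(a)(1,\dots,1)=\eta(L(a))\in\{0,2\}$, separating the assumed identity $2G(R)=0$ by degree in $y'$ forces $\sum_{\sigma(a_i)=-}n_{a_i}L(a_i)=0$ as a linear form. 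On whichever side (\,$+$ or $-$\,) does not contain $a_0$, the $L(a_i)$ are among the independent $x_j$, so all those $n_{a_i}$ vanish; the relation collapses to a single--sign relation and the previous paragraph applies. Thus the missing idea is to work in the $L(a_i)$--basis rather than the $e_h$--basis, together with the degree--separation shift for mixed signs; with these the proof needs no case analysis in $p$.
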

\begin{proof}   Let $q+1$ be the number of vertices. By compatibility the rank of the graph equals the rank of a maximal tree which has   $q$ edges, hence $q\geq k$. 

Assume by contradiction that  $q>k$. Choose a root,   we can choose   $k+1$ vertices $(a_0,a_1,\ldots,a_k) $   so that we have a non trivial relation  $\sum_an_a \sigma(a)L(a_i)=0 $ and the elements $L(a_i),\ i=1,\ldots,k$ are linearly independent.  

We claim that  $\sum_an_aL(a)  ^2 - \sum_{ \sigma(a)=-}n_aL^{(2)}(a) \neq 0$ and thus we have produced an avoidable resonance. Suppose  by contradiction that $\sum_an_aL(a)  ^2 - \sum_{ \sigma(a)=-}n_aL^{(2)}(a) = 0$.
Assume first that all these vertices are marked $+$, we have then $ \sum_an_a L (a)^2=0$.
Similarly, if  they are all marked $-$ we have  $-\sum_an_a L(a)=\sum_an_a\sigma(a)L(a)=0$ and also $ \sum_an_a L^{(2)}(a)=0$ so  again $\sum_an_aL(a)  ^2=0$.

We can consider thus the elements $x_i:=L(a_i),i=1,\ldots,k$ as new variables  and then we write  the relation  as
$$0=L(a_{k+1})+\sum_{i=1}^kp_ix_i,\implies (\sum_{i=1}^kp_ix_i)^2+\sum_{i=1}^kp_ix_i^2=0 .$$
Now  $\sum_{i=1}^kp_ix_i^2 $ does not contain any mixed terms $x_hx_k,\ h\neq k$  therefore  this equation can be verified if and only if  the sum $\sum_{i=1}^kp_ix_i$ is reduced to a single term   $p_ix_i$,  and then we have $p_i=-1$ and $L(a_0)=L(a_i)$, contrary to the second assumption. 
\medskip

Finally assume  we have in the relation $m$   vertices   marked $+$ and  $n$ marked  $-$ . We think of the elements $L(a_i)$ as linear functions in some variables  $y_i$.  
Set  $\underline y=\underline e +\underline y '$,  where $\underline e :=(1,1,\ldots,1)$. 
 Assume $$\sum_{i=1}^ma_iu_i(\underline y)-  \sum_{j=1}^nb_jv_j(\underline y)=0$$
   $$u_i(\underline e)=0,\quad v_j(\underline e)=2\implies \sum_jb_j=0
.$$Now assume 
 $$\sum_{i=1}^ma_iu_i^2 -  \sum_{j=1}^nb_j[v^{(2)}_j -v_j ^2]=0$$
  
For any linear form  $L$,
$$L(\underline y)=L(\underline y')+L(\underline e),\quad 
L^{(2)}(\underline y)=L(\underline e)+2L(\underline y')+L^{(2)}(\underline y'),
$$
  in particular $$u_i(\underline y')=u_i(\underline y ),\quad  v_j(\underline y')=v_j(\underline y )-2.$$$$
v^{(2)}_j(\underline y)-v_j(\underline y)^2=2+2v_j(\underline y')+v^{(2)}_j(\underline y')-(2+v_j(\underline y'))^2=-2-2v_j(\underline y')+v^{(2)}_j(\underline y')-v_j(\underline y')^2.$$ The relation becomes
$$0=\sum_{i=1}^ma_iu_i^2(\underline y')-\sum_{j=1}^nb_j[ -2-2v_j(\underline y')+v^{(2)}_j(\underline y')-v_j(\underline y')^2]$$ $$\implies \sum_{i=1}^ma_iu_i^2(\underline y')-\sum_{j=1}^nb_j[  v^{(2)}_j(\underline y')-v_j(\underline y')^2]$$ $$= \sum_{j=1}^nb_j[ -2-2v_j(\underline y') ]=-2\sum_{j=1}^nb_j v_j(\underline y') .$$ The left hand side is homogeneous of degree 2 and the right of degree 1.
This implies $\sum_{j=1}^nb_j    v_j(\underline y')=0$ and we are back in the previous case.

   \end{proof}
 \section{Main Geometric Theorem\label{Magt}}
\subsection{Determinantal varieties} In this section we think of a marking $\pm v_i\pm v_j$  or more generally of an expression $\sum_{i=1}^ma_iv_i$ as a {\em map} from $V^{\oplus m}$ to $V$.  Here $V$ is a vector space where the $v_i$ belong. Thus a list of $k$ markings is thought of as a map $\rho:  V^{\oplus m}\to V^{\oplus k}$. Such a map is given by a $k\times m$ matrix.

When $\dim(V)=n$  we shall be interested in particular in $n$--tuples of markings. In this case we have
\begin{lemma}
An  $n$--tuples of markings $m_i:= \sum_j a_{ij}v_j$ is formally linearly independent -- that is the $n\times m$ matrix of the $a_{ij}$ has rank $n$-- if and only if the associated   map  $\rho :V^{\oplus m}\to V^{\oplus n}$ is surjective. 
\end{lemma}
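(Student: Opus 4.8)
The plan is to recognize the map $\rho$ as the ``blockwise'' (Kronecker) extension of the coefficient matrix. Write $A=(a_{ij})$ for the $n\times m$ integer matrix of the markings, viewed as a linear map of coordinate spaces over the ground field; then $\rho\colon V^{\oplus m}\to V^{\oplus n}$ is exactly $A\otimes\mathrm{id}_V$, i.e. if we represent an element of $V^{\oplus m}$ as a column $(w_1,\dots,w_m)$ with $w_j\in V$, then
\[
\rho(w_1,\dots,w_m)=\Big(\sum_j a_{1j}w_j,\ \dots,\ \sum_j a_{nj}w_j\Big).
\]
With this identification in hand, both implications are routine linear algebra, and no serious obstacle arises; the only thing to be careful about is the bookkeeping that passes between ``$A$ as a map of coordinate spaces'' and ``$A$ acting on $V^{\oplus\bullet}$''.

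First I would prove that if $A$ has rank $n$ then $\rho$ is surjective. Full row rank means $A$ admits a right inverse, i.e. an $m\times n$ matrix $B$ with $AB=I_n$; then $B\otimes\mathrm{id}_V\colon V^{\oplus n}\to V^{\oplus m}$ satisfies $\rho\circ(B\otimes\mathrm{id}_V)=(AB)\otimes\mathrm{id}_V=\mathrm{id}_{V^{\oplus n}}$, so $\rho$ has a section and is in particular onto. This uses nothing about $V$ beyond its being a vector space.

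For the converse I would argue by contrapositive. If $\mathrm{rank}(A)<n$, the rows of $A$ are linearly dependent, so there is a nonzero $c=(c_1,\dots,c_n)$ with $\sum_i c_i a_{ij}=0$ for every $j$. Fix any nonzero $\phi\in V^*$ and set $\Psi(u_1,\dots,u_n):=\sum_i c_i\,\phi(u_i)$, a nonzero linear functional on $V^{\oplus n}$. Then for all $(w_1,\dots,w_m)$,
\[
\Psi\big(\rho(w_1,\dots,w_m)\big)=\sum_i c_i\,\phi\Big(\sum_j a_{ij}w_j\Big)=\sum_j\Big(\sum_i c_i a_{ij}\Big)\phi(w_j)=0,
\]
so the image of $\rho$ lies in the proper subspace $\ker\Psi$ and $\rho$ is not surjective. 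Combining the two directions gives the stated equivalence; note that the hypothesis $\dim V=n$ plays no role in the argument, it is merely the case of interest. (Equivalently, one could quote $\mathrm{rank}(A\otimes\mathrm{id}_V)=\mathrm{rank}(A)\cdot\dim V$ and compare with $\dim V^{\oplus n}=n\dim V$, obtaining both implications simultaneously; I find the section/functional argument slightly more self-contained.)
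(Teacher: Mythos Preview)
Your proof is correct. The paper itself does not supply a proof of this lemma; it is stated as an elementary fact of linear algebra and the text moves on immediately to use it. Your argument via $\rho = A\otimes \mathrm{id}_V$, producing a section from a right inverse in one direction and a nonzero annihilating functional in the other, is a clean and complete justification, and your observation that $\dim V = n$ is irrelevant to the argument is accurate.
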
 We may identify  $V^{\oplus n}$ with $n\times n$ matrices and  we have   the determinantal variety  $D_n$ of  $V^{\oplus n}$ defined by the vanishing of the determinant  and formed by all the $n$--tuples of vectors $v_1,\ldots,v_n$ which are linearly dependent. The variety $D_n$ defines a similar irreducible variety $D_\rho:= \rho ^{-1}(D_n)$ in $V^{\oplus m}$ which   depends on  the map $\rho$. We need to see when different lists of markings give rise to different determinantal  varieties in $V^{\oplus m}$.  
\begin{lemma}\label{tras} Given $\rho :V^{\oplus m}\to V^{\oplus n},$
a vector $a\in V^{\oplus m}$ is such that $a+v\in D_\rho,\ \forall v\in D_\rho$ if and only if $\rho(a)=0$.
\end{lemma}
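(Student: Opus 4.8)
The plan is to reduce the statement to an elementary fact about the determinant. Write $b:=\rho(a)$ and, as in the text, identify $V^{\oplus n}$ with the space $M_n$ of $n\times n$ matrices, so that $D_n=\{\det=0\}$ and $D_\rho=\rho^{-1}(D_n)$. The easy implication is immediate: if $\rho(a)=0$ then $\rho(a+v)=\rho(v)$ for every $v$, hence $a+v\in D_\rho\iff v\in D_\rho$, and in particular $a+D_\rho\subseteq D_\rho$.

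For the converse I would argue as follows. Assume $a+D_\rho\subseteq D_\rho$. In the situation of interest the $n$ markings defining $\rho$ are formally linearly independent, so by the preceding Lemma $\rho$ is surjective; hence $\{\rho(v):v\in D_\rho\}$ is all of $D_n$, and the hypothesis becomes the purely matricial assertion that $b+x$ is singular for every singular $x\in M_n$, i.e. $b+D_n\subseteq D_n$. It then remains to show that such an invariance forces $b=0$.

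To prove that last point I would use the rank normal form. If $b\neq 0$, write $b=P E_r Q$ with $P,Q$ invertible and $E_r={\rm diag}(1,\dots,1,0,\dots,0)$ (with $r={\rm rk}\,b$, $1\le r\le n$). Since left/right multiplication by invertible matrices preserves both singularity and invertibility, after the substitution $x=Px'Q$ it suffices to exhibit a singular $x'$ with $E_r+x'$ invertible. If $r=n$ (i.e. $b$ invertible) one takes $x'=0$; if $1\le r\le n-1$ (so $n\ge 2$) one takes $x'={\rm diag}(0,1,1,\dots,1)$, which is singular while every diagonal entry of $E_r+x'$ lies in $\{1,2\}$, so $E_r+x'$ is invertible. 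Either way one contradicts $b+D_n\subseteq D_n$, so $b=\rho(a)=0$. (The case $n=1$ is trivial: $D_1=\{0\}$, $D_\rho=\ker\rho$, and $a+\ker\rho\subseteq\ker\rho$ forces $\rho(a)=0$ by taking $v=0$.)

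The main obstacle is precisely the implication $b+D_n\subseteq D_n\Rightarrow b=0$; the elementary argument above is the shortest route I see, but alternatively one may note that $\det$ and $x\mapsto\det(x+b)$ are irreducible polynomials of equal degree with the same zero set, hence equal, so that $\det(x+tb)=\det(x)$ for all scalars $t$ and, differentiating at $t=0$, $ {\rm tr}({\rm adj}(x)\,b)=0$ for all $x$, which gives $b=0$ since the entries of ${\rm adj}(x)$ span the dual of $M_n$. Apart from this, the only care needed is the appeal to surjectivity of $\rho$ from the previous Lemma, which is what legitimises replacing the condition on $D_\rho$ by the corresponding condition on $D_n$.
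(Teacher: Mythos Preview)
Your proof is correct and follows essentially the same route as the paper: both reduce to the matrix statement that for any nonzero $B\in M_n$ there exists a singular $X$ with $B+X$ invertible, which the paper dismisses as ``easily seen'' while you supply an explicit construction via the rank normal form. You are also more careful than the paper in one respect: you make explicit the appeal to surjectivity of $\rho$ (from the preceding Lemma) needed to pass from the hypothesis on $D_\rho$ to the hypothesis on $D_n$, whereas the paper uses this tacitly.
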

\begin{proof}
Clearly  if $\rho(a)=0$ then $a$ satisfies the condition. Conversely if $\rho(a)\neq 0$, we think of $\rho(a)$ as a non zero matrix  $B$ and it is easily seen that there is a matrix $X\in D_n$  such that  $B+X\notin D_n$.
\end{proof}
Let $\rho_1,\rho_2:V^{\oplus m}\to V^{\oplus n}$ be two surjective maps  given by two $n\times m$ matrices $ A=(a_{i,j}),\ B=(b_{i,j});\ a_{i,j},b_{i,j}\in\mathbb C$ .
\begin{theorem}\label{kke}
$\rho_1^{-1}(D_n)=\rho_2^{-1}(D_n)$ if and only if  the two matrices $A,B$ have the same kernel.
\end{theorem}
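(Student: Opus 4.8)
\textbf{Proof plan for Theorem \ref{kke}.} The plan is to reduce the statement to Lemma \ref{tras}, which already characterizes the ``translation stabilizer'' of a determinantal variety $D_\rho$ as the kernel of the corresponding map $\rho$. First I would dispatch the easy direction: if $A$ and $B$ have the same kernel $K\subset V^{\oplus m}$, then one checks that $\rho_1^{-1}(D_n)$ and $\rho_2^{-1}(D_n)$ coincide. Indeed, since $\rho_1,\rho_2$ are surjective $n\times m$ matrices with the same kernel $K$ of dimension $m-n$, they differ by an automorphism: there exists $g\in GL_n(\mathbb{C})$ with $\rho_2 = g\circ\rho_1$ (both factor through the same quotient $V^{\oplus m}/V^{\oplus m}\otimes K \cong V^{\oplus n}$, and the induced maps to $V^{\oplus n}$ differ by a linear isomorphism). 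Now $g$ acts on $V^{\oplus n}\cong n\times n$ matrices by multiplication, and $D_n$ (the locus where the determinant vanishes) is invariant under such an action since $\det(gX)=\det(g)\det(X)$ and $\det(g)\neq0$. Hence $\rho_2^{-1}(D_n) = \rho_1^{-1}(g^{-1}D_n) = \rho_1^{-1}(D_n)$.

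For the converse, suppose $\rho_1^{-1}(D_n) = \rho_2^{-1}(D_n) =: D$. I would argue that $\ker A\subseteq\ker B$ and then conclude by symmetry. Take $a\in\ker A = \ker\rho_1$. By Lemma \ref{tras} applied to $\rho_1$, the vector $a$ translates $D_{\rho_1}=D$ into itself: $a+v\in D$ for all $v\in D$. But $D = D_{\rho_2}$, so by Lemma \ref{tras} applied in the other direction (to $\rho_2$), any vector that translates $D_{\rho_2}$ into itself must lie in $\ker\rho_2 = \ker B$. Therefore $a\in\ker B$, giving $\ker A\subseteq\ker B$; swapping the roles of $\rho_1$ and $\rho_2$ gives the reverse inclusion, so $\ker A = \ker B$.

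The one subtlety to be careful about — and the only place the argument needs a genuine (if elementary) verification rather than bookkeeping — is the very last step of Lemma \ref{tras}, namely the claim that if $B=\rho(a)$ is a nonzero $n\times n$ matrix then there is $X\in D_n$ with $B+X\notin D_n$; this is what makes the implication ``translates $D$ into itself $\Rightarrow$ lies in $\ker\rho$'' an equivalence rather than just one inclusion, and it is needed in both applications of the Lemma above. Since that is already stated as a proved Lemma in the paper, the present proof is just a formal assembly. I do not expect any real obstacle: the content is entirely in Lemma \ref{tras} and in the observation that $GL_n$ acting by left multiplication preserves the singular locus $D_n$; the rest is the standard fact that two surjections with equal kernel differ by an isomorphism of the target.
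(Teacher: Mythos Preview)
Your proof is correct and follows essentially the same approach as the paper: both directions are handled identically, using Lemma \ref{tras} to extract the kernel from the translation stabilizer of $D_\rho$, and using the factorization $B=CA$ (your $\rho_2=g\circ\rho_1$) together with $GL_n$-invariance of $D_n$ for the converse. You have simply spelled out in more detail what the paper compresses into two lines.
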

\begin{proof} First observe that  the two matrices $A,B$ have the same kernel if and only if $\rho_1,\rho_2 $   have the same kernel.

By Lemma \ref{tras}, if  $\rho_1^{-1}(D_n)=\rho_2^{-1}(D_n)$ then  the two matrices $A,B$ have the same kernel. Conversely if  the two matrices $A,B$ have the same kernel we can write $B=CA$ with $C$ invertible.  Clearly $CD_n=D_n$ and the claim follows.
\end{proof}
We shall also need the following well known fact:
\begin{lemma}\label{zade}
Consider the determinantal variety $D$ given by  $d(X)=0$ of $n\times n$  complex  matrices of determinant zero. The real points of $D$ are  are Zariski dense in $D$.\footnote{this means that a polynomial vanishing on the real points of $D$ vanishes also on the complex points.}
\end{lemma}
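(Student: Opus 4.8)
The plan is to prove the density statement for the determinantal variety $D = \{X \in M_n(\mathbb{C}) : \det X = 0\}$ by exhibiting, through the real points of $D$, enough matrices to force any polynomial vanishing on $D(\mathbb{R})$ to vanish on all of $D$. The cleanest route is to use a rational parametrization of (an open dense subset of) $D$ defined over $\mathbb{Q}$.

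First I would observe that the subset $D'$ of $D$ consisting of matrices of rank exactly $n-1$ is Zariski open in $D$ and Zariski dense (its complement, the matrices of rank $\le n-2$, is a proper closed subvariety, and $D$ is irreducible since $\det$ is an irreducible polynomial). So it suffices to show the real points of $D'$ are Zariski dense in $D'$, equivalently in $D$. Next, every rank $n-1$ matrix can be written as $X = P\, E\, Q$ where $P, Q \in GL_n$ and $E = \mathrm{diag}(1,\dots,1,0)$; more usefully, $D'$ is the image of the morphism $\Phi\colon GL_n \times GL_n \to M_n$, $(P,Q)\mapsto P E Q$, which is a dominant map of irreducible varieties, all defined over $\mathbb{Q}$. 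Since $GL_n$ is a rational variety defined over $\mathbb{Q}$, its real points are Zariski dense in $GL_n(\mathbb{C})$ (indeed $GL_n(\mathbb{R})$ is Zariski dense because $GL_n$ is $\mathbb{Q}$-rational and the complement of $GL_n$ in $M_n$ is a hypersurface). Applying $\Phi$, the real points $\Phi(GL_n(\mathbb{R})\times GL_n(\mathbb{R})) \subset D(\mathbb{R})$ are Zariski dense in the image $\overline{\Phi(GL_n\times GL_n)} = D$.

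To make the last step rigorous I would argue: let $f$ be a polynomial vanishing on $D(\mathbb{R})$. Then $f\circ \Phi$ is a polynomial function on $GL_n(\mathbb{C})\times GL_n(\mathbb{C})$ (a rational function on $M_n\times M_n$ with denominator a power of $\det P\cdot \det Q$) vanishing on $GL_n(\mathbb{R})\times GL_n(\mathbb{R})$. Because $GL_n(\mathbb{R})\times GL_n(\mathbb{R})$ is Zariski dense in $GL_n(\mathbb{C})\times GL_n(\mathbb{C})$, we get $f\circ\Phi \equiv 0$ on $GL_n(\mathbb{C})\times GL_n(\mathbb{C})$, hence $f$ vanishes on the image $D'$, hence on its closure $D$. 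The Zariski density of $GL_n(\mathbb{R})$ in $GL_n(\mathbb{C})$ is itself elementary: a polynomial vanishing on the real points of affine space $\mathbb{A}^N(\mathbb{R})$ vanishes identically (one-variable at a time, since $\mathbb{R}$ is infinite), and restricting to the principal open set $\{\det \ne 0\}$ changes nothing because that set is dense and irreducible.

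The only mild subtlety — and the step I would be most careful about — is handling the fact that $\Phi$ is not a closed map: its image is $D'$, not $D$, so one must separately note that $D'$ is Zariski dense in $D$, which uses the irreducibility of the determinant hypersurface. Everything else is routine: the statement is really the general principle that a $\mathbb{Q}$-rational (or even just $\mathbb{R}$-unirational) irreducible variety has Zariski-dense real points, applied to the determinantal hypersurface via its standard parametrization by pairs of invertible matrices. I would phrase the write-up to cite this general principle and then just point out the parametrization $(P,Q)\mapsto P\,\mathrm{diag}(1,\dots,1,0)\,Q$ and the irreducibility of $\det$.
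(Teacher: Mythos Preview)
Your proposal is correct and follows essentially the same route as the paper: both use the parametrization $(P,Q)\mapsto P\,\mathrm{diag}(1,\dots,1,0)\,Q$ of the rank-$(n-1)$ locus by pairs of invertible matrices, then invoke the Zariski density of $GL_n(\mathbb R)\times GL_n(\mathbb R)$ in $GL_n(\mathbb C)\times GL_n(\mathbb C)$ to conclude that a polynomial vanishing on $D(\mathbb R)$ vanishes on $D'$ and hence on $D$. Your write-up is slightly more explicit about the irreducibility of $D$ and the density of $D'$, but the argument is the same.
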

\begin{proof}
Consider in    $D$   the set of matrices    of rank exactly $n-1$. This set is dense in $D$ and  obtained from a  fixed matrix (for instance the diagonal matrix $I_{n-1}$ with all 1 except one 0)  by multiplying $AI_{n-1}B$  with $A,B$ invertible matrices.  If a polynomial $f$  vanishes on the real points of  $D$ then $F(A,B):=f(AI_{n-1} B)$  vanishes  for all $A,B$  invertible matrices and real. This set is the set of  points in $\mathbb R^{2n^2}$  where a polynomial (the product of the two determinants) is non zero. But a polynomial which vanishes in all the  points of  $\mathbb R^{m}$  where another  polynomial   is non zero is necessarily the zero polynomial. So $f$ vanishes also on complex points. This is the meaning of  Zariski dense.
\end{proof}

\subsection{Special graphs}  
  Let $V:=\mathbb C^ n $ so $V^{\oplus m}=\C^{mn}$.  Take a connected $\mathcal M$--graph   with $n+2$ vertices, assume that for generic $v_i$ this graph is realizable.  By Theorem \ref{ridma}  this implies that the rank  of this graph is $n+1$.  Choose in this graph a root, then the variety  $R_A$ of realizations is given by the solutions of equations \eqref{bacos2},  which we think as equations in both the variables of the vector $x$ (corresponding to the root) and also of the parameters  $v_i$.   

The variety $R_A$ maps to the space $\C^{mn}$ of $m$--tuples of vectors $v_i$, call $\theta:R_A\to \C^{mn}$ the projection map.  For a given choice of the $v_i$ the fiber of this map $\theta$ is the set of realizations.
 
  \begin{proposition}\label{codim}   Under the previous hypotheses there is an irreducible hypersurface   $W$ of $\C^{mn}$    such that the map $\theta$ is invertible on $\C^{mn}\setminus W$ with inverse a polynomial map.
\end{proposition}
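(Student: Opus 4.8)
The plan is to replace the realization equations \eqref{bacos2} by a square system of affine--linear equations, to identify the exceptional locus $W$ with a determinantal hypersurface (hence irreducible), and then to use the realizability hypothesis to check that the unique solution of that linear system is a genuine realization.

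First I would fix a root $r$ of $A$ and list the remaining $n+1$ vertices $a_{1},\dots ,a_{n+1}$. Since $A$ is realizable for generic $v_{i}$ it produces no avoidable resonance, so by Theorem \ref{ridma} the rank of $A$ equals $(\#\text{vertices})-1=n+1$; equivalently the forms $L(a_{1}),\dots ,L(a_{n+1})$ are $\C$--linearly independent, so the linear map $\rho\colon \C^{mn}\to(\C^{n})^{\,n+1}$, $v\mapsto(V(a_{1}),\dots ,V(a_{n+1}))$, is surjective. Introduce the auxiliary unknown $q:=(x)^{2}$; then by \eqref{bacos2} the conditions on $x$ become the linear system $\tilde M(v)\,(x,q)^{t}=c(v)$, where the $i$--th row of the $(n+1)\times(n+1)$ matrix $\tilde M(v)$ is $(V(a_{i}),0)$ if $\sigma(a_{i})=+$ and $(V(a_{i}),-1)$ if $\sigma(a_{i})=-$, and $c_{i}=-C(a_{i})$ or $C(a_{i})$ according as $\sigma(a_{i})=+$ or $-$; the entries of $\tilde M$ are linear in $v$ and those of $c(v)$ are quadratic in $v$. (If $A$ has no red edge every $\sigma(a_{i})=+$ and $q$ does not occur; one then argues identically with the $n\times n$ system formed by any $n$ of the equations $(x,V(a_{i}))=-C(a_{i})$ whose $V(a_{i})$ are independent.)

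Next I would compute $\det\tilde M$. Fixing one index $i_{1}$ with $\sigma(a_{i_{1}})=-$, subtracting row $i_{1}$ from every other $\sigma=-$ row, and then expanding along the last column, one gets $\det\tilde M(v)=\pm\det\big(\beta(\rho(v))\big)$, where $\beta\colon(\C^{n})^{\,n+1}\to M_{n}(\C)$ is the linear map sending $(u_{1},\dots ,u_{n+1})$ to the $n\times n$ matrix whose rows are the $u_{i}$ with $\sigma(a_{i})=+$ together with the $u_{i}-u_{i_{1}}$ with $\sigma(a_{i})=-,\ i\ne i_{1}$. One checks at once that $\beta$ is surjective (its coefficient matrix has $n$ independent rows), hence $\beta\circ\rho\colon\C^{mn}\to M_{n}(\C)$ is a surjective linear map and $\det\tilde M\not\equiv0$. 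I then set
$$W:=\{\,v\in\C^{mn}\,:\,\det\tilde M(v)=0\,\}=(\beta\circ\rho)^{-1}(D_{n}),$$
$D_{n}\subset M_{n}(\C)$ being the irreducible determinantal hypersurface of singular $n\times n$ matrices; since $\beta\circ\rho$ is linear and surjective, $W$ is an irreducible hypersurface of $\C^{mn}$.

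Finally, over $\C^{mn}\setminus W$ the system $\tilde M(v)(x,q)^{t}=c(v)$ has the unique solution $(x(v),q(v))$, with $x(v)=\tilde M(v)^{-1}c(v)$ regular (a polynomial in $v$ and in $(\det\tilde M(v))^{-1}$). Any genuine realization $x^{\ast}$ of $A$ at $v$ produces a solution $(x^{\ast},(x^{\ast})^{2})$ of this same system, so outside $W$ a realization is unique and is the $x$--component of $(x(v),q(v))$; in particular $\theta$ has at most one point over each $v\notin W$. The hypothesis furnishes, for generic $v$, an actual realization, which by uniqueness must be $x(v)$; hence $q(v)=(x(v))^{2}$ on a dense open set, hence identically on $\C^{mn}\setminus W$. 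Therefore $(x(v),(x(v))^{2})$ solves the linear system for \emph{every} $v\notin W$, i.e. $x(v)$ solves \eqref{bacos2}, and $\theta$ restricts to a bijection $R_{A}\cap\big(\C^{n}\times(\C^{mn}\setminus W)\big)\to\C^{mn}\setminus W$ whose inverse is the polynomial map $v\mapsto(x(v),v)$. The real content — and the only point where ``realizable for generic $v_{i}$'' is used — is this last implication: the system \eqref{bacos2} is a priori over--determined ($n+1$ scalar equations, $n$ unknowns), and it is precisely generic realizability that forces the unique solution of the linearized system to satisfy the quadratic constraint $q=(x)^{2}$. The determinantal bookkeeping around it is what guarantees that the exceptional set $W$ comes out as a single irreducible hypersurface.
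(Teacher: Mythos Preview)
Your argument has a genuine gap: you establish that $\theta$ is a bijection over $\C^{mn}\setminus W$ with \emph{regular} inverse, but you never show the inverse is given by actual polynomials on $\C^{mn}$. You yourself write that $x(v)=\tilde M(v)^{-1}c(v)$ is ``a polynomial in $v$ and in $(\det\tilde M(v))^{-1}$'', i.e.\ a rational function with denominator $\det\tilde M$; calling the resulting map ``the polynomial map $v\mapsto(x(v),v)$'' at the end does not remove that denominator. And polynomiality is the whole point of the proposition: the very next step, Theorem \ref{aMT}, uses that $x=F(v)$ is a genuine polynomial to deduce from $F(\lambda v)=\lambda F(v)$ that $F$ is linear, and then applies Schur's lemma for $O(n)$ to conclude $F=\sum_a c_a v_a$. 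For a merely rational $F$ that argument does not go through.

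The paper supplies exactly this missing step, and it is not formal. In the all-black case one observes that each of the $n+1$ choices of an $n\times n$ subsystem gives the same Cramer solution on overlaps, and that the locus where \emph{all} such subsystems are singular has codimension $\ge2$ (by Theorem \ref{kke} the $n+1$ determinantal hypersurfaces are distinct); a Hartogs-type extension then forces $x(v)$ to be polynomial. In the red case one keeps $n$ independent linear equations and one genuine quadratic $(x)^2+(x,a)+b=0$; writing $x_i=f_i/d$ yields $d\mid\sum_i f_i^2$, and since the real points of $\{d=0\}$ are Zariski dense (Lemma \ref{zade}), each $f_i$ vanishes on $\{d=0\}$, hence $d\mid f_i$ and $x$ is polynomial. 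Your linearization via $q=(x)^2$ is a clean alternative setup and your density argument $q(v)=(x(v))^2$ is correct---in fact it already gives $d\mid\sum_i f_i^2$---but you stop one step short. Add either the codimension-$2$/Hartogs argument (black case) or the reality argument of Lemma \ref{aMT1} (red case) and your proof is complete.
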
 
  Assume    for a moment the validity of Proposition \ref{codim}.

\begin{theorem}\label{aMT}
Consider a  graph $A$ which contains at least  $n+1$ edges   such that the markings are linearly independent, and assume $A$  is allowable. 

Then  for generic $v_i$'s it has a unique  realization   in the special component.
\end{theorem}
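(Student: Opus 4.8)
The plan is to combine the rigidity coming from Theorem \ref{ridma} with the determinantal analysis of Proposition \ref{codim}, and then to recognize the resulting rigid configuration as sitting inside the special component. First I would fix the combinatorics. Since $A$ is allowable it has no avoidable resonance, so Theorem \ref{ridma} gives that the number of vertices of $A$ equals $\mathrm{rk}\,A+1$; the hypothesis that $A$ contains $n+1$ edges with linearly independent markings (as elements of $\Lambda$) says exactly that $\mathrm{rk}\,A\ge n+1$, hence $A$ has at least $n+2$ vertices. Fix a spanning tree and a root $r$; then for every vertex $a$ the value $x_a=\sigma(a)x+V(a)$ of any realization is a fixed affine--linear function of the single unknown $x=-\pi(r)$, and a realization is a solution of the system \eqref{bacos2}, one equation per non--root vertex, i.e.\ at least $n+1$ scalar equations in the $n$ coordinates of $x$.

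Next I would prove uniqueness of the realization for generic $v_i$. When $A$ has exactly $n+2$ vertices this is Proposition \ref{codim}: the projection $\theta\colon R_A\to\C^{mn}$ is invertible off an irreducible hypersurface $W$, with polynomial inverse, so outside $W$ the vector $x$, and hence the realization, is uniquely determined. For $A$ with more vertices I would argue by induction, removing a leaf $\ell$ of the spanning tree with incident edge $e=\{\ell,u\}$: the resulting connected sub--$\mathcal M$--graph $A'$ is again allowable (any realization of $A$ restricts to one of $A'$), and deleting one tree edge drops the rank by at most one, so $A'$ still has $\mathrm{rk}\,A'\ge n+1$ and $n+1$ independent markings; by induction $A'$ has a unique realization, and the relations \eqref{Ilsom} attached to $e$ determine $x_\ell$ from $x_u$ through their linear part, the accompanying quadratic identity being an extra scalar constraint that holds automatically since $A$ itself is allowable. (Equivalently, one simply repeats the determinantal argument of Proposition \ref{codim} verbatim for any number $\ge n+2$ of vertices.) In all cases one obtains, off a proper algebraic hypersurface in $\C^{mn}$, exactly one realization of $A$.

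It remains to see that this unique realization lies in the special component, i.e.\ that all $x_a$ belong to $S=\{v_1,\dots,v_m\}$. Since uniqueness is already in hand, it is enough to exhibit \emph{one} such realization; and whether $A$ embeds as an $\mathcal M$--graph into the special component is a purely combinatorial property of $A$, independent of $S$ (by Lemma \ref{spco} the special component is the ``doubled complete graph'' on the indices $1,\dots,m$, the same for every $S$ obeying Constraint \ref{co1}). So the actual content is the combinatorial claim: an allowable $\mathcal M$--graph with $n+1$ independent markings admits an injective labelling of its vertices by $\{1,\dots,m\}$ under which each edge marked $(i,j)$ joins the vertices labelled $i$ and $j$ (with the orientation convention for black edges). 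I would establish this by showing that a vertex incident to two edges that would assign it two different labels, combined with the independence of the $n+1$ markings, forces either a circuit violating compatibility or a linear relation among the markings that reduces $\mathrm{rk}\,A$ below $n+1$ --- both excluded --- invoking Constraint \ref{co1} and the translation rigidity of Lemma \ref{tras}. Granting the claim, the labelling gives a realization inside the special component for every choice of $S$, hence it coincides with the unique one produced above.

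The hard part is exactly this last step: passing from ``uniquely realizable'' to ``realized inside the special component'', i.e.\ the combinatorial rigidity statement that rules out incompatible index assignments. The vertex count (Theorem \ref{ridma}) and the uniqueness (Proposition \ref{codim} together with the leaf induction) are comparatively routine once the rank bookkeeping is done carefully; the delicate point is to see that possessing $n+1$ independent markings leaves $A$ no combinatorial freedom to be anything other than a sub-configuration of the special component. This is also the statement that will be used to bound the size of all the other (non-special) components in Theorem \ref{sunto}.
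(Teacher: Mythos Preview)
Your structure is right up to the last step, and you correctly identify that the substantial content is showing the unique realization lands in the special component. But your plan for that step has a genuine gap. You propose to exhibit a realization in the special component by proving a purely combinatorial statement: that an allowable $\mathcal M$--graph with $n+1$ independent markings admits a vertex labelling $a\mapsto \iota(a)\in\{1,\dots,m\}$ with each edge marked $(i,j)$ joining the vertices labelled $i$ and $j$. You do not prove this; the sketch (``two incompatible labels force a circuit or a rank drop'') is not an argument, and in fact there is no visible combinatorial mechanism that rules out, say, a vertex meeting edges marked $(1,2)$ and $(3,4)$. The obstruction to such configurations is \emph{geometric} --- it comes from the over-determination of the realization equations for generic $v_i$ --- so a purely combinatorial proof that does not at some point feed the geometry back in is not going to close.

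The paper's route is quite different and avoids the combinatorics entirely. From Proposition~\ref{codim} the root $x$ of the unique realization is a \emph{polynomial} function $F(v_1,\dots,v_m)$; the defining equations \eqref{bacos2} are homogeneous of degree~$2$, so $F(\lambda v)=\lambda F(v)$ and $F$ is linear. These equations are also invariant under the diagonal action of the (complex) orthogonal group on the $v_i$ and $x$; since the standard representation $V=\mathbb C^n$ is irreducible, Schur's lemma forces every $O(n)$--equivariant linear map $V^{\oplus m}\to V$ to be of the form $F(v)=\sum_a c_a v_a$ with scalar $c_a$. One then substitutes $x=\sum_a c_a v_a$ into a single edge equation at the root --- either $(x-v_i,v_j-v_i)=0$ or $(x-v_i,x-v_j)=0$ --- and reads it as a polynomial identity in the independent variables $(v_h,v_k)$; comparing coefficients immediately yields $x=v_i$ (or $v_j$). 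Once the root sits at some $v_i$, Lemma~\ref{spco} drags the whole graph into the special component. This orthogonal-invariance step is the idea you are missing; it is what converts ``unique polynomial solution'' into ``solution equal to one of the $v_i$'' without any case analysis on the graph.
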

\begin{proof}   
Consider the system on $n+1$ linear and quadratic equations in the variables $x,v_i$ defining the variety $R_A$. We are assuming that  we have a solution $x=F(v)$ which is a   polynomial in $v_1,\ldots,v_m$.  A degree consideration shows that $F$ is homogeneous and linear  in these variables.  In fact we have since the right hand side of the equations  \ref{bacos2} are quadratic, we have $ F(\lambda v)=\lambda F(v)$. 

Now the equations \ref{bacos2}  are invariant under the action of orthogonal matrices, i.e. if $A$ is orthogonal $F(Av_1,\ldots,Av_m)=AF(v_1,\ldots,v_m)$.  Since the space $V$  of the $v_i$ (which we may take as complex) is irreducible under the orthogonal group, a linear map $V\to V$ commuting with  the orthogonal group is a scalar  so it follows that any linear map $V^{\oplus m}\to V$  commuting with the orthogonal group is of the form
$F(v_1,\ldots,v_m)=\sum_{a=1}^mc_av_a$ for some constants $c_a$.

Now $x=\sum_{a=1}^mc_av_a$  is the point of the realization corresponding to the root and so it satisfies  either $(x,v_j-v_i)=(v_i, v_j-v_i)$  for some $i,j$ 
so $x=v_i$ or the quadratic equation $(x-v_i,x-v_j)=0$ from which $x=v_i$ or $v_j$.

Once we know that one point in the realization is in the special component we have proved (see \ref{spco})  that the whole   tree is special and realized in this component.\end{proof}

\subsection{Proof of Proposition \ref{codim}, black edges\label{codibl}}
  Let us first study the case of all black edges.  The next is standard and follows immediately from the unique factorization property of polynomial algebras:
  \smallskip
  
 {\bf Theorem} 
{\it Let $W$ be a subvariety  of $\mathbb C^N$ of codimension $\geq 2$, let $F$ be a rational function on  $\mathbb C^N$ which is holomorphic on $\mathbb C^N\setminus W$,  then $F$ is a polynomial. }
\medskip

  We  have a list of $n+1$--linear equations  $(x,a_i)=b_i$ with the markings $a_i=\sum_j a_{ij}v_j$ formally linearly independent.  The hypotheses made imply that any $n$ of these equations are generically linearly independent. Call $C$ the matrix with rows the vectors $a_i$.

  Therefore on the open set where $n$ of these are linearly independent the solution to the system is unique and given by Cramer's rule.
  
  In order to complete our statement it is enough to show that the subvariety $W$ where any $n$ of these equations are   linearly  dependent has codimension $\geq 2$.  The condition to be in $W$ is that all the determinants of all the maximal minors should vanish.
  
  Each one of these determinants is an irreducible polynomial so it defines an irreducible hypersurface. It is thus enough to see that these hypersurfaces are not all equal. This follows from Theorem \ref{kke}, indeed by hypothesis the matrix $B=(a_{ij})$ has rank $n+1$. All the matrices obtained by $B$ dropping one row define the various determinantal varieties, the fact that these varieties are not equal depends on the fact that the matrices cannot have all the same kernel (otherwise the rank of $B$ is $\leq n$).
  
\subsection{Proof of Proposition \ref{codim}, red edges}
When we also have red edges we see that the equations\eqref{bacos2}  are clearly equivalent to a system on $n$ linear equations associated to formally linearly independent markings, plus a quadratic equation chosen arbitrarily among the ones appearing in \eqref{bacos2}.  We then put as constraint the non vanishing of the determinant of the linear system  we have found. Thus a realization is obtained by solving this system and, by hypothesis, such solution satisfies also the quadratic equation.   
 
 Let $P$ be the space of functions $\sum_{i=1}^mc_iv_i,\ c_i\in\mathbb R$ and $(P,P)$ their scalar products.
Assume we have a list of $n$ equations  $\sum_{j=1}^ma_{ij}(x,v_j)=(x,t_i)=b_i$  with the $t_i=\sum_{j=1}^ma_{ij} v_j$ linearly independent  in the space $P$ and $b_i=\sum_{h,k}a^i_{h,k}(v_h,v_k)\in (P,P)$.

Solve these equations  by Cramer's rule    considering the $v_i$ as parameters. Write  $ x_i=f_i/d $, where  $d(v):=\det(A(v))$ is the determinant of the matrix $A(v)$ with rows $t_i$,     $f_i(v)$ is also a determinant of another matrix $B(v)$ both depending polynomially on the $v_i$. We have thus expressed  the coordinates $x_i$ as rational functions  of the coordinates of the $v_i$. The denominator is an irreducible polynomial  vanishing exactly on the determinantal variety of the $v_i$ for which the matrix of rows $t_j,\ j=1,\ldots,n$ is degenerate.
 \begin{lemma}
\label{aMT1} Assume there are two elements $a\in P, b\in(P,P)$ such that $(x)^2+(x,a)+b=0 $ holds identically (in the parameters $v_i$);
then $x$ is a polynomial in the $v_i$.  \end{lemma}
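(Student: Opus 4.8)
The plan is to work over the field $K=\C(v_{ij})$ of rational functions in the coordinates of the $v_i$, where, since the coefficient matrix $A=(a_{ij})$ of the markings $t_i=\sum_j a_{ij}v_j$ has rank $n$, the $t_i$ form a $K$-basis of $K^n$. Then the linear system $(x,t_j)=b_j$ has the unique Cramer solution $x_i=f_i/d$, with $d=\det(AV)$ the (irreducible) determinant of the system, $V$ the matrix with rows $v_i$, and $f_i,d\in\C[v_{ij}]$. The only candidate denominator of $x$ is a power of $d$, so it suffices to prove $d\mid f_i$ for every $i$; I would assume the contrary — some $f_i$ not divisible by $d$ — and derive a contradiction (the case $x\equiv 0$ being trivial). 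Throughout one may assume the $v_i$ are complex and that $(x)^2+(x,a)+b=0$ holds identically over $\C$: it holds for real $v_i$ off the determinantal locus by hypothesis, hence everywhere by Zariski density of the real points (cf. Lemma~\ref{zade}).

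First I would reduce modulo $d$: let $k$ be the function field of the irreducible hypersurface $\{d=0\}$, and $\bar f=(\bar f_1,\dots,\bar f_n)\in k^n$ the reduction of $f$, which is nonzero by assumption. Reducing the identities $(f,t_j)=d\,b_j$ shows $(\bar f,\bar t_j)=0$ for all $j$, so $\bar f\in U^{\perp}$, where $U\subset k^n$ is the span of the $\bar t_j$ and $U^{\perp}$ is the orthogonal for the standard symmetric bilinear form, which is non-degenerate on $k^n$. At the generic point of $\{d=0\}$ the matrix $AV$ has rank exactly $n-1$ (the locus of rank $\le n-2$ is a proper determinantal subvariety), so $\dim_k U=n-1$ and $U^{\perp}=k\,\bar f$ is a line. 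On the other hand, substituting $x=f/d$ into the hypothesis and clearing denominators gives $(f,f)+d\,(f,a)+d^2b=0$, hence $d\mid(f,f)$ and $(\bar f,\bar f)=0$. In other words the normal line $U^{\perp}$ of the hyperplane $U$ lies on the isotropic quadric $Q$, i.e. $U$ is tangent to $Q$ at the generic point of $\{d=0\}$.

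The crux — and the step I expect to be the main obstacle — is to rule this out: to show that at the generic point of $\{d=0\}$ the hyperplane $U=\langle\bar t_1,\dots,\bar t_n\rangle$ is not tangent to $Q$. The plan is to prove this by showing that the rational map from $\{d=0\}$ to $\mathbb P^{n-1}$ sending a point $p$ to the line $[\ker(AV_p)]=[U_p^{\perp}]$ is dominant, in fact surjective. Indeed, given any line $\ell=[y_0]\subset\C^n$ one can exhibit a matrix $V$ with $\det(AV)=0$ and $\ker(AV)=\ell$: if $m=n$, take $V$ of rank $n-1$ with $\ker V=\ell$, so that $\ker(AV)=\ker V=\ell$ since $A$ is invertible; if $m>n$, choose $0\neq u\in\ker A$ and take $V$ with $Vy_0=u$ and $V$ generic on a complement of $\langle y_0\rangle$, which makes $AV$ have rank $n-1$ with kernel exactly $\langle y_0\rangle$. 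Dominance then forces $U^{\perp}$, at the generic point, to avoid the proper subvariety $Q$, contradicting $(\bar f,\bar f)=0$. Hence $d\mid f_i$ for all $i$, and $x$ is a vector of polynomials in the $v_i$, as asserted.
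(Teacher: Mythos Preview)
Your proof is correct, but it takes a genuinely different route from the paper's. Both arguments start the same way: substitute $x_i=f_i/d$ into the quadratic relation and clear denominators to get $(f,f)+d(f,a)+d^2 b=0$, hence $d\mid (f,f)=\sum_i f_i^2$. The divergence is in how one concludes $d\mid f_i$ for each $i$.

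The paper exploits the \emph{real} structure: the polynomials $f_i$ and $d$ have real coefficients (the $a_{ij}$ and the coefficients of the $b_i$ are integers), so at any real point $v\in\R^{mn}$ with $d(v)=0$ one has $\sum_i f_i(v)^2=0$ with $f_i(v)\in\R$, forcing each $f_i(v)=0$. Zariski density of the real points of $\{d=0\}$ in the complex hypersurface (Lemma~\ref{zade}) then gives $f_i\equiv 0$ on $\{d=0\}$, hence $d\mid f_i$. This is a two-line argument once you spot it.

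Your approach is purely complex-algebraic: you show that the rational kernel map $\{d=0\}\dashrightarrow\mathbb P^{n-1}$, $p\mapsto[\ker(AV_p)]$, is dominant (in fact surjective), so its image cannot be contained in the isotropic quadric $Q$; but $d\mid(f,f)$ together with $(AV)f\equiv 0\pmod d$ forces $[f(p)]\in Q$ wherever $f(p)\neq 0$, and on the dense open set $\phi^{-1}(\mathbb P^{n-1}\setminus Q)$ this makes $f(p)=0$, hence $f\equiv 0$ on $\{d=0\}$. This is more work, but it buys you something: it does not use that the linear and quadratic data have real coefficients, so it would prove the lemma even for complex markings, whereas the paper's argument genuinely needs the real structure to pass from $\sum f_i^2=0$ to $f_i=0$.
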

\begin{proof} Substitute  $x_i=f_i/d$ in the quadratic equation and get 
$$d^{-2}(\sum_if_i^2)+d^{-1}\sum_if_ia_i+b=0,\implies \sum_if_i^2 +d \sum_if_ia_i+d^{ 2} b=0.$$
Since $d=d(v)=\det(A(v))$ is irreducible this implies that $d$ divides $\ \sum_if_i^2.$

For those $v_i\in\R^n$ for which   $d(A(v))=0$, since the  $f_i$ are real  we have $f_i(v)=0,\forall i$, so $f_i$ vanishes on all real solutions of $d(A(v))=0$.  These solutions are Zariski dense, by Lemma \ref{zade},  so $f_i(v)$ vanishes on all the  $v_i$ solutions of $d(A(v))=0$ and $d(v)$  divides $f_i(v)$, hence  $x $ is a polynomial.

\end{proof}
\section{Proof of  Theorems \ref{oneone} and \ref{sunto}}
   \subsection{The resonance inequalities\label{therin}}
    We   are now ready to explain which  restrictions we impose on    the vectors $v_i$  in  order to say that the vectors have been chosen   {\em   in a generic way}.    
\begin{enumerate}[1)]\item We impose Constraint \ref{co1}.
\item We require that any $n$ of the $v_i$ are linearly independent.

\item We want that  any linear combination of  the $v_i$ arising from an odd circuit of length $\leq n$  is non--zero.  

\item    We list all  degenerate  graphs with $k+1,\ k\leq n+1$  vertices, we list all the avoidable resonance equations and impose that the vectors $v_i$ do not satisfy any of these equations, this is a set of quadratic inequalities. 

\item Finally for each graph with $n+1$  formally  linearly independent markings, we impose the following relations. From equations   \ref{bacos2} we select $n$ linear equations and  impose that  the corresponding determinant  is non--zero.   This allows us to apply Theorem \ref{main}.
\end{enumerate} We are now ready to prove Theorems \ref{oneone} and \ref{sunto}.
  The first four constraints control the components  with at most $n$  linearly independent edges. 
 
 The fifth constraint   handles  graphs with $n+2$ vertices.  
       
            Let us consider a connected component  $A$ of the graph $\Gamma_X$. If we choose an element $r\in A$ and a $\mu\in\Z^m$ with $r=-\pi(\mu)$  we see that
       \begin{lemma}
The connected component $B$ of the graph $\Lambda_S$  containing $\mu$ maps surjectively  to $A$.
\end{lemma}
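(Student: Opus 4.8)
The statement to prove is a compatibility assertion: if $A$ is a connected component of the geometric graph $\Gamma_S$, $r\in A$ a vertex, and $\mu\in\Z^m$ any lift of $r$ in the sense that $r=-\pi(\mu)$, then the connected component $B$ of $\Lambda_S$ through $\mu$ surjects onto $A$ under $-\pi$. The natural approach is by induction along paths in $A$, lifting one edge at a time, and the only real content is that a single edge of $\Gamma_S$ lifts to an edge of $\Lambda_S$ once a lift of one endpoint is fixed.

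\textbf{Step 1: edge-lifting.} First I would establish the one-step lemma: suppose $(k,h)$ is an edge of $\Gamma_S$ (black or red) and $\mu\in\Z^m$ satisfies $-\pi(\mu)=k$. In the black case the edge is labelled $v_j-v_i$ and $h=k+v_j-v_i$, so setting $\nu:=\mu+e_i-e_j$ we get $-\pi(\nu)=-\pi(\mu)-v_i+v_j=k+v_j-v_i=h$; by the defining rule \eqref{latonero} for $\Lambda$ the pair $(\mu,\sigma),(\nu,\sigma)$ is joined by a black edge marked $(i,j)$, and by construction this edge is compatible with $S$ (precisely because the geometric edge $(k,h)$ satisfies the rectangle/hyperplane relation \eqref{colo}), hence it is an edge of $\Lambda_S$. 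In the red case the edge is labelled $v_i+v_j$ with $h=-k+v_i+v_j$; putting $\nu:=-\mu-e_i-e_j$ gives $-\pi(\nu)=\pi(\mu)+v_i+v_j=-k+v_i+v_j=h$ (with the opposite sign decoration), and again by rule \eqref{latorosso} together with the fact that $(k,h)$ satisfies \eqref{colo1}, this is a red edge of $\Lambda_S$. This is essentially a rereading of Lemma \ref{matri} and the ``Summary of results'' paragraph, so it should be short.

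\textbf{Step 2: induction along a path.} Fix a vertex $a\in A$; since $A$ is connected there is a path $r=a_0,a_1,\dots,a_\ell=a$ in $\Gamma_S$. Starting from $\mu_0:=\mu$ (with $-\pi(\mu_0)=a_0$), apply Step 1 repeatedly: having built $\mu_t\in\Z^m\times\Z/(2)$ in $\Lambda_S$ with $-\pi(\mu_t)=a_t$ and $\mu_t$ joined to $\mu_0$ by a path in $\Lambda_S$, the edge $(a_t,a_{t+1})$ of $\Gamma_S$ lifts to an edge $(\mu_t,\mu_{t+1})$ of $\Lambda_S$ with $-\pi(\mu_{t+1})=a_{t+1}$. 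Then $\mu_{t+1}$ lies in the same connected component $B$ of $\Lambda_S$ as $\mu_0=\mu$, and $-\pi(\mu_\ell)=a$. Hence every vertex $a$ of $A$ is in the image of $B$ under $-\pi$, which is exactly surjectivity of $-\pi\colon B\to A$ (that $-\pi$ maps $B$ \emph{into} $A$ is immediate from Proposition \ref{ilmco}, since $-\pi$ sends edges of $\Lambda_S$ to edges of $\Gamma_S$ and $B$ is connected through $\mu$, whose image is $r\in A$).

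\textbf{Main obstacle.} There is no deep obstacle here; the statement is genuinely a soft consequence of the two graphs being defined by the ``same'' relations, one downstairs in $\Z^n$ and one upstairs in $\Z^m$. The only point requiring care is bookkeeping of the sign decoration $\sigma$ when a red edge is traversed (the lift of $k$ with sign $+$ produces a lift of $h$ with sign $-$, matching $\eta(a)\in\{0,-2\}$ in Remark \ref{zerdu}), and checking that compatibility of the geometric edge with $S$ is exactly what is needed for the lifted edge to be compatible with $S$ in the sense of \S\ref{gerea}. Injectivity of $-\pi$ on $B$ — which is the harder direction and the content of Theorem \ref{oneone} — is \emph{not} claimed in this lemma and will be handled later using the genericity hypotheses on the $v_i$; here surjectivity alone is asserted and it needs nothing beyond connectedness of $A$.
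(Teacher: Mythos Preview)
Your proposal is correct and takes essentially the same approach as the paper: the paper's proof is the single sentence ``Clearly any edge in $A$ can be lifted to an edge in $B$,'' and your Steps 1--2 are precisely a careful unpacking of that sentence (edge-lifting plus path induction). Your write-up is more explicit about the sign bookkeeping in the red case and about why the lifted edge is $S$-compatible, but there is no difference in strategy.
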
   
   \begin{proof}
Clearly  any edge in $A$ can be lifted to an edge in $B$.
\end{proof}    

Theorem \ref{oneone} follows from the more precise
 \begin{proposition}\label{bah}
If $A$ is different from the special component, then $B$ has rank  $\leq n$ and is non degenerate and $-\pi$ maps $B$ isomorphically to $A$.
\end{proposition}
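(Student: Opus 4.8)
The statement asserts three things about the component $B$ of $\Lambda_S$ lying over a non-special component $A$ of $\Gamma_S$: that $\mathrm{rk}\,B\le n$, that $B$ is non-degenerate (rank equals dimension, i.e. number of vertices minus one), and that $-\pi$ restricts to a graph isomorphism $B\to A$. I would organize the argument around the dichotomy furnished by Theorem \ref{ridma}: a compatible graph of rank $k$ either has exactly $k+1$ vertices (non-degenerate) or produces an avoidable resonance. Since our genericity assumption 4) in \S\ref{therin} forbids all avoidable resonances coming from graphs with at most $n+1$ vertices, and assumption 5) together with Theorem \ref{aMT} (via Proposition \ref{codim}) rules out components with $n+1$ formally independent markings unless they are the special one, the only surviving possibility for $A\ne$ special is a non-degenerate $B$ of rank $\le n$. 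So the real content is to convert these exclusions into the positive statement, and to check the isomorphism claim.

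\textbf{Step 1: $B$ is compatible and surjects onto $A$.} The surjectivity $B\twoheadrightarrow A$ is already the Lemma just stated (every edge of $A$ lifts). Compatibility of $B$ — that $\sigma(p)$ and $L(p)$ depend only on endpoints — must be extracted from the realization: a circuit $p$ in $B$ projects under $\pi$ to a relation $\sigma(p)=+$, $L(p)=0$ would follow if $\pi(L(p))=0$ forced $L(p)=0$, but more directly, from \eqref{cnno} applied around the circuit one gets $\pi[L(p)]=0$ together with the quadratic relation, and then one invokes assumption 3) (odd circuits of length $\le n$ give non-zero linear combinations) and assumption 2) (any $n$ of the $v_i$ independent) to conclude the circuit is trivial. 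I would spell this out: a minimal non-trivial circuit would have length $\le$ the number of vertices of $A$; if $A$ has at most $n+1$ vertices this is handled by 2)–3); the case of more vertices is exactly what Steps 2–3 exclude.

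\textbf{Step 2: bounding the rank and excluding the large case.} Let $k=\mathrm{rk}\,B$. By Theorem \ref{ridma}, if $B$ is degenerate it carries an avoidable resonance; if that resonance comes from a subgraph with $\le n+1$ vertices it is forbidden by 4), so a degenerate $B$ would have to involve $\ge n+2$ vertices, hence $\mathrm{rk}\,B\ge n+1$, hence $B$ contains $n+1$ edges with formally independent markings. But then Theorem \ref{aMT} says $A$ (the realization) has a unique realization in the special component — forcing $A$ to \emph{be} the special component, contrary to hypothesis. Symmetrically, a \emph{non-degenerate} $B$ of rank $\ge n+1$ again has $n+1$ formally independent markings and the same contradiction applies. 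This simultaneously gives $k\le n$ and non-degeneracy: $B$ has $k+1$ vertices.

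\textbf{Step 3: $-\pi\colon B\to A$ is an isomorphism.} Surjectivity is Step 1. For injectivity on vertices: two vertices $(a,\sigma),(b,\tau)$ of $B$ with $-\pi(a)=-\pi(b)$ lie over the same point of $A$; using the embedding of Theorem \ref{limbed}, $B$ sits inside $\tilde\Lambda$ with vertices $-L(a)$, and $\pi(L(a))=\pi(L(b))$ with $L(a)\ne L(b)$ would exhibit a non-zero element of $\ker\pi\cap\Z^m_r$ that is a difference of two marking-sums of "small" support (support bounded by the $\le n+1$ vertices, so by odd/even circuit data of length $\le n$); assumptions 2)–3) forbid this. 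Hence $L(a)=L(b)$ and also $\sigma=\tau$ (the sign is determined by $\eta(L(a))\in\{0,-2\}$ via \eqref{Heta}), so the vertex map is injective. Compatibility with edges in both directions is built into the definitions of $\Lambda_S$ and $\Gamma_S$ and Proposition \ref{ilmco}. The \textbf{main obstacle} is Step 2 — precisely, organizing the case analysis so that "rank $\ge n+1$" always produces either a forbidden avoidable resonance or an application of Theorem \ref{aMT} forcing specialness; one must be careful that a non-degenerate graph of rank exactly $n+1$ with $n+2$ vertices is not accidentally realizable off the special component, which is exactly what Proposition \ref{codim}/Theorem \ref{aMT} and constraint 5) are designed to preclude.
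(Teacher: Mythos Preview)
Your plan is essentially the paper's own argument: invoke Theorem \ref{ridma} together with constraint 4) to force non-degeneracy when $\mathrm{rk}\,B\le n$, invoke Theorem \ref{aMT} together with constraint 5) to show that $\mathrm{rk}\,B\ge n+1$ forces the special component, and then check that $-\pi$ is bijective on vertices (edges being handled by Corollary \ref{lacco}). Two small corrections are worth making.

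First, your Step 1 is unnecessary. A component $B$ of $\Lambda_S$ is, by Corollary \ref{lacco}, a complete subgraph of the universal graph $\Lambda\times\Z/(2)$; in that ambient graph the sign $\sigma(p)$ and the value $L(p)$ of a path are determined by the endpoint coordinates alone, so compatibility is automatic and need not be derived from the resonance inequalities.

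Second, in Step 3 your appeal to assumption 3) does not close the odd case. If $(a,+),(b,-)\in B$ both sit over $k\in A$, setting $c=a+b$ one gets $2k=-\pi(c)$, so $\sum c_i v_i\neq 0$ is automatic whenever $k\neq 0$; assumption 3) gives nothing more. The paper instead uses the accompanying norm identity \eqref{cnno} to obtain $2|k|^2=-\pi(L^{(2)}(c))$, which combines with $2k=-\pi(c)$ to yield the quadratic relation $\bigl|\sum_i c_i v_i\bigr|^2+2\sum_i c_i|v_i|^2=0$ in the $v_i$; one checks this is a non-trivial polynomial unless $c=-2e_i$ (forcing $k=v_i\in S$, impossible), and this quadratic inequality is what is added to the finite list of resonance exclusions. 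The even case is as you say a linear relation $\sum a_i v_i=0$, likewise added to the list.
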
      
 \begin{proof}
Assume  first that $B$ has rank  $\leq n$, we have  that $B$ has to be non--degenerate since we have imposed that the $v_i$ do not satisfy the  resonance equations. In particular $B$ has at most $n+1$ vertices. Then the constraints that we have required imply first that $-\pi$ is bijective on the vertices, in fact if we had to vertices in $B$ which map to the same vertex $k$ in $A$ we  consider a path connecting them we have that either $2k=\sum_ia_iv_i$  if the path is odd and then we can exclude this  by imposing that the quadratic  equation satisfied by $k$ is not satisfied by the $v_i$, or we get  $\sum_ia_iv_i=0$  and again we may exclude this, in fact since $B$ varies on graphs with at most $n+1$ vertices all these are a finite number of inequalities.  The isomorphism at the level of edges follows from Corollary \ref{lacco}.

So it only remains to show that for generic $v_i$ we cannot have  $B$ of rank $\geq n+1$. This follows from Theorem \ref{aMT}.
\end{proof}      
\begin{proof}[Proof of Theorem \ref{sunto}]
{\it i)}\; Let $A$ be a connected component of the graph $\Gamma_S$, let $B$ be a corresponding component in $\Lambda_S$ so that $A=-\pi(B)$. By proposition \ref{bah} $B$ has at most $n+1$ vertices.

{\it ii)}\;Two points are connected by a red edge if they belong to a sphere $S_{ij}$ with diameter $v_i,v_j$ and there are finitely many integral points on such a sphere.

{\it iii)} \; Each component of $\Gamma_S$ is a realization of an abstract marked graph, which encodes the equations that the root $x_r$ should satisfy. In the case of black edges the equations are all linear and the general solution is given by adding the solutions of the associated homogeneous system.
 Finally, there are only a finite number of abstract marked graphs with at most $n+1$ vertices.

\end{proof}
\begin{remark}
In this general discussion we have  restricted the possible types of components that we can have in the geometric graph $\Gamma_X$.  In fact it may very well be that some of the possible components which are allowed do not appear.

This depends upon the fact   that our conditions are algebraic and we only claim that a certain system of equations has a solution.  But, for contributing a component to the graph $\Gamma_X$,  this solution must lie in the lattice spanned by the $v_i$.  This we have not tried to discuss.  In fact in dimension 2, in the paper \cite{GYX}, Geng, You and Xu use arithmetic conditions to exclude all graphs with 3 vertices.
\end{remark} 
 \subsection{$m=\infty$}

It is easy to see that we can also construct infinite sequences of integral vectors $v_i$ satisfying all constraints. For this recall the known, fact. Let $f(x_1,\ldots,x_p)$  be a polynomial with integer coefficients. Assume that all coefficients of the polynomial are $<C$ in absolute value, and all exponents are $<D$, with $C,D$ two positive integers.  Consider the sequence $\mathcal L:=a_i:=C^{D^i}<\ i=1,2,\ldots $. Then
 \begin{lemma} For every choice of  $p$ distinct elements $a_{i_1},\ldots, a_{i_p}$ in this list we have
 $$f(a_{i_1},\ldots, a_{i_p})\neq 0.  $$
\end{lemma}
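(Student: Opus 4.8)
The plan is to exploit the fact that the sequence $\mathcal L=\{a_i=C^{D^i}\}$ grows so fast that, after substituting its elements into $f$, exactly one monomial dominates in absolute value the sum of all the others, so that cancellation to $0$ is impossible. Throughout I assume $f\not\equiv 0$ (otherwise the statement is false); since the coefficients of $f$ are integers of absolute value $<C$, some one of them has absolute value $\ge 1$, forcing $C\ge 2$. I may likewise assume $D\ge 2$: if $D=1$ then all $a_i$ coincide, so either $p=1$ and $f$ is a nonzero constant (hence nonzero at any point), or $p\ge 2$ and the hypothesis ``$p$ distinct elements'' is vacuous.

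Fix $p$ distinct elements $a_{i_1},\dots,a_{i_p}$ of $\mathcal L$; since $i\mapsto C^{D^i}$ is strictly increasing, $i_1,\dots,i_p$ are distinct positive integers. Write $f=\sum_\alpha c_\alpha x_1^{\alpha_1}\cdots x_p^{\alpha_p}$ with $0\le\alpha_j\le D-1$ and each nonzero $c_\alpha$ satisfying $1\le|c_\alpha|\le C-1$. Substituting $x_j=a_{i_j}$, the monomial $c_\alpha x^\alpha$ contributes $c_\alpha\,C^{E(\alpha)}$, where $E(\alpha):=\sum_{j=1}^p\alpha_j D^{i_j}$. The first step is to verify that $\alpha\mapsto E(\alpha)$ is injective on the exponent vectors occurring in $f$; this is just uniqueness of base-$D$ digit expansions. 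Indeed, if $E(\alpha)=E(\beta)$ and $\gamma_j:=\alpha_j-\beta_j$ is not identically $0$, choose $j_0$ so that $i_{j_0}$ is largest among the indices with $\gamma_{j_0}\ne 0$; then
$$|\gamma_{j_0}|\,D^{i_{j_0}}=\Bigl|\sum_{j\ne j_0}\gamma_j D^{i_j}\Bigr|\le (D-1)\sum_{e=0}^{i_{j_0}-1}D^{e}=D^{i_{j_0}}-1,$$
contradicting $|\gamma_{j_0}|\ge 1$.

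The second step is the dominance estimate. Let $\alpha^{*}$ be an exponent vector of a monomial of $f$ with $E(\alpha^{*})$ maximal. By injectivity, the values $E(\alpha)$ for the other monomials of $f$ are pairwise distinct nonnegative integers, each at most $E(\alpha^{*})-1$, so $\sum_{\alpha\ne\alpha^{*}}C^{E(\alpha)}\le\sum_{e=0}^{E(\alpha^{*})-1}C^{e}=\frac{C^{E(\alpha^{*})}-1}{C-1}$. Hence, using $|c_{\alpha^{*}}|\ge 1$ and $|c_\alpha|\le C-1$,
$$\bigl|f(a_{i_1},\dots,a_{i_p})\bigr|\ge |c_{\alpha^{*}}|\,C^{E(\alpha^{*})}-(C-1)\sum_{\alpha\ne\alpha^{*}}C^{E(\alpha)}\ge C^{E(\alpha^{*})}-\bigl(C^{E(\alpha^{*})}-1\bigr)=1>0,$$
which is the claim.

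I do not expect a genuine obstacle. The only point requiring care is the injectivity of $E$: one must ensure that the exponent bound $\alpha_j<D$, together with the fast spacing $D^{i_j}$ of the substituted values, really does prevent interference between the ``digits'' $\alpha_j$. This is exactly where the precise quantitative hypotheses on $C$ and $D$ enter, rather than a soft ``$C,D$ large enough'' assumption; everything else is a routine geometric-series bound.
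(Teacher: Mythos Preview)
Your proof is correct and follows essentially the same approach as the paper's. Both arguments hinge on the same two observations: first, that distinct monomials of $f$ yield distinct exponents $E(\alpha)=\sum_j\alpha_j D^{i_j}$ by uniqueness of base-$D$ digit expansions (since each $\alpha_j<D$); second, that a nontrivial integer combination $\sum c_\alpha C^{E(\alpha)}$ with distinct exponents and $|c_\alpha|<C$ cannot vanish. The paper phrases this last step as ``uniqueness of the expression of a number in a given basis,'' while you make it explicit via a dominance estimate showing the leading term beats the rest by at least $1$; these are the same fact.
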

\begin{proof}
Consider the monomials  appearing   in $f$.
$$x_1^{h_1}x_2^{h_2}\ldots x_p^{h_p}\mapsto C^{\sum_{j=1}^p h_jD^{a_j}}.$$

Since all indices $h_i<D$ we have that different monomials of $f$ give rise to a different exponent $\sum_{j=1}^p h_jD^{a_j}$.

Now the polynomial gives a linear combination of integers $a_i$ with $|a_i|<C$ (the coefficients) times distinct elements  $C^{d_i}$. By the uniqueness of the expression of a number in a given basis  we deduce the claim.
\end{proof}

We now apply this to our setting, we take $C,D$  bigger than the coefficients of all constraints  in dimension $n$, similar bigger than the exponents  in these constraints.

If we now partition in any desired way the list   $\mathcal L$  into disjoint sublists each made of $n$ elements they form an allowable infinite list.  
 
\section{The Matrices\label{matrici}}
{\sl In this and the following sections we discuss the combinatorial and algebraic features of the matrices appearing in the blocks of $ad(N)$ in order to complete the proof of Theorems \ref{LIRRI},  \ref{separ} and Lemma \ref{SEPA}.}
 \subsection{Combinatorial matrices} We now discuss the matrices $C_{\A}$ introduced in Definition \ref{Cacomb}.
  The vertices  of  ${\A}$ index a basis of  this  block. 
Given a vertex $(a,\sigma)$ if $a=\sum_im_ie_i$ we set $$a(\xi):=\sum_im_i\xi_i.$$  
 \begin{lemma}\label{allowa}
The entries of the matrix $C_{\A}$, over the  indexing set of the vertices of ${\A}$, are:

\begin{itemize}\item In the diagonal at the vertex $(a,\sigma)$ equals $-\sigma  a (\xi)$.\item At the position  $(a,\sigma),(b,\tau)$  we put 0 unless they are connected by an oriented edge $e=((a,\sigma),(b,\tau))$  marked with $(i,j)$.  In this case we  place  \begin{equation}
\label{ilc}C(e):=2\tau  \sqrt{\xi_i\xi_j}. 
\end{equation}
\end{itemize} \end{lemma}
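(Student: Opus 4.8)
The plan is to obtain Lemma~\ref{allowa} by unwinding Definition~\ref{Cacomb}, the description of the reduced geometric block matrix $C_A$ in Corollary~\ref{sunto1} and Proposition~\ref{reteo1}, and the embedding of Theorem~\ref{limbed}; no new idea is needed, the statement is just a translation of conventions from the geometric graph $\Gamma_S$ to the combinatorial graph $\Lambda_S$.

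First I would fix the dictionary between the combinatorial block $\A$, generated by the lift $e^{\ii\nu.x}z_{x(A)}$, and its geometric realization $A$. By Theorem~\ref{oneone} the map $-\pi$ is a marking-compatible graph isomorphism of $\A$ onto $A$, and by Theorem~\ref{limbed} the abstract vertex whose invariants relative to the root are the sign $\sigma(a)$ and the linear form $L(a)$ of Corollary~\ref{sunto1} embeds in $\Lambda\times\Z/(2)$ as $-L(a)$; composing with the translation $\tau_\nu$ of~\eqref{sitr} that carries the abstract root to the frequency $\nu$, the same vertex acquires the combinatorial label $a=\sigma(a)\nu-L(a)$, keeping the sign $\sigma(a)$. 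Hence, with the notation $a(\xi)=\sum_i m_i\xi_i$ for $a=\sum_i m_i e_i$, one gets the one algebraic identity that will be used, namely $(\xi,L(a))=\sigma(a)(\xi,\nu)-a(\xi)$.

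Then I would compute the two types of entries of $C_\A=C_A-(\xi,\nu)I$ separately. Off-diagonal: since $C_\A$ differs from $C_A$ only by a scalar, their off-diagonal parts coincide, and by~\eqref{MpA} this is half the off-diagonal part of $M'_A=\Omega'+XMX^{-1}$; since $\Omega'$ is diagonal, formula~\eqref{0fdi2} gives that this part vanishes on non-adjacent pairs and equals $4\tau\sqrt{\xi_i\xi_j}$ on an edge $e$ ending at a vertex of sign $\tau$ and marked $(i,j)$, so that the corresponding entry of $C_\A$ is $2\tau\sqrt{\xi_i\xi_j}=C(e)$. Diagonal: by Corollary~\ref{sunto1} the diagonal of $C_A$ at the vertex $a$ is $\sigma(a)(\xi,L(a))$, hence that of $C_\A$ is $\sigma(a)(\xi,L(a))-(\xi,\nu)$; inserting the identity above and using $\sigma(a)^2=1$ collapses this to $-\sigma(a)\,a(\xi)$, which is the asserted diagonal entry. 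The conjugate block $(\A,-)$ is handled by the sign-change prescription already stated in Definition~\ref{Cacomb}.

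The only genuinely delicate point is the bookkeeping of the $\nu$-translation and of the signs: one must make sure that the scalar $(\xi,\nu)I$ subtracted in Definition~\ref{Cacomb} is exactly the discrepancy between the abstract label $-L(a)$ and the frequency label $a$, and that the sign $\sigma$ attached to a combinatorial vertex coincides with the $\sigma(a)$ of Corollary~\ref{sunto1}. To be safe I would check the final formulas on the root ($L=0$, $\sigma=+$, $a=\nu$, so the diagonal becomes $-(\xi,\nu)$, consistent with $C_\A=C_A-(\xi,\nu)I$ since $C_A$ has zero diagonal at the root) and, as an independent verification, recompute $-\ii\,ad(N)$ directly on the frequency basis using Lemma~\ref{matri} and the Poisson rules~\eqref{ffpb},~\eqref{rupb1}: after subtracting the scalar $|x(A)|^2+\sum_i\nu_i|v_i|^2+4(\sum_i\nu_i+1)\sum_j\xi_j$ of Proposition~\ref{LaCA} one recovers exactly $2C_\A$ with the entries claimed in the lemma.
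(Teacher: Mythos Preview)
Your proposal is correct and follows essentially the same route as the paper: the key step in both is the identity $\mu=\sigma\nu-L(a)$ relating the combinatorial label of a vertex to its geometric data $(\sigma(a),L(a))$, from which the diagonal entry $-\sigma\,a(\xi)=\sigma(a)(\xi,L(a))-(\xi,\nu)$ follows at once via Definition~\ref{Cacomb} and~\eqref{CAab}. You supply more detail than the paper (which only spells out the diagonal computation and leaves the off-diagonal implicit in~\eqref{CAab}), and your sanity checks at the root and via Proposition~\ref{LaCA} are a nice addition; note only that the diagonal formula for $C_A$ you invoke is in Proposition~\ref{reteo1}/\eqref{CAab} rather than Corollary~\ref{sunto1}.
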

\begin{proof}
Let $(\nu,+)\in\A$ correspond to the root $x(A)=-\pi(\nu)$. Take another element $a=(\mu,\sigma)\in\A$ such that $-\pi(\mu)=x_a\in A$ and $\sigma(a)=\sigma$. From Formula \ref{bacos0} and \ref{bacos} and Proposition \ref{bah} we have 
$\mu= \sigma\nu-L(a) $ hence $-\sigma a(\xi)=-\sigma(\xi,\mu)= -(\xi,\nu)+\sigma(\xi,L(a))$ which is the diagonal entry of $C_\A$ by Formula \eqref{CAab} and Definition \ref{Cacomb}.
\end{proof}
We need to see the behavior under the symmetric group, translation and sign change. Under the symmetric group we just permute the variables  $\xi_i$.
\begin{theorem}
\begin{equation}
C_{\tau_c({\A})}=c(\xi)I+C_{\A},\quad C_{\bar {\A}}=-C_{\A}.
\end{equation}
\end{theorem}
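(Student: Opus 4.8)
The identity is a functoriality property of the combinatorial matrices, and the plan is to read $C_{\A}$ off from Lemma \ref{allowa} and simply track each entry under the two operations in \eqref{sitr}. Recall that $C_{\A}$ is indexed by the vertices $(a,\sigma)$ of the concrete block $\A$, has diagonal entry $-\sigma\,a(\xi)$ at $(a,\sigma)$, and has off-diagonal entry $2\tau\sqrt{\xi_i\xi_j}$ at a position $\bigl((a,\sigma),(b,\tau)\bigr)$ spanned by an oriented edge marked $(i,j)$, all other entries being zero; thus $C_{\A}$ is completely determined by the labelled colored marked graph $\A$ together with the linear functional $a=\sum_i m_i e_i\mapsto a(\xi)=\sum_i m_i\xi_i$. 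The assertion about the symmetric group $S_m$ is then immediate, since relabelling the generators $e_i$ relabels the markings $(i,j)$ and hence just permutes the $\xi_i$ inside every entry.

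First I would check that $\tau_c\colon (b,\sigma)\mapsto (b+\sigma c,\sigma)$ and the sign change $\iota\colon (a,\sigma)\mapsto (a,-\sigma)$ are isomorphisms of colored marked graphs, i.e.\ they send an edge to an edge of the same color and marking. For a black edge $\{(a,\sigma),(a+e_i-e_j,\sigma)\}$ as in \eqref{latonero}, $\tau_c$ produces $\{(a+\sigma c,\sigma),(a+e_i-e_j+\sigma c,\sigma)\}$, again black and marked $(i,j)$; for a red edge $\{(a,\sigma),(b,-\sigma)\}$ with $a+b+e_i+e_j=0$ as in \eqref{latorosso}, $\tau_c$ produces $\{(a+\sigma c,\sigma),(b-\sigma c,-\sigma)\}$, and $(a+\sigma c)+(b-\sigma c)+e_i+e_j=0$, so it is again red and marked $(i,j)$; in either case $\tau_c$ fixes the $\Z/(2)$-component of every vertex. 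The same two verifications work for $\iota$, the only change being that $\iota$ flips the sign of every vertex (on a red edge it merely interchanges the two signs $\pm$, which is harmless). Hence $\tau_c$ and $\iota$ induce graph isomorphisms $\A\to\tau_c(\A)$ and $\A\to\bar\A$ compatible with colors and markings, the canonical vertex bijection for $\tau_c$ being $(a,\sigma)\mapsto(a+\sigma c,\sigma)$.

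It then remains to compare the entries under these bijections. An off-diagonal entry depends only on the marking of the edge and on the sign of the target (column) vertex; under $\tau_c$ both are unchanged, so $C_{\tau_c(\A)}$ and $C_{\A}$ have the same off-diagonal part, while under $\iota$ the target sign $\tau$ becomes $-\tau$, so every off-diagonal entry is negated. For the diagonal, the entry of $C_{\tau_c(\A)}$ at $(a+\sigma c,\sigma)$ equals $-\sigma\,(a+\sigma c)(\xi)=-\sigma\,a(\xi)-\sigma^{2}c(\xi)$, which (using $\sigma^{2}=1$) differs from the entry $-\sigma\,a(\xi)$ of $C_{\A}$ by the one vertex-independent scalar $c(\xi)$; hence $C_{\tau_c(\A)}$ and $C_{\A}$ differ by a scalar multiple of the identity, giving $C_{\tau_c(\A)}=c(\xi)I+C_{\A}$. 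Likewise the diagonal entry of $C_{\bar\A}$ at $(a,-\sigma)$ is $-(-\sigma)\,a(\xi)=\sigma\,a(\xi)=-\bigl(-\sigma\,a(\xi)\bigr)$, so combined with the off-diagonal computation one gets $C_{\bar\A}=-C_{\A}$. The argument is pure bookkeeping; the only place that genuinely needs care is the sign accounting on red edges — they are unoriented, yet the matrix entry $2\tau\sqrt{\xi_i\xi_j}$ records the sign $\tau$ of the column vertex — together with a last check that the overall sign of the scalar shift $c(\xi)$ in the first identity is consistent with the sign convention fixed for $\tau_c$ in \eqref{sitr}.
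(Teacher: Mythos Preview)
Your proof is correct and follows essentially the same approach as the paper's (very terse) argument: both simply track the diagonal and off-diagonal entries of $C_{\A}$ under the two symmetries using the explicit description in Lemma~\ref{allowa}. Your closing caveat about the sign of the scalar shift is well placed --- with the convention $\tau_c:(b,\sigma)\mapsto(b+\sigma c,\sigma)$ of \eqref{sitr} and the diagonal entry $-\sigma\,a(\xi)$ of Lemma~\ref{allowa}, the computation actually gives $C_{\tau_c(\A)}=C_{\A}-c(\xi)I$, a harmless sign slip in the stated formula that is immaterial for every subsequent use.
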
\begin{proof}
If we translate ${\A}$, to $\tau_c({\A})$  the edge   $e=((a,\sigma),(b,\tau))$ becomes $\tau_c(e)=((a+\sigma c,\sigma),(b+\tau c,\tau))$ so 
$$C(\tau_c(e))= C(e),\quad \sigma (a+\sigma c)(\xi)= \sigma (a )(\xi)+  c (\xi).$$  Similarly for sign change. 
\end{proof}  
 We denote by
$$\chi_{\A}(t):=\det(t-C_{\A})$$  the characteristic polynomial of $C_{\A}$.

 \begin{remark}
Notice that,   if $e$ is red,   in position $(b,a)$ we have  $\sigma(a)2\sqrt{\xi_i\xi_j}=- \sigma(b)2\sqrt{\xi_i\xi_j}$.

If we change the root $s$ the matrix is changed as $C_{\A}^s=L(s)(\xi)+\sigma(a) C_{\A}^r$. 
\end{remark} 
 
 In particular  $C_{\A}$ is symmetric if and only if there are no red edges.
 
 These are the matrices appearing in our Hamiltonian, but we can immediately change them as follows. Choose  a maximal tree in the graph and a root, then every vertex is connected by a unique minimal path.   Given a vertex $a$  and the minimal path $p=(r=a_0,a_1,\ldots,a_k=a$ from the root $r$ to $a$ and we set  $k=\ell(a)$. We next set
 \begin{equation}
D(r):=1,\ D(a):= \prod_{i=0}^{k-1}C(a_i,a_{i+1})
\end{equation}
  
This defines a diagonal matrix  $D$.    
 \begin{proposition} Set $\tilde C_{\A}:=DC_{\A}D^{-1}$:
\begin{enumerate}
\item If $(a,b)$ is an edge in the tree  we have 
\begin{equation}
\tilde C_{\A}(a,b)=\begin{cases}
\sigma(b)\ \text{if}\   \ell(b)=\ell(a)+1\\
C(a,b)^2=4\xi_i\xi_j\ \text{if}\   \ell(b)=\ell(a)-1
\end{cases}
\end{equation}
\item If $e=(a,b)$ is not an edge in the tree (but it is an edge in the graph), we have that $\tilde C_{\A}(a,b)$  is  a constant times a monomial in the variables $\xi_i^{\pm 1}$.
\end{enumerate}
\end{proposition}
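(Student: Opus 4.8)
The plan is to compute $\tilde C_{\A}=DC_{\A}D^{-1}$ entry by entry. Since $D$ is a diagonal matrix one has $\tilde C_{\A}(a,b)=D(a)\,C_{\A}(a,b)\,D(b)^{-1}$; in particular the diagonal of $\tilde C_{\A}$ coincides with that of $C_{\A}$, and only the off--diagonal entries -- which by Lemma \ref{allowa} sit at the edges of the graph -- need to be examined. Throughout I would use the recursive definition $D(a)=\prod_{i=0}^{k-1}C(a_i,a_{i+1})$ along the minimal path $r=a_0,\dots,a_k=a$ in the chosen maximal tree $T$, together with the fact that an edge marked $(i,j)$ in one direction is marked $(j,i)$ in the other and carries the radical factor $2\sqrt{\xi_i\xi_j}$ (up to a sign according to its colour).

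First I would handle the tree edges, i.e. the first statement of the proposition. Let $(a,b)$ be an edge of $T$. If $\ell(b)=\ell(a)+1$ then $b$ is the child of $a$, so the minimal path to $b$ is the minimal path to $a$ prolonged by the edge $(a,b)$; hence $D(b)=D(a)\,C(a,b)$, and $\tilde C_{\A}(a,b)=D(a)C_{\A}(a,b)D(b)^{-1}$ collapses, after cancellation of all the $\sqrt{\xi_i\xi_j}$ factors, to the sign $\sigma(b)$. If $\ell(b)=\ell(a)-1$ then $a$ is the child of $b$ and symmetrically $D(a)=D(b)\,C(b,a)$, so $\tilde C_{\A}(a,b)=C(b,a)\,C_{\A}(a,b)$; since the entries $(b,a)$ and $(a,b)$ carry the same radical $2\sqrt{\xi_i\xi_j}$, this equals $C(a,b)^2=4\xi_i\xi_j$. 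This is exactly the first statement.

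For the second statement I would take an edge $e=(a,b)$ of the graph not lying in $T$, and consider the unique circuit $\gamma$ obtained by adjoining $e$ to the tree path from $b$ back to $a$ (through their least common ancestor in $T$). After cancelling the common initial segment of the minimal paths to $a$ and to $b$, the ratio $D(a)/D(b)$ is, up to sign, a product of factors $\sqrt{\xi_i\xi_j}$, one for each tree edge of $\gamma$; multiplying by $C_{\A}(a,b)$, itself a constant times $\sqrt{\xi_i\xi_j}$ for the marking of $e$, one obtains a constant times $\prod_\ell \xi_\ell^{k_\ell/2}$ where, by Lemma \ref{IPA}, the exponent vector $(k_\ell)$ is congruent modulo $2$ to $L(\gamma)$ (the distinction black/red and all the signs being immaterial modulo $2$). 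Compatibility of $A$, \eqref{com1}, forces $L(\gamma)=0$ for every circuit, hence $(k_\ell)$ is an even integer vector and $\tilde C_{\A}(a,b)$ is a constant times a Laurent monomial in the $\xi_\ell$, as claimed.

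The only genuinely nontrivial point is this last cancellation of the half--integer exponents: one must recognise that the total unsigned marking $\sum_{e'\in\gamma}\bigl(e_{i(e')}+e_{j(e')}\bigr)$ around a circuit $\gamma$ is, modulo $2$, the quantity $L(\gamma)$, which vanishes by compatibility. I expect that step to require a little care; the rest is a straightforward unwinding of the definition of $D$.
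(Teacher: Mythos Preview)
Your proof is correct and follows essentially the same approach as the paper's own (very terse) argument: part (i) is unwound directly from the recursive definition of $D$, and part (ii) is reduced to the vanishing of $L$ on the circuit formed by the non--tree edge together with the tree path, which forces the half--integer exponents to be even. The paper phrases (ii) as ``modulo $2$ the two elements $L(a)=L(b)$'', which is the same observation you make via $L(\gamma)=0$; your write--up is in fact more explicit than the paper's one--line justification.
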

\begin{proof}
(i)  This  is from the definition.

(ii) This comes from the circuit that shows that modulo 2  the two elements $L(a)=L(b)$ which takes away the squares.
\end{proof}
 \begin{corollary}\label{poli}
For every allowable ${\A}$  the characteristic polynomial of $C_{\A}$  has as coefficients polynomials in the variables $\xi_i$. 
\end{corollary}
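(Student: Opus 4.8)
The plan is to exploit the fact that the characteristic polynomial is a conjugacy invariant, so that $\chi_{\A}(t)=\det(tI-C_{\A})=\det(tI-\tilde C_{\A})$, and to read off information about its coefficients from \emph{both} descriptions of the block matrix. First I would record, using Lemma \ref{allowa}, that the entries of $C_{\A}$ are the linear forms $-\sigma\,a(\xi)\in\Z[\xi_1,\dots,\xi_m]$ on the diagonal and the expressions $\pm 2\sqrt{\xi_i\xi_j}$ off the diagonal; expanding each $k\times k$ principal minor of $C_\A$ over permutations then exhibits every coefficient of $\chi_{\A}(t)$ as a $\Z$-linear combination of products of such entries, hence as an element of the polynomial ring $\Z[\sqrt{\xi_1},\dots,\sqrt{\xi_m}]$ --- that is, a $\Z$-combination of monomials in the $\xi_i$ with non-negative (possibly half-integral) exponents.

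Next I would use the preceding Proposition on $\tilde C_{\A}=DC_{\A}D^{-1}$: conjugation by the diagonal matrix $D$ leaves the diagonal of $C_{\A}$ unchanged, while the off-diagonal entries of $\tilde C_{\A}$ are the integers $\sigma(b)$ and the polynomials $4\xi_i\xi_j$ along the chosen maximal tree, and constants times monomials in $\xi_i^{\pm1}$ on the remaining edges. All of these lie in the ring of Laurent polynomials $\Z[\xi_1^{\pm1},\dots,\xi_m^{\pm1}]$, so each coefficient of $\chi_{\A}(t)=\det(tI-\tilde C_{\A})$ lies there too. Combining the two computations, every coefficient of $\chi_{\A}(t)$ lies simultaneously in $\Z[\sqrt{\xi_1},\dots,\sqrt{\xi_m}]$ and in $\Z[\xi_1^{\pm1},\dots,\xi_m^{\pm1}]$; by uniqueness of the monomial expansion in the ambient field $\Q(\sqrt{\xi_1},\dots,\sqrt{\xi_m})$, every monomial that actually occurs has exponent vector lying both in $\tfrac12\Z_{\ge0}^m$ and in $\Z^m$, hence in $\Z_{\ge0}^m$. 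Therefore each coefficient is a genuine polynomial in $\xi_1,\dots,\xi_m$ with integer coefficients, which is exactly the assertion.

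I do not expect a serious obstacle here: the only slightly delicate point is the final comparison of the two rings, and that is pure bookkeeping with monomial exponents. If one preferred to avoid it, the alternative route is to argue directly at the level of the permutation expansion of a principal minor of $C_{\A}$: in a nonzero term the off-diagonal factors organise into cycles, each cycle is a circuit $p$ of $\A$, and compatibility of $\A$ forces $L(p)=0$, whence --- reducing the signed recursion \eqref{lei} modulo $2$, where the coefficients $\pm1$ all become $1$ --- one gets $\sum_{e\in p}L(e)\equiv 0\pmod 2$ around every circuit; consequently the product of the radicands $\xi_{i_e}\xi_{j_e}$ over the edges of a cycle is a perfect-square monomial and the corresponding term is already a polynomial in the $\xi_i$. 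Either way the corollary follows.
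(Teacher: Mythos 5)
Your primary argument is correct and essentially the paper's own reasoning, made more explicit. The paper's proof is the one-line remark ``by the previous Theorem the square roots disappear,'' which is shorthand for exactly your two-sided comparison: expanding $\det(tI-C_{\A})$ directly shows the coefficients lie in $\Z[\sqrt{\xi_1},\dots,\sqrt{\xi_m}]$ (so no denominators can appear), while the conjugate $\tilde C_{\A}=DC_{\A}D^{-1}$ has Laurent-monomial entries, so the same coefficients lie in $\Z[\xi_1^{\pm1},\dots,\xi_m^{\pm1}]$ (so no half-integer exponents can appear); the intersection of those two rings inside $\Q(\sqrt{\xi_1},\dots,\sqrt{\xi_m})$ is $\Z[\xi_1,\dots,\xi_m]$. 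You have simply written out the final bookkeeping step that the paper leaves implicit.

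Your alternative argument --- reducing the circuit identity $L(p)=0$ modulo $2$ so that every variable appears an even number of times among the radicands of a permutation cycle --- is a genuinely different route, and in fact slightly cleaner in that it bypasses the construction of $D$ entirely and argues directly on the Leibniz expansion of the determinant. It does, however, reuse the same key idea: the paper's proof of item (ii) of the preceding Proposition (``modulo $2$ the two elements $L(a)=L(b)$, which takes away the squares'') is precisely this parity observation, just packaged into the statement about $\tilde C_{\A}$ rather than applied to the characteristic polynomial directly. So the alternative is not a conceptually new tool, but it is a more self-contained presentation that some readers would find preferable, since it avoids introducing the auxiliary diagonal conjugation and the Laurent ring altogether. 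Both routes are sound.
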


By the previous Theorem the square roots disappear.  \bigskip

  {\bf Conjecture 1}\   For every allowable ${\A}$  the characteristic polynomial $\chi_{\A}(t)$ of $C_{\A}$     is irreducible as polynomial with coefficients  in $\C[\xi_1,\ldots,\xi_m]$. 
 
It is clear that the irreducibility property is invariant under all the symmetries, the symmetric group, translation and sign change, so the statement needs to be checked only for finitely many  ${\A}$.  In the next section we discuss dimension 3. We  take as ${\A}$ always a graph containing 0 and we assume that 0 has sign +, (cf. Definition \ref{cgli}).

\subsection{The method}
We have seen in Theorem \ref{sunto} that the graphs we need to consider are complete subgraphs ${\A}$ of the graph   $\Lambda_S$,  constructed as follows. Take  a linearly independent list of  $k$ vectors $\nu_i$ where $k$ is the rank.  Consider the complete graph ${\A}$ generated by $0,\nu_1,\ldots,\nu_k$.

We need to study those ${\A}$ which are connected. 
In Theorem \ref{sunto}  we have seen that the connected components of $\Lambda_S$ are obtained from such a graph by translation and sign change. 

The goal of the rest of the paper is to prove the two Conjectures which allow us to deduce the separation of eigenvalues and thus the second Melnikov equation.  We have at our disposal several theoretical tools which at the moment are not sufficient to treat  all cases. This is why the possible obvious inductions are not available at the moment and we need to perform a rather tedious detailed case analysis.  In principle this can be checked by a finite algorithm in all dimensions but we have not tried to write the  necessary code.

The main ingredient of a possible induction is:
\begin{theorem}\label{lafatt}
Take an allowed graph ${\mathcal A}$  and compute its characteristic polynomial $\chi_\A(t)$. When we  set a variable $\xi_i=0$  we obtain the product  of the polynomials $\chi_{\A_i}(t)$  where  the $A_i$ are the connected components of the graph obtained from $\A$ by deleting all the edges in which $i$ appears as index, with the induced markings (with $\xi_i=0$).
\end{theorem}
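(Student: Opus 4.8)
The plan is to track exactly how the matrix $C_{\mathcal A}$ degenerates when we substitute $\xi_i = 0$, using the explicit description of its entries from Lemma \ref{allowa} (and its cleaned-up form in the proposition after Corollary \ref{poli}). First I would recall that, by Lemma \ref{allowa}, the off-diagonal entry $C(e) = 2\tau\sqrt{\xi_i\xi_j}$ attached to an oriented edge $e$ marked $(i,j)$ vanishes identically once we set $\xi_i = 0$; likewise on the diagonal, the entry at a vertex $(a,\sigma)$ with $a = \sum_\ell m_\ell e_\ell$ is $-\sigma\, a(\xi) = -\sigma \sum_\ell m_\ell \xi_\ell$, which when $\xi_i=0$ loses its $\xi_i$-term but is otherwise unchanged. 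So setting $\xi_i = 0$ in the matrix $C_{\mathcal A}$ kills precisely the entries sitting on edges that carry the index $i$ in their marking, and simultaneously restricts the diagonal entries to the variables $\xi_\ell$, $\ell\neq i$. In other words, $C_{\mathcal A}|_{\xi_i=0}$ is, up to reordering the basis, the block-diagonal matrix whose blocks are indexed by the connected components $A_1,\dots,A_s$ of the graph $\mathcal A'$ obtained from $\mathcal A$ by deleting every edge whose marking involves $i$.

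Next I would identify each such block with $C_{A_j}$ computed for the graph $A_j$ equipped with its induced markings. Here one must check compatibility of the bookkeeping: each $A_j$ is a connected $\mathcal M$-subgraph of $\tilde\Lambda$ (a complete subgraph in the sense of Definition \ref{CMG}), so choosing a root in each $A_j$, the functions $\sigma_{A_j}(a)$ and $L_{A_j}(a)$ agree with the restrictions of $\sigma_{\mathcal A}$ and $L_{\mathcal A}$ to the vertices of $A_j$ — this is immediate because $L$ is computed along paths, and any path inside $A_j$ is a path inside $\mathcal A$ avoiding edges marked with $i$, while $\sigma$ only counts red edges. Therefore the diagonal entry $-\sigma(a)\, a(\xi)$ of the $j$-th block, after setting $\xi_i=0$, is exactly the diagonal entry of $C_{A_j}$ in the variables $\{\xi_\ell : \ell\neq i\}$, and the off-diagonal entries coincide on the nose since those edges carry no $i$. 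Hence the $j$-th block equals $C_{A_j}$ (with $\xi_i=0$), possibly up to a shift by a scalar matrix if the root chosen for $A_j$ differs from the ambient one — but that shift is harmless for the factorization statement and in fact, by the translation formula $C_{\tau_c(\mathcal A)} = c(\xi) I + C_{\mathcal A}$, it can be absorbed into the definition of $\chi_{A_j}(t)$ by translating the spectral variable $t$.

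Finally, taking determinants, $\chi_{\mathcal A}(t)|_{\xi_i=0} = \det(tI - C_{\mathcal A}|_{\xi_i=0}) = \prod_{j=1}^{s} \det(tI - C_{A_j}) = \prod_{j=1}^{s}\chi_{A_j}(t)$, which is the claim. The one genuinely delicate point — the place where I expect the argument to need care rather than being purely formal — is the passage from ``the entries vanish'' to ``the matrix is block diagonal with the $A_j$ as blocks,'' i.e. making sure that after deleting the $i$-edges there is no residual coupling hidden in the diagonal or in the relabelling of vertices, and that the induced marking on $A_j$ really is the restriction (no edge of $\mathcal A$ connecting two vertices of different $A_j$ survives, by definition of connected component, and no edge inside an $A_j$ is lost, since by construction it was not marked with $i$). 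Once this combinatorial decomposition is pinned down, the rest is the multiplicativity of the determinant over block-diagonal matrices.
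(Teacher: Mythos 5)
Your proposal is correct and follows essentially the same reasoning the paper compresses into its one-line proof (``This is immediate from the form of the matrices''). You correctly identify that, in the description of $C_{\mathcal A}$ from Lemma~\ref{allowa}, the off-diagonal entry attached to an edge marked $(i,j)$ is $2\tau\sqrt{\xi_i\xi_j}$ and hence vanishes at $\xi_i=0$, while the diagonal entries simply lose their $\xi_i$-contribution; that the resulting matrix is block-diagonal over the connected components of the graph with $i$-edges deleted; and that the determinant then factors. One small simplification you could make: the worry about a possible scalar shift from a re-rooting of $A_j$ is not needed here, because the matrix in Lemma~\ref{allowa} is defined intrinsically from the vertex labels $(\mu,\sigma)\in\tilde\Lambda$ and the edge markings, without reference to a root, so the block obtained by restriction and setting $\xi_i=0$ is literally $C_{A_j}|_{\xi_i=0}$ for the subgraph $A_j$ with its inherited labels; the translation formula $C_{\tau_c(\mathcal A)}=c(\xi)I+C_{\mathcal A}$ would only matter if one chose to relabel the component.
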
  
\begin{proof}
This is immediate from the form of the matrices.
\end{proof}

\bigskip

  \subsection{Some tests} Let us make several remarks on Conjecture 1, which we are going to prove in dimension 3 by a case analysis.
 
 First we analyze trees, and for each marked tree we complete it according to Theorem \ref{sunto}. We use systematically Theorem \ref{lafatt}  and also  we may sometimes use the following simple\begin{remark}[Minimality]  If  setting two or more of the variables $\xi_i$ equal we have an irreducible polynomial then the one we started with is irreducible.

\end{remark}
 
 \begin{lemma}[Parity test]\label{parita}  \begin{enumerate}
\item If we compute  $t$ at an odd number $g$, the determinant  $\chi_A(g)\neq 0.$

\item If a linear form $t+\sum_ia_i\xi_i,\ a_i\in\Z$ divides $\chi_A(t)$ we must have $\sum_ia_i$  is even.
\end{enumerate}
\end{lemma}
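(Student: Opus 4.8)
The plan is to exploit the structure of the matrices $C_{\A}$ as described in Lemma \ref{allowa}, reduced modulo $2$. First I would observe that the relevant combinatorial datum is the following: by Corollary \ref{poli} the characteristic polynomial $\chi_\A(t)$ has coefficients in $\Z[\xi_1,\dots,\xi_m]$, so it makes sense to reduce it modulo $2$. The key point, which I would establish first, is that $\chi_\A(t) \bmod 2$ is independent of the off-diagonal entries: indeed each off-diagonal entry of $C_\A$ is of the form $2\tau\sqrt{\xi_i\xi_j}$ (Formula \eqref{ilc}), which is $\equiv 0 \pmod 2$. Hence modulo $2$ the matrix $t I - C_\A$ becomes the diagonal matrix with entries $t + \sigma(a)\,a(\xi)$, where $a(\xi) = \sum_i m_i \xi_i$ if the vertex corresponds to $a = \sum_i m_i e_i$. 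Therefore
\[
\chi_\A(t) \equiv \prod_{(a,\sigma)} \bigl( t + \sigma(a)\, a(\xi) \bigr) \pmod 2 \equiv \prod_{(a,\sigma)} \bigl( t + a(\xi) \bigr) \pmod 2,
\]
since $\sigma(a) = \pm 1$ and $-1 \equiv 1 \pmod 2$.

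For item (1): evaluating at an odd integer $g$, each factor $t + a(\xi)$ becomes $g + a(\xi)$, which has an odd constant term and hence is $\not\equiv 0 \pmod 2$ as a polynomial in $\xi$; more precisely, its reduction mod $2$ is the polynomial $1 + \bar a(\xi)$ where $\bar a$ denotes reduction mod $2$, which is not the zero polynomial (its constant term is $1$). Since $\F_2[\xi_1,\dots,\xi_m]$ is an integral domain, the product over all vertices is also nonzero mod $2$, so $\chi_\A(g)$ is a nonzero polynomial in the $\xi_i$; in particular $\chi_\A(g) \neq 0$ as an element of $\C[\xi_1,\dots,\xi_m]$.

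For item (2): suppose a linear form $\ell(t,\xi) = t + \sum_i a_i \xi_i$ with $a_i \in \Z$ divides $\chi_\A(t)$ in $\C[\xi_1,\dots,\xi_m][t]$. Since $\chi_\A$ is monic in $t$ and $\ell$ is monic in $t$, by a Gauss-lemma argument the quotient lies in $\Z[\xi_1,\dots,\xi_m][t]$, so the divisibility descends to $\F_2[\xi_1,\dots,\xi_m][t]$, i.e. $\bar\ell$ divides $\prod_{(a,\sigma)} (t + a(\xi))$ mod $2$. Because $\F_2[\xi_1,\dots,\xi_m][t]$ is a UFD and the $t + a(\xi)$ are (the reductions of) linear-in-$t$ monic polynomials, $\bar\ell$ must coincide mod $2$ with one of the factors $t + a(\xi)$ (after possibly grouping — but as $\bar\ell$ is degree one in $t$ it matches a single factor). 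Hence $\sum_i a_i \equiv \sum_i m_i \pmod 2$ where $a = \sum_i m_i e_i$ is the corresponding vertex. It then remains to recall that $\sum_i m_i = \eta(L(a))$ equals $0$ or $2$ by Formula \eqref{Heta} (since $L(a) = \sigma(a)\nu - \mu$-type combinations land in $\tilde\Lambda$, cf. the discussion around Remark \ref{zerdu}), hence is even, and so $\sum_i a_i$ is even, as claimed. I expect the only delicate point to be pinning down the Gauss-lemma / integrality step so that the mod-$2$ reduction is legitimate, together with correctly identifying that the vertex exponents $m_i$ sum to an even number via \eqref{Heta}; everything else is routine once the mod-$2$ diagonalization of $C_\A$ is in hand.
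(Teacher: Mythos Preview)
Your proposal is correct and follows the same core idea as the paper: reduce modulo $2$ so that the off-diagonal entries $2\tau\sqrt{\xi_i\xi_j}$ vanish, leaving only the diagonal product $\prod(t+a(\xi))$, and then use that $\eta(a)\in\{0,-2\}$ is even for vertices of a block rooted at $(0,+)$. The paper streamlines both items by also specializing $\xi_i=1$, so that $\chi_A(t)\equiv t^{|A|}\pmod 2$; for item~(ii) it then simply substitutes $t=-\sum_i a_i\xi_i$, sets $\xi_i=1$, and invokes item~(i) to conclude that $-\sum_i a_i$ cannot be odd. This avoids your Gauss-lemma/UFD matching step entirely, though your route is equally valid and the monic division-with-remainder makes the descent to $\Z[\xi][t]$ immediate.
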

\begin{proof}
i)   We compute modulo 2 and  set all $\xi_i=1$, recalling that $L_a(\xi )|_{\xi_i=1}=0,2$. We get  $\chi_A(t) =t^m$.

ii) A linear form $t+\sum_ia_i\xi_i,\ a_i\in\Z$ divides $\chi_A(t)$ if and only if  we have $\chi_A(-\sum_ia_i\xi_i)=0$, then set $\xi_i=1$ and use the first part.
\end{proof} 

We shall use the parity test  as follows.\begin{theorem}\label{supertest}
  Suppose we have a graph $A$, in which we find a vertex  $a $ and an index, say 1,  so that  
 $$ \xymatrix{&&c&\\ \ldots &d&a\ar@{.}^{1,h}[r]  \ar@{.}^{1,h}[r] \ar@{.}^{1,k}[l]\ar@{.}^{1,j}[d]\ar@{.}^{1,i}[u]&b \ldots   &\ldots \\&&e&\  }$$
  we have:
\begin{itemize}
\item $1 $ appears in all and only the edges  having $a$ as vertex.
\item When we remove $a$ (and the edges meeting $a$) we have    a connected graph ${\A}$ with at least 2 vertices.
\item When we remove the edges associated to any index, the factors described in Theorem \ref{lafatt}  are irreducible.
\end{itemize}
  Then the polynomial associated to $A$ is irreducible.
\end{theorem}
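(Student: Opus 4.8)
The plan is to argue by contradiction. Suppose $\chi_A(t)$ factors non-trivially over $\C[\xi_1,\dots,\xi_m]$, say $\chi_A=PQ$ with $P,Q$ monic in $t$ (no loss, since leading coefficients lie in $\C^{*}$) and $\deg_t P,\deg_t Q\ge 1$. The first move is to specialise $\xi_1=0$ and invoke Theorem \ref{lafatt}. By hypothesis the edges carrying the index $1$ are exactly the edges meeting $a$, so deleting them leaves the disjoint union of the isolated vertex $a$ and the connected graph $\A$ (with at least two vertices). Hence
\[
\chi_A(t)|_{\xi_1=0}=\bigl(t-\ell_a(\xi)|_{\xi_1=0}\bigr)\,\chi_{\A}(t)|_{\xi_1=0},
\]
where $\ell_a$ is the diagonal entry of the relevant matrix at $a$. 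By the hypothesis (applied to the index $1$) both factors on the right are irreducible over $\C[\xi_2,\dots,\xi_m]$; being monic in $t$ they are primitive, so Gauss's lemma and unique factorisation force $\{P|_{\xi_1=0},Q|_{\xi_1=0}\}=\{\,t-\ell_a|_{\xi_1=0},\ \chi_{\A}|_{\xi_1=0}\,\}$. Since $\chi_{\A}$ has $t$-degree $\ge 2$ and reduction mod $\xi_1$ preserves the $t$-degree of a monic polynomial, one of $P,Q$ — say $P$ — satisfies $\deg_t P=1$; write $P=t-c(\xi)$ with $c(\xi)\in\C[\xi]$ and $c(\xi)|_{\xi_1=0}=\ell_a(\xi)|_{\xi_1=0}$.

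Next I would pin $c$ down. Giving $t$ and each $\xi_i$ weight one, every entry of $tI-C_A$ is homogeneous (the diagonal entries are linear forms and the off-diagonal entries, appearing in matched pairs around the edges, are $\pm 2\sqrt{\xi_i\xi_j}$), so $\chi_A(t,\xi)$ is homogeneous of degree $d=\#V(A)$; hence the monic linear factor $t-c$ forces $c$ to be a linear form. Since $\chi_A\in\Z[\xi][t]$ is monic (Corollary \ref{poli}) and $\Z[\xi]$ is integrally closed, Gauss's lemma gives $c(\xi)=\sum_i a_i\xi_i$ with $a_i\in\Z$. Now the parity test, Lemma \ref{parita}, constrains $\sum_i a_i$ modulo $2$; comparing with the parity of the coefficient sum of $\ell_a$, which is fixed by $\eta(L(a))\in\{0,2\}$ (see \eqref{Heta}) together with the choice of lift, and using $c|_{\xi_1=0}=\ell_a|_{\xi_1=0}$, one obtains $c=\ell_a+\gamma\xi_1$ with $\gamma\in\Z$ \emph{even}.

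The last step — the hard one — is to exclude $c=\ell_a+\gamma\xi_1$. Choose the root of $A$ inside $\A$; then the submatrix of $C_A$ on the vertices of $\A$ is exactly $C_{\A}$ and involves no $\xi_1$, while every entry of $C_A$ in the row and column of $a$ carries a factor $\sqrt{\xi_1}$ (all edges at $a$ carry the index $1$). Laplace expansion of $\det(tI-C_A)$ along that row and column gives an identity
\[
\chi_A(t,\xi)=(t-\ell_a)\,\chi_{\A}(t,\xi')-\xi_1\,R(t,\xi'),\qquad \xi'=(\xi_2,\dots,\xi_m),
\]
where $R$ is a polynomial not involving $\xi_1$, and a comparison of leading $t$-coefficients shows $\deg_t R\le\deg_t\chi_{\A}-1$. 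Substituting $t=c=\ell_a+\gamma\xi_1$ and using $\chi_A(c,\xi)\equiv 0$ yields $\gamma\,\chi_{\A}(\ell_a+\gamma\xi_1,\xi')=R(\ell_a+\gamma\xi_1,\xi')$; comparing degrees in $\xi_1$ (the left side has degree $\deg_t\chi_{\A}$ unless $\gamma=0$ or $\gamma$ kills the $\xi_1$-coefficient of $c$, while the right side has degree $\le\deg_t\chi_{\A}-1$) leaves only the two values $\gamma=0$ and $\gamma=-\beta$, where $\beta$ is the $\xi_1$-coefficient of $\ell_a$. Excluding these is where the explicit combinatorics enters: $\gamma=0$ means $\ell_a$ is an eigenvalue, i.e.\ $R(\ell_a,\xi')\not\equiv 0$, and $\gamma=-\beta$ means $\tfrac{\partial\chi_A}{\partial\xi_1}(\ell_a|_{\xi_1=0},\xi')\neq 0$; both reduce — via $\chi_{\A}|_{\xi_1=0}$ being irreducible of degree $\ge 2$ (so $\ell_a|_{\xi_1=0}$ is not a root of it) and via the hypothesis applied to the index $h$ of an edge at $a$ (so the smaller factors obtained by deleting $h$ are themselves irreducible, cf.\ Theorem \ref{lafatt}) — to comparing two explicit monomials that force incompatible values of the remaining parameter. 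This contradiction shows $\chi_A$ is irreducible. I expect steps one through three to be routine; the genuine obstacle is the coefficient comparison excluding $\gamma=0$ and $\gamma=-\beta$ in full generality, which is exactly the point forcing the case analysis used in low dimension.
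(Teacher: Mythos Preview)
Your first three moves are correct and match the paper: specialising $\xi_1=0$ forces the putative factorisation to be linear--times--irreducible, and the linear factor $t-c$ has $c$ an integral linear form congruent to $\ell_a$ modulo $\xi_1$. Your parity argument giving $\gamma$ even is also fine. The gap is the last step. You try to finish with a Laplace expansion $\chi_A=(t-\ell_a)\chi_{\A}-\xi_1 R$ and a degree count in $\xi_1$, which correctly isolates $\gamma\in\{0,-\beta\}$; parity then kills $\gamma=-\beta$. But you leave $\gamma=0$ open, claiming it forces ``the case analysis used in low dimension''. It does not, and this is precisely where you fail to use the full hypothesis: you have only invoked the third bullet (irreducibility of the factors after deleting \emph{any} index) for the index $1$.

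The paper's argument is shorter and avoids the Laplace expansion entirely. Take $a$ as root, so $\ell_a=0$ and $c=\gamma\xi_1$. Now pick a \emph{second} index $i\neq 1$ so that removing the $i$--edges does not isolate $a$ (if $a$ is an end with unique edge marked $(1,h)$ take $i\neq 1,h$; otherwise $i\neq 1$ different from at least one second index at $a$). By the third hypothesis $\chi_A|_{\xi_i=0}=\prod_j\chi_{A_j}$ with every $\chi_{A_j}$ irreducible, and the component containing $a$ has at least two vertices. Hence $t-c|_{\xi_i=0}=t-\gamma\xi_1$ must equal $\chi_{\{d\}}=t-\sigma(d)L(d)|_{\xi_i=0}$ for some isolated vertex $d\neq a$. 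But every path from the root $a$ to any $d\in\A$ crosses exactly one edge carrying the index $1$, so the $\xi_1$--coefficient of $L(d)$ is $\pm 1$; thus $\gamma=\pm 1$, and the parity test (Lemma~\ref{parita}) finishes. Note that this single move handles all values of $\gamma$ at once --- the Laplace detour and the separate exclusion of $\gamma=0$ are unnecessary. In fact your own route cannot close the $\gamma=0$ case without a second specialisation either: to show $R(\ell_a,\xi')\not\equiv 0$ you need $R\not\equiv 0$, i.e.\ $\chi_A\neq (t-\ell_a)\chi_{\A}$, and the clean way to see this is again to set some $\xi_i=0$ with $i\neq 1$ and observe that the (irreducible, by hypothesis) factor coming from $a$'s component would then be reducible.
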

\begin{proof} 
We take $a$ as root, setting $\xi_1=0$ we have by Theorem \ref{lafatt} and the hypotheses, that  $\chi_A(t)=t\,  P(t)$  with $P=\chi_{\A}(t)$ irreducible of degree $>1$.   Thus, if the polynomial  $\chi_A(t)$ factors, then it must factor into a linear $t-\ell(\xi)$ times an  irreducible polynomial of degree $>1$.

Moreover modulo $\xi_1=0$ we have that $0$ and $\ell$ coincide, thus $\ell$ is a multiple of $\xi_1$.

Take another index $i\neq 1,h$ if $a$ is an end and the only edge from $a$ is marked $(1,h)$ otherwise just different from $1$ and   set $\xi_i=0$. Now    the polynomial $\chi_A(t)$ specializes to the product $\prod_j\chi_{A_j}(t)$ where the $A_j$ are the connected component of the graph obtained from $A$ by removing all edges in which $i$ appears as marking. By hypothesis $\{a\}$ is not one of the $A_j$.

If no factor is linear we are done. Otherwise there is an isolated vertex $d\neq a$   so that $\{d\}$ is one of the connected components $A_j$. The linear factor associated is $t-\sigma(d)L_d(\xi)|_{\xi_i=0}$.  Clearly    we have that the coefficient  of  $\xi_1 $ in  $L_d(\xi)$ is $\pm 1$ (since the marking 1 appears only once).    This implies that $\ell=t\pm\xi_1$  and this is not possible by the parity test.\end{proof}

This Theorem can be used as the basis of a possible, induction. Let us analyze this.

Take a graph  $A$ of dimension and rank $n$. Thus it has  $n+1$ vertices. Assume that, by some inductive procedure, we know that, if  we remove the edges associated to any index, the factors described in Theorem \ref{lafatt}  are irreducible.  If  there is an index $i$ such that, when we remove the edges containing $i$ the graph remains connected, we are done.  Similarly if we can find  an index satisfying the conditions of Theorem \ref{supertest}.

 \section{Dimension 3\label{irrid}}
\subsection{Bases and encoding graphs\label{imarki}}
We first classify the   graphs by rank and up to the symmetry induced by permuting the variables $e_i$ (that is the action of the symmetric group $S_m$).

Thus in order to classify  these graphs, the first step is to understand the possible lists $\nu_i$ which produce a connected complete graph.

A choice of a maximal tree in each such graph determines, through its edges,   a linearly independent list  extracted from the vectors  $E:=\{e_i\pm e_j\}$.  Thus we may start by first classify up to symmetry such lists of rank $k$. In dimension $n$  will appear lists of rank $k\leq n$.

Since we are trying to classify the graphs up to equivalence we have some freedom in choosing the root. In a tree with $n$ vertices  we can always choose a vertex $r$ as root so that every other vertex is at distance at most $[n/2]$, thus each  possible $\nu_h$ is  obtained by adding up at most $[n/2]$ elements in the list $e_i\pm e_j$.

This gives a finite algorithm which is still computationally very heavy even in dimension 3.

Let us start by explaining how to classify the lists of independent edges  for dimension 3.

Observe first that $E:=\{e_i\pm e_j\}$  decomposes under the symmetric group into 2 orbits, the {\em black} and {\em red} edges.

In a list it is first convenient to count the two numbers $e,f$ of black and red edges, and we may have for a list with 3 elements 4 possibilities $(3,0),(2,1),(1,2),(0,3)$.
It is convenient to display the list by its {\em encoding graph}.  This is the subgraph of the full graph with vertices the numbers $1,\ldots,m$ formed by the edges $(i,j)$ which appear as markings in the graph $A$.  We can also color and orient these edges.

When we take 3 linearly independent markings,  their encoding graph   up to permutation of the variables can have the following different combinatorial structure
$$Type\  0: \xymatrix{2&4& 6&\\1\ar@{-}[u] &3\ar@{-}[u]  &5 \ar@{-}[u] },\quad (1,2),(3,4),(5,6)$$\smallskip
    
$$Type\  1: \xymatrix{&1& 5& \\3\ar@{-}[r] &2\ar@{-}[u]  &4 \ar@{-}[u] },\quad (1,2),(2,3),(4,5)$$\smallskip

    $$ Type\  2: \xymatrix{&1& &\\3\ar@{-}[r] &2\ar@{-}[u]  \ar@{-}[r] &4 },\quad (1,2),(2,3),(1,4)$$\smallskip
    
    $$ Type\ 3: \xymatrix{ 1\ar@{-}[r] &2\ar@{-}[r] &3 \ar@{-}[r] &4 },\quad (1,2),(2,3),(3,4)$$\bigskip
    In all these cases we may have any choice of red edges.
    
    We now discuss two special cases.
   
\qquad\quad\  $ Type\ A: $\label{tipoA} \quad    We may have at the same  time an edge, which we may assume to be  $(1,2)$ black and   red, plus another edge, in general disjoint $(3,4)$ but after specialization it can be assumed to be  $(2,3)$  black or red.
\begin{figure}[!ht]
\centering
\begin{minipage}[b]{11cm}
\centering
{
\psfrag{1}{$1$}
\psfrag{2}{$2$}
\psfrag{3}{$3$}
\psfrag{4}{$4$}
\psfrag{I}{I}
\psfrag{II}{II}
\includegraphics[width=11cm]{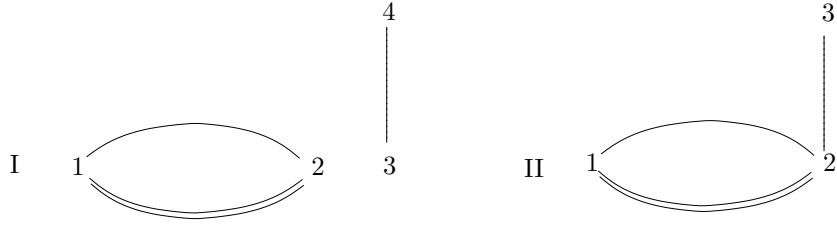}
}
\caption{\footnotesize{ The dotted line may be either red or black.}}\label{fig2}
\end{minipage}
\end{figure}

Finally we may have an {\em odd circuit} (notice that an even circuit gives linearly dependent markings)
 \begin{equation}
\label{typeB}Type\ B: \xymatrix{ &3& &\\I\quad 1\ar@{=}[rr] \ar@{-}[ur]<-5pt>&  & 2\ar@{-}[ul] } \  \xymatrix{ &3&  \ (1,2),(2,3),(3,1)&\\II\quad 1\ar@{=}[rr] \ar@{=}[ur]<-5pt>&  & 2\ar@{=}[ul] }
\end{equation} \smallskip
    
 Type BII  cannot occur since, if we have tow adjacent red edges with a common index in the label, this configuration has to be completed with a black edge.

    In case BI the encoding graph is  completely described  by  the choice of orientation for the black edges  (and again we have some symmetries to consider).

It is clear that  {\em specializing some variables}  one can pass from   more general types to others.  When we pass to analyzing the possible allowable graphs, the 3 markings come from a maximal tree and the actual graph may need to be completed with further edges.

    \bigskip

\subsection{Irreducibility tests}\quad 

{\sl  There are standard algorithms that check if a polynomial with integer coefficients is irreducible. In our analysis we try to avoid them as much as possible in order to give a possible general approach, nevertheless for a few cases we could not find a theoretical explanation so we just verified the irreducibility with these algorithms.}
\bigskip

If $A$ has $s$ vertices the characteristic polynomial is  of degree $s$. For the markings, we   restrict to the case of rank $s-1$   (by Theorem \ref{ridma}). Let us start by considering:\medskip
 
  $s=2$. \quad In this case $A$  is a single edge that we may assume marked by $(1,2)$  or equivalently $(2,1)$.  The corresponding matrix is, in the black and red case:
  
  \begin{equation}\label{22}\begin{vmatrix}
t &-2\sqrt{\xi_1\xi_2}\\&&&\\-2\sqrt{\xi_1\xi_2}&t+\xi_1- \xi_2  
\end{vmatrix},\quad \begin{vmatrix}
t &  2\sqrt{\xi_1\xi_2}\\&&&\\ -2\sqrt{\xi_1\xi_2}&t+\xi_1+ \xi_2  
\end{vmatrix} $$ with determinants
$$ t  ^2+t(\xi_1-  \xi_2 ) -4\xi_1\xi_2,\ t  ^2+t(\xi_1+ \xi_2 ) +4\xi_1\xi_2\end{equation}
 
it is easily seen that these polynomials in $t,\xi_1,\xi_2$ are   irreducible.
  $s=3$. \quad 
   \begin{lemma}\label{adia}
Unless we are in the special component we cannot have in the graph adjacent $ \xymatrix{ a\ar@{-}^{1,2}[r]& b \ar@{=}[r] ^{1,2} &c }$ or $ \xymatrix{a\ar@{=}[r] ^{1,2}&b\ar@{-}[r]^{2,1} &c }$.
\end{lemma}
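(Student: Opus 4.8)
\emph{Proof plan.} The plan is to descend to a geometric realization and eliminate, using the simple--step identities \eqref{Ilsom}. Suppose, towards a contradiction, that the first configuration --- a black edge $a\to b$ marked $(1,2)$ followed by a red edge $b\to c$ marked $(1,2)$ --- occurs inside a connected component $A$ of $\Gamma_S$, and write $a,b,c\in\mathbb Z^n$ for the vectors at these vertices. First I would record what \eqref{Ilsom} says for the two edges: the black edge gives $b-a=v_2-v_1$ (interchanging the names $v_1,v_2$ if the orientation demands it) together with $|b|^2-|a|^2=|v_2|^2-|v_1|^2$, which, after substituting the linear relation, is equivalent to $(a-v_1,\,v_2-v_1)=0$; the red edge gives $b+c=v_1+v_2$ together with $|b|^2+|c|^2=|v_1|^2+|v_2|^2$, which is equivalent to $(b-v_1,\,b-v_2)=0$, i.e.\ $b\in S_{1,2}$ in the notation of Definition~\ref{lasfe}.

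Next I would substitute $b=a+v_2-v_1$ into the red (sphere) equation. Since $b-v_2=a-v_1$ and $b-v_1=(a-v_1)+(v_2-v_1)$, one computes $0=(b-v_1,b-v_2)=|a-v_1|^2+(v_2-v_1,\,a-v_1)$, and the black--edge relation kills the second summand, leaving $|a-v_1|^2=0$. As $a-v_1\in\mathbb Z^n$ and the scalar product is the standard positive--definite Euclidean one, this forces $a=v_1$, whence also $b=v_2$ and $c=v_1$; thus all three vertices lie in $S$. By Lemma~\ref{spco} the component of $\Gamma_S$ through $v_1$ is the special component, which is exactly the case excluded in the statement, giving the contradiction.

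For the second configuration (a red edge $a\to b$ marked $(1,2)$ followed by a black edge $b\to c$ marked $(2,1)$) I would run the symmetric argument: the red edge gives $(b-v_1,b-v_2)=0$ and the black edge gives $(b-v_2,\,v_1-v_2)=0$; writing $b-v_1=(b-v_2)+(v_2-v_1)$ in the first relation and using the second yields $|b-v_2|^2=0$, so $b=v_2\in S$ and again $A$ is forced to be the special component. (Alternatively one may observe that this path is the orientation--reversal of the first, using that the opposite of a black edge marked $(i,j)$ is black marked $(j,i)$ while the opposite of a red edge keeps its marking.) I do not expect a genuine obstacle here; the only point that needs care is the bookkeeping of which index is added and which subtracted in \eqref{Ilsom} --- but every branch of that bookkeeping terminates with one of $a,b,c$ equal to $v_1$ or $v_2$ --- and the single hypothesis that is really used is that we work inside the geometric graph $\Gamma_S$, where the form is positive definite on the integer lattice, so that $|w|^2=0$ implies $w=0$.
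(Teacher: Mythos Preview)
Your proof is correct and is essentially the same as the paper's: the paper's one--line argument ``$(H_{1,2}\cup H_{2,1})\cap S_{1,2}=\{v_1,v_2\}$'' encodes precisely the substitution you carry out, since the black edge places the middle vertex $b$ on one of the parallel hyperplanes $H_{1,2},H_{2,1}$ while the red edge places it on the sphere $S_{1,2}$, and your elimination is exactly the computation that this intersection consists only of $v_1,v_2$. Your explicit unpacking (and the remark that positive definiteness over $\mathbb Z^n$ is what turns $|a-v_1|^2=0$ into $a=v_1$) is a faithful expansion of that same idea.
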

\begin{proof}
We use the fact that $(H_{1,2}\cup H_{2,1})\cap S_{1,2}=\{v_1,v_2\}$.
\end{proof}

 $s=3$. \quad  For a graph $A$ with 3 vertices $\chi_A(t)$ has degree 3,  we only have two possibilities, up to coloring the edges.

$$   \xymatrix{& & &\\ a\ar@{-}[r]  &b\ar@{-}[r] &c }  \xymatrix{&c& &\\ a\ar@{-}[ru] \ar@{-}[rr] &&b\ar@{-}[lu]  &  }. $$ 

   We may have,  once we specialize  the variables,  only the case   $(1,2),(2,3)$  on the tree
with various colors and orientations. Some of these choices do not give a complete graph. We complete it (this determines uniquely the oriented edges to be added)      we have the circuit  which must be even by the rank assumption and up to symmetry \begin{equation}
\label{icircui}\xymatrix{&c& &\\ a\ar@{<-}[ru] ^{3,1}\ar@{->}[rr]_{1,2} &&b\ar@{->}[lu]_{2,3}  &  }, \xymatrix{&c& &\\ a\ar@{<-}[ru] ^{3,1}\ar@{=}[rr]_{1,2} &&b\ar@{=}[lu]_{2,3}  &  }.
\end{equation} 

All these cases fall under the requirements of Theorem \ref{supertest} and thus give irreducible polynomials. 
 \bigskip

 $s=4$  The possible colored trees with 4 vertices are
\bigskip

\begin{itemize}\item {\bf Star types}
$$\boxed{a}: \xymatrix{  &c &  &\boxed{b}: &&c&   &&  & \\a\ar@{-}[r]&b\ar@{-}[u]  \ar@{-}[r] &d &&a\ar@{-}[r]&b\ar@{=}[u]  \ar@{-}[r] &d &  }$$
$$   \xymatrix{\boxed{c}: &c& &\\a\ar@{-}[r]&b\ar@{=}[u]\ar@{=}[r] &d }\quad    \xymatrix{\boxed{d}: &c& &\\a\ar@{=}[r]&b\ar@{=}[u]  \ar@{=}[r] &d }$$
\item {\bf Linear types}
$$ \xymatrix{\boxed{e}:  a\ar@{-}[r]&b\ar@{-}[r]&c  \ar@{-}[r] &d } \xymatrix{ \boxed{f}: a\ar@{-}[r]&b\ar@{-}[r]& c \ar@{=}[r] &d }$$$$   \xymatrix{\boxed{g}: a\ar@{-}[r]&b\ar@{=}[r]& c \ar@{=}[r] &d }\xymatrix{\boxed{h}: a\ar@{=}[r]&b\ar@{-}[r]& c \ar@{=}[r] &d }$$$$
  \xymatrix{\boxed{i}:  a\ar@{-}[r]&b\ar@{=}[r]& c \ar@{-}[r] &d }\quad    \xymatrix{\boxed{j}:  a\ar@{=}[r]&b\ar@{=}[r]& c \ar@{=}[r] &d }$$
\end{itemize}
For each of these trees we must analyze the markings which can be classified according to their encoding graphs. We easily see that, for the encoding graphs of types  0,1,2,3  there is always an index of a marking which appears only once and in an end edge. Thus all these cases fall under Theorem \ref{supertest}.

Type A cannot occur in the star type and in cases e, h,i,j by Lemma \ref{adia}.

The   analysis is still quite long. We can simplify it a bit  if we weaken our requirements and try to prove only that the polynomials are irreducible over $\Z$. 
This is in any case sufficient for the applications we have in mind. We can set  all variables $\xi_i=\frac{y}{\sqrt{2}}$.  In  fact when we do this for types  $a$ through  $d$ we always have a linear by an irreducible cubic polynomial. This is not a useful  information but for the other cases we have some reductions.
 For the graphs of type $ e $ 
       the matrix becomes
 $$\begin{vmatrix}
t&-y&0&0\\-y&t&-y&0\\0&-y&t&-y\\0&0&-y&t
\end{vmatrix}  $$   with determinant
$$ t^4 - 3\,t^2\,y^2 + y^4=  \left( t^2 - t\,y - y^2 \right) \,
   \left( t^2 + t\,y - y^2 \right)  $$
     For colored case   we set $\xi_i= y$ and we have as possible matrices 
       $$\boxed{f}\qquad \xymatrix{ a\ar@{-}[r]&b\ar@{-}[r]& c \ar@{=}[r] &d^{-} },\quad \begin{vmatrix}
t & -2y & 0 & 0 \\ -2y & t & -2y & 0 \\ 0 & -2y & t &  2y \\ 0 & 0 & -2y & t  +2y
\end{vmatrix}$$
with determinant   
$$ 
t^4 + 2\,t^3\,y - 4\,t^2\,y^2 - 16\,t\,y^3 - 16\,y^4
=
\left( t + 2\,y \right) \,\left( t^3 - 4\,t\,y^2 - 8\,y^3 \right) $$
$$ \boxed{g}\qquad \xymatrix{ a\ar@{-}[r]&b\ar@{=}[r]& c^{-} \ar@{=}[r] &d^{+} },\quad \begin{vmatrix}
t & -2y & 0 & 0 \\- 2y & t & 2y & 0 \\ 0 & -2y & t +2y& -2y \\ 0 & 0 & 2y & t  
\end{vmatrix}$$ with determinant the irreducible polynomial  
$$  t ^4 + 2 t ^3 y + 4  t ^2  y ^2 - 8  t y ^3 - 16  y ^4  $$
  $$\boxed{h}\qquad\xymatrix{  a\ar@{=}[r]&b^-\ar@{-}[r]& c^- \ar@{=}[r] &d^+ } ,\quad \begin{vmatrix}
t & 2y & 0 & 0 \\ -2y & t+2y &  2y & 0 \\ 0 &   2y & t  +2y& -2y \\ 0 & 0 &   2y & t 
\end{vmatrix}$$  with determinant the  irreducible polynomial 
$$t^4 + 2\, t^3\, y + 4\, t^2\, y^2 + 8\, t\, y^3 + 16\, y^4$$ 
$$\boxed{i}\qquad  \xymatrix{ a\ar@{-}[r]&b^{+}\ar@{=}[r]& c^{-} \ar@{-}[r] &d^{-} },\quad \begin{vmatrix}
t & -2y & 0 & 0 \\  -2y & t & 2y & 0 \\ 0 & -2y & t +2y& 2y \\ 0 & 0 &     2y & t +2y 
\end{vmatrix}$$ with determinant the  polynomial 
$$  
t^4 + 4\,t^3\,y - 8\,t\,y^3 =
t\,\left( t + 2\,y \right) \,\left( t^2 + 2\,t\,y - 4\,y^2 \right) $$
$$\boxed{j}\qquad  \xymatrix{ a\ar@{=}[r]&b^-\ar@{=}[r]& c^+ \ar@{=}[r] &d^- },\quad \begin{vmatrix}
t & 2y & 0 & 0 \\ -2y & t+2y & -2y & 0 \\ 0 &  2y & t & 2y \\ 0 & 0 &   -2y & t  +2y
\end{vmatrix}$$ with determinant the irreducible polynomial 
$$ t^4 + 4\, t^3\, y + 16\, t^2\, y^2 + 24\, t\, y^3 + 16\, y^4$$

   this shows that,  \begin{lemma}\label{cas1}\begin{itemize}\item If for a given marking of the tree of type $e$ the polynomial factors  it must be into two irreducible quadratic polynomials.
   \item  If for a given marking of the tree of type $f$ the polynomial factors  it must be into a  linear and an irreducible cubic polynomials.
\item Types $g,j,h$ are always irreducible.
\item For type   $ i$ we have no useful information.

\end{itemize}

\end{lemma}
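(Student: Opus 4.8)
The plan is to read off the global factorization shape of $\chi_{\A}(t)$ from the factorizations of the specialized polynomials that were just computed, using the elementary fact that a specialization never \emph{splits} an irreducible factor of a polynomial that is monic in $t$ — it can only \emph{refine} the list of irreducible factors. Concretely, by Corollary \ref{poli} we have $\chi_{\A}(t)\in\Z[\xi_1,\dots,\xi_m][t]$, monic in $t$; let $\phi$ be the substitution $\xi_i\mapsto$ (the fixed monomial in $y$) used in \S\ref{irrid}, extended to a ring homomorphism $\Z[\xi_1,\dots,\xi_m][t]\to\Q[y][t]$ (or into a finite extension of $\Q[y]$ — in any case a UFD). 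Since every monic factor of $\chi_{\A}$ keeps its $t$-degree under $\phi$, if $\chi_{\A}=\prod_\ell Q_\ell$ is the factorization into irreducibles (each $Q_\ell$ taken monic in $t$, by Gauss's lemma), then $\phi(\chi_{\A})=\prod_\ell\phi(Q_\ell)$ in $\Q[y][t]$ with $\deg_t\phi(Q_\ell)=\deg_t Q_\ell$; by unique factorization the multiset of $t$-degrees of the \emph{irreducible} factors of $\phi(\chi_{\A})$ is a partition refining $\{\deg_t Q_\ell\}_\ell$. Hence $\{\deg_t Q_\ell\}_\ell$ must be one of the coarsenings (obtained by merging blocks) of the degree multiset of $\phi(\chi_{\A})$.

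The first step is then to record, from the explicit determinants computed above, the degree multiset of the irreducible factorization of $\phi(\chi_{\A})$: it is $\{2,2\}$ for type $e$, $\{1,3\}$ for type $f$, $\{4\}$ for types $g,h,j$, and $\{1,1,2\}$ for type $i$ — once we check that the displayed quadratic, cubic and quartic pieces are genuinely irreducible over $\Q[y]$. For the quadratics and for the cubic $t^3-4ty^2-8y^3$ this is a one-line computation on the dehomogenization $y=1$ (discriminant not a square, resp. no rational root of $t^3-4t-8$); for the three quartics attached to $g,h,j$ one needs the rational-root test together with the absence of a factorization into two quadratics over $\Q$. Granting this, the coarsenings are: $\{4\}$ only, for $g,h,j$ (so $\chi_{\A}$ is irreducible); $\{4\}$ or $\{2,2\}$, for $e$; $\{4\}$ or $\{1,3\}$, for $f$; and $\{4\},\{1,3\},\{2,2\},\{1,1,2\}$ all possible for $i$ — which is exactly why no information is extracted in that case. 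Moreover, in the surviving reducible cases the irreducibility of the individual factors comes for free: if $\chi_{\A}=Q_1Q_2$ with $\deg_t Q_i=2$ in type $e$, then each $\phi(Q_i)$ is a degree-$2$ divisor of $\phi(\chi_{\A})$ assembled from irreducible factors of degree summing to $2$, hence equals one of the two irreducible quadratics, so $Q_i$ is irreducible; likewise in type $f$ the degree-$3$ factor of $\chi_{\A}$ specializes onto the irreducible cubic and is therefore irreducible, while the linear factor specializes onto $t+2y$.

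There is essentially no analytic content here: once the specialized factorizations of \S\ref{irrid} are in hand, the lemma is pure bookkeeping of coarsenings. The only genuine obstacle is the verification that the three specialized quartics for types $g,h,j$ are irreducible over $\Q$ — there is no conceptual shortcut for this, and it is precisely one of the spots where, as noted in \S\ref{irrid}, one resorts to a direct symbolic irreducibility check. I would therefore carry out (or cite) those three quartic verifications explicitly, remark that for type $i$ the conclusion is genuinely vacuous, and assemble the four bullet points of the lemma from the coarsening analysis above.
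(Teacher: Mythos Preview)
Your proposal is correct and is essentially the paper's own argument, spelled out in full: the paper simply records the specialized factorizations and writes ``this shows that'', relying implicitly on exactly the coarsening principle you state (a monic-in-$t$ factorization can only refine under specialization). Your added remarks—that the irreducibility of the individual factors in types $e$ and $f$ follows automatically from the specialization, and that the quartic irreducibility checks for $g,h,j$ are the only substantive verifications—are accurate and match what the paper leaves to a direct computation.
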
 
   Recall the classification of \S \ref{imarki}.
      So we are left with  Linear types of  type  A in case f,g.   Type BI     for trees of linear and  star type.  
    \bigskip

{\bf Linear types}           
    Type  A.  In the linear graphs $3$ must be in the middle so if the polynomial factors it must be into 2 irreducible quadratics.   We have already seen that we only need to treat cases f,g  and for these the previous factorization  is incompatible with the restrictions given by Lemma \ref{cas1}.
     
     We still need to treat Type  B I which can appear only in the two cases  $f,i$. Some orientations give rise to a graph in the special component so we may ignore them.  We are left with (up to exchanging $1,2$)  $$\boxed{f1}\qquad \xymatrix{ a\ar@{-}[r]^{3,1}&b^+\ar@{-}[r]^{2,3}& c^+ \ar@{=}[r]^{1,2} &d^- }, 
\boxed{f2}\xymatrix{ a\ar@{-}[r]^{3,1}&b^+\ar@{-}[r]^{3,2}& c^+ \ar@{=}[r]^{1,2} &d^- }, 
$$$$\boxed{f3}\qquad \xymatrix{ a\ar@{-}[r]^{1,3}&b^+\ar@{-}[r]^{2,3}& c^+ \ar@{=}[r]^{1,2} &d^- }, 
\boxed{f4}\xymatrix{ a\ar@{-}[r]^{1,3}&b^+\ar@{-}[r]^{3,2}& c^+ \ar@{=}[r]^{1,2} &d ^-}, 
$$\smallskip

Now,  $f1,f4$ do not arise  in the applications, since they are not complete (although they also give rise to irreducible polynomials). We are left with $f2,f4$:
$$f2= \begin{vmatrix}
t & -2\sqrt{\xi_1\xi_3}& 0&0\\&&&\\ -2\sqrt{\xi_1\xi_3}&t-\xi_1+ \xi_3  
 & - 2\sqrt{\xi_2\xi_3}&0\\&&&\\  0& -2\sqrt{\xi_2\xi_3}   &t     -\xi_1-\xi_2+2\xi_3&2\sqrt{\xi_2\xi_1}\\&&&\\ 0&0&-2\sqrt{\xi_2\xi_1}&t+2\xi_3  \end{vmatrix} $$$$  f3=\begin{vmatrix}
t & -2\sqrt{\xi_1\xi_3}& 0&0\\&&&\\ -2\sqrt{\xi_1\xi_3}&t-\xi_3+ \xi_1  
 & - 2\sqrt{\xi_2\xi_3}&0\\&&&\\  0& -2\sqrt{\xi_2\xi_3}   &t     -2\xi_3+\xi_1+\xi_2&2\sqrt{\xi_2\xi_1}\\&&&\\ 0&0&-2\sqrt{\xi_2\xi_1}&t+2\xi_3  \end{vmatrix}  $$\smallskip

Markings
$$  0, \xi_1-\xi_3, \xi_1+\xi_2-2\xi_3, 2\xi_3;\quad
 0, \xi_3-\xi_1, 2\xi_3-\xi_1-\xi_2, 2\xi_3 . $$
In this table we show the possible linear factors setting one of the variables to 0.
 $$\begin{matrix} &&f2&&f3& \\&&&\\
\xi_1=0: &&0,2\xi_3 &&0,2\xi_3&  \\&&&\\
\xi_2=0: &&-\xi_1+2\xi_3, 2\xi_3&& \xi_1-2\xi_3, 2\xi_3& \\&&&\\
\xi_3=0: &&0,  - \xi_1&&0,    \xi_1& 
\end{matrix}$$\medskip
 
  So a possible linear factor could be    $t-\xi_1+2\xi_3 ,t+2\xi_3$  in the first case, $t+2\xi_3,t+\xi_1-2\xi_3$ in the second. The odd cases can be excluded by parity  so we may have   $t+2\xi_3$ in both cases.    
 At $t=-2\xi_3$ the two determinants are  
 
 $$-8\, \xi_1\, \xi_2\, \left( \xi_1 - \xi_3 \right) \, \xi_3 ,\quad -24\, \xi_1\, \xi_2\, \left( \xi_1 - \xi_3 \right) \, \xi_3.$$
 
 Finally Type $i$, observe that 
$$  \xymatrix{ a\ar@{-}[r]^{3,1}&b\ar@{=}[r]^{1,2}& c \ar@{-}[r]^{2,3} &d }, \qquad  \xymatrix{ a\ar@{-}[r]^{1,3}&b^+ \ar@{=}[r]^{1,2}& c^- \ar@{-}[r]^{2,3} &d ^-} $$ $$  \xymatrix{ a\ar@{-}[r]^{3,1}&b\ar@{=}[r]^{1,2}& c \ar@{-}[r]^{3,2} &d },$$
do  not arise since they are not complete.  It remains up to symmetry  
 $$ \qquad  \xymatrix{ a\ar@{-}[r]^{1,3}&b^+ \ar@{=}[r]^{1,2}& c^- \ar@{-}[r]^{3,2} &d ^-}, $$ 
  
 $$\begin{vmatrix}
t & -2\sqrt{\xi_1\xi_3}& 0&0\\&&&\\ 2\sqrt{\xi_1\xi_3}&t-\xi_3+ \xi_1  
 &  2\sqrt{\xi_2\xi_1}&0\\&&&\\  0& -2\sqrt{\xi_2\xi_1}   &t     +2\xi_1+\xi_2-\xi_3& 2\sqrt{\xi_2\xi_3}\\&&&\\ 0&0& 2\sqrt{\xi_2\xi_3}&t+2\xi_1+2\xi_2-2\xi_3   \end{vmatrix} $$
 This  case has passed   a standard factorization algorithm which shows that it is irreducible.\smallskip

    {\bf Star types}
 
  {\bf Type B I} For  only one orientation the tree is complete. Take $b$ as root $$  \xymatrix{&c^+& &\\a^-\ar@{=}[r]_{1,2}&b\ar@{-}[u]^{3,2}  \ar@{-}[r]^{3,1} &d^+ }  
markings
  \xymatrix{& \xi_3-\xi_2& &\\  \xi_2+\xi_1\ar@{-}[r] &\ar@{-}[u]  \ar@{-}[r] & \xi_3  -\xi_1 }$$
 $$  \begin{vmatrix}
t+\xi_2+\xi_1 & -2\sqrt{\xi_1\xi_2}& 0&0\\&&&\\ 2\sqrt{\xi_1\xi_2}&t 
 &  - 2\sqrt{\xi_2\xi_3}&-2\sqrt{\xi_1\xi_3}\\&&&\\ 0 &- 2\sqrt{\xi_2\xi_3}   &t    -\xi_2 +\xi_3&0\\&&&\\ 0&-2\sqrt{\xi_1\xi_3}&0&t+\xi_3-\xi_1  \end{vmatrix} $$
  passes standard tests of irreducibility.

In fact in this case even the    determinant is   irreducible (over $\mathbb Q$):
$$ 4\,\left( \xi_1^2\,\xi_2^2 + \xi_1^2\,\xi_2\,\xi_3 + \xi_1\,\xi_2^2\,\xi_3 - \xi_1^2\,\xi_3^2 - \xi_1\,\xi_2\,\xi_3^2 - 
    \xi_2^2\,\xi_3^2 \right), $$   
   \bigskip

 {\bf Circuits}
 \medskip

 We first start  from the Formulas \eqref{icircui} of a basic circuit $$\boxed{C}\xymatrix{&c& &\\ a\ar@{<-}[ru] ^{3,1}\ar@{->}[rr]_{1,2} &&b\ar@{->}[lu]_{2,3}  &  }\boxed{D}  \xymatrix{&&c& &\\ ,&a\ar@{<-}[ru] ^{3,1}\ar@{=}[rr]_{1,2} &&b\ar@{=}[lu]_{2,3}  &  } 
 $$    and add a linearly independent edge in all possible ways and then close it  as necessary.  If this edge has two indices disjoint from $1,2,3$ we are under the hypotheses of Theorem \ref{supertest}.
Otherwise, up to symmetry   we may have several cases, but we can always assume that 1 appears in the marking  of  the edge. It can be $(1,4),$  black or red, in this case, we complete if necessary the graph but then  we may apply Theorem \ref{supertest}. The other possibility  is that it does not involve   an index different from $1,2,3$  but then it must have a color different from the one appearing in the graph in order to be linearly independent.
\smallskip

{\bf Type C}  We may assume we add the edge $\xymatrix{  \ar@{=}[r] ^{1,2}  &   } $, according to Lemma \ref{adia} the only possible position is at $c$. 
 
 $$\boxed{C1} \xymatrix{&c\ar@{=}[r] ^{1,2}  & d&\\ a\ar@{<-}[ru] ^{3,1}\ar@{->}[rr]_{1,2} &&b\ar@{->}[lu]_{2,3}  &  } 
 $$  
 {\bf Type D}  We may assume we add the edge $\xymatrix{  \ar@{=}[r] ^{1,3}  &   } $ or  $\xymatrix{  \ar@{-}[r] ^{1,2}  &   } $, according to Lemma \ref{adia} the only possible positions are $$   \xymatrix{ &c& &\\  &d& &\\   a\ar@{<-}[ruu] ^{3,1}\ar@{=}[rr]_{1,2} &&b\ar@{=}[luu]_{2,3} \ar@{=}[lu] ^{1,3} &   }  \xymatrix{&&c\ar@{-}[d] _{2,1}& &\\ & & d &  &\\ ,&a\ar@{<-}[ruu] ^{3,1}\ar@{=}[rr]_{1,2} &&b\ar@{=}[luu]_{2,3}  &  } \boxed{D1}  \xymatrix{&&c\ar@{-}[d] _{1,2}& &\\ & & d &  &\\ ,&a\ar@{<-}[ruu] ^{3,1}\ar@{=}[rr]_{1,2} &&b\ar@{=}[luu]_{2,3}  &  } 
 $$  
 the first 2 should be completed to the same
 $$  \boxed{D2}  \xymatrix{ &c\ar@{<- }[d] _{2,1}& &\\  &d& &\\   a\ar@{ ->}[ru] _{3,2}\ar@{<-}[ruu] ^{3,1}\ar@{=}[rr]_{1,2} &&b\ar@{=}[luu]_{2,3} \ar@{=}[lu] ^{1,3} &   }    
 $$  
 So our final computation is with these 3 cases.  The matrices are:

$$ \boxed{C1}  \quad \begin{vmatrix}
t &- 2\sqrt{\xi_1\xi_2}& -2\sqrt{\xi_1\xi_3}&0\\&&&\\-2\sqrt{\xi_1\xi_2}&t+\xi_1- \xi_2  
 &  -2\sqrt{\xi_2\xi_3}&0\\&&&\\  -2\sqrt{\xi_1\xi_3}& -2\sqrt{\xi_2\xi_3}   &t     +\xi_1-\xi_3&2\sqrt{\xi_1\xi_2}\\&&&\\ 0&0&-2\sqrt{\xi_1\xi_2}&t+2\xi_1+\xi_2-\xi_3  \end{vmatrix} $$
passes a standard test of irreducibility.  The determinant is $\xi_1$ times an irreducible cubic.
   
   Take $a$ as root.
$$\boxed{D1}  \xymatrix{&&c^+\ar@{-}[d] _{1,2}& &\\ & & d^+ &  &\\ ,&a\ar@{<-}[ruu] ^{3,1}\ar@{=}[rr]_{1,2} &&b^-\ar@{=}[luu]_{2,3}  &  } 
   \boxed{D2}  \xymatrix{ &c^+\ar@{<- }[d] _{2,1}& &\\  &d^+& &\\   a\ar@{ ->}[ru] _{3,2}\ar@{<-}[ruu] ^{3,1}\ar@{=}[rr]_{1,2} &&b^-\ar@{=}[luu]_{2,3} \ar@{=}[lu] ^{1,3} &   }    
 $$  
      
$$  \begin{vmatrix}
t & 2\sqrt{\xi_1\xi_2}& -2\sqrt{\xi_1\xi_3}&0\\&&&\\-2\sqrt{\xi_1\xi_2}&t+\xi_1+ \xi_2  
 &  -2\sqrt{\xi_2\xi_3}&0\\&&&\\  -2\sqrt{\xi_1\xi_3}& 2\sqrt{\xi_2\xi_3}   &t     +\xi_1-\xi_3&-2\sqrt{\xi_1\xi_2}\\&&&\\ 0&0&-2\sqrt{\xi_1\xi_2}&t+2\xi_1-\xi_3+\xi_2  \end{vmatrix} $$ 
  \bigskip
  
$$  \begin{vmatrix}
t & 2\sqrt{\xi_1\xi_2}& -2\sqrt{\xi_1\xi_3}&-2\sqrt{\xi_2\xi_3}\\&&&\\-2\sqrt{\xi_1\xi_2}&t+\xi_1+ \xi_2  
 &  -2\sqrt{\xi_2\xi_3}&-2\sqrt{\xi_1\xi_3}\\&&&\\  -2\sqrt{\xi_1\xi_3}& 2\sqrt{\xi_2\xi_3}   &t     +\xi_1-\xi_3&-2\sqrt{\xi_1\xi_2}\\&&&\\ -2\sqrt{\xi_2\xi_3}&2\sqrt{\xi_1\xi_3}&-2\sqrt{\xi_1\xi_2}&t+\xi_3-\xi_2  \end{vmatrix} $$ 
  In the first  the characteristic polynomial is irreducible while the determinant is $4\xi_1$ times an   
irreducible cubic. In the second they are both irreducible.

We finally have circuits with 4 edges but not 3. In this case the circuit has 4 markings linearly dependent of rank 3, which satisfy a very special linear relation.  One easily sees that they must have as encoding graph a standard circuit with an even number of red edges and suitably oriented black edges:
$$ \xymatrix{4\ar@{- }[d] \ar@{ - }[r] &3 \\1\ar@{- }[r] &2\ar@{- }[u]    } \quad  \xymatrix{4\ar@{= }[d] \ar@{ = }[r] &3 \\1\ar@{- }[r] &2\ar@{-  }[u]    } \quad   \xymatrix{4\ar@{= }[d] \ar@{ - }[r] &3 \\1\ar@{- }[r] &2\ar@{=  }[u]    }  \quad  \xymatrix{4\ar@{= }[d] \ar@{ = }[r] &3 \\1\ar@{= }[r] &2\ar@{=  }[u]    }   $$

One easily sees that they  fall under theorem \ref{supertest}.  E.g.:
  $$ \xymatrix{d\ar@{->}[d]_{4,1}\ar@{<- }[r]^{3,2 }&c \\a\ar@{->}[r]^{1,2}&b\ar@{->}[u]_{3,4}   } \xymatrix{d\ar@{->}[d]_{3,4}\ar@{<- }[r]^{3,2 }&c \\a\ar@{->}[r]^{1,2}&b\ar@{->}[u]_{4,1}   }  $$
  
  The other cases are similar or not complete.\section{Separation\label{SEP}}

  \subsection{Translations}
  We want to prove here the separation Lemma \ref{SEPA} in dimension 3.

  We  have constructed the polynomials  which appear in the  formulas for $ad(N)$  as follows.
  
  We  take a set $A=\{(a_0,\sigma_0),(a_1,\sigma_1),\ldots, (a_k,\sigma_k)\}\subset \Z^m\times\Z/(2)$  so that the vectors $a_i$ are independent in the sense of affine geometry (they span an affine space of dimension $k$).  
  
  We only consider those such that the complete graph generated by them is connected and  we call $k$ the {\em rank} of $A$. Let us call these sets {\em allowable}.
  
  To such a set we have associated (Definition \ref{allowa}) a $k+1\times k+1$ matrix  $  C_A$ and its characteristic polynomial $\chi_A$, a  polynomial of degree $k+1$ in $t$ and the variables $\xi_i$  which is monic in  $t$.   
  We Conjecture that
 
 {\bf Conjecture 2}\ 
If $A\neq B$ have at least 2 elements then $\chi_A\neq \chi_B.$

For one element $(a,+)$  and $(-a,-)$ give  $t-L_a(\xi).$
\bigskip
 
    We start from a standard edge $[(0,+),(e_1-e_2,+)];$  $ [(0,+),(-e_1-e_2,-)].$

Under translations and sign change we have 4 possibilities:
$$[(a,+),(a+e_1-e_2,+)];\quad [(b,+),(-b-e_1-e_2,-)].$$$$ [(c,-),(c+e_1-e_2,-)];\quad [(d,-),(-d-e_1-e_2,+)].$$

The matrices  are $$\begin{vmatrix}
  -a(\xi)  & 2\sqrt{\xi_1\xi_2}\\&&&\\ 2\sqrt{\xi_1\xi_2}&  -a(\xi) -\xi_1+ \xi_2  
\end{vmatrix},\quad \begin{vmatrix}
 - b(\xi)   &  -2\sqrt{\xi_1\xi_2}\\&&&\\ 2\sqrt{\xi_1\xi_2}&   -b(\xi)  -\xi_1-\xi_2  
\end{vmatrix} $$  $$\begin{vmatrix}
   c(\xi)  & -2\sqrt{\xi_1\xi_2}\\&&&\\ -2\sqrt{\xi_1\xi_2}&   c(\xi) +\xi_1- \xi_2  
\end{vmatrix},\quad \begin{vmatrix}
   d(\xi)   &   2\sqrt{\xi_1\xi_2}\\&&&\\ -2\sqrt{\xi_1\xi_2}&   d(\xi) +\xi_1+\xi_2  
\end{vmatrix} $$   \begin{lemma}\label{recod}
These blocks can be reconstructed from the characteristic polynomial.
\end{lemma}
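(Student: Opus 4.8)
```latex
\begin{proof}[Proof of Lemma \ref{recod}]
The plan is to extract from the characteristic polynomial of a $2\times 2$ block enough invariants to pin down which of the four families above we are in, and then to recover the translation parameter. First I would compute the four characteristic polynomials explicitly. Writing $\ell$ for the linear form $a(\xi)$, $b(\xi)$, $c(\xi)$ or $d(\xi)$ attached to the root, and using \eqref{22}, the four determinants are
\[
(t+\ell)(t+\ell+\xi_1-\xi_2)-4\xi_1\xi_2,\quad
(t+\ell)(t+\ell+\xi_1+\xi_2)+4\xi_1\xi_2,
\]
\[
(t-\ell)(t-\ell+\xi_1-\xi_2)-4\xi_1\xi_2,\quad
(t-\ell)(t-\ell+\xi_1+\xi_2)+4\xi_1\xi_2.
\]
So each polynomial has the form $s^2+s(\xi_1\mp\xi_2)\mp 4\xi_1\xi_2$ where $s=t\pm\ell$, the upper signs corresponding to the two black-edge cases and the lower signs to the two red-edge cases. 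The key observation is that the sign of the $\xi_1\xi_2$ term and the relative sign inside the linear coefficient $\xi_1\mp\xi_2$ are \emph{intrinsic} to the polynomial: the constant-in-$t$ term $\chi_A(0)$, viewed as a polynomial in $\xi$, is $\ell^2+\ell(\xi_1-\xi_2)-4\xi_1\xi_2$ in the black cases and $\ell^2+\ell(\xi_1+\xi_2)+4\xi_1\xi_2$ in the red cases, and the discriminant of $\chi_A$ in $t$ is $(\xi_1-\xi_2)^2+16\xi_1\xi_2=(\xi_1+\xi_2)^2$ in the black case and $(\xi_1+\xi_2)^2-16\xi_1\xi_2=(\xi_1-\xi_2)^2$ in the red case, which distinguishes black from red.

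Next I would recover the linear form $\ell$ and its sign. Having decided black versus red, complete the square: $\chi_A(t)=\bigl(t-m(\xi)\bigr)^2 + (\text{lower order in }t) $ where $2m(\xi)=-(2\ell\pm(\xi_1\mp\xi_2))$ is read off as minus the coefficient of $t$. This determines $\ell$ up to the ambiguity between the ``$t+\ell$'' families and the ``$t-\ell$'' families, i.e. up to replacing $\ell$ by $-\ell$ together with the accompanying sign flip on the vertices. But this is precisely the residual ambiguity already present in the statement of the separation problem: the conjugation $\A\mapsto\bar\A$ sends $C_\A$ to $-C_\A$ and hence is genuinely invisible at the level of $\chi_{\A,\pm}$ taken together — and Lemma \ref{SEPA} only asks us to reconstruct $\A$ and its sign, which we have now done after fixing the global sign convention ``$0$ has sign $+$'' from \S\ref{imarki}. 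Finally, from $\ell=a(\xi)=\sum_i m_i\xi_i$ one reads off the integer vector $a=\sum_i m_i e_i$, hence the translation $\tau_c$ with $c=a$ (black case) or the analogous shift in the red case, using the formula $C_{\tau_c(\A)}=c(\xi)I+C_\A$ from the Theorem preceding Corollary \ref{poli}.

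The only point requiring a small argument is that the two data (the color, and the linear form with its sign) actually \emph{suffice}: two distinct standard edges give rise to $\xi$-variables that are permuted by $S_m$, so after normalizing the marking to $(1,2)$ there is exactly one edge per color, and two translates of the same edge have the same color and differ only by the value of $\ell$; since $\ell$ is recovered, the block is recovered. I do not expect any real obstacle here — the computation is the three-line determinant expansion above and the ``obstacle'', such as it is, is merely bookkeeping the sign convention so that ``reconstruct $\A$ and its sign'' is an honest statement rather than a statement up to the harmless involution $\A\leftrightarrow\bar\A$. This settles the $k=1$ base case of the separation Lemma; the inductive step for $k\ge 2$ is carried out in the body of \S\ref{SEP} using Theorem \ref{lafatt} to read off, from specializations $\xi_i=0$, the subgraphs obtained by deleting the edges through $i$.
\end{proof}
```
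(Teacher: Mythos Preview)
Your overall strategy---extract the color from an invariant of $\chi_A$, then read off the translation from the trace---is the same as the paper's. But there are two concrete problems.

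\medskip

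\textbf{A minor arithmetic slip.} The discriminant of $s^2+s(\xi_1-\xi_2)-4\xi_1\xi_2$ is $(\xi_1-\xi_2)^2+16\xi_1\xi_2=\xi_1^2+14\xi_1\xi_2+\xi_2^2$, not $(\xi_1+\xi_2)^2$; similarly the red discriminant is $\xi_1^2-14\xi_1\xi_2+\xi_2^2$, not $(\xi_1-\xi_2)^2$. The two are still distinct, so your use of the discriminant to separate black from red survives, but the stated values are wrong.

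\medskip

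\textbf{A genuine gap in the final step.} After fixing the color you are left with a two-fold ambiguity: for black, case~1 with parameter $a$ versus case~3 with some $c$, matched so that the traces agree. You assert that this residual ambiguity is exactly the conjugation $\A\mapsto\bar\A$. It is not: $C_{\bar\A}=-C_\A$, so $\chi_{\bar\A}(t)=\chi_\A(-t)$, which is a \emph{different} polynomial (the trace changes sign). The two blocks that genuinely share the same $\chi$ are case~1 with $a$ and case~3 with $c=-a-e_1+e_2$; a short computation shows both give $t^2+(2a+\xi_1-\xi_2)t+a(a+\xi_1-\xi_2)-4\xi_1\xi_2$. These are not related by sign change, so your dismissal of the ambiguity does not go through. (By contrast, in the red case the analogous pair, case~2 with $b$ and case~4 with $d=-b-e_1-e_2$, are literally the \emph{same} unordered edge written with the two possible roots---this is the content of the paper's footnote---so there the ambiguity really is harmless, though not for the reason you give.)

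The paper handles this not via the discriminant but by writing down all four candidate matrices with the \emph{same} trace $\tau$ and comparing their determinants pairwise, then checking that whenever two determinants coincide the underlying edges coincide. You should either carry out that pairwise comparison explicitly, or supply an independent argument (for instance via the parity/augmentation constraint on the vertices) that rules out one of the two black candidates.
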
\begin{proof} First, passing modulo 2 we identify from the trace $\tau$ the two variables $\xi_1,\xi_2$.
We   can deduce the elements $a,b,c,d$ in the 4 cases from $\tau$.
$$ -\tau=2(a-\xi_2)+ \xi_1+ \xi_2=2b+ \xi_1+ \xi_2=2c+\xi_2-\xi_1=2d-\xi_1-\xi_2 .$$  So let us write  $\tau=-2a+\xi_2-\xi_1$  and the possible 4 matrices with trace $\tau$ are 
$$\begin{vmatrix}
  -a  & 2\sqrt{\xi_1\xi_2}\\&&&\\ 2\sqrt{\xi_1\xi_2}& - a -\xi_1+ \xi_2  
\end{vmatrix},\quad\text{or}\quad  \begin{vmatrix}
  -a+\xi_2  & -2\sqrt{\xi_1\xi_2}\\&&&\\  2\sqrt{\xi_1\xi_2}&  -a-\xi_1   
\end{vmatrix} $$   
 $$\begin{vmatrix}
   a  +\xi_1-\xi_2 & -2\sqrt{\xi_1\xi_2}\\&&&\\ -2\sqrt{\xi_1\xi_2}&   a +2\xi_1- 2\xi_2  
\end{vmatrix},\quad \begin{vmatrix}
   a+\xi_1   &   2\sqrt{\xi_1\xi_2}\\&&&\\ -2\sqrt{\xi_1\xi_2}&   a +2\xi_1+\xi_2  
\end{vmatrix} $$
we compute the 4 determinants$$a(a +\xi_1- \xi_2)- 4\xi_1\xi_2,\quad  (a- \xi_2)(a +\xi_1)+ 4\xi_1\xi_2,$$
 $$ (a +\xi_1- \xi_2) (a +2\xi_1- 2\xi_2)- 4\xi_1\xi_2,\quad (a +\xi_1)(a +2\xi_1+\xi_2)+ 4\xi_1\xi_2.$$ We  leave to the reader to verify that,  when they are  equal,   the  edge is the same.\footnote{of course the matrix depends on ordering the vertices of the edge, in particular one vertex appears as root}

 \end{proof}  
 
 \medskip
 
  \begin{proof} [Proof In dimension 2,3]  
  We   have already proved in Lemma \ref{recod}, that this statement is true for blocks with two elements.
In   dimension 3 we know that  all polynomials $\chi_C$  with $C$ of rank $<4$ are irreducible.   

Consider thus  the $m$ linear mappings  $\lambda_i:\Z^m\to\Z^{m-1}$  each dropping the  $i^{th}$ coordinate or in more intuitive terms, setting $e_i=0$.

We extend this map to  $ \lambda_i:\Z^m\times\Z/(2)\to\Z^{m-1}\times \Z/(2)$  and notice that, if we remove from the graph  $\Z^m\times\Z/(2)$ all edges in which $e_i$ appears, this is a map of  graphs.  Moreover  take   $A$  allowable of  rank $k$ and     remove  all the edges  in which $e_i$ appears, we obtain in general several connected components   $A^i_1,\ldots,A_s^i$ of the restricted graph. It is easily seen that every connected component $B$ of the induced graph maps injectively under $\lambda_i$  to   $ \Z^{m-1}\times\Z/(2)$, since the index $i$ does not appear in the markings of the  edges of $B$. Theorem \ref{lafatt}  tells us  that
\begin{equation}
\label{lafa1} \chi_A(t,\xi)_{\xi_i=0}=\prod_{j=1}^s\chi_{A_j^i}(t,\xi).  
\end{equation}
This is again the basis of an induction. We can in fact by induction reconstruct the entire graph once we forget the $i^{th}$ coordinate, except for the isolated vertices for which we have no information on the sign.  By convention we describe their known coordinates as if the sign were $+$.

In particular by setting all variables equal to 0  except  two of them   we see that if $\chi_A=\chi_B $   the two graphs must have the same colored encoding graphs. In fact we recover not only the edges but also the coordinates of the vertices relative to the two indices of the edge and their sign.

Now in dimension 3 we treat  first the case of a block with 3 vertices. Thus we know the encoding graph.  This has either two or three edges. Let us treat the  case of 3 edges (the other case is simpler). In this case the  graph  is as in  \eqref{icircui}, we need to compute the coordinates.  In this case set for instance $e_1=0$ we see  $$\xymatrix{&c& &\\ a  &&b\ar@{->}[lu]_{2,3}  &  }, \xymatrix{&c& &\\ a  &&b\ar@{=}[lu]_{2,3}  &  }.
$$
In the first case we know the sign of $a$ and hence the second and third coordinate of all the vertices. This is enough to reconstruct all edges since $b,c$ have different third coordinate.

In the second case, assume for instance $c$ has sign $+$. We see immediately to which of $b,c$ the point $a$ is connected by the black edge $(1,3)$  since we must have one and only one  $a_2=c_2$ or $a_2=-b_2=c_2+1$.

Next the most degenerate cases with 4 vertices.

This, up to symmetry is when the encoding graph is of type  $A,B$ (cf.  \ref{tipoA}).

In Type $A$ when we put $e_3=0$  we  must see the graph  
$$  \xymatrix{ a   \ar@{=} [r]^{1,2}&b&c \ar@{-}[r]^{1,2}    &d    } $$ together with the first two coordinates and the sign of all the vertices.

Set $a=(a_1,a_2,x), c=(c_1,c_2,y)$ so that \begin{equation}\label{labb}  b=(-1-a_1,-1 -a_2,-x),\quad d=(c_1+1,c_2-1,y) .
\end{equation}

When we set   $e_2=0$ we have two possible cases:
$$ 1)\qquad \xymatrix{    u\ar@{=} [r]^{1,3} &v&z&w     }$$  or $$ 2)\qquad  \xymatrix{  u\ar@{-} [r]^{1,3} &v&z&w       }.$$   Possibly  we may have $z=w$. Let us treat the first case.

We need to determine which vertices match coordinates and are thus joined by this  edge.    
If say $a=(a_1,a_2,x)$  is joined with $c=(c_1,c_2,y)$  then we must have that $a+c=(-1,0,-1)$ so
$$a_1+c_1=-1,a_2+c_2=0,x+y=-1. $$   Similarly for the other 3 possible ways in which the edge may appear.  We need to show that only one possible choice is available, so we may assume that the choice $a,c$ is available and then prove that the others are not possible. Let us first exclude the possibility $b,c$.  By \eqref{labb} $b_2+c_2=-1 -a_2+c_2$  by assumption. If  $a,c$ is available we have also  $a_2+c_2=0$, if also $b,c$ is available then $0=b_2+c_2=-1 -a_2+c_2$. This implies   $c_2=1/2$ which is not possible.

Now the possibility  $b,d$ or $a,d$.  We have that  $d=(c_1+1,c_2-1,y)$  so for $a,d$ again we should have $a_2+d_2=0$ that is  $a_2+c_2-1=0$ incompatible. For $b,d$  we would have 
$$b_1+d_1=-1,b_2+d_2=0,-x+y=-1. $$

We have $y=-1,x=0$  and $b_1+d_1=a_1+c_1=-1$. Next $b_1+d_1=-1-a_1+c_1+1$  which implies
$a_1=0,c_1=-1$
 $-a_1+c_1=1$ hence $c_1=-1,a_1=0$  then $b_2+d_2=a_2+c_2=0$ implies 
$-1 -a_2+c_2-1=0$ that is $a_2=-1,c_2=1$. Finally we must have 
$$  \xymatrix{(0,-1,0)    \ar@{=} [r]^{1,2} \ar@{=} [d]^{1,3}&(-1,0,0)\ar@{=} [d]^{1,3}&\\(-1,1,-1 )\ar@{-}[r]^{1,2}    & (0,0,-1 )   }.$$

This is possible only if we ignore the signs. With signs this is incompatible  (the odd circuits   do  not lift from $\tilde \Lambda$ to $\Lambda$ and hence we have excluded  them   with the resonance hypothesis  3) \ref{therin}).

Now the second case.

When we put $e_3=0$  we  must see the graph  
$$  \xymatrix{ a   \ar@{=} [r]^{1,2}&b&c \ar@{-}[r]^{1,2}    &d    }.$$ When we set   $e_2=0$ we have a graph
  $$  \xymatrix{  u\ar@{-} [r]^{1,3} &v&z&w       }$$  
We need to determine which vertices are joined by this  edge.  
If say $a=(a_1,a_2,x)$  is joined with $c=(c_1,c_2,y)$, we have still two possibilities for the orientation.  In   have have either  $a-c=(-1,0,1 ),\ a-c=( 1,0,-1 )$ so
$$i)\quad a_1-c_1= -1,a_2=c_2, x-y= 1\quad\text{or}\quad ii)\quad  c_1-a_1=-1,a_2=c_2, y-x= 1. $$ Either $u$ or $v$ correspond to $a$, that is  either $u_1=a_1$ or $v_1=a_1$  moreover we need to have that $b,d$ correspond to $z,w$ in some order.  Similarly for the other 3 possible ways in which the edge may appear.  We need to show that only one possible choice is available, so we may assume that the choice $a,c$ is available and then prove that the others are not possible. Let us first exclude the possibility $b,c$. This implies, $b_2=c_2=a_2$ since   $b=(-1-a_1,-1 -a_2,-x)$   we have $a_2=-1/2$ impossible.  

 We have $d_1=c_1+1,d_2=c_2-1,d_3=y$. Assume we have the edge $b,d$ marked $(1,3)$.
 $$j)\quad b_1-d_1= 1,b_2=d_2, -x-y= -1\quad\text{or}\quad jj)\quad  d_1-b_1= 1,d_2=b_2, y+x= -1. $$
 
 Assume case $i,j$.
 Then $b_2=d_2= c_2-1$ so $-1 -a_2= a_2-1$ implies $a_2=0$. 
We get
$$a=(-2,0, 1),\ b=( 1,-1,-1),\ c=(-1,0,0),\ d=(0,-1,0).$$

$$  \xymatrix{ (0,-1,0) \ar@{-}[r]^{2,1 }    &(-1,0,0 )& \\(1,-1,-1)  \ar@{-}[u]^{3,1 }   \ar@{=} [r]^{1,2}& (-2,0, 1) \ar@{-}[u]^{1,3 }&   }.$$

This is possible only if we ignore the signs. The other cases are symmetric due to the symmetry in the red edge $(1,2)$.\bigskip

A priori there is a possible further degeneracy, this occurs when the two edges $$  \xymatrix{ a   \ar@{=} [r]^{1,2}&b&c \ar@{-}[r]^{1,2}    &d    } $$ coincide in the projection. This means that $$a+b=-e_1-e_2,\ a-b=\pm(e_2-e_1)\implies a=d=-e_1, 
 b=c=-e_2\  \text{or conversely}.$$   Then   the original points can be 
 $$a=(-1,0,x),b=(0,-1,-x);\quad c=(0,-1,y), d=(-1,0,y).$$ But we cannot  have two of them joined by $(1,3)$ so this case does not occur.\bigskip

In Type $B I$ \eqref{typeB}, when we put $e_3=0$  we  must see the graph  
$$  \xymatrix{ a   \ar@{=} [r]^{1,2}&b&c     &d    }.$$

We have then  $a=(a_1,a_2,x),b= (-1-a_1,-1 -a_2,-x)$ and we know the sign of both. When we set   $e_2=0$ we have a graph
$$  \xymatrix{    u\ar@{-} [r]^{1,3} &v&z&w     }$$   When we set   $e_1=0$  $$  \xymatrix{  p\ar@{-} [r]^{2,3} &q&t&s       }$$  

We need to determine which vertices are joined by these  edges.  What we know are the first two coordinates and sign of $a,b  $, the last two coordinates and sign of   the vertices $p,q$ the first and last of  $u,v $.

If the signs of  $u,v$  and $p,q$  are different  then we see immediately that the graph  is uniquely reconstructed. Let us give some detail, for instance assume that $u,v$ have the same sign of $a$ and $p,q$ the sign of $b$.   Then necessarily either $u=a$ or $v=a$ and which one is the case we see by the first coordinate. Similarly  for $p,q$.

If  $u,v,p,q$ have the same sign   then the two edges $(u,v)$ and $(p,q)$ form a segment. This a priori can occur in 4 ways, but two of them are not complete. The only possibilities are
$$ \xymatrix{  u\ar@{-}[r]^{1,3 }   &v=p\ar@{-}[r]^{2,3 }  &q  &  &p\ar@{-}[r]^{2,3 }   &q=u\ar@{-}[r]^{1,3 }  &v}$$
Which one occurs is determined inspecting the coordinates  (the two end points have different third coordinate).

Next we need to identify in this segment the point $a$ or $b$ depending from the sign,  and this is clear again by the coordinates.
    
 Next cases C1, D1, D2 are distinguished by their encoding graphs.  In case C1 we set $e_3=0$ and see the two edges $(1,2)$.  How to complete the graph is clear by inspecting the second coordinate  since we know all the signs.
 
 Cases D1, D2  are treated in a very similar way.
  
 The less degenerate cases are   easier since we have more coordinates available that change  and follow the same line of reasoning.
\end{proof}

\section{Real roots\label{posit}}
In this section we want to touch on  the issue of when the eigenvalues of the  combinatorial matrices are all real.  We  know that, for black edges and positive $\xi$  the quadratic form is positive definite so the corresponding matrix has real eigenvalues.    When we have red edges  we do not have always real eigenvalues and we need to isolate the regions in the parameters where this occurs.

First of all in dimension 2  we have seen that for a  $2$ block corresponding to a red edge marked  $(i,j)$ we have the inequality $(\xi_i+\xi_j)^2>16\xi_i\xi_j$  determining the region where the eigenvalues are real.

If one follows  the choice  of \cite{GYX}  no further blocks appear and these inequalities suffice and determine a non empty open sector in the parameters $\xi$.

Otherwise we have to analyze the 3 dimensional blocks.

Similarly in dimension 3.  The polynomial inequalities that one obtains are explicit but rather formidable, so we can discuss  just the qualitative aspects.
One can remark  that when we set one variable  equal to 0  we are in position to apply  Theorem \ref{lafatt}.  When we have linear or quadratic terms  we  deduce that either all roots are real for all positive values of the remaining parameters if the quadratic terms come from black edges, or in case of red edges  we have the simple explicit inequality $(\xi_i+\xi_j)^2>16\xi_i\xi_j$.  Moreover if all the roots are different as it happens  in most cases  we have that in an open neighborhood of this set the  roots are still real and distinct.  So we have to make sure that all these neighborhoods have a non--empty intersection.

\bibliographystyle{plain}

\bibliography{bibliografia}
\end{document}